\documentclass[amsfonts]{report}

\usepackage{amssymb,amsfonts,amscd,eucal,mathrsfs,amsmath}
\usepackage{glossaries}
\usepackage[square,sort,comma,numbers]{natbib}
\usepackage{amsthm}
\usepackage{tocloft}
\usepackage[all,arc]{xy}
\usepackage{enumerate}
\usepackage{mathrsfs}
\usepackage{hyperref} 
\usepackage{graphicx}
\usepackage{tikz}
\usepackage{lastpage}
\usepackage{bmpsize}
\usetikzlibrary{matrix,arrows,decorations.pathmorphing}

\theoremstyle{plain}
\newtheorem{thm}{Theorem}[section]
\newtheorem{cor}[thm]{Corollary}

\newtheorem{lem}[thm]{Lemma}

\newtheorem{res}[thm]{Result}

\theoremstyle{definition}
\newtheorem{defn}[thm]{Definition}

\theoremstyle{remark}
\newtheorem{rem}[thm]{Remark}
\newtheorem{rems}[thm]{Remarks}

\def\R {{\mathbb R}}

\def\Q {{\mathbb Q}}

\def\N {{\mathbb N}}
\def\Z {{\mathbb Z}}

\setlength{\parskip}{3mm}
\linespread{1.1}

\makeatletter
\let\c@equation\c@thm

\numberwithin{equation}{section}

\bibliographystyle{plain}

\title{The Lifting Problem is NP Complete}

\author{Doron Ben-Hadar}

%
\date{}

\makeglossaries

\newglossaryentry{DADG}
{
    name=DADG,
    description={Digital arrowed daisy graph - a way to present an arrowed daisy graph as a
    formal data type. See definitions \ref{Pref}, \ref{SurfDADG} and \ref{DADG}}
}

\newglossaryentry{ADG}
{
    name=ADG,
    description={Arrowed daisy graph - an enhanced graph structure that captures some properties
   	of the interaction graph of an oriented generic surface. More refined than a DG. Defined by
   	 Li in \cite{Li1}}
}

\newglossaryentry{DG}
{
    name=DG,
    description={Daisy graph - an enhanced graph structure that captures some properties of the
    interaction graph of a generic surface. Defined by Li in \cite{Li1}}
}

\newglossaryentry{proper}
{
    name=proper 3-sat formula,
    description={A 3-sat formula for which the literals in every clause are distinct and ordered,
    and whose clauses are distinct and lexicographically ordered. See definition \ref{reduced}}
}

\newglossaryentry{symmetric}
{
    name=symmetric 3-sat formula,
    description={A 3-sat formula whose clauses are divided into pairs of opposite clauses. See
    definition \ref{Sym3Sat}}
}

\newglossaryentry{sym&pro}
{
    name=proper symmetric 3-sat formula,
    description={A 3-sat formula that is both symmetric and proper}
}

\newglossaryentry{mirror}
{
    name=mirror clause,
    description={Two 3-clauses ``mirror" each other if they have the same variables but a
    variable is negated in one iff it is not negated in the other. Each of these clauses may be
    referred to as the mirror of the other clause. See definition \ref{Sym3Sat}}
}

\newglossaryentry{genericsurface}
{
    name=generic surface,
    description={A surface in a 3-manifold in which 2 or 3 sheets of the surface may intersect 
    transversely. May also contain branch values and regular and double boundary values. See
    definition \ref{Generic}}
}

\newglossaryentry{lifting}
{
    name=lifting,
    description={A lifting of a generic surface $S$ in a 3-manifold $M$ is a knotted surface in
    $M \times \R$ whose projection into the $M$ component is equal to $S$ \ref{Generic}. see
    definition \ref{BSD}}
}

\newglossaryentry{liftingAt}
{
    name=lifting attempt,
    description={A choice, for every double arc of a given generic surface, which of the two
    intersecting surface strips is ``higher" than the other one. A lifting attempt is successful
    if the surface has a lifting that realizes it. See definition \ref{Attempt}}
}

\newglossaryentry{BSD}
{
    name=broken surface diagram,
    description={A technique used to draw a lifting of a generic surface. See
    definition \ref{Attempt}}
}

\newglossaryentry{gradable}
{
    name=gradable,
    description={A gradable DADG is a DADG that has a grading as per definition \ref{Grading}}
}

\newglossaryentry{Height1}
{
    name=height-1,
    description={A height-1 DADG is a DADG that has no indecisive edges. See
    definition \ref{H1}}
}

\newglossaryentry{EoE}
{
    name=end of edge,
    description={One of the ends of an edge in a topological graph, or the topological
    realization of a graph-theoretical graph. The structure of a DADG distinguishes between the
    different ends of each edge by indicating which end of which edge lies on which vertex. See
    definition \ref{Pref}}
}

\newglossaryentry{consecutive}
{
    name=consecutive,
    description={Two ends of edges in the intersection graph of a generic surface are said to be
    consecutive if they lie on opposite sides of the same triple value. The structure of a DADG
    indicates which ends of edges are consecutive. See definition \ref{consec}}
}

\newglossaryentry{preferred}
{
    name=preferred,
    description={I a trice oriented generic surface, the orientation of the surface points
    towards one in each pair of consecutive ends of edges. This end of edge is said to be
    preferred. The structure of a DADG indicates which ends of edges are preferred. See
    definition \ref{Pref}. A edge is said to be preferred if both of its ends are preferred and
    non-preferred if non of it edges are preferred. See definition \ref{H1}}
}

\newglossaryentry{indecisive}
{
    name=indecisive,
    description={An edge in a DADG is said to be indecisive if one of its ends is preferred the
    other is non-preferred. See definition \ref{H1}}
}

\newglossaryentry{Thrice}
{
    name=thrice-oriented,
    description={A thrice-oriented generic surface is an oriented generic surface inside an 
    oriented 3 manifold that additionally has an orientation on each of its double arcs. See
    definition \ref{Trice}}
}

\begin{document} 

\maketitle

\pagenumbering{gobble}
\baselineskip 15pt
\tableofcontents

\baselineskip 20pt

\begin{abstract}
\addcontentsline{toc}{chapter}{Abstract}
\thispagestyle{plain}
\pagenumbering{roman}
%
Let $M$ be a 3-manifold. Every knotted (embedded) surface in $M \times \R$ can be moved via an ambient isotopy in such a way that its projection into $M$ is a \gls{genericsurface}. A surface is generic if every point on it is either a regular, double or triple value - the transversal intersection of 1, 2 or 3 embedded surface sheets, or a ``branch value" that look like Whitney's umbrella. We elaborate on this in Definition~\ref{Generic}. The double values form arcs, and along each arc two long strips of surface intersect. In a knotted surface, the additional $\R$ coordinate distinguishes between the two strips. One of them must be "higher" than the other. We elaborate on this in Definition~\ref{SurfaceStrip}.

The lifting problem is the problem of determining if a \gls{genericsurface} in $M$ can occur as the $M$-projection of a knotted surface in 4-space in $M \times \R$. The main purpose of this thesis is to study the computational aspects of the lifting problem. We will prove that the problem is NP-complete, and devise an efficient algorithm that determines if a \gls{genericsurface} is liftable.

A surface can be lifted iff one can choose, along each of the double arcs of the surface, which of the two intersecting surface strips is "higher" without arriving at some sort of obstruction. There are two obstructions that might occur. First, what locally looks like two distinct surface strips may globally ``join" into one strip. We call a double arc in which the two surface strips join ``non-trivial". We elaborate on this in Definition~\ref{Triv}. A \gls{genericsurface} that has a non-trivial double arc cannot be lifted (see Lemma \ref{TrivLem}). If the surface has no non-trivial arcs, one can attempt to lift the surface by choosing which of the two strips at each arc is the higher one.

Three double arcs intersect at each triple value. Each pair in the trio of surface sheets that meet at this value intersect transversely along a piece of one of the three arcs. When you choose a ``higher strip" at each of these arcs, you dictate which one of the pair of surface sheets is higher than the other along their intersection. This may lead to what we call a ``cyclic height relation" on the three surface sheets - the \gls{liftingAt} dictates that one sheet is higher than the second, which is higher than the third, which is higher than the first. This is a contradiction. A \gls{liftingAt} succeeds iff it does not create a cyclic height relation in any triple value, and a \gls{genericsurface} is liftable iff it has no non-trivial arcs and it has at least one successful \gls{liftingAt} (see Theorem \ref{LiftThm}).

In order to check if a surface is liftable, we match each double arc of the surface with a binary variable. We encode a \gls{liftingAt} by choosing one of its surface strips and deciding that this variable gets the value $0$ if this strip is the higher strip and the value $1$ otherwise. Then for every triple value of the surface, we show how to find integers $j(1),j(2),j(3)$ and binary values $s(1),s(2),s(3)$ such that the \gls{liftingAt} causes a cyclic height relation iff it satisfies the formula
$$(((x_{j(k)} \leftrightarrow s(k)) \vee (x_{j(k)} \leftrightarrow s(k)) \vee (x_{j(k)} \leftrightarrow s(k))) \wedge$$
$$((x_{j(k)} \leftrightarrow \neg s(k)) \vee (x_{j(k)} \leftrightarrow \neg s(k)) \vee (x_{j(k)} \leftrightarrow \neg s(k))))$$
where $x_0,...,x_{N-1}$ are our variables.

This formula is the conjunction of two ``mirror" 3-clauses (see Definition~\ref{Sym3Sat}). Each triple value provides two such clauses and a \gls{liftingAt} succeeds iff it satisfies the conjunction of all the clauses, which is a ``symmetric" 3-sat formula. We call it the lifting formula of the surface. It follows that one can see if a surface is liftable by checking if it has a non-trivial arc, compiling the lifting formulas of the surface and checking if the formula is satisfiable using any of the known 3-sat algorithms. In Chapters~\ref{SecNP} and \ref{SecTech} we will show that the first two steps take polynomial time, and so the complexity of the lifting algorithm is determined by that of the 3-sat algorithm (which is exponential) and use similar techniques to prove that the lifting problem is NP.

In order to prove that lifting problem is NP-complete we ``reverse" this process. Instead of taking a surface and producing a formula, we take a formula and produce a matching surface. We begin by proving that the ``symmetric 3-sat problem", a variant of the 3-sat problem that focus on symmetric 3-sat formulas, is NP-complete (see Theorem \ref{SymThm}). We then reduce the symmetric 3-sat problem into the lifting problem in polynomial time using a polynomial time algorithm that receives a \gls{symmetric} and produces a \gls{genericsurface} in $D^3$ (or any given 3-manifold) such that the formula is solvable iff the surface is liftable. This is done in Chapters \ref{SecMain}, \ref{SecGraph} and \ref{SecNPHrad}.

The last chapter is dedicated to a different, though related, result. In \cite{Li1}, Li showed that the double arcs of an oriented \gls{genericsurface} in an oriented 3-manifold form an enhanced graph structure which he called an ``Arrowed Daisy Graph", or ADGs. This relates to lifting because the aformentioned algorithm use a generalization of Li's ADGs which we call \gls{DADG}s (the ``D" stands for ``Digital").

In \cite{Li1}, Li left open the question ``Which ADGs can be realized by a generic immersion in $S^3$". In Chapter \ref{BHrep}, we answer a generalized version - given a 3-manifold $M$, which \gls{DADG}s can be realized by \gls{genericsurface}s in $M$.
\end{abstract}

\thispagestyle{plain}
\pagenumbering{roman}

\setcounter{page}{4}
\addcontentsline{toc}{chapter}{List of Figures}
\listoffigures

\newpage

\pagenumbering{arabic}
\chapter{Introduction}

The subject of our study is knotted surfaces, perhaps with a boundary. A knotted surface in $M \times \R$, where $M$ is a 3-manifold, is a proper embedding $k:F \to M \times \R$ of some surface $F$. In this thesis, the manifolds are PL and the embeddings are PL and locally flat. Let $\pi:M \times \R \to M$ be the projection into the first 3 dimensions. We often depict a knotted surface by drawing the projection $i=\pi \circ k$.

It is possible to perturb $k$ so that $i$ is a ``\gls{genericsurface}". This means that the intersection set $X(i)=cl\{p \in M| \#i^{-1}(p)>1\}$ consists of the ``double arcs" - lines where two sheets of the surface intersect transversely, and several types of isolated values, namely: ``triple values", which are the transverse intersection of three surface sheets at a point; ``double boundary values" (DB values for short), the transverse intersection of two surface sheets and the boundary of $M$; and ``branch values", which are cones over the figure 8 in $S^2$.

A \gls{genericsurface} that is equal to the projection of some knotted surface in 4-space is called ``liftable". There are many examples of \gls{genericsurface}s that are not liftable. Informally, the ``lifting problem" is the problem of deciding if a given \gls{genericsurface} is liftable or not. We study the algorithmic complexity of this problem. Several authors have given necessary and sufficient conditions for a \gls{genericsurface} to be ``liftable". For instance:

1) In \cite{Car&Sai1}, Carter and Saito showed that a \gls{genericsurface} is liftable iff there is an orientation on the double arcs that upholds certain properties. See also \cite{Car&SaiB1}.

2) They also showed that a \gls{genericsurface} without branch values is liftable iff the preimages of its double arcs (which are loops in $F$) can be colored a certain way.


3) In \cite{Sat1}, Satoh showed how to encode some of the topology of a neighborhood of the intersection set by presenting this set as a form of an enriched graph, with values at the various vertices and edges. He then encoded a \gls{lifting} as additional enrichment values on the graph, and showed that a surface is liftable iff there is a consistent way to add this second layer of enrichment.

4) In \cite{Gil1}, Giller (who pioneered the study of knotted surfaces) showed that a \gls{genericsurface} is liftable iff there exists a solution of a set of ``linear inequations" that is created from the surface.

Most of these conditions required the \gls{genericsurface} $i:F \to M$ to uphold several constrictions, such as $i$ being an immersion, $F$ being orientable or $M=\R^3$.

An in-depth look at conditions (1)-(3) suggests that finding out if a surface is liftable or not should take exponential time, but no study of the computational aspects of this problem has been preformed so far. In this thesis we will prove that the lifting problem of \gls{genericsurface}s is NP-complete. We will also describe an efficient algorithm that checks if a \gls{genericsurface} is liftable. The algorithm is in exponential time, but with a small exponentiation base.

Our technique involves matching each \gls{genericsurface} with a 3-sat formula, and prove that the surface is liftable iff the formula is satisfiable. For a reader who is new to computational logic, a 3-sat formula with $n$ (boolean) variables $x_1,...,x_n$ is a formula of the form $\bigwedge_{k=1}^m(y_{k,1} \vee y_{k,2} \vee y_{k,3})$ where each $y_{k,l}$ is either one of our variables $x_j$ or its negative $\neg x_j$. The 3-sat problem - ``is a 3-sat formula satisfiable?" - is one of the 20 problems proven by Karp to be NP-complete in his 1972 paper (\cite{Kar1}). 

The paper is organized as follows:

Chapter \ref{Sec3Sat} will revolve around 3-sat formulas. We will first give the necessary background about 3-sat formulas. We will also review what is currently known about the complexity of efficient 3-sat algorithms. We will then define a new variant of the 3-sat problem - the ``symmetric 3-sat problem", and prove that it is NP-complete. This is done so we can later reduce the symmetric 3-sat problem to the lifting problem in polynomial time, thus proving that the latter is NP-hard.

In chapter \ref{SecGen}, we will formally define \gls{genericsurface}s, explain in detail what it means to lift a \gls{genericsurface}, and show that there are only two kinds of obstructions that may prevent a surface from being liftable.

In chapter \ref{SecNP}, we will explain the key parts of our ``lifting algorithm". The algorithm has three steps. The first one involves checking if the surface encounters the first of the two aforementioned obstructions. If it does not, the second step is to produce a 3-sat formula, called the lifting formula of the surface. After we define the lifting formula of a surface, we will prove that this formula is satisfiable iff the surface is liftable. The third step is to use any known 3-sat algorithm to check if the lifting formula is satisfiable.

The first two steps of the algorithm take polynomial time, which implies that the complexity of the lifting algorithm is determined by that of the 3-sat algorithm. Towards the end of chapter \ref{SecNP}, we will explain the connection between the two complexities. We will also explain why the lifting problem is NP.

Chapter \ref{SecNP} is not self-contained. A formal examination of the lifting algorithm involves a lot of technical parts. These include explaining how to encode a \gls{genericsurface} as a data type that a computer can use, and how to verify that the input is a valid \gls{genericsurface}. Additionally, while the first two steps of the algorithm are simple to perform manually for a small surface, explaining how a computer does them and proving that it takes polynomial time is another technical ordeal. The same is true for the full proof that the lifting problem is NP. In order to preserve the flow of the thesis, we moved these technical parts from chapter \ref{SecNP} to their own dedicated chapter \ref{SecTech}.

A reader who wishes to skip the technical parts should be aware that there are two small parts of chapter \ref{SecTech} that are referred to in the later sections of the thesis - the formal definition of a \gls{genericsurface}, and the short section~\ref{TGsec} that revolves around graph homeomorphisms. 

In the short chapter \ref{SecMain} we explain our strategy for proving that the lifting problem is NP-hard, and formulate the main theorem (Theorem~\ref{Thm3}). In general terms, our strategy involves reducing the symmetric 3-sat problem into the lifting problem in polynomial time. This means devising a polynomial time algorithm that receives a \gls{symmetric} and produces a \gls{genericsurface} such that the surface is liftable iff the formula is satisfiable. To guarantee this, we will ensure that the lifting formula of the surface will be equivalent to the given formula.

The lifting formula of a \gls{genericsurface} is determined by the topology of the intersection graph of the surface and its close neighborhood. In \cite{Li1} and \cite{Sat1}, Li and Satoh (respectively) defined enriched graph structures on the intersection graph that encode the topology of its neighborhood. In chapter \ref{SecGraph}, we will use a structure very similar to Li's ``arrowed daisy graphs", which we will call ``digital arrowed daisy graph" (\gls{DADG}), to encode this information in a more general setting. Unlike Li and Satoh, we define \gls{DADG}s as a formal data type that can be used by a computer, so that we can use them in algorithms.

We will prove that the \gls{DADG} structure of an orientable \gls{genericsurface} determines its lifting formula, and show how to deduce the formula from the \gls{DADG}. We will use this to give an alternative definition for the lifting formula that relies on the \gls{DADG} alone, without involving the surface. We will call this the ``graph lifting formula" of the \gls{DADG}.

In chapter \ref{SecNPHrad}, we will devise the algorithm referred to in the main theorem. This algorithm has two distinct steps. Firstly, the algorithm produces a \gls{DADG} whose graph lifting formula is equivalent to the given formula. It will then produce an orientable \gls{genericsurface} whose \gls{DADG} is equal to the \gls{DADG} produced in the previous step. We will prove that both these steps take polynomial time.

Not every \gls{DADG} can be realized with a \gls{genericsurface}. Furthermore, the surface-producing algorithm may not work even for a \gls{DADG} that can be realized. It only works on a special kind of \gls{DADG}, which we refer to as a ``\gls{Height1}" \gls{DADG}. Chapter \ref{SecNPHrad} also contains the definition of \gls{Height1} \gls{DADG}s, and a proof that the \gls{DADG}s produced by the algorithm are all \gls{Height1}.

In the final chapter, \ref{BHrep}, we will answer the question ``which \gls{DADG}s are realizable via an oriented \gls{genericsurface} in a given orientable 3-manifold $M$". This is a generalization of an open question posted by Li in \cite{Li1}. The answer depends solely on the first homology group $H_1(M;Z)$, and whether $M$ has a boundary. We note that the result of chapter \ref{BHrep} have been submitted for publication as the article \cite{BH1}.

\chapter{3-sat Formulas} \label{Sec3Sat}

In order to prove that the lifting problem is NP-complete, we will reduce it to a variant of the 3-sat problem which we call ``the symmetric 3-sat problem" - the problem of determining if a \gls{symmetric} is solvable.

In this chapter, we will provide the background about 3-sat formulas and the 3-sat problem required for this work, which includes emphasizing some nuances that others usually ignore, but are relevant here. We will then rigorously define symmetric 3-sat formulas, and the symmetric 3-sat problem and prove that the aforementioned problem is NP-complete.

\section{Background}

In this section, we will provide the background, and explain some nuances about 3-sat formulas required for this thesis.

\begin{defn} \label{3Sat}
In the framework of propositional calculus with variables $x_0,x_1...$:

1) A literal is either just a variable $x_i$, in which case it is called a positive literal, or the negation of a variable $\neg x_i$, in which case it is called a negative literal.

2) A 3-clause is the disjunction of 3 literals. For instance, $x_3 \vee x_4 \vee \neg x_7$ and $x_1 \vee \neg x_2 \vee \neg x_1$ are 3-clauses. 

3) A 3-sat formula $F$ is the conjunction of some number $K$ of 3-clauses. \\$F=\bigwedge_{k=0}^{K-1}F_k$ where each $F_k$ is a 3-clause. We denote the literals in $F_k$ $y_{k,1},y_{k,2},y_{k,3}$ so $F_k = y_{k,1} \vee y_{k,2} \vee y_{k,3}$. It follows that $F=\bigwedge_{k=0}^{K-1}(y_{k,1} \vee y_{k,2} \vee y_{k,3})$. The number $K$ of clauses is called the ``length" of $F$.

4) Clearly, each literal $y_{k,l}$ in a 3-sat formula $F$ has either the form $x_{j(k,l)}=x_{j(k,l)} \leftrightarrow 1$ or $\neg x_{j(k,l)}=x_{j(k,l)} \leftrightarrow 0$ where $j(k,l)$ is the index of the variable that appears in this literal. Note that the indexes $k$ and $l$ have a different purpose than $j=j(k,l)$. $k$ and $l$ indicate the position of the literal in the formula - it is the $l$th literal in the $k$th clause. $j(k,l)$ tells us which variable among $x_0,x_1,...$ appears in this literal.

We call $j(k,l)$ the ``index" of the $k,l$ literal. We refer to the collection of all $j(k,l)$'s as the ``index function" of the formula, since one can think of it as a function that associates each $k,l$ with the index $j(k,l)$ of the appropriate variable.

5) In general, the literal $a(k,l)$ has the form $x_{j(k,l)} \leftrightarrow s(k,l)$ where the parameter $s(k,l)$ is equal to 0 or 1 in correspondence to whether the literal is negative or positive. We call $s(k,l)$ the parameter of the $k,l$ literal.
\end{defn}

\begin{rem}
1) We will usually forgo naming the literals of a 3-sat formula and will not use the notation $y_{k,l}$. Instead, we will define the formula using its index function and parameters - $F=\bigwedge_{k=0}^{K-1}((x_{j(k,1)} \leftrightarrow s(k,1)) \vee (x_{j(k,2)} \leftrightarrow s(k,2)) \vee (x_{j(k,3)} \leftrightarrow s(k,3)))$. If we want to refer to the $k,l$'th literal we will simply write $x_{j(k,l)} \leftrightarrow s(k,l)$.

2) The number $K$ of clauses really indicates the length of the formula. The total numbers of variables, logical connectives and brackets in the formula are all $\Theta(K)$. Additionally, the number of variables used in the formula, $N$, is bounded from above by $3K$.

3) One can similarly define an $r$-clause to be the disjunction of $r$ literals, and an
$r$-sat formula to be the conjunctions of $r$-clauses.
\end{rem}

When different authors define a 3-sat formula / the 3-sat problem, they may use a stricter definition than the above. They may require every clause to have distinct literals - that the same literal will not appear twice in the same clause. They may also require 3-sat formulas to have distinct clauses - that the same clause will not appear more than once in the formula. In this work,it will be useful to carefully distinguish between different variants of the 3-sat problem. We achieve this by employing the following, \textbf{non-standard} notation:

\begin{defn} \label{reduced}
1) We give the set of all the potential literals of a formula the following strong linear order $x_0 \prec \neg x_0 \prec x_1 \prec \neg x_1 \prec x_2 \prec ...$. In other words, $x_i \leftrightarrow s \prec x_j \leftrightarrow t$ iff $i<j$ or $i=j$, $s=1$ and $t=0$. It has a matching weak linear order $\preceq$.

2) We say that a 3-sat formula is called ``reduced" if:

a) The literals in every clause are ordered according to $\preceq$. This means that for every $k=0,...,K-1$, the following order $x_{j(k,1)} \leftrightarrow s(k,1) \preceq x_{j(k,2)} \leftrightarrow s(k,2) \preceq x_{j(k,3)} \leftrightarrow s(k,3)$.

b) The clauses themselves are ordered according to the lexicographic order (on 3-tuples) induced by $\preceq$. This means that for every $k=0,...,K-2$, either $x_{j(k,1)} \leftrightarrow s(k,1) \prec x_{j(k+1,1)} \leftrightarrow s(k+1,1)$ or $j(k,1)=j(k+1,1)$, $s(k,1)=s(k+1,1)$ and $x_{j(k,2)} \leftrightarrow s(k,2) \prec x_{j(k+1,2)} \leftrightarrow s(k+1,2)$ or $j(k,1)=j(k+1,1)$, $s(k,1)=s(k+1,1)$, $j(k,2)=j(k+1,2)$, $s(k,2)=s(k+1,2)$ and $x_{j(k,3)} \leftrightarrow s(k,3) \preceq x_{j(k+1,3)} \leftrightarrow s(k+1,3)$.

c) The clauses are distinct - no two clauses in the formula can be equal (have the exact same literals).

3) A clause of a reduced 3-sat formula may include the same variable more than once. For example, clauses like $x_1 \vee x_3 \vee \neg x_3$, $x_5 \vee x_5 \vee x_5$ or $x_1 \vee x_1 \vee x_2$ can occur in a reduced 3-sat formula. We say that 3-sat formula is called ``proper" if it is reduced and the 3 variables in every clause are distinct.
\end{defn}

Conceptually, the ``3-sat problem" is the question: ``Given a 3-sat formula, is it satisfiable?". A 3-sat solving algorithm receives a 3-sat formula as an input, and returns ``Yes" if the formula is satisfiable and ``No" otherwise. The 3-sat problem has several variants depending on one's definition of a 3-sat formula. To be precise, we will add the distinction between these variants. For instance, the ``reduced 3-sat problem" is the problem of determining whether a reduced 3-sat formula is satisfiable. The only difference between it and the general 3-sat problem is that an algorithm that solves the reduced 3-sat problem has a smaller set of potential inputs - its input must be a reduced 3-sat formula. A priori, the reduced 3-sat problem could be computationally simpler than the general one - there might be a fast algorithm that checks if a reduced 3-sat formula is satisfiable, but does not work for general 3-sat problems.

Actually, all variants of the 3-sat problems we defined so far are considered to have the same complexity. In particular, they are all known to be NP-complete. An NP-complete problem is a problem that is both NP and NP-hard.

A problem is said to be NP if there is a polynomial-time algorithm that receives an input (in our case, a 3-sat formula $F$ of the given variant) and a ``possible solution of the problem", known as a certificate (in our case, a choice of binary value for each of the variables $x_0,...,x_{N-1}$, represented by a vector $x=(x_0,...,x_{N-1})$) and returns ``yes" if the certificates solves the problem (in our case, if the chosen values satisfy the formula) and ``no" otherwise.

Such an algorithm obviously exists for the 3-sat problem. The algorithm simply places the values in the formula and performs the extrapolation. Since the length of the formula is $\Theta(K)$, this takes linear $O(K)$ time, and is in particular polynomial-time. Notice that the size of the certificate is also $N=O(K)$. It follows that all variants of the 3-sat problem are NP.

NP-hardness is a more difficult matter. A decision problem $A$ is said to be NP-hard if any NP problem $B$ can be reduced to $A$ in polynomial time. This means that there is a polynomial time algorithm that receives a possible input $p$ of $A$, and returns a possible input $q$ of $B$, such that ``$A$ should return yes to $p$" iff ``$B$ should return yes to $q$". The usual method one uses to prove that problem $B$ is NP hard is to take another problem $C$ that is already known to be NP-hard, and show that $C$ can be reduced to $B$ in polynomial time. It would imply that every NP problem $A$ can be reduced to $C$ and then to $B$ in polynomial time. A detailed proof as to why this works was given in \cite{Kar1}, where Richard Karp proved that 20 known computational problems are NP-complete. Among these was the 3-sat problem (his proof works for all the variants of the problems that we described thus far).

Before we finish this chapter, we would like to discuss the complexity of the 3-sat problem in explicit terms - how fast is an efficient 3-sat solving algorithm. 

\begin{rem} \label{reduce}
1) If one attempts to solve the general 3-sat problem, then the first step of the algorithm should be to ``reduce" the given formula - ordering the literals in each clause, ordering the clauses, and deleting repeated occurrences of the same clause. These tasks take $O(K)$ time, $O(K \cdot \log(K))$ time, and $O(K)$ time, respectively. The algorithm then proceeds by solving the reduced formula, which is equivalent to the original formula.

The reduced formula has at most $8N^3$ clauses - $2N$ possible different literals to the power of 3. $8N^3$ is not a tight bound. One should really only consider clauses with ordered literals, and there are ways to reduce this number farther - for instance, a clause that contains the literals $x_j$ and $\neg x_j$ is a tautology and can be removed from the formula. However, the amount of possible clauses cannot be reduced below $O(N^3)$, even if one restricts the input to only allow \gls{proper}s - the most limiting case.

2) The next step will be an algorithm that solves the reduced 3-sat problem. Since in this case $\frac{1}{3}N \leq K \leq 8N^3$, it is common to use $N$, instead of $K$, as a measure to the size of the formula, and the problem remains NP-complete with regards to $N$ as the size parameter.

The fastest known algorithms to solve the 3-sat problem have an exponential run-time $O(c^N)$. Finding a faster algorithm, if one exists, will be a major achievement in the theory of computation field (it will contradict the exponential time hypothesis). The efficiency of a 3-sat solving algorithm is thus determined by the exponential base $c$. At the moment, it seems that new algorithms with smaller $c$s are being discovered yearly. The fastest that we are aware of was given by Kutzkov and Schederin in \cite{Kut&Sch1}, for which $c \approx 1.439$.

There are also algorithms that are designed to have a faster average run-time or expected run-time in return for a slower worst-case run-time. This means that, depending on the 3-sat formula given as input, the algorithm will usually run faster but may be slow for some small percentage of the possible inputs. These algorithms still have an exponential expected run-time $O(c^N)$, but the exponential base $c$ is smaller than even the most efficient worst-case run-times discovered so far. For instance, in \cite{Hom&Sch&Schu&Wat1}, Hofmeister, Sch\"{o}ning, Schuler and Watanabe devised a 3-sat solving algorithm for which the exponential base $c$ of the expected run-time is $c \approx 1.3302$.  
\end{rem}

In the next section we will define a new variant of the 3-sat problem, the ``symmetric 3-sat problem", and prove that it too is NP-hard.

\section{Symmetric 3-sat formulas} \label{Sym3Sec}
We define a new variant of the 3-sat formula:

\begin{defn} \label{Sym3Sat}
1) Given a literal $x_j \leftrightarrow s$, the ``mirror literal" is $\neg (x_j \leftrightarrow s) = \neg x_j \leftrightarrow s = x_j \leftrightarrow \neg s$. It has the same variable but with the opposite parameter.

2) Given a 3-clause $\bigvee_{i=1}^3 (x_j(i) \leftrightarrow s_i)$, its ``mirror 3-clause" is the 3-clause that uses the mirror literals - $\bigvee_{i=1}^3 (\neg x_j(i) \leftrightarrow s_i)=\bigvee_{i=1}^3 \neg (x_j(i) \leftrightarrow s_i)=\bigvee_{i=1}^3 (x_j(i) \leftrightarrow \neg s_i)$. For instance, the mirror clause of $x_3 \vee x_5 \vee \neg x_8$ is $\neg x_3 \vee \neg x_5 \vee x_8$. The mirror of the \gls{mirror} is clearly the original clause. Therefore, the set of 3-clauses divides into pairs of mirror clauses. One can similarly define ``mirror $r$-clauses" for any $r \in \N$.

3) A 3-sat formula is symmetric if for every clause in the formula, the \gls{mirror} is also in the formula, and it appears in the formula the same number of times as the given clause. For instance, the formula $(x_1 \vee x_3 \vee x_4) \wedge (\neg x_1 \vee \neg x_3 \vee \neg x_4)$ is symmetric, but the formula $(x_1 \vee x_2 \vee x_5) \wedge (\neg x_1 \vee \neg x_3 \vee \neg x_5) \wedge (\neg x_1 \vee x_3 \vee \neg x_4)$ is not.
\end{defn}

\begin{rem}
If the 3 variables of a 3-clause are all different, then the same will hold for its \gls{mirror}. In addition, if the variables are arranged in increasing order (for instance $x_1 \vee x_3 \vee x_4$ as opposed to $x_3 \vee x_4 \vee x_1$), then the same will hold for the \gls{mirror}. These are the precise requirements that a clause must uphold in order to appear in a \gls{proper}.

This leads us to the \gls{sym&pro}. Since \gls{proper}s may only have one copy of the same clause, a \gls{proper} is symmetric iff, for each clause of the formula, the formula also contains its \gls{mirror}.
\end{rem}

We study a new variant of the 3-sat formula - the ``proper symmetric 3-sat problem". This is the decision problem: ``Given a symmetric and proper 3-sat problem, is it satisfiable?". The ``symmetric 3-sat problem" is similarly defined. These problems are NP as are all variants of the 3-sat problem. The remainder of this chapter is dedicated to proving that:

\begin{thm} \label{SymThm}
The proper symmetric $3$-sat problem is NP-hard (and thus NP-complete). 
\end{thm}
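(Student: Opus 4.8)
The plan is to reduce the ordinary 3-sat problem (or rather the reduced or proper 3-sat problem, already known to be NP-hard by Karp) to the proper symmetric 3-sat problem in polynomial time. The naive idea — given a formula $F$, form $F \wedge \overline{F}$ where $\overline{F}$ is the conjunction of all the mirror clauses of $F$ — does not work, because $\overline{F}$ is logically independent of $F$ and the conjunction is satisfiable only in very restricted circumstances; in fact adding the mirror of every clause typically destroys satisfiability. So the reduction has to be cleverer: we must build a \emph{new} symmetric formula $G$, on a larger variable set, that is satisfiable if and only if $F$ is.

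The key device I would use is a fresh ``polarity'' or ``flip'' variable $z$ (or a small family of them) adjoined to the variables $x_0,\dots,x_{N-1}$ of $F$. The intuition is that in a symmetric formula, any satisfying assignment, when all bits are flipped, should remain relevant to the structure; we want the extra variable $z$ to record ``which side'' of the mirror symmetry we are on, so that one of the two mirror clauses in each pair is forced to be satisfied trivially by the $z$-literal while the other carries the real content of a clause of $F$. Concretely, for each clause $C = \ell_1 \vee \ell_2 \vee \ell_3$ of $F$ I would introduce a gadget: replace $C$ by a clause involving $z$ together with (encodings of) the $\ell_i$, in such a way that when $z$ takes one value the gadget reduces to $C$ and when $z$ takes the other value the gadget and its mirror are both automatically true. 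One then checks that the whole collection of gadget clauses together with their mirrors is (a) symmetric by construction, (b) can be made proper after the standard reduction step (ordering literals within clauses, ordering and de-duplicating clauses, and splitting any clause with a repeated variable using auxiliary variables — all polynomial by Remark~\ref{reduce}), and (c) satisfiable iff $F$ is: the ``if'' direction fixes $z$ to the live value and extends a satisfying assignment of $F$; the ``only if'' direction argues that whatever value $z$ takes, one half of each mirror pair forces the corresponding clause of $F$ (or of the bit-flipped $F$, which is equivalent to $F$ up to renaming) to hold.

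The steps, in order, would be: (1) recall that the proper 3-sat problem is NP-hard; (2) describe the gadget that turns a single 3-clause into a symmetric bundle of clauses on the enlarged variable set, and verify the local correctness of the gadget by a short truth-table argument; (3) assemble $G$ as the conjunction of all gadget bundles, observe symmetry is preserved under conjunction, and run the reduction-to-proper procedure, tracking that repeated variables inside a gadget clause are eliminated without breaking symmetry (this requires the auxiliary-variable splitting to be done in mirror-symmetric pairs); (4) bound the running time and the size blow-up — each clause of $F$ produces $O(1)$ clauses and $O(1)$ new variables, so $G$ has length $O(K)$ and the reduction is polynomial; (5) prove the biconditional $F \text{ satisfiable} \iff G \text{ satisfiable}$ by the two directions sketched above; (6) conclude NP-hardness, and combine with the already-noted fact that the problem is NP to get NP-completeness.

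The main obstacle I expect is designing the gadget so that \emph{both} properties hold simultaneously: the formula must be genuinely symmetric (every mirror clause present, with matching multiplicity) \emph{and} the mirror clauses must not introduce spurious constraints that kill satisfiability when $F$ is satisfiable. Balancing these is delicate because symmetry is a strong syntactic constraint — essentially doubling the clause set — so the ``extra'' mirror clauses have to be rendered harmless by the polarity variable in all cases. A secondary technical nuisance is the interaction between the mirror-symmetry requirement and the properness requirement: the standard trick for removing a repeated variable from a clause (introducing a new variable and a couple of linking clauses) must itself be carried out in a way that respects the mirror pairing, so that applying it to a clause and to its mirror yields mirror-related results; I would handle this by always processing mirror pairs together and using the same fresh auxiliary variable for both, with the linking clauses chosen to be a mirror-closed set.
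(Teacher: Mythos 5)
Your proposal follows essentially the same route as the paper: the paper's reduction forms $F_{\vee} \equiv (z \vee F) \wedge (\neg z \vee F_{\neg})$ with a single polarity variable $z$, then splits each resulting mirror pair of 4-clauses into four 3-clauses using one fresh auxiliary variable $y_k$ per pair (Lemma~\ref{Symmed3Sat}), which is exactly the mirror-respecting auxiliary-variable splitting you describe. The only point to make explicit is that this splitting is needed because $z \vee \ell_1 \vee \ell_2 \vee \ell_3$ is a 4-clause rather than because of repeated variables; otherwise your plan matches the paper's proof.
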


In order to prove this, we will reduce the usual ``proper 3-sat problem", which is known to be NP-hard, to the ``proper symmetric 3-sat problem" in polynomial time. This means that, given a \gls{proper} $F$, we will produce a \gls{sym&pro} $F_{sym}$, such that $F_{sym}$ is satisfiable iff $F$ is satisfiable. This will clearly also prove that:

\begin{res} \label{SymRes}
The symmetric $3$-sat problem is NP-hard (and thus NP-complete). 
\end{res}

The reason all this is done, as will be seen later on, is that symmetric 3-sat formulas arise naturally in the context of \gls{lifting}s of \gls{genericsurface}s. In particular, Result~\ref{SymRes} is used to prove that the lifting problem is NP-hard.

Defining the said formula $F_{sym}$, and proving that it is satisfiable iff $F$ is satisfiable, requires several definitions and computations in propositional calculus. In order to start, we will need to use the equivalence provided below. This equivalence can probably be found in some textbooks, but it is not as elementary or commonly known as, for instance, DeMorgan's law, so we will prove it here:

\begin{equation} \label{Equiv1}
(x\wedge y) \vee (\neg x \wedge z) \equiv (x\vee z) \wedge (\neg x \vee y)
\end{equation}.

\begin{proof}
First of all, it is clear that

$$(x\vee z) \wedge (\neg x \vee y) \to (x\vee z) \vee (\neg x \vee y) \equiv (\neg x \vee x) \vee (y \vee z) \equiv y \vee z.$$

Using this and $(\neg x \vee x) \equiv 1$, deduce that:

$$(\neg x\vee y) \wedge (x\vee z) \equiv (y\vee \neg x) \wedge (x\vee z) \wedge (y \vee z) \wedge (x \vee \neg x).$$

Distributivity shows that the latter is equivalent to $(x`\wedge y) \vee (\neg x \wedge z)$.
\end{proof}

The first step in producing a \gls{symmetric} from an arbitrary 3-sat formula $F$, is to ``symmetrize" it as per the following definition:

\begin{defn} \label{Symmed}
1) Given a formula $F$ in $n$ variables $x_0,...,x_{n-1}$, we define the \textbf{mirror} formula $F_{\neg}$ to be $F_{\neg}(x_0,...,x_{n-1}) \equiv F(\neg x_0,...\neg x_{n-1})$. $F_{\neg}$ uses the same variables as $F$.

2) Given a formula $F$ in $n$ variables $x_0,...,x_{n-1}$, we define the \textbf{Symmetrized} formula $F_{\vee}$ to be $F_{\vee} \equiv (z \vee F) \wedge (\neg z \vee F_{\neg})$. $F_{\vee}$ uses the same variables as $F$ plus a new variable $z$.
%
\end{defn}

The equivalence (\ref{Equiv1}) implies that:

\begin{res}
Given a formula $F$, $F_{\vee}$ is equivalent to $(\neg z \wedge F) \vee (z \wedge F_{\neg})$.
\end{res}

Each of the following properties is trivial, or follows immediately from the earlier properties.

\begin{rem} \label{SymmedSolve}
1) A certificate $x=(x_0,...,x_{n-1})$ satisfies a formula $F$ iff $\neg x=(\neg x_0,...,\neg x_{n-1})$ satisfies the mirror formula $F_{\neg}$ iff $x'=(x_0,...,x_{n-1},0)$ satisfies that symmetrized formula $F_{\vee}$ iff $\neg x'=(\neg x_0,...,\neg x_{n-1},1)$ satisfies that symmetrized formula $F_{\vee}$.

2) In particular, $F$ is satisfiable iff $F_{\neg}$ is satisfiable iff $F_{\vee}$ is satisfiable.

%
3) The ``mirror" functional commutes with the elementary logical connections. Formally, if $F_1,F_2,...,F_5$ are formulas and $F_3 \equiv \neg F_1$, $F_4 \equiv F_1 \vee F_2$, and $F_5 \equiv F_1 \wedge F_2$, then $F_{3\neg} \equiv \neg F_{1\neg}$, $F_{4\neg} \equiv F_{1\neg} \vee F_{2\neg}$, and $F_{5\neg} \equiv F_{1\neg} \wedge F_{2\neg}$.

4) In particular, if a formula $F$ does not use the variable $z$, and $H \equiv z \vee F$, then $H_{\neg} \equiv \neg z \vee F_{\neg}$ and $F_{\vee} \equiv H \wedge H_{\neg}$.

5) Also, if $F_0,...,F_{K_1}$ are formulas, then, by induction, the mirror formula of $(\bigwedge_{k=0}^{K-1}F_k)$ is $(\bigwedge_{k=0}^{K-1}F_{k\neg})$.

6) The mirror formula of a clause $\bigvee (x_i \leftrightarrow s_i)$ is the \gls{mirror} from Definition~\ref{Sym3Sat}(2). $\bigvee (\neg x_i \leftrightarrow s_i) \equiv \bigvee \neg (x_i \leftrightarrow s_i) \equiv \bigvee (x_i \leftrightarrow \neg s_i)$.
\end{rem}

Let $F \equiv \bigwedge_{k=0}^{K-1} F_k$ be a 3-sat formula where the $F_k$'s are 3-clauses. Let $H \equiv z \vee F \equiv  \bigwedge_{k=0}^{K-1}(z \vee F_k)$, then Remark~\ref{SymmedSolve}(4) implies that:
$$F_{\vee} \equiv H \wedge H_{\neg} \equiv \bigwedge_{k=0}^{K-1}(z \vee F_k) \wedge \bigwedge_{k=0}^{K-1}(z \vee F_k)_{\neg} \equiv \bigwedge_{k=0}^{K-1}((z \vee F_k) \wedge (z \vee F_k)_{\neg}).$$

Unfortunately, this is not a \gls{symmetric}. It is a symmetric 4-sat formula. If each $F_k$ is a 3-clause, and has the form $(x_{j(k,1)} \leftrightarrow s(k,1)) \vee (x_{j(k,2)} \leftrightarrow s(k,2)) \vee (x_{j(k,3)} \leftrightarrow s(k,3))$, then $(z \vee F_k)$ is the 4-clause $(z \leftrightarrow 1) \vee (x_{j(k,1)} \leftrightarrow s(k,1)) \vee (x_{j(k,2)} \leftrightarrow s(k,2)) \vee (x_{j(k,3)} \leftrightarrow s(k,3))$, and $(z \vee F_k)_{\neg}$ is its \gls{mirror}, as per Remark~\ref{SymmedSolve}(5).

We now have a symmetric 4-sat formula, $F_{\vee}$, that is satisfiable iff $F$ is satisfiable, and we can deduce the solutions of $F$ from those of $F_{\vee}$ using Remark~\ref{SymmedSolve}(1). In order to modify it into a (symmetric) 3-sat formula, we use the following lemma:

\begin{lem} \label{Symmed3Sat}
Let $a_1,a_2,a_3,a_4$ and $b$ be variables and $A \equiv (a_1 \vee a_2 \vee a_3 \vee a_4)$. The formula $B \equiv (a_1 \vee a_2 \vee b) \wedge (\neg a_1 \vee \neg a_2 \vee \neg b) \wedge (\neg b \vee a_3 \vee a_4) \wedge (b \vee \neg a_3 \vee \neg a_4)$ is equivalent to $A \wedge A_{\neg}$. In other words, a valuation $(a_1,..,a_4)$ satisfies $A \wedge A_{\neg}$ iff either $(a_1,..,a_4,1)$ or $(a_1,..,a_4,0)$ satisfy $B$.
\end{lem}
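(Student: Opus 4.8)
The plan is to eliminate the auxiliary variable $b$ from $B$ by a case split, using the equivalence (\ref{Equiv1}) just established, and then to check that the resulting formula on $a_1,\dots,a_4$ coincides with $A \wedge A_{\neg}$.

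First I would group the four clauses of $B$ into the two pairs $(a_1 \vee a_2 \vee b) \wedge (\neg a_1 \vee \neg a_2 \vee \neg b)$ and $(\neg b \vee a_3 \vee a_4) \wedge (b \vee \neg a_3 \vee \neg a_4)$. Each pair has the shape $(x \vee z) \wedge (\neg x \vee y)$ of (\ref{Equiv1}): for the first take $x = b$, $z = a_1 \vee a_2$, $y = \neg a_1 \vee \neg a_2$, and for the second take $x = \neg b$, $z = a_3 \vee a_4$, $y = \neg a_3 \vee \neg a_4$. Applying (\ref{Equiv1}) to each pair rewrites $B$ as
\[
B \equiv \bigl[(b \wedge (\neg a_1 \vee \neg a_2)) \vee (\neg b \wedge (a_1 \vee a_2))\bigr] \wedge \bigl[(\neg b \wedge (\neg a_3 \vee \neg a_4)) \vee (b \wedge (a_3 \vee a_4))\bigr].
\]
Substituting $b = 1$ collapses $B$ to $(\neg a_1 \vee \neg a_2) \wedge (a_3 \vee a_4)$, and substituting $b = 0$ collapses $B$ to $(a_1 \vee a_2) \wedge (\neg a_3 \vee \neg a_4)$. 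Hence $(a_1,\dots,a_4,b)$ satisfies $B$ for some choice of $b$ if and only if $(a_1,\dots,a_4)$ satisfies
\[
C \;\equiv\; \bigl[(\neg a_1 \vee \neg a_2) \wedge (a_3 \vee a_4)\bigr] \;\vee\; \bigl[(a_1 \vee a_2) \wedge (\neg a_3 \vee \neg a_4)\bigr].
\]

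It remains to prove $C \equiv A \wedge A_{\neg}$, where $A_{\neg} \equiv \neg a_1 \vee \neg a_2 \vee \neg a_3 \vee \neg a_4$ by Definition~\ref{Symmed}(1). Writing $u \equiv a_1 \vee a_2$, $u' \equiv \neg a_1 \vee \neg a_2$, $v \equiv a_3 \vee a_4$, $v' \equiv \neg a_3 \vee \neg a_4$, we have $C \equiv (u' \wedge v) \vee (u \wedge v')$ while $A \wedge A_{\neg} \equiv (u \vee v) \wedge (u' \vee v')$. Distributing the latter gives the four terms $(u \wedge u') \vee (u \wedge v') \vee (v \wedge u') \vee (v \wedge v')$; the middle two are exactly $C$, so $C \to A \wedge A_{\neg}$. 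Conversely, since $v \vee v'$ is a tautology, $u \wedge u'$ implies $(u \wedge u' \wedge v) \vee (u \wedge u' \wedge v')$, which implies $(u' \wedge v) \vee (u \wedge v') = C$; symmetrically $v \wedge v'$ implies $C$ using that $u \vee u'$ is a tautology. Thus all four distribution terms imply $C$, so $A \wedge A_{\neg} \to C$, giving the equivalence. (Alternatively one may just evaluate $C$ and $A \wedge A_{\neg}$ on the nine cases determined by how many of $\{a_1,a_2\}$ and of $\{a_3,a_4\}$ are true.)

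I expect no genuine obstacle: this is a direct propositional computation. The only point requiring care is that the claimed equivalence of $B$ with $A \wedge A_{\neg}$ is an equivalence after projecting away the fresh variable $b$, i.e.\ $\exists b.\,B \equiv A \wedge A_{\neg}$, which is exactly why the statement is phrased via satisfying valuations rather than as a literal logical equivalence; the case split on $b$ above performs this projection. It is also worth recording, for the subsequent use of the lemma, that $B$ is itself a symmetric $3$-sat formula, its first and second clauses being mirrors of each other, as are its third and fourth.
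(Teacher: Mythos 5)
Your proof is correct and follows essentially the same route as the paper's: both eliminate $b$ via equivalence (\ref{Equiv1})/distributivity and reduce the claim to the identity $((a_1 \vee a_2) \wedge (\neg a_3 \vee \neg a_4)) \vee ((\neg a_1 \vee \neg a_2) \wedge (a_3 \vee a_4)) \equiv A \wedge A_{\neg}$. The only differences are bookkeeping --- the paper groups the clauses by the sign of $b$ so as to recognize $B$ as $C_{\vee}$ for $C \equiv (a_1 \vee a_2) \wedge (\neg a_3 \vee \neg a_4)$ and then invokes Remark~\ref{SymmedSolve}, whereas you group the mirror pairs and perform the case split on $b$ by hand --- and your verification of the final identity (which the paper attributes to bare distributivity) is in fact the more complete of the two.
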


\begin{proof}
Let $C \equiv (a_1 \vee a_2) \wedge (\neg a_3 \vee \neg a_4)$. Distributivity shows that $b \vee C \equiv (a_1 \vee a_2 \vee b) \wedge (b \vee \neg a_3 \vee \neg a_4)$, and a similar calculation for $\neg b \vee C_{\neg}$ shows that $B \equiv (b \vee C) \wedge (\neg b \vee C_{\neg}) \equiv C_{\vee}$.

Distributivity also shows that:
\begin{multline*}
A \wedge A_{\neg} \equiv (a_1 \vee a_2 \vee a_3 \vee a_4) \wedge (\neg a_1 \vee \neg a_2 \vee \neg a_3 \vee \neg a_4) \equiv \\
((a_1 \vee a_2) \wedge (\neg a_3 \vee \neg a_4)) \vee ((\neg a_1 \vee \neg a_2) \wedge (a_3 \vee a_4)) \equiv C \vee C_{\neg}.
\end{multline*}

According to Remark~\ref{SymmedSolve}(2), $B \equiv C_{\vee}$ is satisfiable iff $C$ is satisfiable iff $C_{\neg}$ is satisfiable iff either $C$ or $C_{\neg}$ is satisfiable iff $A \equiv C \vee C_{\neg}$ is satisfiable. Additionally, a valuation $(a_1,..,a_4)$ solves $A \equiv C \vee C_{\neg}$ iff it either solves $C$ or solves $C_{\neg}$. According to Remark~\ref{SymmedSolve}(1), the former case holds iff $(a_1,..,a_4,1)$ solves $B \equiv C_{\vee}$ and the latter case holds iff $(a_1,..,a_4,0)$ solves $B \equiv C_{\vee}$.
\end{proof}

Now, look back at the symmetrized formula $F_{\vee} \equiv \bigwedge_{k=0}^{K-1}((z \vee F_k) \wedge (z \vee F_k)_{\neg})$. A valuation $(x_0,...,x_{N-1},z)$ solves this formula iff it satisfies the expression $(z \vee F_k) \wedge (z \vee F_k)_{\neg}$ for all $k$.

For each $k$, $z \vee F_k \equiv z \vee (x_{j(k,1)} \leftrightarrow s(k,1)) \vee (x_{j(k,2)} \leftrightarrow s(k,2)) \vee (x_{j(k,3)} \leftrightarrow s(k,3))$ and $(z \vee F_k)_{\neg} \equiv \neg z \vee \neg (x_{j(k,1)} \leftrightarrow s(k,1)) \vee \neg (x_{j(k,2)} \leftrightarrow s(k,2)) \vee \neg (x_{j(k,3)} \leftrightarrow s(k,3))$.

Using Lemma~\ref{Symmed3Sat} with $a_1 \equiv z, a_2 \equiv (x_{j(k,1)} \leftrightarrow s(k,1)), a_3 \equiv (x_{j(k,2)} \leftrightarrow s(k,2))$ and $a_4 \equiv (x_{j(k,3)} \leftrightarrow s(k,3))$ shows that a valuation $(x_0,...,x_{N-1},z)$ satisfies $(z \vee F_k) \wedge (z \vee F_k)_{\neg}$ iff, for some new variable $y_k$, either the valuation $(x_0,...,x_{N-1},z,1)$ or $(x_0,...,x_{N-1},z,0)$ satisfies:
\begin{multline*}
B_k \equiv (z \vee (x_{j(k,1)} \leftrightarrow s(k,1)) \vee y_k) \wedge (\neg z \vee \neg (x_{j(k,1)} \leftrightarrow s(k,1)) \vee \neg y_k) \wedge \\ (\neg y_k \vee (x_{j(k,2)} \leftrightarrow s(k,2)) \vee (x_{j(k,3)} \leftrightarrow s(k,3))) \wedge  \\  (y_k \vee \neg (x_{j(k,2)} \leftrightarrow s(k,2)) \vee \neg (x_{j(k,3)} \leftrightarrow s(k,3))).
\end{multline*}

After some bracket-moving we see that:
\begin{multline} \label{BK}
B_k \equiv ((x_{j(k,1)} \leftrightarrow s(k,1)) \vee (z \leftrightarrow 1) \vee (y_k \leftrightarrow 1)) \wedge \\ ((x_{j(k,1)} \leftrightarrow \neg s(k,1)) \vee (z \leftrightarrow 0) \vee (y_k \leftrightarrow 0)) \wedge \\ ((x_{j(k,2)} \leftrightarrow s(k,2)) \vee (x_{j(k,3)} \leftrightarrow s(k,3)) \vee (y_k \leftrightarrow 0)) \wedge \\ ((x_{j(k,2)} \leftrightarrow \neg s(k,2)) \vee (x_{j(k,3)} \leftrightarrow \neg s(k,3)) \vee (y_k \leftrightarrow 1)))
\end{multline}
is a collection of four 3-clauses.

In general, a valuation $(x_0,...,x_{N-1},z)$ solves the symmetrized formula $F_{\vee} \equiv \bigwedge_{k=0}^{K-1}((z \vee F_k) \wedge (z \vee F_k)_{\neg})$ iff it can be extended into a bigger valuation $(x_0,...,x_{N-1},z,y_0,..,y_{K-1})$, that satisfies the 3-sat formula $F_{sym} \equiv \bigwedge_{k=0}^{K-1}(B_k)$ (it has $4K$ clauses). Recall that this happens iff either $x=(x_0,...,x_{N-1})$ or $\neg x=(\neg x_0,...,\neg x_{N-1})$ solves the original 3-sat formula $F$.

Using this, we can prove Theorem~\ref{SymThm}

\begin{proof}
Since the ``regular" proper 3-sat problem is NP-complete, the theorem can be proven by reducing it to the \gls{sym&pro}. This means providing a polynomial time algorithm that receives a \gls{proper} $F \equiv \bigwedge_{k=0}^{K-1}F_k$, where $F_k \equiv (x_{j(k,1)} \leftrightarrow s(k,1)) \vee (x_{j(k,2)} \leftrightarrow s(k,2)) \vee (x_{j(k,3)} \leftrightarrow s(k,3))$, and produces a \gls{sym&pro} $G$ such that $G$ is satisfiable iff $F$ is satisfiable. 

As per Remark~\ref{reduce}(2), the size of $F$ is indicated by either $K$ or $N$ (the number of variables), and ``polynomial time" can mean either $O(N^a)$ or $O(K^a)$ - these coincide for \gls{proper}s.

The intuitive candidate for $G$ is $F_{sym} \equiv F_{sym} \equiv \bigwedge_{k=0}^{K-1}(B_k)$. We have already shown that $F$ is satisfiable iff $F_{sym}$ is satisfiable, and that $F_{sym}$ is a \gls{symmetric} with $4K$ clauses, so writing it will take $O(K)$ - polynomial time. But is $F_{sym}$ a \gls{proper}?

It is true that each clause of $F_{sym}$ includes 3 distinct variables: For each $k$, $F_{sym}$ contains the 4 clauses seen in formula (\ref{BK}). Two of them, $x_{j(k,1)} \leftrightarrow s(k,1)) \vee (z \leftrightarrow 1) \vee (y_k \leftrightarrow 1)$, and $(x_{j(k,1)} \leftrightarrow \neg s(k,1)) \vee (z \leftrightarrow 0) \vee (y_k \leftrightarrow 0)$, have the clearly distinct variables $x_{j(k,1)}$, $z$ and $y_k$. The other two, $(x_{j(k,2)} \leftrightarrow s(k,2)) \vee (x_{j(k,3)} \leftrightarrow s(k,3)) \vee (y_k \leftrightarrow 0)$, and $(x_{j(k,2)} \leftrightarrow \neg s(k,2)) \vee (x_{j(k,3)} \leftrightarrow \neg s(k,3)) \vee (y_k \leftrightarrow 1)$ have the variables $x_{j(k,2)}$, $x_{j(k,3)}$ and $y_k$. The last one is clearly different from the first two, and these two are different since the original 3-sat formula $F$ is proper.

A \gls{proper} also needs to be reduced. This entails 3 requirements:
 
Firstly, the literals in every clause must be ordered. Since not all our variables have the form $x_0,x_1...$, we will need to specify an order for the variables - first the $x_j$s, then $z$ and then the $y_k$s, resulting in the following order on literals - $x_0 \prec \neg x_0 \prec x_1 \prec \neg x_1 \prec ...\prec z \prec \neg z \prec y_0 \prec \neg y_0 \prec y_1 \prec..$. The definition of $B_k$ (\ref{BK}), combined with the fact that for every $k$, $j(k,1)<j(k,2)<j(k,3)$ (since $F$ is proper) implies that each of the clauses in each $B_k$ is ordered.

Secondly, it is clear that the different clauses are all distinct. Clauses from different $B_k$s will be different, since they will include different $y_k$s and clauses from the same $B_k$ can be seen to be different.

The only remaining requirement is for the clauses to be ordered in lexicographic order. This may actually not hold, but we can just reorder the clauses of $F_{sym}$ - replace it with a new formula that has the exact same clauses, but in a different order - in $O(4K \log(4K))=O(K \log(K))$ time. The reordered formula will still be symmetric, as this property does not depend on the order of the clauses.

To summarize, the algorithm that receives $F$, writes $F_{sym}$, and then reorders its clauses, takes $O(K)+O(K \log(K))=O(K \log(K))$ time, which is less than $O(K^2)$, and it reduces the (NP-complete) proper 3-sat problem to the proper symmetric 3-sat problem. The theorem follows.
\end{proof}

\chapter{Generic Surfaces and Liftings} \label{SecGen}

In this chapter we will provide some background about the lifting problem. In the first section, we will explain what is a \gls{genericsurface}, how to lift a \gls{genericsurface} into a knotted surface, and how to draw \gls{genericsurface}s and \gls{lifting}s. In the second section, we will explain why some \gls{genericsurface}s have \gls{lifting}s and others do not. Particularly, we will demonstrate two obstructions that prevent a surface from being liftable, and prove that these are the only ``obstructions to liftability" - that a surface unhindered by these obstructions is indeed liftable.

\section{Preliminaries}

In order to draw a knot in 3-space, its projection needs to be drawn in 2-space. When two strands of the projected loop intersect, one indicates which of the two went above the other one before the projection. For this definition to work, one makes sure that the projection is ``generic" - that is to say, that no more than two strands of the projected loop intersect at the same point, and that this loop lacks any kind of singularity. In order to draw a knotted surface in 4-space, one should similarly draw its projection into 3-space and make sure that it is generic in the following sense:

\begin{defn} \label{Generic}
A proper map $i:F \to M$ from a compact surface $F$ to a 3-manifold $M$ is called a ``\gls{genericsurface}" in $M$ if each value $p \in M$ has a neighborhood $N(p)$ such that the pair $(N(p),N(p) \cap i(F))$ is homeomorphic to one of the following:

1) ($D^3$, the transverse intersection of 1, 2 or 3 of the coordinate planes) where $D^3$ is a ball in $\R^3$ centred at $0$. We refer to these respectively as regular, double and triple values.

2) ($D^3$, a cone over the figure 8) where the figure 8 curve is on the boundary of $D^3$. This is the image of the smooth ``Whitney's umbrella" function $(x,y)\mapsto(x,xy,y^2)$. We refer to such values as branch values. In literature, they are sometimes known as cross-caps, figure 8 cones, etc. Note that we mostly work with triangulated manifolds, and so the neighborhoods of branch values will actually be the images of PL approximations of Whitney's umbrella.

3) ($D^3_+$, the transverse intersection of the one or two of the $[xz]$ and $[yz]$ coordinate planes) where $D^3_+$ is the ``upper half" of the ball $D^3$ - the part where $z \geq 0$. We refer to these respectively as regular boundary values and double boundary values, or RB and DB values for short.
\end{defn}

The left images in Figure~\ref{fig:Figure 1} are illustrations of a double value, a triple value, a branch value and a DB value. Since two surface sheets intersect at a line, the double values of $i$ form long arcs, called ``double arcs". In Figure~\ref{fig:Figure 1}B (left), we see that three segments of double arc intersect at each triple value. These can be parts of the same arc or different arcs, which implies that each double arc is an immersed, but not necessarily embedded, 1-manifold in $M$.

A double arc may have either a DB value or a branch value at each of its ends, as in Figures~\ref{fig:Figure 1}C and D (left). It is also possible that the arc will close into a circle. We refer to arcs of the former kind as ``open" and arcs of the latter kind as ``closed". In particular, the union of all double arcs is equal to the set of all double, triple, DB, and branch values, and it is also equal to the intersection set $cl\{p \in M| \#i^{-1}(p)>1\}$. We denote this set $X(i)$ and we also refer to it as the ``intersection graph" for reasons that we will explain later on.

\begin{figure}
\begin{center}
\includegraphics{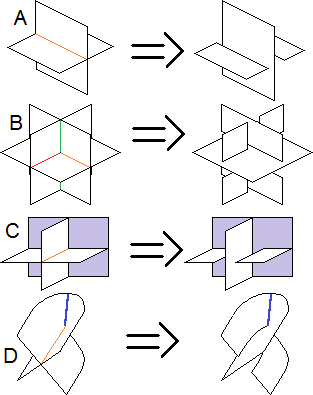}
\caption{How to draw a \gls{genericsurface} and / or \gls{BSD}}
\label{fig:Figure 1}
\end{center}
\end{figure}

The knotted surfaces we regard in this thesis are proper 1-1 PL functions from a compact surface $F$ into $M \times \R$. Such functions have the form $k=(i,h)$ where $i:F \to M$ and $h:F \to \R$. An arbitrarily small perturbation can turn $i$ into a \gls{genericsurface}. This was proven by Izumiya and Marar in \cite{Izu&Mar2} for the case where $F$ is closed and $M$ is boundaryless, and the proof readily extends to the case where $F$ and $M$ have boundaries. Since $k$ will remain 1-1 after a sufficiently small perturbation to $i$, this implies that every knotted surface can be perturbed into a surface $k=(i,h)$ for which $i$ is generic. One may think of such a knotted surface as a \gls{lifting} of the \gls{genericsurface} $i$.

\begin{defn} \label{BSD}
Given a \gls{genericsurface} $i:F \to M$:

1) A \gls{lifting} of $i$ is a \gls{genericsurface} $k$ in $M \times \R$ whose projection into the $M$ component is $i$. Such a \gls{lifting} has the form $k=(i,h)$ for some PL function $h:F \to \R$. We refer to $h$ as the ``height function" of the lifting $k$.

2) We say that two \gls{lifting}s $k_1=(i,h_1)$ and $k_2=(i,h_2)$ are equivalent if they uphold $\forall p,q \in F,i(p)=i(q): h_1(p)>h_1(q) \Leftrightarrow h_2(p)>h_2(q)$ - the relative height of every two points with the same $i$-image is the same. 

3) In order to draw a \gls{lifting} $(i,h)$, one draws the surface $i$ and, whenever two points $p,q \in F$ have the same $i$ value, indicate which of them is ``lower" (has a lower $h$ value) by ``deleting" the $i$-image of a small neighborhood of the lower point from the drawing. This clearly describes the lifting up to equivalence. This type of drawing of a knotted surface is called a ``\gls{BSD}" of the knotted surface. We believe this notation (\gls{BSD}) was first used by Satoh in \cite{Sat1}.
\end{defn}

Figure~\ref{fig:Figure 1} demonstrates how to draw the different parts of a \gls{genericsurface}, and how each part will look when some of it is deleted in order to draw a \gls{BSD}. On the left of Figure~\ref{fig:Figure 1}A, there are two sheets of the surface $F$ that are embedded in $M$, such that their images intersect transversely. Each sheet has a line on it, these lines are the preimages of the segment of double arc that is formed where the sheets intersect. Each double value on the arc segment has one preimage in each surface, and each of these preimages has a different $h$ value since $k=(i,h)$ is 1-1. Since $h$ is continuous, all the ``higher" preimages come from the same sheet.

\begin{defn} \label{SurfaceStrip}
From now on, we will informally refer to the ``higher" and ``lower" sheet at each such intersection. In a \gls{BSD}, a small neighborhood of the arc segment is deleted from the lower surface sheet. Globally, one can think of a double arc as a place where two long strips of surface intersect. One of these will be the ``lower" strip, and we will delete a small neighborhood of the arc from this strip, as in Figure~\ref{fig:Figure 2}.
\end{defn}

Figures~\ref{fig:Figure 1}C and \ref{fig:Figure 1}D show what the \gls{BSD} looks like at the end of an open double arc - at a DB or branch value. One should keep deleting a part of the ``lower strip" until the end of the arc is reached. This includes a neighborhood of one of the preimages of a DB value (the ``lower" one). A branch value has only one preimage, so the deleted part narrows as we approach a branch value and ends there.

Figure~\ref{fig:Figure 1}B depicts a triple value. It has three preimages, one on each of the intersecting surface sheets. We refer to the sheets as the ``highest", ``middle" and ``lowest" sheet based on the relative height of the preimage this surface contains. Each pair of sheets intersect at one of the three arc-segments that cross the triple value. Due to continuity, the lowest surface will be lower than both the middle and highest sheet along its intersection with each of them. One should thus remove a neighborhood of the union of these segments, which is a thickened ``X" shape, from the lowest sheet. One should also remove from the middle sheet a neighborhood of its intersection with the highest sheet.

\begin{figure}
\begin{center}
\includegraphics{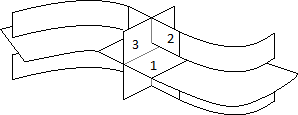}
\caption{Lifting a double arc}
\label{fig:Figure 2}
\end{center}
\end{figure}

\Gls{BSD}s are a higher dimensional analogue to knot diagrams. Unlike the lower-dimensional case, one needs to prove that every \gls{BSD} of a \gls{genericsurface} $i$ really does define a \gls{lifting} of it. For knot diagrams this is trivial - simply take the generic loop in $\R^2 \subseteq \R^3$ and, at every intersection, ``push up" the strand that the diagram tells us is supposed to be higher. The same general idea works for \gls{BSD}s, but the execution is slightly more complicated.

\begin{lem} \label{DTL}
Every \gls{BSD} on a \gls{genericsurface} $i$ defines a \gls{lifting} of $i$.
\end{lem}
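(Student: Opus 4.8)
The plan is to construct, given a BSD on a generic surface $i : F \to M$, an explicit height function $h : F \to \R$ so that $k = (i,h)$ is a proper, locally flat PL embedding whose BSD (in the sense of Definition \ref{BSD}(3)) is the prescribed one. The strategy mirrors the classical knot-diagram case — "push up the higher strand" — but executed sheet-by-sheet over a triangulation compatible with the stratification of $X(i)$. First I would fix a triangulation $T$ of $M$ in which $X(i)$ is a subcomplex and each neighborhood model of Definition \ref{Generic} is realized by a subcomplex; pull this back to a triangulation of $F$ via $i$ (after a small perturbation, $i$ is simplicial and generic, so each closed simplex of $M$ has finitely many sheets of $F$ over it). The BSD assigns, over each double arc, a linear order (lower/higher) on the two surface strips meeting there; at a triple value the three pairwise orders are required to be consistent (no cycle) precisely when the BSD is a genuine diagram, but in fact a BSD as drawn never records an inconsistent picture, so I may assume the orders glue to a partial order on the sheets over each simplex.

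The key steps, in order: (1) On each closed $3$-simplex $\sigma$ of $T$, list the finitely many sheets of $F$ lying over $\sigma$; the BSD data give a strict partial order among those sheets that meet along double/triple/branch strata inside $\sigma$. Extend this to a total order and assign to the sheets distinct constant "level" values in $\R$, say integers $1, 2, \dots$ — this is the local height. (2) Reconcile the choices on adjacent simplices: over a shared face the sets of sheets agree and the relative orders imposed by the BSD agree (this is the content of continuity of $h$ in the informal discussion after Definition \ref{SurfaceStrip}), so one can rescale/interpolate the level functions across simplices by a PL partition-of-unity argument to obtain a globally continuous PL function $h$ on $F$ that is locally constant-to-first-order on each sheet and strictly separates sheets wherever $i$ identifies their points. (3) Verify $k = (i,h)$ is injective: two points $p \ne q$ with $i(p) = i(q)$ lie on distinct sheets over some simplex, and by construction were assigned different levels there, so $h(p) \ne h(q)$; the only subtlety is at branch values, where the two "sheets" merge into one — but there $i$ itself is already injective on a neighborhood minus the branch point, and the deleted region of the BSD narrows to a point, so no height conflict arises. (4) Verify properness and local flatness: properness is inherited from properness of $i$ and finiteness of levels; local flatness follows because near each stratum the pair $(N(p), k(F) \cap (N(p) \times \R))$ is PL-homeomorphic to the standard local model (transverse planes at distinct heights, or a pushed-off Whitney umbrella), which is locally flat by inspection of the finitely many model cases.

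The main obstacle I expect is step (2) — the global reconciliation of the per-simplex level assignments into a single continuous PL height function. Locally everything is a finite poset, but the orders only encode \emph{relative} height between sheets that actually interact along a stratum; two sheets that do not meet over $\sigma$ have no constraint, and when we pass to an adjacent simplex where they \emph{do} meet, the independently chosen integer levels may be in the wrong order. The honest fix is to process simplices in an order refining the face relation and, when extending $h$ over a new simplex, choose the level values to be consistent with all values already fixed on its faces — this is always possible because the constraints form a partial order (the BSD has no height cycles, by hypothesis that it is a legitimate diagram), and a partial order on a finite set always extends to a total order realizable in $\R$. Packaging this as a clean induction over the skeleta of $T$, while keeping the function PL and the embedding locally flat at every stratum including branch values and DB values, is the part that needs care; the rest is routine checking of the finite list of local models from Definition \ref{Generic}.
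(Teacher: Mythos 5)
Your overall strategy (assign heights locally, then glue with a PL partition of unity) is the same as the paper's, but two of your steps contain genuine errors.

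First, the branch value case in your step (3) is wrong. You claim that near a branch value ``$i$ itself is already injective on a neighborhood minus the branch point,'' but it is not: the local model is $(x,y)\mapsto(x,xy,y^2)$, and the points $(0,y)$ and $(0,-y)$ have the same image for every $y$ --- the double arc terminates \emph{at} the branch value, so arbitrarily close to it there are pairs of preimages that $h$ must separate. Moreover these two preimages lie on the \emph{same} local sheet (the preimage of a small neighborhood of a branch value is connected), so your scheme of assigning a constant integer level per sheet per simplex cannot separate them no matter how the levels are chosen. The paper handles this with an explicitly non-constant local height, $h(x,y)=\pm y$, chosen to match the diagram; your framework has no analogue of this and the lemma fails for any surface with a branch value under your construction as written.

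Second, your ``honest fix'' for step (2) --- extend the per-simplex partial orders to total orders consistently across all simplices --- is both unnecessary and, in general, impossible. The BSD only constrains the relative height of sheets where their images actually meet; three sheets can satisfy $A$ over $B$, $B$ over $C$, $C$ over $A$ in three pairwise-disjoint regions (the standard cyclic-link configuration), which is perfectly liftable but admits no globally consistent total order, so your induction over the skeleta can get stuck. And if you instead insist that the constant levels literally agree across shared faces, $h$ becomes locally constant on each sheet and cannot lift a connected self-intersecting surface at all. The paper's resolution is to drop consistency of the local heights as \emph{functions} entirely: each local $h_k$ need only order coincident preimage pairs as the diagram dictates, and the weighted sum $h(p)=\sum h_k(p)\tau_k(i(p))$ then preserves every one of those strict inequalities, which is all that injectivity of $(i,h)$ requires. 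Non-interacting sheets may cross heights freely where their images are disjoint, so no global order is needed.
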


\begin{proof}
We need to define a height function $h$ that corresponds to the diagram. At first we will prove that for every value $p \in i(F)$ it is possible to define $h$ locally in the preimage of a neighborhood $U$ of $p$: if $v$ is a regular or RB value, choose such a $U$ that is disjoint from $X(i)$, and define the local $h$ to be constant $0$ there. If $v$ is a double or DB value, $i^{-1}(U)$ will contain the two surface sheets, and the broken surfaces diagram indicates which of them is out to be higher and which is out to be lower. Set $h$ to return $1$ on the former and $0$ on the latter. Do the same for the highest, middle and lowest surface sheets of a triple value with the heights $2$, $1$ and $0$.

The case for branch values is only slightly more complicated. In this case, $i$ is a PL approximation of the function $(x,xy,y^2)$, in some parametrizations of $U$ and $i^{-1}(U)$. The double values in $U$ are $i(0,y)=i(0,-y)=(0,0,y^2)$ for all $y>0$. Set $h(x,y)$ to be equal $y$ or $-y$, making sure we pick the value that makes the right preimages higher as depicted in the \gls{BSD}. The function $(x,y)\mapsto (x,xy,y^2,\pm y)$ is a smooth 1-1 embedding, so its PL approximation will be a PL 1-1 embedding as needed.

We can now create a global $h$ using a common partition of unity trick. Take a PL partition of unity on $M$, $\tau_k:U_k \to \R$ where each $U_k$ is one of the aforementioned neighborhoods. Define the global $h$ as $h(p)=\sum h_k(p)\tau_k(i(p))$ ($h_k$ is the local $h$ on $U_k$). This $h$ corresponds to our \gls{BSD}, since if $i(p)=i(q)$ and the diagram tells us that $p$ is higher than $q$ then for $k$ for which $i(p) \in U_k$ upholds $h_k(p)>h_k(q)$ and this implies that $h(p)>h(q)$.
\end{proof}

\section{The obstruction to liftability} \label{OtL}

A \gls{genericsurface} may not be liftable. One can always ``attempt" to lift the surface by doing the following: Think of a double arc as a place where two long strips of surface intersect, as in Figure~\ref{fig:Figure 2}. In order to lift the surface, choose, at each arc, which strip will be ``lower" than the other. This is analogous to lifting a generic loop in 2-space to a knot by choosing crossing information at each intersection.

This ``\gls{liftingAt}" can be represented in a drawing of the surface. Doing so involves choosing one double value on each arc and ``deleting" a part of the lower strip around it, as one does in a \gls{BSD}. One must then progress along the arc, in both directions, and remove more parts of the lower strip until all the arc has been covered. If one succeeds in doing this for every arc, then the \gls{liftingAt} is successful - it describes a \gls{BSD} and thus a \gls{lifting}. In this section, we will review two ``obstructions" that can cause a \gls{liftingAt} to fail, and prove that these are the only obstructions to the liftability of the surface.

Firstly, notice that a neighborhood of a small segment of double arc looks like a bundle over that segment, the fibres of which are ``X"'s. Each of the two intersecting strips is a sub-bundle whose fibres are one of the two intersecting lines that compose the ``X". In Figure~\ref{fig:Figure 3}A the two strips are colored green and orange. Figure~\ref{fig:Figure 3}B depicts a single fibre of that ``X" bundle. The neighborhood of a long segment of double arc may be an immersed (but not embedded) image of an X-bundle over an interval, since it can intersect itself around triple values.

The neighborhood of a closed double arc will be an immersed image of an X-bundle over $S^1$. There is more than one kind of X-bundle over $S^1$. Each bundle of this kind is created by taking an X-bundle over an interval (which is trivial since intervals are contractable), and gluing the fibres at both ends of the interval together. Up to isotopy, there are 8 ways to do this ``gluing" - 4 rotations and 4 reflections.

\begin{figure}
\begin{center}
\includegraphics{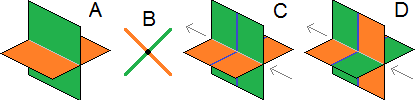}
\caption{X bundles, X fibres, and closing an X-bundle over $S^1$}
\label{fig:Figure 3}
\end{center}
\end{figure}

In 4 of these gluings, as in Figure~\ref{fig:Figure 3}C, the ends of each strip will be glued together, producing two ``closed strips" - immersed annuli / M\"{o}bius bands in $M$. In the other 4 cases, as Figure~\ref{fig:Figure 3}D depicts, each of the ends of one strip will be glued to an end of the other strip, combining them into one big closed strip.

\begin{defn} \label{Triv}
We refer to a closed arc whose neighborhood is composed of two separate closed strips as trivial, and a closed arc whose neighborhood is composed of one big closed strip as non-trivial.
\end{defn}

\begin{lem} \label{TrivLem}
A \gls{genericsurface} that has a non-trivial closed double arc is not liftable.
\end{lem}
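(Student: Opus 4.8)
The plan is a connectedness argument: a hypothetical lifting would produce a continuous function from a certain circle $\tilde\gamma$ to $\{+1,-1\}$ that is simultaneously constant and reversed by a fixed-point-free involution, which is impossible.

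First I would extract the structural input from the X\nobreakdash-bundle\nobreakdash-over\nobreakdash-$S^1$ discussion preceding Definition~\ref{Triv}. Since $\gamma$ is non-trivial, a neighborhood of $\gamma$ in $M$ is the immersed image of a single closed strip $A\subseteq F$ (an annulus or a M\"{o}bius band), and the core (center section) $\tilde\gamma$ of $A$ is a \emph{single} circle for which $i|_{\tilde\gamma}\colon\tilde\gamma\to\gamma$ is $2$-to-$1$, i.e. the connected double cover of the circle; in the trivial case the core would instead split as two disjoint circles, each mapped homeomorphically onto $\gamma$, and it is exactly this splitting that Definition~\ref{Triv} records. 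Let $\sigma\colon\tilde\gamma\to\tilde\gamma$ be the nontrivial deck transformation, which is a continuous fixed-point-free involution satisfying $i\circ\sigma=i$ on $\tilde\gamma$. (If $\gamma$ passes through a triple value $q$, then $i^{-1}(q)$ has three points, but only the two lying on $\tilde\gamma$ — the preimages on the two sheets meeting along $\gamma$ — enter the argument; the third is irrelevant.)

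Next, suppose toward a contradiction that $i$ is liftable, and fix a lifting $k=(i,h)$ as in Definition~\ref{BSD}, so $k\colon F\to M\times\R$ is injective and $h$ is continuous. I would consider the continuous function $g=h|_{\tilde\gamma}-(h|_{\tilde\gamma})\circ\sigma\colon\tilde\gamma\to\R$. It satisfies $g\circ\sigma=-g$, and it is nowhere zero: if $g(p)=0$ then $h(p)=h(\sigma(p))$, hence $k(p)=k(\sigma(p))$, contradicting injectivity of $k$ since $p\neq\sigma(p)$. Therefore $\operatorname{sign}\circ g\colon\tilde\gamma\to\{+1,-1\}$ is continuous, hence constant because $\tilde\gamma$ is connected; but $\operatorname{sign}(g(\sigma(p)))=-\operatorname{sign}(g(p))$ shows it takes both values. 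This contradiction proves $i$ is not liftable.

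The one step needing genuine care is the first paragraph — confirming that in the non-trivial case the core of the strip neighborhood is a single circle realizing the connected double cover of $\gamma$ (rather than a disjoint pair), and checking that the height comparison across $\gamma$ is still well defined where $\gamma$ meets a triple value. Both follow directly from the classification of X\nobreakdash-bundles over $S^1$ already carried out before Definition~\ref{Triv}; after that the remainder is the elementary fact that a continuous map from the connected space $\tilde\gamma$ to $\{+1,-1\}$ is constant, whence it cannot satisfy $f\circ\sigma=-f$. I therefore expect no serious obstacle beyond bookkeeping.
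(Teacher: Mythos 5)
Your proof is correct and is essentially the paper's argument made rigorous: the paper's proof simply observes (citing Figure~\ref{fig:Figure 4}B) that the lower strip returns as the higher strip after one circuit of the arc, so by continuity of $h$ the two strips must agree in height somewhere, contradicting injectivity of $(i,h)$. Your sign-of-$g$/connectedness formulation on the double cover $\tilde\gamma$ is the contrapositive packaging of that same intermediate-value contradiction, with the structural input (non-trivial arc $\Rightarrow$ one closed strip whose core connectedly double-covers $\gamma$) identical to the paper's.
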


\begin{proof}
As seen in Figure~\ref{fig:Figure 4}B, any attempt to lift a non-trivial arc will inevitably fail - what started as the lower strip will end up as the higher strip after going around the arc. Since $h$ is continuous, this implies that somewhere along the way both strips have the same $h$ value, contradicting the fact that $(i,h)$ is 1-1. 
\end{proof}

\begin{figure}
\begin{center}
\includegraphics{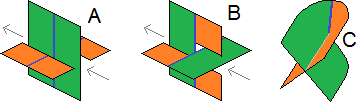}
\caption{A and B - Attempting to lift a trivial and a non trivial closed arc.
C - The ends of the two surface strips at a branch value.}
\label{fig:Figure 4}
\end{center}
\end{figure}

There is no similar obstruction for an open arc or a trivial closed arc. It is possible to progress throughout the whole arc and delete the lower strip until reaching the ends of the arc (Figure~\ref{fig:Figure 1}C or D) or returning to the starting point (see Figure~\ref{fig:Figure 4}A). In other words, an arc that is trivial and either open or closed is the intersection of two ``global" surface strips. In this case, a ``\gls{liftingAt}" becomes a simple choice of ``which of the two strips intersecting at the arc is higher".

\begin{rem}
Note that when an open arc ends in a branch value, the strips meet after the value as per Figure~\ref{fig:Figure 4}C, but this does not pose a problem. One can still lift one of the strips above the other throughout all of the arc up until the branch value(s) at its end(s) - as per Figure~\ref{fig:Figure 1}D.
\end{rem}

\begin{defn} \label{Attempt}
Let $i:F \to M$ be a \gls{genericsurface} with no non-trivial closed double arcs. A ``\gls{liftingAt}" of $i$ is a choice, for each double arc $DA$, of which of the two surface strips that intersect at $DA$ is higher.
\end{defn}

There is a second obstruction that might make an individual \gls{liftingAt} fail. One can draw any \gls{liftingAt} using the above method - draw the surface, and delete a small ``sub-strip" from the lower surface strip at each arc. Lemma~\ref{DTL} says that if this drawing confers to the definition of a \gls{BSD} the \gls{liftingAt} is successful - it describes a genuine lifting of the surface. However, the drawing may fail to be a \gls{BSD}. 

At each triple value three ``surface sheets" intersect. The intersection of any two of them is a double arc segment that goes through the triple value. Each of these segments is a small part of a double arc. Choosing how to lift this double arc determines which of the said two sheets is lower. For example, in Figure~\ref{fig:Figure 2}, the lifting of the double arc implies that the sheet marked ``2" is lower than the sheet marked ``1". There are two other arcs that go through the triple value. Choosing a \gls{lifting} for them would tell us if the sheet marked ``3" is higher or lower than ``1" and/or ``2", but Figure~\ref{fig:Figure 2} does not depict this information.

A full \gls{liftingAt} will dictate, for each pair of intersection sheets at each triple value, which sheet is lower. Since each triple value has 3 pairs of sheets and 2 ways to lift each pair, a \gls{liftingAt} can have one of $2^3=8$ forms around each triple value. We draw 4 of them in Figure~\ref{fig:Figure 5} and the other 4 are the mirror images of those. If at every triple value the \gls{liftingAt} has one of the forms \ref{fig:Figure 5}A-C or their mirror images, then it will fall in line with the definition of a \gls{BSD}, as per Figure~\ref{fig:Figure 1}B, and so the \gls{liftingAt} is successful.

\begin{figure}
\begin{center}
\includegraphics{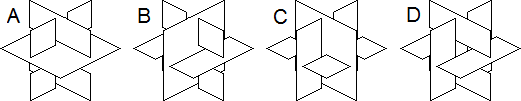}
\caption{Successful and failed ways to lift a triple value}
\label{fig:Figure 5}
\end{center}
\end{figure}

On the other hand, if a triple value $p$ has the form of Figure~\ref{fig:Figure 5}D, then the preimages of $p$ have a ``cyclic height relation". Denoting the preimage in the $k$th surface sheet ($k=1,2,3$) as $p_k$, one can see that $h(p_1)<h(p_2)$, $h(p_2)<h(p_3)$ and $h(p_3)<h(p_1)$ - a contradiction. The mirror image of Figure~\ref{fig:Figure 5}D depicts the reverse cyclic relation, where $h(p_1)>h(p_2)$, $h(p_2)>h(p_3)$ and $h(p_3)>h(p_1)$. This implies that a \gls{liftingAt} that dictates one of these two configurations on any triple value must fail. To summarize:

\begin{thm} \label{LiftThm}
1) A \gls{liftingAt} of a \gls{genericsurface} with no non-trivial closed double arcs is successful iff it does not produce a cyclic height relation at any triple value.

2) A \gls{genericsurface} is liftable iff it has no non-trivial closed double arcs and at least one of its \gls{liftingAt}s does not produce a cyclic height relation at any triple value.
\end{thm}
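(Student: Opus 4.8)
The plan is to prove part (1) first --- the equivalence between success of a \gls{liftingAt} and the absence of cyclic height relations --- and then derive part (2) almost immediately by combining it with Lemmas~\ref{DTL} and~\ref{TrivLem}. For part (1), the ``only if'' direction is the easy one: a successful \gls{liftingAt} is by definition realized by some \gls{lifting} $k=(i,h)$, and at any triple value the three preimages receive three \emph{distinct} real values under $h$ (because $k$ is 1-1); three distinct reals are linearly ordered, so the height relation induced on the three sheets cannot be the directed 3-cycle of Figure~\ref{fig:Figure 5}D or its mirror. Hence no cyclic height relation occurs.

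For the ``if'' direction of part (1), I would show that a \gls{liftingAt} without a cyclic height relation produces a drawing that satisfies the definition of a \gls{BSD}, and then invoke Lemma~\ref{DTL}. The drawing is obtained by deleting a sub-strip of the lower strip along each arc, so one only has to check that near each value $p \in i(F)$ the picture is one of the local models of Figure~\ref{fig:Figure 1}. Away from triple values this is immediate: at a regular or RB value nothing is deleted; at a double or DB value exactly one of the two sheets has a neighborhood of the arc removed; near a branch value the deleted region of the lower strip narrows and terminates at the value, as in Figure~\ref{fig:Figure 1}D. The only nontrivial case is a triple value. There, the three pairwise height comparisons dictated by the \gls{liftingAt} form a tournament on three vertices, and a tournament on three vertices is either a directed 3-cycle or transitive; by hypothesis it is not a 3-cycle, hence transitive, so the three sheets carry a genuine ``highest / middle / lowest'' order. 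For that order the deletions prescribed by the \gls{liftingAt} remove a thickened ``X'' from the lowest sheet --- two sub-strips, one for each of the two arc segments along which it is the lower sheet --- and a single thickened arc from the middle sheet, matching Figure~\ref{fig:Figure 1}B exactly. So the drawing is a \gls{BSD}, and Lemma~\ref{DTL} yields a \gls{lifting} realizing the \gls{liftingAt}.

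Part (2) then follows quickly. If $i$ is liftable it has no non-trivial closed double arc by the contrapositive of Lemma~\ref{TrivLem}; moreover a \gls{lifting} $k=(i,h)$ induces a \gls{liftingAt} (on each open or trivial closed arc the two strips are globally distinct, and by continuity of $h$ one of them consistently contains the higher preimage), this \gls{liftingAt} is realized by $h$ hence successful, so by part (1) it has no cyclic height relation at any triple value. Conversely, if $i$ has no non-trivial closed double arc then \gls{liftingAt}s are defined (Definition~\ref{Attempt}), and any one of them with no cyclic height relation is successful by part (1), which exhibits a \gls{lifting} of $i$; hence $i$ is liftable.

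The main obstacle I anticipate is making the local verification at a triple value fully rigorous --- in particular, checking that the union of the deleted sub-strips on the lowest sheet really assembles into the thickened ``X'' of the standard model, without a gap or a bad overlap, and that the deletion on the middle sheet is simultaneously consistent with its intersections with the other two sheets. This is essentially the content of the informal discussion preceding the statement (the enumeration of the eight configurations at a triple value and the identification of Figure~\ref{fig:Figure 5}D and its mirror as the failures), so the bulk of the proof will consist of turning that enumeration into a clean case check; the tournament-on-three-vertices observation is what makes the ``no cyclic relation $\Rightarrow$ linear order'' step painless, and everything else is bookkeeping on top of Lemmas~\ref{DTL} and~\ref{TrivLem}.
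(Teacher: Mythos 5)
Your proposal is correct and follows essentially the same route as the paper, which justifies Theorem~\ref{LiftThm} by exactly this case analysis: the ``only if'' direction from injectivity of $k=(i,h)$ forcing a linear order on the three preimages, and the ``if'' direction by checking that the deletions prescribed by a cycle-free \gls{liftingAt} match the local models of Figure~\ref{fig:Figure 1} (the eight configurations at a triple value, of which only Figure~\ref{fig:Figure 5}D and its mirror fail) and then invoking Lemma~\ref{DTL}, with part (2) following from part (1) together with Lemma~\ref{TrivLem}. Your tournament-on-three-vertices observation is a slightly cleaner packaging of the paper's explicit enumeration, but the argument is the same.
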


\begin{rem}
Note also that the ``successful \gls{liftingAt}s" are in 1-1 correspondence with the equivalence classes of the \gls{lifting}s of the surface.
\end{rem}

We end this chapter with a very important note:

\begin{rem} \label{SurfaceSet}
In order to check if a \gls{genericsurface} $i:F \to M$ is liftable, one needs to check if any closed arc is non-trivial, and then if any \gls{liftingAt} is successful. In order to do this, one only needs to examine the image $S \subset M$ of the surface. 

It is also simple to see if a subset $S$ of $M$ is the image of a \gls{genericsurface} - this happens iff every point in $S$ has a neighborhood like one of those in Figure~\ref{fig:Figure 1}.

We will thus abuse the term ``\gls{genericsurface}" for the remainder of the thesis: instead of a function $i:F \to M$ as in Definition~\ref{Generic}, we will use it to mean a set that is the image of such a function. We do this because a subset of $M$ is simpler to define and examine algorithmically than a function.
\end{rem}

\chapter{An Algorithm to Lift a Generic Surface} \label{SecNP}

In this chapter we will explain the technique we use to determine if a \gls{genericsurface} is liftable or not. We will also describe a lifting algorithm - an algorithm that receives a \gls{genericsurface} and determines if it is liftable or not.

The algorithm is composed of three parts. First is the preliminaries, in which the algorithm verifies that the input is valid (a real \gls{genericsurface}) and compiles some information from the surface. The algorithm also checks if the surface has any non-trivial closed double arcs, in which case the surface is not liftable.

If the surface has no non-trivial arcs, then the algorithm proceeds to the second part. In it, the algorithm compiles a \gls{symmetric}, called the ``lifting formula of the surface", such that the surface is liftable iff the formula is satisfiable. The last step involves using any 3-sat solving algorithm to determine if the lifting formula is satisfiable, and thus whether the surface is liftable.

\section{The preliminaries of the algorithm} \label{PremSec}

In the first section, we will list the preliminary steps of the lifting algorithm and the run-time required by each step. The general idea of each step is simple to understand, but the actual realization and run-time computation is often long and technical. In order to preserve the flow of the thesis, we will forgo these technical parts here. We will provide them in the next chapter (named ``technicalities"), which is dedicated specifically to them.

The preliminaries of the algorithm are as follows:

1) The input of the algorithm is a data type that represents a \gls{genericsurface} in a 3-manifold. It is a pair $(M,S)$ where $M$ is an abstract simplicial complex whose geometric realization is a 3-manifold, and $S$ is a subcomplex of $M$, whose geometric realization is a \gls{genericsurface} in $M$. The size of the data is indicated by a parameter called $n$ (the number of 3-simplices in $M$).

The first step of the algorithm is to verify the validity of the input - that $M$ really is a 3-manifold and that $S$ is a \gls{genericsurface}. We will rigorously define this data type, explain how the algorithm verifies that the input is valid, and prove that it can be done in linearithmic ($O(n \cdot \log(n))$) time, in the subsections \ref{DTsec1}, \ref{DTsec2}, and \ref{DTsec3} (see Theorem~\ref{ValidQuad}).

2) The algorithm then identifies the relevant parts of the surface - it indicates what are the double arcs, the triple values, the 3 intersecting surface sheets at each triple value and the 3 intersecting arc-segments at each triple value. It saves them as accessible data. While it identifies these parts, the algorithm also names (or indexes) them. It indexes the double arcs as $DA_0,...,DA_{N-1}$, the triple values as $TV_0,...,TV_{K-1}$, and the 3 intersecting arc-segments at each triple value $TV_k$ as $TV_k^1$, $TV_k^2$ and $TV_k^3$.

Furthermore, each arc segment $TV_k^l$ is a small part of some double arc $DA_j$, and the algorithm will define / calculate an index function $j(k,l)$ that gives us the index $j$ of this double arc (for every $k=0,...,K-1,l=1,2,3$). All of this will take quadratic $O(n^2)$ time, as we will explain in section~\ref{idensec} (see Theorem~\ref{IdenQuad}).

3) The algorithm will also identify the two intersecting surface strips at each double arc in $O(n^2)$ time. It is possible that some of the closed arcs will be non-trivial - their two surface strips will merge into one strip. The algorithm will check if this occurs. If there is such a non-trivial arc, then the surface is not liftable and the algorithm will end. Otherwise, it will name the surface strips of each double arc $DA_j$ to distinguish between them. One of them will be called the ``0 strip" at $DA_j$ and the other will be called the ``1 strip" at $DA_j$.

\begin{rem}
The purpose of distinguishing between the 0 and 1 strips at any double arc is as follows: when working with knots in 3 space, one sometimes uses oriented knots / links. In the diagram of oriented knots / links, one can distinguish between $+$ crossings and $-$ crossings. Furthermore, given an oriented generic loop, the choice of which of its intersections will be $+$ crossings and which will be $-$ crossings determines how to lift said loop.

While there was always a choice between two kinds of \gls{lifting}s at each crossing, without an orientation there is no way to distinguish between the two without drawing the loop. In a way, the difference between them involves the global topology of the loop as a subspace of $\R^2$. But once there is an orientation, one can distinguish between a $+$ crossing and a $-$ crossing at a given intersection point, by looking at a neighborhood of this intersection. Furthermore, if one indexes all the intersection points as $IP_0,...,IP_{N-1}$, then one can describe each lifting of the diagram via an $N$-tuple of $+$'s and $-$'s - the $k$'s intersection point have a $+$ / $-$ crossing iff the $k$'s entry in the vector is $+$ / $-$.

In a \gls{genericsurface}, there may be no way to orient the surface (it may be non-orientable), but one can choose arbitrarily which of the intersecting strips in each double arc is the $0$ strip and which is the $1$ strip. Afterwards, one can describe a \gls{liftingAt} by choosing, for each arc, if the $0$ strip is higher than the $1$ strip or if it is the other way around.
\end{rem}

\begin{defn} \label{01Strip}
Given a \gls{genericsurface} in which all closed double arcs (if there are any) are trivial, name one of the intersecting strips at each double arc ``the $0$ strip" and the other strip ``the $1$ strip". If a \gls{liftingAt} makes the $0$ strip (resp. $1$ strip) at a certain double arc higher than the $1$ strip (resp. $0$ strip), we will say that this \gls{liftingAt} is a ``$0$ lifting" (resp. ``$1$ lifting") at this arc.

Using this, we encode each \gls{liftingAt} of the surface as a vector \\$(x_0,...,x_{N-1}) \in \{0,1\}^N$ ($N$ is the number of the surface's double arcs) where the \gls{liftingAt} is a $0$ / $1$ lifting at the double arc $DA_j$ iff the $j$th entry at the vector is $0$ / $1$.
\end{defn}

\section{The lifting formula of a surface} \label{LiftFormSec}

Having verified that the surface has no non-trivial double arc, our next step, as per Theorem~\ref{LiftThm}, is to see whether either of the potential \gls{liftingAt}s of the surface is legitimate. In this section, we will define a 3-sat formula in $N$ variables, called the lifting formula of the surface, such that a \gls{liftingAt} is legitimate iff its corresponding vector $(x_0,...,x_{N-1}) \in \{0,1\}^N$ satisfies the formula. In particular, the surface will be liftable iff the formula is satisfiable.

By definition, a \gls{liftingAt} is legitimate iff it does not produce a cyclic height relation at any triple value. In order for the formula to capture this information, the parameters of the formula should represent information about the neighborhood of the triple value. Specifically:

\begin{defn} \label{3Discs}
At each triple value $TV_k$, every two of the three surface sheets at $TV_k$ intersect at one of the arc segments. In particular, each of the sheets contains a unique two of the arc segments. We therefore denote the sheets $D_k^{\{1,2\}}$, $D_k^{\{1,3\}}$ and $D_k^{\{2,3\}}$ - where each sheet is named after the two arc segments it contains.
\end{defn}

Up to homeomorphism, the neighborhood of $TV_k$ looks like Figure~\ref{fig:Figure 6}. A \gls{liftingAt} will produce a cyclic height relation in $TV_k$ iff one of the following two situations occur:

I) The sheet $D_k^{\{1,3\}}$ is higher than the sheet $D_k^{\{1,2\}}$ along their intersection at $TV_k^1$, the sheet $D_k^{\{1,2\}}$ is higher than the sheet $D_k^{\{2,3\}}$ along their intersection at $TV_k^2$, and the sheet $D_k^{\{2,3\}}$ is higher than the sheet $D_k^{\{1,3\}}$ along their intersection at $TV_k^3$. 

II) The exact opposite situation. The sheet $D_k^{\{1,2\}}$ is higher than the sheet $D_k^{\{1,3\}}$ along their intersection at $TV_k^1$, the sheet $D_k^{\{2,3\}}$ is higher than the sheet $D_k^{\{1,2\}}$ along their intersection at $TV_k^2$ and the sheet $D_k^{\{1,3\}}$ is higher than the sheet $D_k^{\{2,3\}}$ along their intersection at $TV_k^3$.

\begin{figure}
\begin{center}
\includegraphics{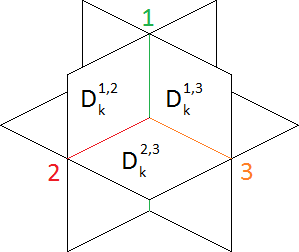}
\caption{The neighborhood of a triple value with the names of the arc segments and the sheets}
\label{fig:Figure 6}
\end{center}
\end{figure}

As the arc $DA_{j(k,l)}$ crosses the triple value $TV_k$ via the arc segment $TV_k^l$, each of the two surface strips that intersect at the arc coincide with one of the two sheets that intersect at $TV_k^l$. The algorithm will find which strip coincides with which sheet. It will do so while it identifies the strips, so no additional computation is required. 

\begin{defn} \label{sParam}
We encode this information using binary parameters $s(k,1),$ $s(k,2),s(k,3) \in \{0,1\}$. Their values are set as follows:

a) $s(k,1)=0$ if the sheet $D_k^{\{1,3\}}$ coincides with the $0$ strip along $TV_k^1$, and the sheet $D_k^{\{1,2\}}$ coincides with the $1$ strip along $TV_k^1$. $s(k,1)=1$ if it is the other way around.

b) $s(k,2)=0$ if the sheet $D_k^{\{1,2\}}$ coincides with the $0$ strip along $TV_k^2$, and the sheet $D_k^{\{2,3\}}$ coincides with the $1$ strip along $TV_k^2$. $s(k,2)=1$ if it is the other way around.

c) $s(k,3)=0$ if the sheet $D_k^{\{2,3\}}$ coincides with the $0$ strip along $TV_k^3$, and the sheet $D_k^{\{1,3\}}$ coincides with the $1$ strip along $TV_k^3$. $s(k,3)=1$ if it is the other way around.
\end{defn}

\begin{rem}
The algorithm will calculate these parameters while it identifies the 0 and 1 strips of the surface, in section \ref{idensec}. It will not add to the run-time of the algorithm.
\end{rem}

The definition insures that:

a) The sheet $D_k^{\{1,3\}}$ coincides with the $s(k,1)$ strip along $TV_k^1$, and the sheet $D_k^{\{1,2\}}$ coincides with the $\neg s(k,1)$ strip along $TV_k^1$.

b) The sheet $D_k^{\{1,2\}}$ coincides with the $s(k,2)$ strip along $TV_k^2$, and the sheet $D_k^{\{2,3\}}$ coincides with the $\neg s(k,2)$ strip along $TV_k^2$.

c) The sheet $D_k^{\{2,3\}}$ coincides with the $s(k,3)$ strip along $TV_k^3$, and the sheet $D_k^{\{1,3\}}$ coincides with the $\neg s(k,3)$ strip along $TV_k^3$.

All this implies that the surface will produce a cyclic height relation in $TV_k$ iff one of the following two situations occur:

I) The $s(k,1)$ strip is higher than the $\neg s(k,1)$ strip along their intersection at $TV_k^1$, the $s(k,2)$ strip is higher than the $\neg s(k,2)$ strip along their intersection at $TV_k^2$, and the $s(k,3)$ strip is higher than the $\neg s(k,3)$ strip along their intersection at $TV_k^3$.

II) The exact opposite situation. The $\neg s(k,1)$ strip is higher than the $s(k,1)$ strip along their intersection at $TV_k^1$, the $\neg s(k,2)$ strip is higher than the $s(k,2)$ strip along their intersection at $TV_k^2$, and the $\neg s(k,3)$ strip is higher than the $s(k,3)$ strip along their intersection at $TV_k^3$.

Let the vector $x=(x_0,....,x_{N-1})$ represent a \gls{liftingAt}. Recall that each $x_j$ is either $0$ or $1$ and $x_j=0$ iff the 0 strip is higher than the 1 strip at the double arc $DA_j$. The above is equivalent to saying that the \gls{liftingAt} will produce a cyclic height relation in $TV_k$ iff either $x_{j(k,1)} \equiv s(k,1)$, $x_{j(k,2)} \equiv s(k,2)$ and $x_{j(k,3)} \equiv s(k,3)$, or $x_{j(k,1)} \equiv \neg s(k,1)$, $x_{j(k,2)} \equiv \neg s(k,2)$ and $x_{j(k,3)} \equiv \neg s(k,3)$.

In other words, the \gls{liftingAt} will \textbf{not} produce a cyclic height relation in $TV_k$ iff the vector $x=(x_0,....,x_{N-1})$ solves the formula $(((x_{j(k,1)} \leftrightarrow s(k,1)) \vee (x_{j(k,2)} \leftrightarrow s(k,2)) \vee (x_{j(k,3)} \leftrightarrow s(k,3))) \wedge ((x_{j(k,1)} \leftrightarrow \neg s(k,1)) \vee (x_{j(k,2)} \leftrightarrow \neg s(k,2)) \vee (x_{j(k,3)} \leftrightarrow \neg s(k,3))))$.

In particular, the \gls{liftingAt} is successful iff it upholds this formula for all $k=0,...,K-1$. This means that: 

\begin{thm} \label{Thm1}
A \gls{liftingAt} is successful iff the vector $x=(x_0,....,x_{N-1})$ solves the following $3$-sat formula, and in particular the surface is liftable iff this formula is satisfiable. 
\end{thm}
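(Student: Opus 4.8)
The plan is to assemble facts already in hand. By Theorem~\ref{LiftThm}(1), a \gls{liftingAt} of the surface --- which is well defined since the surface has no non-trivial closed double arcs, by Definition~\ref{Attempt} --- is successful if and only if it produces no cyclic height relation at any triple value. So it suffices to determine, for each fixed $k \in \{0,\dots,K-1\}$, the Boolean condition on the vector $x = (x_0,\dots,x_{N-1})$ equivalent to ``$x$ produces no cyclic height relation at $TV_k$'', and then conjoin these conditions over all $k$.

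For a fixed triple value $TV_k$, the first step is to recall, from the discussion preceding Theorem~\ref{LiftThm} and from Figure~\ref{fig:Figure 5}, that of the $2^3$ local forms a \gls{liftingAt} can take at $TV_k$, exactly the two ``cyclic'' ones --- situations I and II above, phrased in terms of the sheets $D_k^{\{1,2\}}$, $D_k^{\{1,3\}}$, $D_k^{\{2,3\}}$ of Definition~\ref{3Discs} --- obstruct a \gls{BSD}, while the other six are compatible with one. The second step is to rewrite situations I and II in terms of $x$: using the parameters $s(k,l)$ of Definition~\ref{sParam}, which record for each arc segment $TV_k^l$ which of the two sheets along it coincides with the $0$ strip of the arc $DA_{j(k,l)}$, together with the vector encoding of \gls{liftingAt}s from Definition~\ref{01Strip}, situation I becomes the three equalities $x_{j(k,l)} \equiv s(k,l)$ for $l = 1,2,3$, and situation II becomes $x_{j(k,l)} \equiv \neg s(k,l)$ for $l = 1,2,3$. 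Hence $x$ forces a cyclic height relation at $TV_k$ exactly when all three equalities $x_{j(k,l)} \equiv s(k,l)$ hold, or all three equalities $x_{j(k,l)} \equiv \neg s(k,l)$ hold. Negating this disjunction of conjunctions and applying De Morgan's law turns ``no cyclic height relation at $TV_k$'' into the conjunction of the two mirror $3$-clauses $(x_{j(k,1)} \leftrightarrow \neg s(k,1)) \vee (x_{j(k,2)} \leftrightarrow \neg s(k,2)) \vee (x_{j(k,3)} \leftrightarrow \neg s(k,3))$ and $(x_{j(k,1)} \leftrightarrow s(k,1)) \vee (x_{j(k,2)} \leftrightarrow s(k,2)) \vee (x_{j(k,3)} \leftrightarrow s(k,3))$, which is the $k$th conjunct of the formula above.

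To conclude, I would conjoin these equivalences over all $k = 0,\dots,K-1$ and invoke Theorem~\ref{LiftThm}(1), yielding the first assertion: a \gls{liftingAt} encoded by $x$ is successful iff $x$ solves the formula. The second assertion follows because Definition~\ref{01Strip} sets up a bijection between \gls{liftingAt}s and vectors in $\{0,1\}^N$, so by Theorem~\ref{LiftThm}(2) the surface is liftable iff some such $x$ is a solution, i.e. iff the formula is satisfiable. The only step carrying real geometric content is the identification of the two obstructing configurations at a triple value and the verification that the $s(k,l)$ translate them into the stated equalities; but that analysis was already carried out in the lead-up to the theorem (via Figures~\ref{fig:Figure 5} and~\ref{fig:Figure 6}), so within the proof it is pure bookkeeping, and the one thing to watch is the De Morgan manipulation together with keeping the position indices $k,l$ distinct from the variable index $j(k,l)$.
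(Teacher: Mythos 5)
Your proposal is correct and follows essentially the same route as the paper: the paper does not give a separate proof environment for Theorem~\ref{Thm1} but derives it in the prose immediately preceding it, by exactly your steps --- reduce via Theorem~\ref{LiftThm}(1) to ``no cyclic height relation at any $TV_k$'', translate situations I and II through Definitions~\ref{sParam} and~\ref{01Strip} into the equalities $x_{j(k,l)} \equiv s(k,l)$ (resp.\ $\equiv \neg s(k,l)$), negate the disjunction of conjunctions into the pair of mirror 3-clauses, and conjoin over $k$. Your De Morgan manipulation and the bijection argument for the second assertion both check out.
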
 \begin{multline} \label{LiftForm}
\bigwedge_{k=0}^{K-1} (((x_{j(k,1)} \leftrightarrow s(k,1)) \vee (x_{j(k,2)} \leftrightarrow s(k,2)) \vee (x_{j(k,3)} \leftrightarrow s(k,3))) \wedge \\
((x_{j(k,1)} \leftrightarrow \neg s(k,1)) \vee (x_{j(k,2)} \leftrightarrow \neg s(k,2)) \vee (x_{j(k,3)} \leftrightarrow \neg s(k,3)))).
\end{multline}

\begin{defn} \label{LiftFormDef}
Formula (\ref{LiftForm}) is called the lifting formula of the surface.
\end{defn}

\begin{rem} 
1) This 3-sat formula has $N$ variables (the number of double arcs the surface has), and $2K$ clauses (twice the number of triple values the surface has). After calculating the $j(k,l)$s and $s(k,l)$s, writing the full formula takes $O(K) \leq O(n)$ time.

2) The lifting formula is clearly symmetric.

3) The index function $j(k,l)$ and parameters $s(k,l)$ do not exactly match the Definitions~\ref{3Sat}(4,5) of the index function and parameters of a 3-sat formula. This is because each $j(k,l)$ and $s(k,l)$ is used in two clauses of the formula, instead of just one.

In the notation of Definition~\ref{3Sat}(4,5), $j(k,l)$ is the index function of the $(2k,l)$th and $(2k+1,l)$th literals, $s(k,l)$ is the parameter of the $(2k,l)$th literal and $\neg s(k,l)$ is the parameter of the $(2k+1,l)$th literal.
%
\end{rem}

\section{The complexity of the lifting problem} \label{CompOfLift}

The final step of the algorithm is to use any 3-sat algorithm to solve the lifting formula. As per Remark~\ref{reduce}, this takes $O(c^N+K \cdot \log(K))$ time, with $c$ depending on the algorithm one uses. Add to this the run-time of the preliminaries, $O(n^2)$, and the total runtime of the algorithm is thus $O(c^N+K \cdot \log(K)+n^2)$.

$N$ and $K$ - the numbers of double arcs and triple values of the surface, respectively, are clearly both $O(n)$ - there are some $a,b,c,d \in \R$ such that $N \leq an+b$ and $K \leq cn+d$. Using this, the $K$-dependent element of the complexity is observed in the $n$-dependent part and the run-time becomes $O(c^N+n^2)$. One may use the bound on $N$ to represent the complexity entirely in terms of $n$, as $O((c^a)^n)$. We will not calculate a formal bound to $a$, but one can intuitively expect it to be small. $M$ has at most $6n$ 1-simplices, but only a few of these are likely to be in the intersection set $X(i)$, and these will compose an even smaller number of double arcs (each arc is usually made of many 1-simplices).

That being said, we prefer to continue representing the complexity of the algorithm by both $n$ and $N$, as $O(c^N+n^2)$. Our claim is that the parameters $n$ and $N$ capture fundamentally different aspects of the topology of the surface, and they should both be represented. $n$ measures the difficulty of encoding the surface as a data type, of describing it to a computer. There are several ways one may encode a \gls{genericsurface} as a data type. Each way will have its own ``$n$" - a parameter that measures the size of the data type. Any lifting algorithm will have a ``preliminaries" part, whose complexity will depend on $n$. In it, the algorithm will calculate the relevant data required to determine if the surface is liftable. If the surface is represented efficiently, the ``preliminaries" part should take polynomial $O(n^\alpha)$ time. 

After the preliminaries, the algorithm will use said ``relevant data" to determine if the surface is liftable. The previous works on the lifting formula suggest that the complexity of this part depends on the parameter $N$.

In their articles, Carter and Saito (\cite{Car&Sai1}) and Satoh (\cite{Sat1}) each equated a \gls{liftingAt} with a kind of combinatorial structure on the intersection graph of the surface, and found that a \gls{liftingAt} is successful iff the matching structure upholds some condition. This is similar to the way we equated a \gls{liftingAt} with a vector in $\{0,1\}^N$, and showed that the attempt is successful iff the vector satisfies that lifting formula.

In each case, a close inspection reveals that there are $2^N$ possible structures. This complies with the fact that a surface has $2^N$ potential \gls{liftingAt}s. The ``trivial" way to check if a surface is liftable is to check all structures, and see if any of them are successful. This takes $O(2^N)$ time. It may be possible to devise a more efficient check, as we did using 3-sat algorithms, but there is no known way to check this in less than exponential $O(c^N)$ time. 
%

Theorem~\ref{Thm1} can also be used to prove the following:

\begin{thm} \label{Thm2}
The lifting problem is NP.
\end{thm}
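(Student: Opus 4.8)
\textbf{Proof proposal for Theorem~\ref{Thm2}.}

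The plan is to exhibit a polynomial-time verification algorithm in the sense spelled out earlier in the excerpt: a procedure that takes an input $(M,S)$ encoding a \gls{genericsurface} together with a certificate, and returns ``yes'' in polynomial time precisely when $S$ is liftable and some certificate witnesses this. The natural certificate is a \gls{liftingAt}, i.e.\ (after the algorithm has indexed the double arcs $DA_0,\dots,DA_{N-1}$) a vector $x = (x_0,\dots,x_{N-1}) \in \{0,1\}^N$ as in Definition~\ref{01Strip}. Since $N = O(n)$ (as noted just before the statement, $N \le an+b$), the certificate has size polynomial in $n$, which is the first thing one must check for membership in NP.

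The verification algorithm would run the preliminaries of the lifting algorithm from Section~\ref{PremSec}: verify that $(M,S)$ is a valid \gls{genericsurface} (linearithmic time, Theorem~\ref{ValidQuad}), identify the double arcs, triple values, arc segments and the index function $j(k,l)$ (quadratic time, Theorem~\ref{IdenQuad}), identify the two strips at each arc and check that no closed arc is non-trivial (quadratic time); if a non-trivial closed arc is found, reject (this is legitimate because by Lemma~\ref{TrivLem} such a surface is not liftable, so there is nothing to certify). Having passed these checks, the algorithm computes the parameters $s(k,l)$ of Definition~\ref{sParam} and assembles the lifting formula~(\ref{LiftForm}), which by the Remark following Definition~\ref{LiftFormDef} takes $O(K) \le O(n)$ additional time. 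Finally it substitutes the certificate $x$ into the lifting formula and evaluates it in $O(K)$ time, accepting iff the formula is satisfied. Total run-time is $O(n^2)$, hence polynomial.

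Correctness follows immediately from Theorem~\ref{Thm1}: a \gls{liftingAt} $x$ satisfies the lifting formula iff it is successful, and by Theorem~\ref{LiftThm}(2) the surface is liftable iff it has no non-trivial closed double arc and at least one successful \gls{liftingAt} exists. Thus there is a certificate accepted by the verifier iff $S$ is liftable, and every accepted certificate genuinely witnesses liftability; this is exactly the definition of NP recalled in Chapter~\ref{Sec3Sat}.

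I do not expect a serious obstacle here — the theorem is essentially a bookkeeping corollary of the machinery already in place. The one point requiring a little care is making sure that \emph{all} preliminary steps, including input validation and the identification of strips, are genuinely polynomial in the single size parameter $n$ and that the certificate size is polynomial in $n$ as well; both facts are supplied by the cited technical results (Theorems~\ref{ValidQuad} and~\ref{IdenQuad}) and by the bound $N = O(n)$, so the argument is complete once these are invoked. A secondary subtlety is the handling of the non-trivial-arc case: one must note explicitly that rejecting without examining any certificate is correct there, which follows from Lemma~\ref{TrivLem}.
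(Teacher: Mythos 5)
Your argument is correct in substance and is in fact the first of the two routes the paper itself considers: take the bit-vector $x=(x_0,\dots,x_{N-1})$ of Definition~\ref{01Strip} as the certificate, run the polynomial-time preliminaries, build the lifting formula, and evaluate. The paper explicitly says that if one accepts this certificate format, Theorem~\ref{Thm2} follows exactly as you argue. However, the paper then raises an objection you do not address: the meaning of the vector $x$ is not intrinsic to the input $(M,S)$ but depends on arbitrary choices made \emph{inside} the verifier (the order in which it happens to index the double arcs, and which strip at each arc it happens to call the $0$ strip). Before the verifier runs, a given bit-vector does not correspond to any particular \gls{liftingAt}. Formally your verifier is still a legitimate NP verifier, since it is deterministic and an accepted vector exists iff the lifting formula is satisfiable iff the surface is liftable; but the certificate is a witness only relative to the verifier's internal conventions. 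The paper's actual proof replaces the vector by a choice-independent certificate (Definition~\ref{Cer}): a bit recording whether a non-trivial closed arc exists, plus, for each 1-simplex of $X(S)$, a labelling of the four 2-simplices of its $S$-star as ``H'' or ``L'' describing which strip is higher. Lemma~\ref{CerIsLeg} then shows this richer certificate can be validated, translated into a vector $x$ via the verifier's own designations, and checked against the lifting formula, all in $O(n^2)$ time. So your proof buys simplicity at the cost of a certificate whose semantics are verifier-dependent; the paper's buys an intrinsically meaningful certificate at the cost of the extra consistency checks in Lemma~\ref{CerIsLeg}. If you keep your version, you should at least state explicitly that the verifier's choices are deterministic functions of the input, so that ``there exists an accepted certificate'' is well defined.
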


Recall that a problem is NP if there is a polynomial-time algorithm that receives an input and a certificate and returns ``yes" if the certificate solves the problem for the given input and ``no" otherwise. In our case, the input is a \gls{genericsurface} $(M,S)$ and the certificate is a form of data that represents a \gls{liftingAt} of the surface. The algorithm should return ``yes" if this \gls{liftingAt} is legitimate, and ``no" otherwise.

All parts of the lifting algorithm, except solving the lifting formula, take polynomial time. This includes checking if the surface has any non-trivial closed arcs, identifying the relevant parts of the surface and calculating the lifting formula. This can be used as a foundation for the ``certificate verifying algorithm". Next, if the surface has no non-trivial arcs, the algorithm should check if the \gls{liftingAt} described by the certificate is legitimate. Intuitively, this should take polynomial time. The only problem is deciding how to encode this \gls{liftingAt}, what would be a valid certificate for the lifting problem.

The simplest way to depict a \gls{liftingAt} it to provide the vector $(x_0,...,$ $x_{N-1})$ that corresponds to it. To check if this \gls{liftingAt} is legitimate, one needs only to insert these values into the lifting formula and see if it returns $1$ or $0$. This clearly takes linear time. If the reader is willing to except this as a certificate, then we now have a complete polynomial-time certificate verifying algorithm, and Theorem~\ref{Thm2} follows.

However, one could argue that this is not a valid certificate. For one, the exact formulation of the lifting formula depends on arbitrary choices made by the algorithm - the way it chooses to order the double arcs determines which variable $x_j$ corresponds to which arc, and the way it chooses which of the surface strips at the $j$th arc is the 0 strip and which is the 1 strip effects the variable $s(k,l)$ of all literals for which $j(k,l)=j$. These choices also determine which vectors in $\{0,1\}^N$ correspond to which \gls{liftingAt}. Before this choice is made, a vector does not correspond to a \gls{liftingAt}.

There is a better way to describe a \gls{liftingAt}. Recall that one of the things that the algorithm identifies in the ``preliminaries" stage is the pair of intersecting surface strips at each double arc, and names one of them the 0 strip and the other the 1 strip. The exact data that the algorithm uses to represent a surface strip is slightly complicated. We use what we call ``a continuous designation" - see Definition~\ref{Des} in section~\ref{idensec}.

A good certificate for the lifting problem will be a similar data type, but instead of identifying the 0 and 1 strips along each double arc, it will identify the higher and lower strips along the arc.

The algorithm needs to verify that this information is valid - that it really describes the two surface strips at each arc. Then, after the algorithm chooses which surface strip at any arc $DA_j$ is the 0 strip and which is the 1 strip, it will compare this information with the certificate and see which strip is higher at each arc. It will then determine the value of the variable $x_j$ per Definition~\ref{01Strip} - if the 0 strip is the higher strip then $x_j=0$, otherwise $x_j=1$. Lastly, it will insert the values of the $x_j$s into the formula and check if they satisfy it.

Intuitively, this will take polynomial time $O(p(n))$ - verifying that the certificate contains a real description of the surface strips of all arcs is a simpler task than identifying the surface strips yourself, and we know that the latter takes polynomial time. Nonetheless, we will give a formal proof that this process takes $O(n \cdot \log(n))$ in the last section of the technicalities chapter - section~\ref{NPsec}.

After the technicalities, we will set about proving that the lifting problem is NP-hard. We will do this by reducing the symmetric 3-sat problem, proved to be NP-complete in section~\ref{Sym3Sec}, to the lifting problem in polynomial time. In order to do this, we need to reverse the process we used in this chapter. For every \gls{symmetric}, we need to construct a \gls{genericsurface} $(M,S)$ such that the lifting formula of the surface is equivalent to the given formula.

%
\chapter{Technicalities} \label{SecTech}

As stated throughout the previous chapter, this chapter is dedicated to the technicalities of the lifting algorithm. In it, we explain the following: how to depict a ``sub-simplicial complex of a triangulated 3-manifold" to a computer as a data type, how the algorithm verifies that the input is valid, how it identifies the relevant parts of the surface - the triple values, double arcs, intersecting surface strips at each arc, etc, how it checks if the surface has any non-trivial closed double arcs, and how it deduces the index function $j(k,l)$ and parameters $s(k,l)$ of the lifting formula. We will also examine the complexity of each of these tasks, and in particular prove that they all take polynomial time.

Additionally, in the last section, we will explain how to encode a \gls{liftingAt} of the surface as a certificate, and how to check if this \gls{liftingAt} is legitimate in polynomial time.

\section{A \gls{genericsurface} as a data type - the basics} \label{DTsec1}

%
In this first section, we will explain how to encode a simplicial complex as a data type a computer can use, and in particular how to encode a \gls{genericsurface} within a 3-manifold to a computer. We also explain the first steps needed to verify that an input of this sort is valid.

\begin{defn} \label{GSIM}
1) We represent a finite 3-dimensional simplicial complex via a data type that contains the following entries: a number $\#V$ which indicates how many vertices the complex has; a list $M_1$ of pairs of numbers representing edges - a pair $(s,q)$ means that there is an edge between the $s$'th vertex and the $q$'th vertex (we index the vertices between $0$ and $\#V-1$); a similar list $M_2$ of triples representing triangles and a list $M_3$ of quadruples representing tetrahedron.

Each $d$-simplex is supposed to be a set of $d+1$ elements. We use $d+1$-tuples instead of sets since it is easier for a computer to define and work with them. In particular, the $d+1$ elements of each tuple must all be distinct and, since the order of the elements is irrelevant, we assume that the numbers in each tuple appear in ascending order. For instance, the triangle with vertices $1$, $4$ and $6$ will be written as $(1,4,6)$ and not $(4,1,6)$.

$M_d$ is also supposed to represent the set of all $d$-simplexes, and we use a list instead of a set for similar reasons. Due to this, no $d$-simplex should appear in $M_d$ twice. Lastly, the faces of every simplex in a complex must also belong to the aforementioned complex. Due to this, every ``sublist" of every $d$-simplex must belong to the appropriate $M_d$. For instance, if the 3-simplex $(0,3,4,9)$ is in $M_3$, then the 2-simplices $(0,4,9)$ and $(0,3,4)$ must be in $M_2$.

2) A subcomplex $S$ contains the following data: a list $S_0$ of all the vertices in this sub-complex (a list of numbers between $0$ and $\#V-1$), and lists $S_1$, $S_2$ and $S_3$ of all the edges, triangles and tetrahedrons in the sub-complex. 

3) An abstract \gls{genericsurface} is a pair $(M,S)$ where $M$ is a 3-dimensional simplicial complex whose geometric realization is a compact 3-manifold (possibly with a boundary), and $S$ is a 2 dimensional sub-complex of $M$ whose geometric realization is a \gls{genericsurface} in the above-mentioned 3-manifold (in the meaning of Remark~\ref{SurfaceSet}).
\end{defn}

We would like to explain some conventions we use, regarding \gls{genericsurface}s and complexity.

\begin{rem} \label{Calculate}
1) The parameter we use to describe the size of a \gls{genericsurface} $(M,S)$ is the number $n$ of 3-simplices in $M$. A 3-manifold is a pure simplicial complex - every $0$, $1$ or $2$ simplex is in the boundary of some $3$-simplex. A $3$-simplex has a 4 vertices, 6 edges and 4 faces, and so the manifold can have no more then $4n$ 0- or 2-simplices and $6n$ 1-simplices. $n$ thus linearly bounds the length of all the lists $M_i$ and the vectors $S_i$, and is therefore a good representation of the size of $(M,S)$.

2) The way one calculates the runtime of an algorithm depends on the kind of actions one considers to be trivial - to take $O(1)$ time. For instance, it is common to ``cheat" and consider the addition of two integers $k_1$ and $k_2$ to take $O(1)$ time, but the amount of time it actually takes depends on the number of digits in each $k_i$, and is thus proportional to the logarithm of these numbers - $O(\log(max\{k_1,k_2\}))$. This is why, for instance, each of the different algorithms for the multiplication of $n \times n$ matrices is commonly considered to take $O(n^a)$ time for some $2 < a \leq 3$, disregarding the logarithmic component that depends on the number of digits of the entries in the matrix.
 
This ``cheating" reflects an assumption that there is a common bound on the number of digits of all the numeric values that appear in the input. This really is the case when working with a computer, where every type of variable that represents a number (Int, Double, Float, Long, etc) can only contain numbers of a given size. However, an abstract algorithm (or alternatively a Turing machine) has no such limitation.

We will often make the same assumption. For instance, we consider the size of a \gls{genericsurface} to be $O(n)$ since the length of every list in it (the $M_i$'s) is $(O(n))$, even though technically it is $O(n \cdot \log(n))$, since every value the entries of the list has $O(\log(n))$ digits. Similarly, we will sometimes consider the time some action takes to be $O(1)$ instead of $O(\log(n))$ or $O(\log^2(n))$. This assumption can only change the runtime of the algorithm by removing a logarithmic multiplier - for instance, an algorithm where at most $O(n^2)$ such actions are made will take $O(n^2)$ time instead of $O(n^2 \cdot \log(n))$. In any case, this change between these two perspectives is too small to let the same algorithm have a polynomial runtime from one perspective but not from the other. 

3) Our definition of a simplicial complex does not allow for two simplices with the exact same vertices. Even had our definition allowed same-vertex simplices, a single barycentric subdivision on any complex would still have created an equivalent complex with no same-vertex simplices. The total number of 3-simplices in the complex will be multiplied by a constant (24), and so polynomial time algorithms will have the same runtime on the subdivided complex as they did on the original one. Due to this, we may assume WLOG that all complexes have no same-vertex simplices.
\end{rem}

Verifying that $(M,S)$ is indeed a \gls{genericsurface} is a long and technical process. In the following few subsections, and the remainder of the current one, we will explain all the steps of this process, and prove that, collectively, they all take linearithmic $O(n \cdot \log(n))$ time. The first conditions one should check is that $M$ is a valid 3 dimensional simplicial complex (as per Definition~\ref{GSIM}), that $S$ is a 2 dimensional sub-complex, and that they are both pure.

\begin{lem} \label{PureComp}
All these checks take ($O(n \cdot \log(n))$) time. 
\end{lem}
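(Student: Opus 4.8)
The plan is to reduce every one of these checks to a constant number of sorts followed by binary searches on the lists $M_0,\dots,M_3$ and $S_0,\dots,S_2$. First I would sort each list lexicographically. Each list has length $O(n)$ and each entry is a tuple of bounded length (at most $4$), so comparing two entries costs $O(1)$ under the conventions of Remark~\ref{Calculate}(2), and all the sorts together take $O(n\log n)$ time. Having sorted, I would scan each list once to confirm that every tuple has distinct entries lying in $\{0,\dots,\#V-1\}$, that these entries appear in ascending order, and that no two consecutive (hence no two) tuples in a list are equal; this scan is $O(n)$.

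Next I would verify the face-closure conditions. For each $d$-simplex $\sigma$ in $M_d$ with $d\in\{1,2,3\}$, I would enumerate its $d+1$ codimension-one faces, each obtained by deleting one entry of $\sigma$, and check that each such face lies in $M_{d-1}$ by binary search in the already-sorted list $M_{d-1}$. Each query costs $O(\log n)$ and there are $O(n)$ of them, for $O(n\log n)$ total; processing $d$ from $3$ down to $1$ guarantees that \emph{all} faces of all simplices, not merely the codimension-one ones, are present. The same procedure applied to $S$ checks that $S$ is closed under faces, one further pass checks (again by binary search) that each simplex listed in $S_d$ genuinely occurs in $M_d$ so that $S$ is a subcomplex, and a trivial check confirms that $S$ carries no $3$-simplices, hence is $2$-dimensional.

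For purity I would work upward rather than downward. Initialize a Boolean flag for each entry of $M_0$, $M_1$ and $M_2$; then for every $3$-simplex in $M_3$, enumerate its $14$ proper faces (its four vertices, six edges and four triangles), locate each in the appropriate sorted $M_d$ by binary search, and set its flag. After all tetrahedra are processed, $M$ is pure iff every flag is set, which one final linear scan decides. Purity of $S$ is handled identically, using the triangles of $S_2$ together with their vertices and edges. Each stage costs $O(n\log n)$, and summing the three stages gives the claimed bound.

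I expect the only real subtlety to be bookkeeping rather than mathematics: the input lists cannot be assumed pre-sorted (Definition~\ref{GSIM} orders only the entries \emph{within} each tuple), so the sorts must actually be carried out; and the downward face-closure stage must precede the conclusions drawn from the upward purity stage, since the latter's binary searches presuppose that the faces being sought would indeed appear in the lower-dimensional list if the complex is valid. Under the logarithmic-cost conventions already adopted in Remark~\ref{Calculate}(2), none of the integer comparisons or array accesses inflates the total beyond $O(n\log n)$, which is exactly the bound asserted.
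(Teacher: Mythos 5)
Your proposal is correct and achieves the stated $O(n\cdot\log(n))$ bound, but it organizes the verification differently from the paper. The paper's proof, after sorting, forms the explicit multiset of all codimension-one faces of all $d$-simplices, sorts \emph{that} list, deletes repetitions, and then tests it for \emph{equality} with the list $M_{d-1}$; a single list comparison thus certifies face-closure (every face appears in $M_{d-1}$) and purity (every element of $M_{d-1}$ is such a face) simultaneously. You instead decouple the two conditions: a downward pass that checks face-closure by $O(\log n)$ binary searches into the already-sorted $M_{d-1}$, and a separate upward pass that marks, via Boolean flags, every proper face of every tetrahedron and then verifies all flags are set. Both decompositions are sound and cost $O(n\log n)$; the paper's buys a slightly cleaner accounting (one sort-and-compare per dimension, no per-query logarithms beyond the sort), while yours makes the logical separation between ``is a complex'' and ``is pure'' more transparent and avoids materializing the face multiset. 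One small point you should make explicit: the claim that each list has length $O(n)$ is not free, since $n$ is defined as the number of $3$-simplices and an invalid input could carry far more edges or triangles than a pure complex would allow. The paper disposes of this first, in $O(1)$ time, by rejecting any input with more than $4n$ vertices or triangles or $6n$ edges (such an input cannot be pure); without that preliminary bound your sorts are not guaranteed to run in $O(n\log n)$.
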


\begin{proof}
Finding $n$ (the number of 4-tuples, or 3-simplices), and making sure there are no more than $4n$ triangles and $6n$ edges, and that $\#V \leq 4n$ takes $O(1)$ time. If this does not hold $M$ cannot be pure. One may now assume that the lengths of the lists of $d$-dimensional simplices in $M$ are linearly bounded by $n$ ($O(n)$).

Next, checking that the number of $j$-simplices of $S$ is smaller or equal to that of $M$, and that it has no 3-simplices, also takes $O(1)$ time. One may now assume that the lengths of the lists of $d$-dimensional simplices in $S$ are $O(n)$ as well.

Definition~\ref{GSIM} demands that one verifies that every tuple contains integers from the correct domain ($0,...,\#V-1$) in increasing order. Verifying this clearly takes $O(n)$ time.

One may then sort the lists of $d$-simplices of $M$ (for $d=1,2,3$) and of $S$ (for $d=0,1,2$) according to the lexicographical order, so that, for instance, the 2-simplex $(1,3,7)$ will appear before $(2,3,5)$. This will simplify some of the following steps of the algorithm. This takes $O(n \cdot \log(n))$ time using the merge sort algorithm.

Next, one must write the list of all the 2-faces of all the 3-simplices in $M$, with multiplicities. For every 3-simplex $(a_0,...,a_3)$ in $M$, the list will have 4 entries - the faces $(a_0,a_1,a_2)$, $(a_0,a_1,a_3)$ etc. It will take $O(n \cdot \log(n))$ time to sort this list, and $O(n)$ time to delete all the repeating instances of 2-simplices. After the sorting and deleting, this list should be equal to the list of 2-simplices of $M$ in order for $M$ to be a valid and pure simplicial complex. Since both of these lists are sorted and of length $O(n)$, comparing them takes $O(n)$ time.

Checking that $M$ is a valid and pure complex requires that one also compares the list of 1-faces produced from the 2-simplices of $M$ with the list of 1-simplices of $M$, and the list of 0-faces produced from the 1-simplices of $M$ with the list $0,1,...,V-1$. This will again take $O(n \cdot \log(n))$ time, as will checking that $S$ is a valid and pure complex. Lastly, one must check that the list of $d$-simplices of $S$ ($d=1,2$) is contained in the list of $d$-simplices of $M$. Since all these lists are sorts, this takes $O(n)$ time.
\end{proof}

\begin{rem} \label{SortSimp}
As part of the previous proof, we ordered the lists of $d$-simplices of $M$ and $S$ in lexicographical order. From now on, one may assume that these lists are ordered. This will simplify some of the following calculations.
\end{rem}

The remaining steps in proving that the input is valid are verifying that $M$ is a 3-manifold and that $S$ is a \gls{genericsurface} within $M$. The former involves examining the stars and/or links of the simplices of $M$. Recall that the star $St(\sigma)$ of a simplex $\sigma$ in a simplicial complex $M$ is the sub-complex of $M$ that contains the simplices that contain $\sigma$ and their boundaries, and that the link $Lk(\sigma)$ is the sub-complex of $St(\sigma)$ that contains only those simplices that are disjoint from $\sigma$. Similarly, verifying that $S$ is a \gls{genericsurface} requires the examination of the $S$-stars or $S$-links of every simplex $\sigma$ in $S$.

\begin{defn} \label{St'}
The ``$S$-star" of $\sigma$, $St'(\sigma)$ is the sub-complex containing all the simplices \textbf{within $S$} that contain $\sigma$, and their boundaries. The ``$S$-link" $Lk'(\sigma)$ is the sub-complex of simplices from $St'(\sigma)$ that are disjoint from $\sigma$.
\end{defn}

For every $d$-simplex $\sigma$ of $M$ ($d=0,1,2$), we will identify and store the subcomplex $St(\sigma)$ in such a way that it can be readily accessed in $O(1)$. Using a computer program, this can be achieved using pointers. This complex is made of 4 lists listing $d$-simplices of $St(\sigma)$. Searching these lists will still take linear time. We will similarly define the sub-complexes $Lk(\sigma)$, $St'(\sigma)$ and $Lk'(\sigma)$ for the appropriate $\sigma$'s.

\begin{lem} \label{St'Lem}
This can be done in linear $O(n)$ time.
\end{lem}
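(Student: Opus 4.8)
The plan is to build everything in one structured pass over the (already lexicographically sorted, by Remark~\ref{SortSimp}) simplex lists of $M$ and $S$, attaching to each simplex $\sigma$ of dimension at most $2$ a pointer to a record that holds the four lists comprising $St(\sigma)$; the $O(1)$ access demanded by the statement is then just pointer dereferencing. Throughout I use the size conventions of Remark~\ref{Calculate}: each simplex may be assumed to carry an integer index (its position in the sorted list $M_d$ or $S_d$), so that retrieving a simplex's record costs $O(1)$.

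The key structural observation is that, since $M$ is pure (Lemma~\ref{PureComp}), $St(\sigma)$ equals the union of the \emph{closed} tetrahedra of $M$ that contain $\sigma$: if $\rho\subseteq\tau$ and $\sigma\subseteq\tau$, purity gives a tetrahedron $T\supseteq\tau$, and then $\sigma,\rho\subseteq T$. So it suffices to: (i) scan $M_3$ and, for each tetrahedron $T$ and each of its $14$ faces $\sigma$ of dimension $\le 2$, append $T$ to a list $T(\sigma)$ attached to $\sigma$'s record --- this does $O(14n)=O(n)$ work and needs no deduplication, each pair $(T,\sigma)$ being met once; and (ii) for each $\sigma$, walk through $T(\sigma)$ and, for each tetrahedron in it, add its $15$ faces (of dimension $\le 3$) to the four lists of $St(\sigma)$, suppressing repeats with a global ``last owner'' array indexed by the simplices of $M$ (when processing $\sigma$ we mark each simplex $\rho$ placed in $\sigma$'s star, and skip $\rho$ if it is already marked for the current $\sigma$). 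Step (ii) performs $\sum_{\sigma}15\,|T(\sigma)| = 15\sum_{T}14 = O(n)$ constant-time operations.

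The one point that genuinely requires an argument --- since a single star $St(v)$ may itself contain $\Theta(n)$ simplices --- is that the \emph{total} output size $\sum_{\sigma}|St(\sigma)|$ is $O(n)$, so that the whole table fits in linear space and step (ii) above terminates in linear time. This is a charging argument: every membership $\rho\in St(\sigma)$ is witnessed by some tetrahedron $T$ with $\sigma,\rho\subseteq T$; charge it to the triple $(T,\sigma,\rho)$. A tetrahedron has at most $15$ faces, so it is charged at most $15^2$ times, whence $\sum_{\sigma}|St(\sigma)|\le 225\,n=O(n)$. I expect this amortisation (rather than any per-simplex bound) to be the real obstacle in the lemma.

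Finally, the links and the $S$-decorations are obtained by the same machinery. For $Lk(\sigma)$, traverse the already-built $St(\sigma)$ and delete every simplex meeting $\sigma$ in a vertex; $\sigma$ has $\le 3$ vertices, so this is $O(1)$ per entry, hence $O(\sum_\sigma|St(\sigma)|)=O(n)$ in total. For $St'(\sigma)$ and $Lk'(\sigma)$ one reruns the construction with $S$ in place of $M$: $S$ is a pure $2$-complex (Lemma~\ref{PureComp}), so $St'(\sigma)$ is the union of the closed triangles of $S$ containing $\sigma$, the scan now runs over $S_2$ with $6$ sub-faces and $7$ faces per triangle, and the identical charging bound gives $O(n)$. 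Adding the costs of the four constructions yields the claimed $O(n)$.
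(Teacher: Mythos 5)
Your proof is correct and follows essentially the same route as the paper's: a single linear pass over the simplices of $M$ (resp.\ $S$), distributing a bounded number of faces per top simplex into the star records, with the total size of all stars bounded by a constant times $n$. The only real difference is bookkeeping: where you enumerate with repetitions and deduplicate via a marking array plus an explicit charging argument, the paper avoids duplicates outright by parametrizing the elements $\nu$ of $St(\sigma)$ bijectively by the pairs $(\tau,\mu)=(\nu\cup\sigma,\nu\cap\sigma)$ with $\sigma\subseteq\tau$ and $\mu\subseteq\sigma$ — a device you may find cleaner, though your amortized count $\sum_\sigma\#St(\sigma)=O(n)$ is exactly the bound the paper records in Remark~\ref{AllStIsn}.
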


\begin{proof}
We begin by defining each $St(\sigma)$ and $Lk(\sigma)$ as an ``empty" complex - it will contain 4 empty lists, one for the simplices of each dimension. We will later add the appropriate simplices to each list.

A $d$-simplex is a set of $d+1$ integers represented as an increasing sequence $(a_0,...,a_d)$. Calculating the intersection, union or difference of two such sets takes $O(1)$ time (since $d$ is bounded by $3$). For instance, the union of $(5,9)$ and $(1,5,7)$ is $(1,5,7,9)$, their intersection is $5$ and the difference is $9$.

A simplex $\nu$ is in the star of another simplex $\sigma \neq \nu$ iff their union is also a simplex in $M$. For every simplex $\tau$ that contains $\sigma$ and every simplex $\mu$ that is contained in $\sigma$ (including the ``empty simplex") the simplex $\nu=(\tau \setminus \sigma) \cup \mu$ is in $St(\sigma)$. $\mu$ is the intersection of $\nu$ and $\sigma$ while $\tau$ is their union. Two different simplices $\nu_1,\nu_2$ in $St(\sigma)$ cannot have the same union and intersection with $\sigma$, and we can use $\tau$ and $\mu$ as a way of listing all of the simplices in $St(\sigma)$ with no repetitions. Notice that such a $\nu$ is in $Lk(\sigma)$ iff it is disjoint from $\sigma$ iff $\mu$ is empty.

As per the above, we will go over every simplex $\tau$ of dimension $d>0$ and every non-empty sub-complex $\sigma$ of $\tau$ (for instance, if $\tau=(1,3,4)$ these will be $1$, $3$, $4$, $(1,3)$, $(1,4)$ and $(3,4)$). We also go over every subset $\mu$ of $\sigma$, this time allowing the empty set and $\sigma$ itself, and add $\nu=(\tau \setminus \sigma) \cup \mu$ to $St(\sigma)$. If $\mu$ is empty, we add $\nu$ to $Lk(\sigma)$ as well. Notice that when $\tau=\sigma$ and $\mu$ is empty then $\nu$ is empty, in which case we ignore it and move on.

Since the number of different $\tau$'s to go over is bounded by $O(n)$, and the number of $\sigma$'s and $\mu$'s per a given $\tau$ is bounded, this process takes $O(n)$ time. Computing $St'$ and $Lk'$ is done similarly.
\end{proof}

\begin{rem} \label{AllStIsn}
1) It follows from the proof that there is a number $a$ such that every simplex can appear at the stars of at most $a$ simplices. This implies that the sum of the sizes of all stars, $\sum_\sigma \#St(\sigma)$, is bounded by $a$ times the size of $M$, and is thus $O(n)$. The same applies to the sizes of $Lk$, $St'$ and $Lk'$.

2) As in Remark~\ref{SortSimp}, we can sort the lists of $d$-simplices in every star $St(\sigma)$ according to lexicographical order. For a single star, this takes $O(\#St(\sigma) \cdot \log(\#St(\sigma)))$ time. For all stars, this takes $\sum O(\#St(\sigma) \cdot \log(\#St(\sigma))) \leq \sum O(\#St(\sigma) \cdot \log(an+b)=O(n \cdot \log(n))$ time. We can similarly sort the simplices in every $Lk(\sigma)$, $St'(\sigma)$ and $Lk'(\sigma)$ in $O(n \cdot \log(n))$ time.
\end{rem}

\section{A \gls{genericsurface} as a data type - manifolds} \label{DTsec2}

In this section we explain the complexity of verifying that the total space $|M|$ of $M$ is a 3-manifold. We also comment on the complexity of verifying that a simplicial complex of dimension $m$ is an $m$-manifold. Later, we study the complexity of the first few checks needed to verify that $S$ is a \gls{genericsurface} in $M$.

Any triangulation of a 3-manifold is combinatorial (see \citep[p.165-168, theorems 23.1 and 23.6]{MoiB1}). There are several equivalent definitions for a combinatorial manifold. The most common definition is that an $m$-dimensional complex is a combinatorial manifold if the link of every $d$-dimensional simplex $\sigma$ in $M$ (for $d=0,1,...,n-1$) $Lk(\sigma)$ is an $m-1-d$ dimensional PL-sphere or ball, although it is actually enough for this to hold for $d=0$, which implies this for every other $d$. For a short but encompassing introduction to combinatorial manifolds, see \citep[chapter 5, pp. 20-28]{Hud1}.

\begin{rem} \label{ManBySpheres}
The complexity of verifying that an $m$-dimensional pure complex is a combinatorial manifold is tied to the complexity of determining if a connected combinatorial $m-1$-manifold is PL-homeomorphic to a sphere or a ball. The following known devices demonstrate this:

1) On the one hand, given a connected combinatorial $m-1$-manifold $X$, the suspension $SX$ is an $m$-dimensional pure complex. Its triangulation is as follows: it has two new 0-simplices $a$ and $b$ and, for every simplex $\sigma$ in $X$, $SX$ contains the simplices $\sigma$, $\sigma \cup a$ and $\sigma \cup b$. For every $\sigma$ in $X$, the links of $\sigma \cup a$ and $\sigma \cup b$ in $SX$ are equal to the link of $\sigma$ in $X$, and the link of $\sigma$ in $SX$ is equal to the suspension of the link of $\sigma$ in $X$. In particular, all these links are spheres or balls.

The only remaining simplices of $SX$ are $a$ and $b$, and each of their links is clearly equal to $X$. It follows that $SX$ is a combinatorial manifold iff $X$ is PL sphere or ball. This implies that checking if an $m$ complex is a manifold is at least as hard as checking that an $m-1$ manifold is a sphere or a ball.

2) On the other hand, checking that a pure $m$-dimensional simplicial complex $X$ in a manifold involves proving that the link $Lk(\sigma)$ of any $d$-simplex $\sigma$ for any $d \leq m-1$ is an $m-d-1$ dimensional sphere or disc. $Lk(\sigma)$ is clearly a pure $m-1-d$-dimensional sub-complex of $X$. The following remains to be checked:

a) For $d<m-1$, an $m-d-1>0$ dimensional sphere or disc is connected. A simplicial complex is connected iff its 1-skeleton, a graph, is connected. Checking that a graph with $\#V$ vertices and $\#E$ edges is connected takes $O(\#E+\#V)$ time, as is proven in \cite{Hop&Tar1}. In particular, checking that $Lk(\sigma)$ is connected takes $O(\#Lk(\sigma))$ time. Similarly to Remark~\ref{AllStIsn}(1), the sum of the sizes of all links in $X$ is $O(\#X)$. It thus takes linear $O(\#X)$ time to verify that they are all connected.

b) For $d=m-1$, the link of an $m-1$ simplex is a finite set of 0-simplices. One should only check that there are either 1 or 2 of them (A 0-dimensional ball/sphere is just 1/2 points). Again, this takes $O(\#X)$ time.

c) For lower dimensions, induction can be used. The induction uses the facts that if $\sigma$ is a $c$-dimensional simplex in $M$ and $\tau$ is a $d$ dimensional simplex in $Lk(\sigma)$, then $\sigma \cup \tau$ is a $c+d+1$-dimensional simplex in $M$, and that the link of $\tau$ \textbf{inside $Lk(\sigma)$} - its $Lk(\sigma)$-link - is equal to the link of $\sigma \cup \tau$ in $M$. They both have the same definition - the set of simplices in $M$ that are disjoint from $\sigma$ and $\tau$, but whose union with $\sigma \cup \tau$ is in $M$.

Given a dimension $k<m-1$, assume that for every $k<d<m$ the link of a $d$-simplex in $M$ is an $m-1-d$-dimension disk or a sphere. The link of every $c$-simplex inside the link of a $k$ simplex is thus an $m-1-(c+k+1)=(m-1-k)-1-c$-sphere or a ball. This implies that that the link of every $k$-simplex in $M$ is an $m-1-k$-dimensional combinatorial manifold. Step (a) verified that this manifold is connected.

In order to progress to the next phase of the induction, one will have to prove that each of these combinatorial manifolds is a PL ball or sphere. Once one does so, the same argument implies that the links of $k-1$-simplices are connected $m-k$-manifolds, and one will need to prove that these are all PL-spheres or balls and so on.


3) This implies that checking if a pure $m$-complex is a combinatorial manifold is harder than checking if a combinatorial $m-1$-manifold is a sphere/ball, but is easier than checking if a combinatorial $d$-manifold is a sphere/ball for an arbitrary $0 \leq d \leq m-1$. Intuitively, this gets harder as $d$ increases, and reaches maximal difficulty for $d=m-1$, hence the equivalence. But the complexity of this last problem is not fully understood for some $d$s.

In the next paragraphs, we will show that this takes $O(\#X)$ time for $d=0,1$ and $O(\#X \cdot \log(\#X))$ times for $d=2$. For $d=3$, the problem is known to be NP (Schleimer, \cite{Sch1}), and in \cite{Has&kup1} Hass and Kuperberg proved that it is also co-NP, assuming that the generalized Riemann hypothesis holds. These results are formulated for the recognition of 3-spheres but can be easily modified for 3-balls.  For $d=4$, very little is known. It is not even known if they are exotic 4-spheres - PL 4-manifolds that are topologically homeomorphic, but not PL-homeomorphic, to $S^4$.

However, for every $d \geq 5$ this problem is known to be undecidable - no algorithm can solve it, regardless of runtime. The original proof is in Russian, by Novikov, in an appendix of \cite{Vol&Kuz&Fom}. The main idea is that one can check (in polynomial time) if $X$ is a homology $d$-sphere, and such a homology sphere is PL-homeomorphic to a sphere iff it is simply connected. The real problem is looking at a presentation of the fundamental group of $X$, and determining if it is trivial. Novikov proves that this is impossible, using a variation of the Adian-Rabin theorem.

4) Some higher-dimensional manifolds have non-combinatorial triangulations. One can use a similar technique to determine if a simplicial complex is a non-combinatorial manifold, using a theorem of Galewski and Stern. See \cite{Gal&Ste1}
\end{rem}

\begin{lem} \label{MisMan}
It takes $O(n \cdot \log(n))$ time to check that a pure $3$-dimensional complex $M$ is a $3$-manifold.
\end{lem}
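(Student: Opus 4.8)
The plan is to turn ``$|M|$ is a $3$-manifold'' into a short list of purely local conditions on links, each testable in time proportional to the size of the link involved, and then to sum using the bound $\sum_\sigma \#Lk(\sigma) = O(n)$ from Remark~\ref{AllStIsn}. By Moise's theorem (\cite{MoiB1}) every triangulation of a $3$-manifold is combinatorial, so $|M|$ is a $3$-manifold if and only if $M$ is a combinatorial $3$-manifold; and, as recalled in Remark~\ref{ManBySpheres}, the latter holds if and only if the link $Lk(v)$ of every vertex $v$ of $M$ is a PL $2$-sphere or a PL $2$-ball. Thus the algorithm only has to test this one family of conditions — but, to test it efficiently and to make sure each $Lk(v)$ is even a surface before appealing to the classification of surfaces, it is convenient to run the (much cheaper) manifold checks on the links of triangles and edges first.

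First I would dispose of the low-dimensional links. All links $Lk(\sigma)$ were already computed, stored and sorted in the preceding lemmas (Lemma~\ref{St'Lem} and Remark~\ref{AllStIsn}). For each triangle $\sigma$ of $M$ the algorithm checks that $Lk(\sigma)$ is a list of exactly one or two $0$-simplices with nothing of higher dimension; for each edge $e$ the algorithm checks that the graph $Lk(e)$ is connected (linear time, \cite{Hop&Tar1}), that every vertex of it has degree $1$ or $2$, and that the number of degree-$1$ vertices is $0$ or $2$ — equivalently, that $Lk(e)$ is a single cycle (a PL $1$-sphere) or a single arc (a PL $1$-ball). Each such check takes $O(\#Lk(\sigma))$ time, so by Remark~\ref{AllStIsn} all of them together take $O(n)$. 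Using the identity from Remark~\ref{ManBySpheres} that the link of a vertex $w$ of $Lk(v)$ is the link of the edge $\{v,w\}$ in $M$, and the link of an edge $\tau$ of $Lk(v)$ is the link of the triangle $\{v\}\cup\tau$ in $M$, these checks guarantee that each $Lk(v)$ (which is automatically a pure $2$-complex, since $M$ is pure) is a compact combinatorial surface, possibly with boundary.

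It then remains, for each vertex $v$, to decide whether the compact combinatorial surface $L := Lk(v)$ is $S^2$ or $D^2$. By the classification of compact surfaces, $L\cong S^2$ iff $L$ is connected, has no boundary edges (edges lying in exactly one triangle of $L$), and $\chi(L)=\#L_0-\#L_1+\#L_2 = 2$; and $L\cong D^2$ iff $L$ is connected, its boundary edges form a single cycle, and $\chi(L)=1$. Extracting the boundary edges, testing connectedness, counting simplices for $\chi$, and checking that the boundary graph is a single cycle are all $O(\#L)$ operations on the already-sorted link data; summed over all vertices this is $O(n)$. Together with the $O(n\log n)$ already spent in Lemmas~\ref{PureComp} and~\ref{St'Lem} and Remark~\ref{AllStIsn} validating and sorting $M$ and its links, the whole test runs in $O(n\log n)$ time, the logarithmic factor coming entirely from that preprocessing.

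The running time itself is not the difficulty — everything new here is linear. The delicate points are correctness: one must justify that checking only vertex links suffices (via Moise's theorem and the link criterion of Remark~\ref{ManBySpheres}), that the triangle- and edge-link checks really do force each vertex link to be a surface, and that the surface-recognition step separates $S^2$ and $D^2$ from every other compact surface. Euler characteristic alone fails in the bounded case (an annulus and a M\"{o}bius band both have $\chi=0$, and $\mathbb{RP}^2$ has $\chi=1$ with no boundary), which is why the test also pins down the boundary: with no boundary and $\chi=2$ the classification forces $L\cong S^2$, and with a single boundary circle and $\chi=1$ it forces $L\cong D^2$.
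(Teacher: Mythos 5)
Your proof is correct and follows essentially the same route as the paper: verify triangle links have one or two points, verify edge links are connected (hence circles or arcs), then verify vertex links are connected surfaces and separate $S^2$ and $D^2$ from everything else using the Euler characteristic together with a boundary check — exactly the $\chi=1$ disc-versus-$\mathbb{RP}^2$ distinction the paper makes. The only cosmetic differences are that you require the boundary of a $\chi=1$ link to be a single cycle where the paper only needs it to be non-empty (both suffice by the classification of surfaces), and the paper incurs its $O(\log n)$ factor partly in sorting the edge-multiset when extracting each link's boundary rather than purely in preprocessing; neither affects the $O(n\log n)$ bound.
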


\begin{proof}
As per Remark~\ref{ManBySpheres}(1) one must first check that for every 2-simplex $\sigma$, $Lk(\sigma)$ contains only 1 or 2 0-simplices. This clearly takes $O(1)$ time per 2-simplex and $O(n)$ time for all 2-simplices. If this holds then, as per Remark~\ref{ManBySpheres}(1), every 1-simplex $\sigma$ $Lk(\sigma)$ is a compact 1-manifold. The only connected compact 1-manifolds are the 1-sphere (circle) and the 1-disc (interval), and so it is enough to prove that every $Lk(\sigma)$ is connected. As per Remark~\ref{ManBySpheres}(1) this takes $O(n)$ time.

Lastly, we may assume that for every 0-simplex $\sigma$ $Lk(\sigma)$ is a compact 2-manifold. Again, it takes $O(n)$ time to verify that all these links are connected. Next, we calculate the euler characteristic of every link. This takes $O(1)$ time per link and $O(n)$ time for all links. If the euler char of a link is $\neq 2,1$ it cannot be a sphere or a disc, and so $M$ is not a manifold. If it is equal to $2$ then it is a sphere, and we move on to the next link. If it is 1 then it may either be a disc or a projective plane. In this case, $Lk(\sigma)$ is a disc iff it has a boundary iff there is a 1-simplex in $Lk(\sigma)$ which is contained in only 1, as opposed to 2, 2-simplices of $Lk(\sigma)$.

Go over all the 2-simplices in $Lk(\sigma)$ and list all of their 1-faces (a single list for the faces of all the 2-simplices). Order this list. As in Remark~\ref{AllStIsn}(2) this takes $O(\#Lk(\sigma) \cdot \log(\#Lk(\sigma)))$ time per 0-simplex $\sigma$ and $O(n \cdot \log(n))$ time for all such 0-simplices. A non-boundary edge will appear twice in the list. Go over the list and search for two consecutive instances of the same 1-simplex. Delete every such pair, and what remains is the list of 1-simplices of the boundary $\partial Lk(\sigma)$. $Lk(\sigma)$ is a disc iff this list is not empty. 
\end{proof}

\begin{rem} \label{save}
During its run this algorithm determined which of the 0-simplices have a disc / sphere for a link and similar information that will be useful later on. We will store this information. Specifically:

1) During this calculation we can store a list of all the $d$-simplices ($d=0,1,2$) whose link is a disc, and do the same for simplices whose links are spheres. The algorithm did not check which 1-simplex is of which kind, but it can do so by checking the euler char of the link. The lists will be lexicographically ordered. The lists of all $d$-simplices whose links are discs form the sub-complex $\partial M$ of $M$. The lists of simplices whose links are spheres, which we will refer to as ``internal simplices", do not form a sub-complex since internal simplices may have faces in $\partial M$.

2) We can indicate which 0-simplices are internal/boundary in another way - by creating a list of elements ``b" and ``i" with length $\#V$ (the number of 0 simplices of $M$) such that the $r$th element in this list is ``i" iff the $r$th 0-simplex is internal. This allows us to check if said $r$th simplex is internal or external in $O(1)$ time.

3) By going over the list of vertices in $S$, we can create a similar list that will allow us to check if a 0-simplex of $M$ is in $S$ or not in $O(1)$ time. It takes $O(n)$ time to create this list.

4) We can also calculate the intersection of the list of internal / boundary $d$-simplices of $M$ with the list of $d$-simplices in $S$. Since all the lists are sorted, and of length $O(n)$, calculation the intersection takes $O(n)$ time.

5) Lastly, for every boundary 0-simplex $\sigma$ in $M$ we will save the sub-complex $\partial Lk(\sigma)$ of $Lk(\sigma)$ as we saved $St(\sigma)$ and $Lk(\sigma)$. We calculated the list of 1-simplices in $\partial Lk(\sigma)$ during the proof of Lemma~\ref{MisMan}. The 0-simplices of $\partial Lk(\sigma)$ are the 0-faces of the 1-simplices.

Calculating the list of the 0-faces of all 1-simplices of the circle $\partial Lk$ and ordering it takes $O(\#Lk \cdot \log(\#Lk))$ time. Every entry will appear twice in this list, deleting the repeating instances takes $(O(\#Lk))$ time. Doing this for every 0-simlices $\sigma$ in $\partial M$ takes $O(n \cdot \log(n))$ time due to the usual reason (as in Remark~\ref{AllStIsn}(2)). 
\end{rem}

After verifying that $M$ is a 3-manifold and that $S$ is a pure 2-dimensional sub-complex, proving that $S$ is a \gls{genericsurface} requires that one go over all of the $d$-simplices in $S$, for $d=2$, 1 and then 0, and check that the star or link of the simplex has the appropriate shape. Specifically, we need to check that every point in (the geometric realization of) $S$ has a neighborhood of one of the types required in Definition~\ref{Generic}. Every such point is contained in the interior of some simplex, and so we must verify that, for every simplex $\sigma$ in $S$, the internal points of $\sigma$ have a neighborhood as Definition~\ref{Generic} requires.

\begin{lem} \label{Surf2&1}
Doing this for all simplices of dimension $d=2,1$ takes $O(n)$ time.
\end{lem}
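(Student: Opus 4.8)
The plan is to observe that, for a simplex $\sigma$ of $S$ of dimension $1$ or $2$ and an interior point $p$ of $\sigma$, the PL-homeomorphism type of the pair $(N(p),N(p)\cap S)$ is completely determined by a bounded amount of already-available combinatorial data: whether $\sigma$ is an internal or a boundary simplex of $M$ (recorded while proving Lemma~\ref{MisMan}; see Remark~\ref{save}), and, when $\dim\sigma=1$, the number $m=\#Lk'(\sigma)$ of triangles of $S$ incident to $\sigma$. Indeed, since $M$ is a verified $3$-manifold, a neighbourhood of $p$ in $M$ is PL-homeomorphic to $\R^{\dim\sigma}\times\mathrm{cone}(Lk(\sigma))$ with $Lk(\sigma)$ a PL sphere or ball, and the corresponding neighbourhood in $S$ is $\R^{\dim\sigma}\times\mathrm{cone}(Lk'(\sigma))$. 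What makes the count suffice is the elementary fact that $\mathrm{cone}$ of any $m$ points inside $\mathrm{cone}(S^1)=D^2$ (or inside $\mathrm{cone}(D^1)=D^2_+$) is the \emph{standard} local model once $m$ is fixed, since any $m$ points on the boundary circle can be carried to any other $m$ of them by a self-homeomorphism of the disc; one therefore never has to inspect how the incident triangles sit around $\sigma$.

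With this in hand I would run through the cases against Definition~\ref{Generic}. Dimension $2$ first: $\sigma$ is already known to be a $2$-simplex of $M$ (Lemma~\ref{PureComp}); if $\sigma$ is internal the local pair is $(D^3,D^2)$, a regular value, which is legal, whereas if $\sigma\subset\partial M$ then $S$ contains a $2$-dimensional piece lying flat inside $\partial M$, a configuration absent from Definition~\ref{Generic}, so the input is rejected. Hence at dimension $2$ the only check is that no triangle of $S$ is a boundary triangle of $M$, and the sorted list of those is already produced in Remark~\ref{save}(4); testing it for emptiness is $O(n)$. Dimension $1$: if $\sigma$ is internal the pair is $\R^1\times\mathrm{cone}(m\text{ points})$ inside $\R^1\times D^2$, a regular value when $m=2$, a double value when $m=4$, and nothing allowed by Definition~\ref{Generic} otherwise (e.g.\ $m=1$ would place a boundary point of the surface in the interior of $M$, and $m=3$ produces a forbidden ``$Y$''), so one accepts iff $m\in\{2,4\}$. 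If $\sigma\subset\partial M$ --- and here, having first carried out the dimension-$2$ check, the incident triangle cannot lie in $\partial M$ --- the unique legal value is $m=1$, a regular boundary value (one fin whose straight edge is $\sigma$); every $m\ge2$ yields a pair absent from Definition~\ref{Generic} (for $m=2$, for instance, $S$ is tangent to $\partial M$ along $\sigma$) and is rejected.

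For the running time: deciding whether each $\sigma$ is internal or boundary is free, since Remark~\ref{save}(4) already supplies the sorted sublists of internal and of boundary $1$- and $2$-simplices of $S$, and a single simultaneous pass through these alongside the sorted simplex lists of $S$ costs $O(n)$. For $\dim\sigma=1$ one also needs $m=\#Lk'(\sigma)$ for every edge of $S$; since each $Lk'(\sigma)$ was stored as a list in Lemma~\ref{St'Lem} and $\sum_\sigma\#Lk'(\sigma)=O(n)$ by Remark~\ref{AllStIsn}(1), reading off all the $m$'s is also $O(n)$. Thus the dimension-$2$ and dimension-$1$ verifications together take $O(n)$ time.

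The step I expect to be the main obstacle is the justification underlying the first paragraph: that these two combinatorial invariants genuinely determine the homeomorphism type of the ball pair, and in particular that the \emph{correct} count automatically produces the \emph{standard} neighbourhood, so that no positional inspection of the incident triangles is ever needed --- together with the bookkeeping point that dimension $2$ is \emph{not} vacuous, a triangle of $S$ lying inside $\partial M$ being an illegal input that must be caught here, whose exclusion is conveniently also what lets the $m=1$ boundary-edge case go through cleanly.
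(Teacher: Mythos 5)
Your proof is correct and follows essentially the same route as the paper's: reject any $2$-simplex of $S$ lying in $\partial M$, then accept a $1$-simplex $\sigma$ of $S$ iff $St'(\sigma)$ contains $2$ or $4$ triangles (internal case) or exactly $1$ triangle (boundary case), all read off from the precomputed stars/links in $O(n)$ total time. Your added justification that the triangle count alone pins down the local pair up to homeomorphism, and your identification of the $m=1$ boundary case as an RB value (the paper's text says DB there, which appears to be a slip), are both sound refinements of the argument as written.
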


\begin{proof}
For $d=2$, a small neighborhood of a point in the interior of a boundary 2-simplex will look like the embedding of the $xy$ plane in the upper half space. In particular, it will not look like any of the neighborhoods of Definition~\ref{Generic}. On the other hand, any point in the interior of a 2-simplex in $S$ that is internal in $M$ will clearly be a regular value - it will have a neighborhood that looks like the embedding of a plane in 3-space. It follows that it is enough to check that there are no 2-simplices in $S \cap \partial M$. This takes $O(1)$ time.

Assuming that every 2-simplex in $S$ is internal, we move on to $d=1$. Figure~\ref{fig:Figure 7} shows that an interior point in an internal 1-simplex $\sigma$ in $S$ will have the neighborhood of a regular/double value if the star in $S$, $St'(\sigma)$, contains two / four 2-simplices. If $St'(\sigma)$ contains any other number of 2-simplices, then its neighborhood does not look right and $S$ is not a \gls{genericsurface}. Figure~\ref{fig:Figure 7}C, for example, depicts the case where $St'(\sigma)$ contains 5 2-simplices. Similarly, a 1-simplex in $S \cap \partial M$ must have only 1 2-simplex in its star, in which case its interior points will be DB values. Figure~\ref{fig:Figure 7}D depicts this (the purple 2-simplices are from $\partial M$ while the white one is from $St'(\sigma)$). Checking that a 1-simplex $\sigma$ has the right number of 2-simplices in $St'(\sigma)$ takes $O(1)$ time, and doing this for every $\sigma$ takes $O(n)$ time.
\end{proof}

\begin{figure}
\begin{center}
\includegraphics{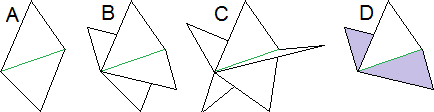}
\caption{The stars of 1-simplices in $S$}
\label{fig:Figure 7}
\end{center}
\end{figure}

\begin{rem} \label{Save3}
While checking that every 1-simplex has the right neighborhood, create a list of all the 1-simplices that are made of double values - the one with 4 triangles in $St'$. If $S$ is indeed a \gls{genericsurface}, then the aforementioned list will be the set of 1-simplices of the intersection graph $X(S)$, and so it will be useful to store this data. 
\end{rem}
%

\section{Homeomorphisms of topological graphs} \label{TGsec}

We move on to the 0-simplices of $S$. The $S$-link of a 0-simplex $\sigma$ in $S$, $Lk'(\sigma)$, is a topological graph. The check whether $\sigma$ has the right sort of neighborhood involves examining the homeomorphism type of this graph. We dedicate this separate section to the graph homeomorphism problem. We will explain what constitutes ``the homeomorphism type" of a graph and the complexity of calculating this homeomorphism type and checking whether two graphs are homeomorphic.

The ``homeomorphism type" of a graph is a slightly different concept than the ``isomorphism type" of the graph. The difference is that the actions of adding a degree 2 vertex into some edge, and thus splitting it into two edges (see Figure~\ref{fig:Figure 8}A) and the opposite action of deleting a degree 2 vertex, both preserve the homeomorphism type of the graph.

\begin{figure}
\begin{center}
\includegraphics{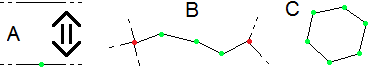}
\caption{A - Adding or deleting a degree 2 vertex, B and C - a long edge and a circle}
\label{fig:Figure 8}
\end{center}
\end{figure}

This is the only difference - homeomorphic graphs are ``isomorphic up to adding or deleting degree 2 vertices". In order to study a graph's homeomorphism type, one may delete all of its degree-2 vertices to produce and examine the isomorphism type of the resulting ``simplified graph". 

\begin{defn} \label{LongEdge}
1) A ``long edge" in a graph is a path $v_0,v_1,...,v_n$ such that for every $k=1,...,n-1$ $v_k$ is a degree-2 vertex, $v_0$ and $v_n$ are vertices of degree $\neq 2$, and each $v_k$ is connected to $v_{k+1}$. This long edge stretches between $v_0$ and $v_n$. It is possible that $v_0=v_n$, in which case the long edge is said to be a long loop. Figure~\ref{fig:Figure 8}B depicts a long edge.

2) Similarly, a circle in a (multi)graph is a path $v_0,v_1,...,v_n=v_0$ such that every $v_k$ is a degree-2 vertex and is connected to $v_{k+1}$. Figure~\ref{fig:Figure 8}C depicts a circle in a graph.

3) The ``simplified graph" produced by deleting all of the degree-2 vertices of a graph clearly has the following structure: it has all the vertices of the original graph whose degree is not 2, and each two of these vertices have one (short) edge between them per every long edge that stretches between them in the original graph. In particular, the simplified graph is a multigraph, possibly with loops, and with no degree-2 vertices. Additionally, the simplified graph will have one disjoint copy of $S^1$ per every circle of the original graph. This may be expressed by adding an integer (that counts the number of circles) to the multigraph structure.
\end{defn}

\begin{rem}
When discussing long edges, we sometimes refer to to the actual edges of the graph as ``short edges" in order to avoid confusion. 
\end{rem}

\begin{lem} \label{SimpHomeo}
1) Two simplified graphs are homeomorphic iff they have the same number of circles and their multigraph portions are graph-isomorphic.

2) Two graphs are homeomorphic iff their simplified graphs are homeomorphic.
\end{lem}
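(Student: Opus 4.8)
The plan is to base both parts on the topological invariance of the \emph{local degree} of a point in a graph. For a point $p$ in the realization of a finite graph and a sufficiently small connected neighborhood $U$ of $p$, the number $\deg(p)$ of connected components of $U\setminus\{p\}$ is well defined and independent of $U$; it equals the graph-theoretic degree when $p$ is a vertex and equals $2$ when $p$ lies in the interior of an edge (and $0$ for an isolated vertex). Since this number is defined purely topologically, any homeomorphism $f$ between graphs satisfies $\deg(f(p))=\deg(p)$ for every $p$, and $f$ carries connected components to connected components.

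For part (1), the ``if'' direction is immediate: a multigraph isomorphism between the multigraph portions induces a homeomorphism of their realizations, and the disjoint circles are matched by homeomorphisms $S^1\to S^1$, so equal circle counts plus isomorphic multigraph portions yield a homeomorphism. For ``only if'', let $f\colon|G_1'|\to|G_2'|$ be a homeomorphism of simplified graphs. First I would note that no connected component of the multigraph portion of a simplified graph is homeomorphic to $S^1$: such a component would be a cycle graph or a single vertex carrying a loop, and in every such case \emph{all} of its vertices have degree $2$, contradicting the defining property (Definition~\ref{LongEdge}(3)) that a simplified graph has no degree-$2$ vertices. Hence the components of $|G_i'|$ homeomorphic to $S^1$ are exactly the disjoint circles, so $f$ matches circles to circles and the two circle counts agree. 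Next, the points of degree $\neq2$ are, since $G_i'$ has no degree-$2$ vertices, precisely the vertices of $G_i'$; as $f$ preserves degree it restricts to a degree-preserving bijection $V(G_1')\to V(G_2')$. Removing these vertices leaves a disjoint union of open arcs, one per (short) edge; $f$ permutes these arcs, and the endpoints of each arc in the closure are preserved, so $f$ induces a bijection of edges compatible with the vertex bijection, i.e.\ a multigraph isomorphism.

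For part (2), I would first record the elementary fact that deleting a degree-$2$ vertex $v$ — replacing the two edges at $v$ by a single edge joining their other endpoints, or, if $v$ carries only a loop, recording the resulting free $S^1$ in the circle count — does not change the homeomorphism type of the realization, since the arc $a\text{--}v\text{--}b$ is homeomorphic to the single edge $a\text{--}b$. Iterating this finitely many times transforms a graph $G$ into its simplified graph $G'$, so $|G|\cong|G'|$. Given $G_1,G_2$ with simplified graphs $G_1',G_2'$, we then have $|G_1|\cong|G_1'|$ and $|G_2|\cong|G_2'|$, whence $|G_1|\cong|G_2|$ iff $|G_1'|\cong|G_2'|$, which is the assertion.

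The main obstacle is the ``only if'' direction of part (1): one must verify that every item of combinatorial data of a simplified graph — the circle count, the vertex set with its degrees, and the edge multiset — is recoverable from the homeomorphism type alone. The degree invariant takes care of the vertices and edges routinely, and the point needing genuine care is ruling out a multigraph component that is itself a topological circle, which is exactly where the ``no degree-$2$ vertex'' normalization of the simplified graph does its work.
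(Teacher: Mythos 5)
Your proof is correct and follows essentially the same route as the paper's: topological invariance of the local degree forces vertices to map to vertices of equal degree, edges to edges, and circles to circles, while part (2) follows from the fact that suppressing a degree-2 vertex preserves the homeomorphism type. Your explicit check that no component of the multigraph portion can be homeomorphic to $S^1$ (because such a component would consist entirely of degree-2 vertices) is a detail the paper leaves implicit, and it is a worthwhile addition.
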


\begin{proof}
(2) is trivial, as is the ``if" part of (1). As for the ``only if" part: for a vertex $a$ of the first graph and its image $f(a)$ to have homeomorphic neighborhoods $f(a)$ must be a vertex of the same degree. This only holds since $deg(a) \neq 2$, otherwise $f(a)$ could have been a point in the middle of an edge. Lastly, an edge between vertices $a$ and $b$ must clearly be sent to a path between $f(a)$ and $f(b)$ that crosses no other vertices. Since there are no degree-2 vertices, this must be an edge between them. This defines an embedding of the first graph portion in the second, and this embedding must be an isomorphism, since it is invertible.

The homeomorphism also sends the rest of the simplified graph, the union of the circles, homeomorphically to the union of circles of the other graph. This implies that the graphs have the same number of circles.
\end{proof}

In order to determine the topological type the graph $Lk'(\sigma)$, we must identify its long edges and circles.

\begin{lem} \label{LongEdgeComp}
Given a $1$-dimensional sub-complex (sub-graph) of $M$ with $\#V$ vertices and $\#E$ edges, identifying the vertices of degree $\neq 2$, long edges, and circles of the sub-graph takes $O(\#V+\#E)$ time.
\end{lem}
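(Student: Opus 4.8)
The plan is to treat the given sub-complex as an ordinary graph $G$: being a $1$-dimensional sub-complex of $M$, its $1$-simplices are genuine $2$-element subsets of its vertex set, so $G$ is simple — no loops, no parallel edges — and in particular ``the other edge at a degree-$2$ vertex'' is unambiguous. First I would convert the given data (the lists of $0$- and $1$-simplices of $G$) into a standard adjacency-list representation together with a degree array; after relabelling the vertices $0,\dots,\#V-1$ this is done by a single pass over the edge list, incrementing $\deg(u)$ and $\deg(v)$ and appending $v$ to the adjacency list of $u$ and $u$ to that of $v$ for each edge $\{u,v\}$. This takes $O(\#V+\#E)$ time. In the same pass (plus one linear pass over the vertex list) build the list $B$ of all vertices of degree $\neq 2$; these are exactly the endpoints of long edges, degree-$0$ and degree-$1$ vertices included, and every vertex not in $B$ has degree exactly $2$.

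Next I would extract the long edges. Keep a boolean ``used'' flag on each edge, initially false. For every $v_0\in B$ and every not-yet-used edge $e$ incident to $v_0$, start a walk: cross $e$ to its other endpoint $w$ and mark $e$ used; while the current vertex $w\notin B$ — i.e.\ $\deg(w)=2$ — exactly one of its two incident edges is the one just traversed, so take the other one, mark it used, and advance; stop as soon as the current vertex lies in $B$. The traversed vertices $v_0,v_1,\dots,v_m$ form a long edge from $v_0$ to $v_m$ (a long loop if $v_0=v_m$), which I record. The point for the running time is a charging argument: each edge of $G$ has its ``used'' flag set at most once, because when a long edge is traced from one of its endpoints all its constituent edges are marked at that moment, so when the other endpoint is later processed those edges are skipped; hence each edge is inspected only $O(1)$ times in total, and since scanning all the incident-edge lists of vertices in $B$ costs $O(\#E)$ and iterating over $B$ costs $O(\#V)$, this phase is $O(\#V+\#E)$.

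Finally I would pick up the circles. A connected component of $G$ all of whose vertices have degree $2$ is a simple cycle and contains no vertex of $B$, so none of its edges was touched above; conversely any still-unused edge must have both endpoints of degree $2$, for otherwise it would have been traced when its degree-$\neq 2$ endpoint was processed, so it lies in such a component. I therefore make one more pass over the edge list, and for each still-unused edge run the same ``follow the unique continuing edge'' walk — now every vertex has degree $2$ — until returning to the start, marking all its edges used and recording a circle. This pass is again $O(\#E)$. Altogether the algorithm runs in $O(\#V+\#E)$ time, and as a byproduct it has labelled, for each degree-$2$ vertex, the long edge or circle containing it, which will be convenient in later sections.

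The adjacency-list construction and the circle-handling pass are routine; the only place needing a genuine argument is the charging used in the long-edge phase — showing that, although walks are launched from every incident half-edge of every vertex of $B$, each edge is still examined only $O(1)$ times, so there is no quadratic blow-up — together with a quick check that the degenerate cases (isolated vertices, degree-$1$ vertices, long loops with $v_0=v_m$, and components that are pure cycles) are each handled by one of the three phases. I do not expect any real obstacle beyond this bookkeeping.
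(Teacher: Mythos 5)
Your proposal is correct and follows essentially the same approach as the paper: build adjacency lists and degrees in one pass, walk out from each vertex of degree $\neq 2$ to trace long edges, then sweep up the remaining pure-degree-$2$ components as circles; your per-edge ``used'' flag plays exactly the role of the paper's deletions from the adjacency lists and from the copied vertex list, and both yield the same $O(\#V+\#E)$ charging argument.
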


\begin{proof}
We begin by calculating, for every vertex $v$, the list $Ad(v)$ of the adjacent vertices to $v$. We do this by setting every such $Ad(v)$ to be an empty list and then, for every edge $(v,u)$ in $G$, add $v$ to $Ad(u)$ and $u$ to $Ad(v)$. This clearly takes $O(\#V+\#E)$ time. The length $\#Ad(v)$ is now the degree $deg(v)$ of $v$. Next, we create a copy of the list of all vertices of the graph, and a list of all vertices of degree $\neq 2$. This takes $O(\#V)$ time.

On to identifying the long edges. We pick a vertex $v_0$ of degree $\neq 2$ and an adjacent vertex $v_1$. There is a long edge that begins at $v_0$, continues into $v_1$ and keeps going until it reaches another vertex $v_r$ of degree $\neq 2$. We find it accordingly - if $v_1$ is of degree 2, we let $v_2$ be its other adjacent vertex - the only vertex in $Ad(v_1)$ except $v_0$. We continue in this way. As long as $\#Ad(j)=2$, we let $v_{j+1}$ be the other vertex in $Ad(v_j)$ (other then $v_{j-1}$). We add every new $v_j$ to a list $v_0,v_1,v_2,...,v_r$, and end this list at the first $r$ for which $\#Ad(v_r) \neq 2$. We then delete $v_1,...,v_{r-1}$ from the copy of the list of the vertices of the graph.

Additionally, weא delete $v_1$ from $Ad(v_0)$, so as not to accidentally try to construct the same long edge again. If $Ad(v_0)$ is now empty, then there are no more long edges that begin or end in it, and we delete it also from both the copy of the list of the vertices of the graph and the list of vertices of degree $\neq 2$. Similarly, we delete $v_{r-1}$ from $Ad(v_r)$ and, if $Ad(v_r)$ becomes empty, we delete $v_r$.

We let $v_0,v_1,v_2,...,v_r$ be the first long edge is a list of all the long edges (which are themselves lists). Next, we pick a new vertex $v_0'$ from the list of vertices of degree $\neq 2$, and a new adjacent vertex $v_1'$, and repeat the same process. We add the new long edge to the list of long edges and continue in this way, until all of the vertices of degree $\neq 2$ have been deleted from the list. If there are any vertices left in the copy of the list of all vertices of the graph they must have degree 2 and cannot be on any long edge, and so they come from the circles of the graph.

We calculate the circles similarly. We pick one of the remaining vertices in the copy list and call it $v_0$, pick an adjacent vertex and call it $v_1$, and for every $j>0$ we pick the vertex in $Ad(v_j)$ that is not $v_{j-1}$ to be $v_{j+1}$. We go over the whole circle until we return to $v_0$, and end the process in the first $r$ for which $v_r=v_0$. The list $v_0,v_1,...,v_r=v_0$ is a circle. It is the first in a list of all circles. We delete $v_0,...,v_{r-1}$ from the copy list, pick a new vertex $v_0'$ and identify the next circle. We finish when there are no more vertices in the copy list.

This process made a bounded number of simple actions for every pair of adjacent vertices, which means that it made a bounded number of actions per every edge of the graph, so it all took $O(\#E)$ time. The need to calculate $Ad(v)$ increases the runtime to $O(\#V+\#E)$. This is a necessary addition in case the graph has isolated vertices.
\end{proof}

\section{A \gls{genericsurface} as a data type - The vertices of $S$} \label{DTsec3}

We return to verifying that the input $(M,S)$ is valid. All that remains at this point is to check that every vertex $s$ of $S$ has one of the types of neighborhood required by Definition~\ref{Generic}. As mentioned above, this is done by examination of the homeomorphism type of the $S$-link $Lk'(\sigma)$. We dedicate this section to proving this, and showing that it can be done in polynomial time.

\begin{thm} \label{GenLink}
1) An internal $0$-simplex of $S$ is:

a) A regular value iff $Lk'(\sigma)$ is a circle.

b) A double value iff $Lk'(\sigma)$ is homeomorphic to a multigraph with $2$ vertices and $4$ edges between them.

c) A triple value iff $Lk'(\sigma)$ is homeomorphic to the $1$-skeleton of a octahedron - a graph with degree $6$ vertices, each of which is connected (via a single edge) to exactly $4$ of the other vertices.

d) A branch value iff $Lk'(\sigma)$ is homeomorphic to an $8$-graph - a multigraph with $1$ vertex and $2$ loops.

2) A vertex in $S \cap \partial M$ is:

a) An RB value iff $Lk'(\sigma)$ is an interval that is properly embedded in the disc $Lk(\sigma)$. ``Proper" means that the boundary of the interval, its degree-1 vertices, are the only part of the interval that is contained in the boundary circle $\partial Lk(\sigma)$.

b) A DB value iff $Lk'(\sigma)$ is a properly embedded ``X" in the disc $Lk(\sigma)$ - two properly embedded intervals that intersect each other transversally.
\end{thm}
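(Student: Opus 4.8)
The plan is to reduce everything to the fact that the closed star of a vertex is the cone on its link, together with the classical uniqueness of planar embeddings. For a vertex $\sigma$ of $M$ one has $|St(\sigma)| = \mathrm{cone}\,|Lk(\sigma)|$ with cone point $\sigma$, and this homeomorphism carries $|St'(\sigma)|$ onto the subcone $\mathrm{cone}\,|Lk'(\sigma)|$. Since $|M|$ is a $3$-manifold, hence combinatorially triangulated, $|Lk(\sigma)|$ is a PL $2$-sphere when $\sigma$ is internal and a PL $2$-disc when $\sigma\in\partial M$ — exactly the sphere/disc dichotomy already recorded in Remark~\ref{save} — so $|St(\sigma)|$ is $\cong D^3$ or $\cong D^3_+$ and is a closed neighbourhood of $\sigma$ in $M$. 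Thus $\sigma$ is a value of a given type in the sense of Definition~\ref{Generic} if and only if the pair $(|St(\sigma)|,|St'(\sigma)|)$ is homeomorphic, as a pair, to the corresponding model. The key observation is that each of those models is itself a cone on a pair in $S^2$ or in $D^2$: the transverse union of $k$ coordinate planes in $D^3=\mathrm{cone}(S^2)$ is $\mathrm{cone}(S^2,\Gamma_k)$ with $\Gamma_1$ a great circle, $\Gamma_2$ two great circles meeting at two points (a $2$-vertex $4$-edge multigraph), and $\Gamma_3$ the three coordinate great circles, which is the $1$-skeleton of an octahedron; the cone over a figure $8$ is $\mathrm{cone}(S^2,\Gamma_8)$ with $\Gamma_8$ a figure $8$ in $S^2$; and the half-ball models are $\mathrm{cone}(D^2,\Gamma^{+})$ with $\Gamma^{+}$ a properly embedded arc (one plane) or a properly embedded ``X'' (two planes). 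In each case the graph claimed in the theorem is precisely the surface-link that the model exhibits at the centre of its cone.

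For the forward implication, suppose $\sigma$ is a value of a given type, and fix a homeomorphism of pairs $\Phi$ from $(|St(\sigma)|,|St'(\sigma)|)$ onto $(\mathrm{cone}\,\Sigma,\mathrm{cone}\,\Gamma)$. The cone point $\sigma$ has a neighbourhood basis of subcones, each homeomorphic as a pair to the whole (open) cone, so $q=\Phi(\sigma)$ has, inside the model, a neighbourhood pair homeomorphic to the whole model. Running through the finitely many strata of each model shows this forces $q$ into a prescribed locus — the origin for the triple, branch and DB models, any point of the single plane for the regular and RB models, any point of the double line for the double model — and for every such $q$ the link of $q$ in the model, taken as a pair, equals $(\Sigma,\Gamma)$. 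Since $\Phi$ is a homeomorphism of pairs, it carries a regular neighbourhood of $\sigma$, its frontier, and its intersection with the surface $S$ onto the corresponding data at $q$; hence $(|Lk(\sigma)|,|Lk'(\sigma)|)\cong(\Sigma,\Gamma)$, and in particular $|Lk'(\sigma)|$ has the claimed homeomorphism type.

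For the converse, suppose $|Lk'(\sigma)|$ is homeomorphic, as a graph (i.e.\ after deleting degree-$2$ vertices), to the graph $\Gamma$ attached to one of the types. Then $|Lk'(\sigma)|$ is a subcomplex of the surface $|Lk(\sigma)|\cong\Sigma$, and I must upgrade the abstract graph homeomorphism to a homeomorphism of pairs $(|Lk(\sigma)|,|Lk'(\sigma)|)\cong(\Sigma,\Gamma)$; coning that homeomorphism then yields $(|St(\sigma)|,|St'(\sigma)|)\cong(\mathrm{cone}\,\Sigma,\mathrm{cone}\,\Gamma)$, which is the model, so $\sigma$ is a value of the required type. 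The upgrade is a uniqueness-of-embedding statement: for $\Gamma=S^1$ it is the Schoenflies theorem on $S^2$; for $\Gamma$ the octahedral $1$-skeleton it is Whitney's theorem that a $3$-connected simple planar graph has an essentially unique embedding in $S^2$; for the figure $8$, for the $2$-vertex $4$-edge multigraph, and for the properly embedded arc and ``X'' in $D^2$, it follows from an elementary direct argument, since each loop or arc of $\Gamma$ separates its ambient surface and so the cyclic order of the edge-ends around each vertex, together with the interleaving of the boundary endpoints on $\partial D^2$ in the boundary cases, is combinatorially forced. Finally, the six graphs in the statement are pairwise non-homeomorphic — they differ in the number of vertices of degree $\neq 2$ or in the vertex degrees — so the characterisation is unambiguous.

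The bookkeeping with cones in the first two paragraphs is routine, and the forward direction is a short stratum check. The step I expect to be the main obstacle is the uniqueness-of-embedding input in the converse: invoking Whitney's theorem correctly for the octahedral link $\Gamma_3$, and, more tediously, checking by hand that the small multigraph $\Gamma_2$, the figure $8$, and the arc and ``X'' in $D^2$ admit no ``exotic'' embedding in $S^2$ or $D^2$, so that the purely combinatorial homeomorphism of links can in fact be realised by an ambient homeomorphism of the relevant surface.
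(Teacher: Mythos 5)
Your converse direction (given graph type $\Rightarrow$ value of the matching type) is essentially the paper's own argument: uniqueness of the (proper) embedding of each of the six graphs in $S^2$ or $D^2$ (the paper's Lemma~\ref{UnEmb}, for which your appeal to Whitney's theorem in the octahedral case is a clean substitute), followed by coning. One point you should still add there: $(|St(\sigma)|,|St'(\sigma)|)$ is \emph{not} in general a neighbourhood pair of $\sigma$ in $(M,S)$, because $S\cap St(\sigma)$ may contain simplices not in $St'(\sigma)$; you must pass to the inner half of the cone, where $S\cap St(\sigma)$ and $St'(\sigma)$ agree, to conclude that $\sigma$ has a neighbourhood of the required type. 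The paper handles this explicitly (Figure~\ref{fig:Figure 11}C).

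The forward implication is where your proposal has a genuine gap. Your pivot is the claim that $\sigma$ is a value of a given type if and only if the pair $(|St(\sigma)|,|St'(\sigma)|)$ is homeomorphic to the corresponding model; the ``only if'' half of that claim is exactly what has to be proved and is never established. Definition~\ref{Generic} supplies a homeomorphism from \emph{some} neighbourhood pair $(N(\sigma),N(\sigma)\cap S)$ to the model, with no control over where $\sigma$ lands and no guarantee that the homeomorphism is PL; your second paragraph then begins by ``fixing'' a homeomorphism of pairs defined on the star pair, which presupposes the conclusion. Bridging the gap requires an invariance-of-links argument: links and regular neighbourhoods are PL notions, not preserved by arbitrary homeomorphisms, so the step ``$\Phi$ carries a regular neighbourhood of $\sigma$, its frontier, and its intersection with $S$ onto the corresponding data at $q$'' is precisely where the difficulty is hidden (note also that cone-point links are not topological invariants in general --- this is the content of the double suspension phenomenon --- so some low-dimensional input is genuinely needed). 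The paper sidesteps all of this by proving the forward direction combinatorially: it writes $Lk'(\sigma)=\bigcup i(Lk_F(\sigma_k))$ over the preimages $\sigma_k$ of $\sigma$ in the underlying surface $F$, observes that each $Lk_F(\sigma_k)$ is a circle or arc, and uses Lemma~\ref{TriLem} to determine exactly which points the map $i$ identifies; the claimed graph then emerges by direct inspection. If you want to keep your route, you must either restrict the homeomorphisms in Definition~\ref{Generic} to the PL category and invoke PL invariance of links (still locating the image of $\sigma$ by a stratum argument first), or supply a topological argument recovering the homeomorphism type of $Lk'(\sigma)$ from local invariants of the pair $(M,S)$ at $\sigma$. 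As written, the forward direction does not go through.
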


Figure~\ref{fig:Figure 9} depicts one graph from each of the 6 homeomorphism types. The set of purple vertices is the intersection of the graph with $\partial Lk(\sigma)$. The green dots along the (long) edges are degree-2 vertices. They are there because of the specific triangulation of $M$ and $S$, and do not effect the homeomorphism type of the graph. 

\begin{figure}
\begin{center}
\includegraphics{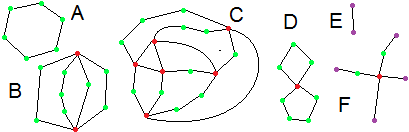}
\caption{The links of 0-simplices $S$}
\label{fig:Figure 9}
\end{center}
\end{figure}

The description in each item defines a unique graph up to homeomorphism. We consider this statement to be trivial except in the case of item (1C), where we will prove it: 

\begin{cor} \label{Oct}
Up to isomorphism, there is a unique graph with $6$ vertices, for which every vertex is connected to exactly $4$ of the other vertices.
\end{cor}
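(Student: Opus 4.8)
The statement is that a $6$-vertex graph in which every vertex has degree exactly $4$ (and is joined to four distinct other vertices — so the graph is simple) is determined up to isomorphism. The plan is to pass to the complement. If $G$ is such a graph on vertex set $\{1,\dots,6\}$, let $\bar G$ be its complement in $K_6$; since $G$ is simple and $4$-regular, $\bar G$ is a simple graph on $6$ vertices in which every vertex has degree $6-1-4=1$. A $1$-regular simple graph on $6$ vertices is a perfect matching: three pairwise-disjoint edges covering all six vertices. First I would record that any two perfect matchings on a $6$-element set are related by a permutation of the vertices — concretely, if $\bar G_1$ has edges $\{a_1,b_1\},\{a_2,b_2\},\{a_3,b_3\}$ and $\bar G_2$ has edges $\{a_1',b_1'\},\{a_2',b_2'\},\{a_3',b_3'\}$, the bijection sending $a_i\mapsto a_i'$ and $b_i\mapsto b_i'$ is a graph isomorphism $\bar G_1\to\bar G_2$. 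Hence all such $\bar G$ are isomorphic, and since a graph isomorphism of the complements is a graph isomorphism of the original graphs, all such $G$ are isomorphic.

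To make the corollary fully useful I would also exhibit the unique isomorphism type explicitly and identify it with the octahedron $1$-skeleton referenced in Theorem~\ref{GenLink}(1c): taking the complement of the matching $\{1,2\},\{3,4\},\{5,6\}$ gives exactly the graph whose non-edges are those three ``antipodal'' pairs, which is the $1$-skeleton of the octahedron (equivalently $K_{2,2,2}$, the complete tripartite graph with parts of size $2$). This confirms that the description in Theorem~\ref{GenLink}(1c) — each vertex joined to exactly $4$ of the other $5$ — is consistent and names a single graph.

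The only point requiring a moment's care is the reduction step itself: one must check that $G$ being $4$-regular on $6$ vertices forces $G$ to be \emph{simple} (no repeated edges), which is exactly the hypothesis ``connected via a single edge to exactly $4$ of the other vertices'', so the complement is well defined; and conversely that $\bar G$ having a vertex of degree $1$ at \emph{every} vertex forces $\bar G$ to be a disjoint union of edges rather than, say, allowing a loop — again automatic since $\bar G\subseteq K_6$ is simple. After that the argument is purely combinatorial and routine. I do not anticipate any real obstacle; the main thing is simply to phrase the ``any two perfect matchings are equivalent'' observation cleanly, since that is where all the content sits.
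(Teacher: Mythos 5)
Your proof is correct, and it takes a slightly different route from the paper's. The paper argues directly and greedily: it labels a vertex $0$, observes it has a unique non-neighbour (call it $1$), then picks a vertex $2$, finds its unique non-neighbour $3$, and so on, concluding deterministically that the graph must be $K_6$ minus the three edges $\{0,1\},\{2,3\},\{4,5\}$. You instead pass to the complement, note that it is $1$-regular and hence a perfect matching on six vertices, and invoke the (easy) fact that any two perfect matchings are related by a bijection of the vertex sets. The underlying content is identical --- both proofs come down to the observation that the non-edges form a perfect matching and that a perfect matching on six labelled points is unique up to relabelling --- but your packaging is cleaner and more reusable: the complement reduction turns the uniqueness claim into the classification of $1$-regular graphs, which is a one-line statement, whereas the paper's version buries the same fact inside a case-by-case labelling. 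Your explicit identification with $K_{2,2,2}$ is also a nice addition that the paper leaves implicit. The care you take over simplicity is warranted in this paper's context, since elsewhere the ``simplified graphs'' under discussion are genuinely multigraphs; the hypothesis ``connected to exactly $4$ of the \emph{other} vertices'' together with the degree count is indeed what rules out multiple edges and loops, exactly as you say.
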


\begin{proof} [Proof of Corollary~\ref{Oct}]
Let us name one of the vertices $0$. It is connected to all other vertices save for one. Call said vertex $1$. $1$ is not connected to $0$ and thus must be connected to each of the other vertices. Choose another vertex and call it $2$. As per the above it is connected to $0$ and $1$ and must be connected to two of the other vertices. Call these two other vertices $4$ and $5$. There is one vertex in the gaph that is not connected to the vertex $2$. Call this vertex $3$. Since $3$ is not connected to $2$ it must be connected to all other vertices. $4$ is now known to be connected to $0$, $1$, $2$ and $3$, and thus cannot be connected to $5$. It follows that our graph must be isomorphic to the full graph on $0,...,5$ minus the edges $\{0,1\}$, $\{2,3\}$ and $\{4,5\}$.
\end{proof}

We will prove the two directions of Theorem~\ref{GenLink} separately, starting with the ``only if" direction. That proof relies on the fact that each of these graphs can be embedded in the sphere or disc in a unique way:

\begin{lem} \label{UnEmb}
1) Each of the graphs in Theorem~\ref{GenLink}(1) can be embedded in the sphere $S^2$ in a unique way up to homeomorphism of the graph and/or sphere.

2) Similarly, each of the graphs in Theorem~\ref{GenLink}(2) has a unique proper embedding in the disc $D^2$ up to homeomorphism.
\end{lem}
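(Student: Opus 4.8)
The plan is to dispose of the six graphs one at a time, in each case reducing uniqueness to a classical fact. The two ``$1$-dimensional'' cases are immediate. A circle embedded in $S^2$ is unique up to homeomorphism by the (PL) Jordan--Schoenflies theorem. A properly embedded interval $\alpha$ in $D^2$ separates $D^2$ into two discs, and the relative (PL) Schoenflies theorem lets one carry $\alpha$ onto a fixed diameter by a self-homeomorphism of $D^2$; this handles Theorem~\ref{GenLink}(2a).

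For the remaining graphs of part (1) I would pass to the combinatorial picture. First recall that every embedding of a connected graph $G$ in $S^2$ is cellular: if $f$ is a complementary region and $\gamma\subset f$ a simple closed curve, then $\gamma$ bounds two discs in $S^2$; $G$, being connected and disjoint from $\gamma$, lies in the interior of one of them, so the other closed disc lies in $f$ and $\gamma$ is null-homotopic in $f$; hence $f$ is a simply connected proper open subset of $S^2$, i.e. an open disc. By the Heffter--Edmonds correspondence (standard in the theory of graphs on surfaces), a cellular embedding of a graph in a closed orientable surface is determined up to homeomorphism by its rotation system (the cyclic order of edge-ends at each vertex) together with a choice of orientation; allowing orientation-reversing homeomorphisms, it is determined by the rotation system up to reversing all cyclic orders simultaneously, and the genus is recovered by counting faces via Euler's formula. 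Since embeddings differing by an automorphism of the abstract graph count as ``the same'' here, it suffices to check for each $G$ on the list that the rotation systems yielding Euler characteristic $2$ form a single orbit under $\mathrm{Aut}(G)$ together with global reversal. For the figure-$8$ graph ($1$ vertex, $2$ loops) a direct check shows the interleaved rotation $(a,b,a,b)$ of the four edge-ends gives the torus while the nested rotation $(a,a,b,b)$ gives $S^2$, and all nested rotations are related by swapping the ends of a loop, an automorphism; this gives (1d). For the $2$-vertex $4$-edge multigraph, only the rotation systems in which the cyclic order at one vertex is the reverse of that at the other give $S^2$, and these form a single $\mathrm{Aut}$-orbit; this gives (1b).

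The substantive case is the octahedron $1$-skeleton $O=K_{2,2,2}$ in part (1c). Any embedding in $S^2$ is cellular with $V=6$, $E=12$, hence $F=8$; since $O$ is simple, every face has length at least $3$, and $3\cdot 8 = 24 = 2E$ forces all eight faces to be triangles. Thus the embedding is a $4$-regular triangulation of $S^2$, so the link of each vertex $v$ is a $4$-cycle on its four neighbours; but the subgraph of $O$ induced on $N(v)$ is $K_4$ minus a perfect matching, i.e. already a prescribed $4$-cycle, so the cyclic order of edges around $v$ is forced up to reflection. Reflections are coherent across $O$ by connectedness, so the rotation system is unique up to global reversal and the embedding is unique. (Alternatively one may quote Whitney's uniqueness theorem for embeddings of $3$-connected planar graphs, since $O$ is $3$-connected.) The main obstacle of the whole proof is precisely this case, and more specifically the passage from ``the rotation system is unique'' to ``the embedding is unique up to homeomorphism of $S^2$''; but that is exactly the content of Heffter--Edmonds (or of Whitney's theorem for $O$), so no genuinely new work is needed.

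Finally, for Theorem~\ref{GenLink}(2b) I would bootstrap from (2a). Given a properly embedded ``X'' $=\alpha\cup\beta$ in $D^2$ meeting transversally at one interior point $p$, first straighten $\alpha$ to a diameter by (2a), cutting $D^2$ into half-discs $D_+,D_-$. A standard intersection-number argument shows the two endpoints of $\beta$ separate the two endpoints of $\alpha$ on $\partial D^2$, so $\beta\cap D_+$ and $\beta\cap D_-$ are properly embedded arcs in $D_+$ and $D_-$, each running from a boundary point to $p$. Applying (2a) inside each half-disc, with the two self-homeomorphisms chosen to fix $\alpha$ pointwise (in particular fixing $p$), and patching them along $\alpha$, exhibits any two such configurations as equivalent under a self-homeomorphism of $D^2$. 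This completes all cases.
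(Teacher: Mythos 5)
Your proof is correct, but it takes a genuinely different route from the one in the paper. The paper only writes out the hardest case, the octahedron $1$-skeleton, and does so by a bare-hands deterministic construction: embed a triangle of long edges (unique up to homeomorphism), use a homeomorphism of $S^2$ to push the remaining three vertices to one side, and then place them one at a time, observing that at each stage there is only one complementary region whose boundary contains all the required neighbours; the other five cases are dismissed as ``similar''. You instead reduce everything to classical combinatorial embedding theory: Schoenflies for the two $1$-dimensional cases, the Heffter--Edmonds correspondence plus an Euler-characteristic face count to show that for each multigraph the genus-$0$ rotation systems form a single orbit under $\mathrm{Aut}(G)$ and global reversal, and Whitney's theorem (or the forced vertex links in a $4$-regular triangulation) for the octahedron. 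What your approach buys is uniformity --- all six cases, including the ones the paper skips, are handled by one mechanism --- and an explicitly combinatorial certificate of uniqueness, at the cost of importing Heffter--Edmonds and Whitney as black boxes; the paper's argument is longer per case but entirely self-contained and visual, which matches its style elsewhere. The one place in your write-up that is thinner than it looks is the step ``reflections are coherent across $O$ by connectedness'': passing from ``the rotation at each vertex is determined up to reflection'' to ``the rotation system is determined up to global reversal'' requires an argument (e.g.\ that the eight triangular faces forced by the Euler count must be precisely the eight triangles of $K_{2,2,2}$, so the face set and hence the rotation system is determined), but your parenthetical appeal to Whitney's theorem for $3$-connected planar graphs already closes that gap, so nothing is missing. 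In the $D^2$ cases you should also note that you need the uniqueness of a properly embedded arc \emph{rel} a prescribed boundary behaviour when patching the two half-discs along $\alpha$, but this is the standard relative form of Schoenflies and poses no difficulty.
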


\begin{proof}[Proof of Lemma~\ref{UnEmb}]
We prove the most complicated case - the 1-skeleton of a octahedron. The other cases are similar. Figure~\ref{fig:Figure 10} illustrates the proof. We represent the sphere as a plane with a point in infinity. Pick a ``triangle of long edges" of the graph and embed it in the sphere as in Figure~\ref{fig:Figure 10}A - there is clearly a unique way to do this up to homeomorphism. Each of the 3 remaining (degree $\neq 2$) vertices are connected to each other via long edges. This implies that any embedding must either put them all ``inside" or ``outside" the triangle. There is a homeomorphism of the sphere that preserves the triangle and exchanges the inside and outside, so we may assume WLOG that the remaining vertices are outside.

Place one of the remaining vertices outside the triangle, as per Figure~\ref{fig:Figure 10}B. It must be connected to exactly two of the triangle vertices. Connect it to them. Up to homeomorphism, it looks like Figure~\ref{fig:Figure 10}C. The sphere is now tiled with two triangles and a square. All of the following vertices must be in the square, since they must be connected to the left and right vertices. One new vertex must be connected to the left, right and bottom vertices. Add it and its edges to the embedded graph which will look like Figure~\ref{fig:Figure 10}D up to homeomorphism. Now there are 4 triangles and a square. The last vertex must be in the square and be connected to all of its vertices, as in Figure~\ref{fig:Figure 10}E. Due to the determinism of this construction, this is the only embedding up to homeomorphism.
\end{proof}

\begin{figure}
\begin{center}
\includegraphics{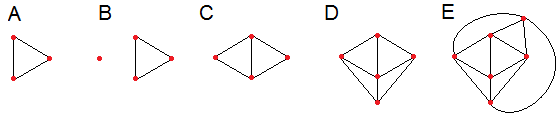}
\caption{Embedding the 1-skeleton of a octahedron in a sphere}
\label{fig:Figure 10}
\end{center}
\end{figure}

\begin{proof} [Proof of the ``only if" direction of Theorem~\ref{GenLink}]
All 6 items have a similar proof. Take the unique (proper) embedding of the graph in the sphere/disc. This must be ``pair-homeomorphic" to the pair of spaces $(Lk(\sigma),Lk'(\sigma))$. Look at the product of this pair of spaces with an interval $I=[0,1]$ - the pair $(Lk(\sigma) \times I,Lk'(\sigma) \times I)$, then quotient it by sending all of $Lk(\sigma) \times \{0\}$ to a single point. The result must be PL homomorphic to $(St(\sigma),St'(\sigma))$. Figures~\ref{fig:Figure 11}A and B depict this for a triple value.

For each one of the 6 types of graphs, one may simply look at this pair of spaces and see that it is PL-homeomorphic to the neighborhood of the matching type of value. In truth, $(St(\sigma),St'(\sigma))$ is not a neighborhood of $\sigma$ in $(M,S)$ - $(St(\sigma),S \cap St(\sigma))$ is. It is possible that $S \cap St(\sigma)$ may contain simplices that are not in $St'(\sigma)$. However, one may fix this by looking at a smaller neighborhood, like the image of $(Lk(\sigma) \times [0,\frac{1}{2}],Lk'(\sigma) \times [0,\frac{1}{2}])$. This will be a neighborhood of $\sigma$ in $(M,S)$. Figure~\ref{fig:Figure 11}C depicts an example in a lower dimension - the star of an intersection point in a generic loop on a surface. The red simplices are in $St'$ and the blue simplices are in $S \cap St$ but not in $St'$.
\end{proof}

\begin{figure}
\begin{center}
\includegraphics{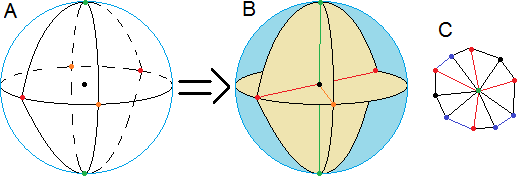}
\caption{A: From link-graph to star, B: $St \cap S$ may contain simplices that are not in $St'$}
\label{fig:Figure 11}
\end{center}
\end{figure}

We move on to the ``if" direction. Assume that $S$ is a triangulated \gls{genericsurface} in $M$, and recall that $S$ is thus the image of a generic and proper PL map $i:F \to M$, where $F$ is a surface - the underlying surface of $S$. Each simplex in $S$ is pulled back into a number of simplices in $S$, depending on what type of values compose the simplex. Definition~\ref{Generic} says that regular, RB and branch values have 1 preimage in $F$, double and DB values have 2 preimages, and triple values have 3 preimages. A 0-simplex $\sigma$ of $S$ is pulled back to 1, 2 or 3 0-simplices of $F$, accordingly. We refer to them as $\sigma_1,...,\sigma_q$ ($1 \leq q \leq 3$). The internal points of a 2-simplex are regular values, and therefore it is pulled back to only one 2-simplex in $F$. The same goes for 1-simplices that consist of regular values, but 1-simplices that consist of double values (1-simplices in $X(S)$) have two preimages in $F$.

The general idea of the proof is that the link $Lk'(\sigma)$ of a 0-simplex $\sigma$ in $S$ is the image of the union of the links of its preimages in $F$, $\bigcup Lk(\sigma_k)$, via the quotient map $i$. If the triangulation is well behaved, $\bigcup Lk(\sigma_k)$ will be a disjoint collection of 1, 2 or 3 circles. $i$ will glue some of the degree-2 vertices of these circles together, in a predictable way, and this will produce the necessary graph. However, the triangulation of $F$ may be slightly pathological. Following are some properties of the triangulation of $F$ and its connection to that of $S$: 

\begin{lem} \label{TriLem}
i) For every $0$-simplex $\sigma$ in $S$, $St'(\sigma)=\bigcup i(St(\sigma_k))$ and $Lk'(\sigma)=\bigcup i(Lk(\sigma_k))$.

ii) The vertices $a$ and $b$ of a $1$-simplex in $F$ are distinct ($a \neq b$), and furthermore $i(a) \neq i(b)$. The same holds for $2$-simplices.

iii) If $(a,b)$ is a $1$-simplex in $X(S)$ and $a$ is a double, triple or DB value, then each of the two pullbacks of $(a,b)$ will contain a different preiamge of $a$. The same holds for $b$.

iv) If two simplices of degree $d>0$ in $F$ have the exact same vertices, then $d=1$ and they are the two pullbacks of the same $1$-simplces in $X(S)$ and both vertices of this edge are branch values.

v) If $p$ is an internal $0$-simplex in $F$ (it is not in $\partial F$), and unless: a) $i(p)$ is a branch value or b) the other vertex of the unique edge in $X(S)$ that contains $i(p)$ is also a branch value, then $(St(p),p)$ is homeomorphic to $(D^2,\{0,0\})$ - $St(p)$ is a disc and $p$ is a point in the interior of this disc.

vi) If $p$ upholds the conditions (a) and (b) that where excluded in (v) then $St(p)$ is the space depicted in Figure~\ref{fig:Figure 12}F. $St(p)$ is a disc with $p$ in its interior, and two vertices on the boundary of the disc coincide/are glued together.

vii) If $p$ is a 0-simplex on the boundary of $F$, then $(St(p),p)$ is homeomorphic to $(D_+^2,\{0,0\})$ - $St(p)$ is a disc and $p$ is a point in the boundary of this disc, as Figure~\ref{fig:Figure 12}G depicts. The purple points are the boundary of the interval $Lk(p)$.
\end{lem}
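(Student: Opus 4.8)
The plan is to run every part through two facts about the generic map $i\colon F\to M$: that it is simplicial for the pullback triangulation of $F$ (so each simplex of $F$ maps \emph{onto} a simplex of $S$, with possibly several simplices of $F$ sharing an image), and that $\#i^{-1}(p)\le 3$ for every $p$, with the precise preimage count --- $1$ at regular/RB/branch values, $2$ at double/DB values, $3$ at triple values --- read off from the local models of Definition~\ref{Generic}. Parts (i) and (ii) are then essentially formal. For (i): since $i^{-1}(\sigma)=\{\sigma_1,\dots,\sigma_q\}$ exactly, a simplex $\tau'$ of $S$ satisfies $\sigma\subseteq\tau'$ iff some pullback of $\tau'$ contains some $\sigma_k$, and $\tau'$ is disjoint from $\sigma$ iff every pullback of $\tau'$ is disjoint from every $\sigma_k$; combining this with the fact that a simplicial map carries $St(\sigma_k)$ into $St'(\sigma)$ and $Lk(\sigma_k)$ into $Lk'(\sigma)$ gives $St'(\sigma)=\bigcup_k i(St(\sigma_k))$ and the link version. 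For (ii): $a\neq b$ is the no--same--vertex convention of Remark~\ref{Calculate}(3); and if $i(a)=i(b)$ then the simplicial image of the edge $(a,b)$ collapses to the single vertex $i(a)$, so $i^{-1}(i(a))$ contains that whole edge and is infinite, contradicting $\#i^{-1}\le 3$ --- and the same argument forbids two vertices of a $2$-simplex from sharing an image.

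For (iii) I would argue inside a model neighbourhood $N$ of $a$. By Definition~\ref{Generic}, $i^{-1}(N)$ is a disjoint union of $2$ (double, DB) or $3$ (triple) discs or half-discs, on each of which $i$ restricts to an embedding onto one of the sheets; consequently the preimage of the double arc through $a$ meets each of these pieces in at most one arc, there are exactly two such arcs, and they sit in two \emph{distinct} pieces. Hence, near their $a$-ends, the two pullbacks of $(a,b)$ lie in distinct pieces and therefore contain distinct preimages of $a$. Part (iv) then follows from (ii) and (iii): if $e\neq e'$ have the same vertex set, then by (ii) both map onto a common genuine simplex $\tau$ of $S$, which thus has at least two preimages; a $2$-simplex of $S$ has only regular values in its interior (since $X(S)$ is a subcomplex it cannot cross the interior of a $2$-simplex), hence a single preimage, so $\tau$ is a $1$-simplex; a $1$-simplex outside $X(S)$ likewise consists of regular values, so $\tau\in X(S)$ and $e,e'$ are its two pullbacks; finally, if an endpoint $u$ of $\tau$ were a double, triple or DB value, (iii) would force $e$ and $e'$ to contain distinct preimages of $u$, contradicting that both contain the common vertex lying over $u$ --- so both endpoints of $\tau$ are branch values. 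Conversely this shows that the only way $F$'s pullback triangulation fails to be an honest simplicial complex is that $X(S)$ contains an edge both of whose endpoints are branch values, whose two pullbacks (running between the unique preimages of those branch values) are then parallel edges in $F$.

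For (v)--(vii) the plan is a case analysis on the type of $i(p)$. Topologically there is nothing new: $F$ is a surface, so the underlying space of $St(p)$ is a disc with $p$ interior when $p\notin\partial F$ and a half-disc with $p$ on the boundary when $p\in\partial F$; the content is whether $Lk(p)$ is an honest circle or honest interval. By the previous paragraph, $Lk(p)$ has a repeated vertex exactly when $p$ carries a pair of parallel edges, i.e.\ exactly when $i(p)$ is a branch value \emph{and} the far endpoint of the unique $X(S)$-edge at $i(p)$ is also a branch value --- precisely the excluded conditions (a) and (b). So for an interior $p$ outside that situation $Lk(p)$ is a genuine circle, giving (v); in that situation, going around $\partial St(p)$ meets the common far vertex twice, which is the pinched disc of Figure~\ref{fig:Figure 12}F, giving (vi); and for $p\in\partial F$, $i(p)\in\partial M$ forces $i(p)$ to be an RB or DB value, whose model puts $p$ on the boundary arc of one of the (half-)disc sheets, and since branch values are interior no parallel edges can occur, so $Lk(p)$ is a genuine interval, giving (vii).

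I expect the main obstacle to be precision rather than ideas. One must state carefully in what sense $F$ carries a pullback \emph{$\Delta$-complex} rather than a genuine simplicial complex, justify that the Whitney-umbrella fold of Definition~\ref{Generic}(2) is the \emph{only} mechanism producing non-embedded simplices of $F$, match that mechanism exactly against conditions (a)--(b), and keep the purely topological assertion ``$St(p)$ is a (half-)disc'' cleanly separated from the combinatorial assertion about a repeated vertex on its boundary. The local-model analysis underlying (iii) is routine but needs to be done uniformly across the double, triple and DB cases, as those counts propagate into (iv)--(vii).
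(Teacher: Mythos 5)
Your proposal follows essentially the same route as the paper's own proof: (i)--(ii) from the simplicial structure and the no-repeated-vertex convention, (iii) from the two local sheets (surface strips) meeting along a double-arc edge, (iv) by combining the coincidence of images with (iii) and the preimage counts, and (v)--(vii) by using (iv) to show that the only source of a repeated vertex in $Lk(p)$ is the branch-value--to--branch-value edge excluded by conditions (a)--(b). The one slip is the blanket claim that $St(p)$ is always a disc for interior $p$ --- in case (vi) it is the pinched disc of Figure~\ref{fig:Figure 12}F, as you yourself state two lines later --- so this is a matter of phrasing rather than a gap.
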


\begin{proof}
ii) This item follows from the fact that $(i(a),i(b))$ (or $(i(b),i(a))$) is a 1-simplex in $S$, and 1-simplices have distinct vertices (see Remark~\ref{Calculate}(3)). 

ii $\Rightarrow$ i) $St'(\sigma)=\bigcup i(St(\sigma_k))$ since a simplex $\tau \in F$ is in $St(\sigma_{k_0})$ for some ${k_0}$ iff it is contained in a 2-simplex that contains $\sigma_{k_0}$ iff $i(\tau)$ is contained in some 2-simplex of $S$ that contains $\sigma$ iff $i(\tau) \in St'(\sigma)$. 

Similarly, $i(\tau) \in Lk'(\sigma)$ iff $i(\tau) \in St'(\sigma)$ and $i(\tau)$ does not contain $\sigma$ iff $\tau \in \bigcup St(\sigma_k)$ and $\tau$ does not contain any $\sigma_k$ iff for some $k_0$ $\tau \in Lk(\sigma_{k_0})$ and for every other $k$ $\tau$ does not contain $\sigma_k$. (i) implies that the second condition is redundant - if $\sigma_k \subseteq \tau \in Lk(\sigma_{k_0})$, then $\sigma_k \subseteq \tau \in Lk(\sigma_{k_0})$, and there is a 1-simplex that connects $\sigma_{k_0}$ and $\sigma_k$. This implies that $Lk'(\sigma)=\bigcup i(Lk(\sigma_k))$.

iii) $(a,b)$ is an interval that is contained in a double arc of $S$. As Figure~\ref{fig:Figure 12}A depicts, there are two surface strips that intersect along such an interval (marked in light orange and light green). Each of the preimages of $(\sigma,\tau)$ comes from a different strip, and in particular each of the preimages of $\sigma$ / $\tau$ come from a different strip and are thus different. 

Figure~\ref{fig:Figure 12}A depicts the case where $a$ is a double value and $b$ is a triple value, but it would look roughly the same in any case where $a$ and $b$ are each double, DB or triple value. If $b$ is a branch value, as in Figure~\ref{fig:Figure 12}B, the two strips will meet at $b$, but this is the only difference, and the argument still holds. This reflects the fact that $b$ has only one preimage, and both pullbacks of $(a,b)$ will have this preimage as one of their vertices, but their other vertices will be different preimages of $a$.

iv) The $i$ images of these simplices will be $d$-simplices of $S$ with the same vertices. As per Remark~\ref{Calculate}(3), the $i$-images of the two simplices in $F$ are equal, implying that these two simplices are pullbacks of the same simplex in $S$. The only $d$-simplices in $S$ with $d>0$ and more than one pullback are 1-simplices in $X(S)$. Furthermore, as per (iii), if either of the vertices of this 1-simplex is not a branch value, then its preimages in $S$ will not have the exact same vertices.

v) Even in the most pathological triangulation of a surface $F$, every internal 0-simplex $p$ must have a neighborhood like the one depicted in Figure~\ref{fig:Figure 12}C. There are $m$ ``wedges" $w_1,...,w_m$ arranged is a circle around $p$ for some $m$ ($m=5$ in the figure). Each one is one of the three corners of some 2-simplex in $F$, and each of them shares a small line segment with the following one, and the segment is one of the ends of one of the 1-simplices of $F$. In a pathological triangulation, some the wedges may come from the same 2-simplex, or there may be only one wedge. For instance, Figure~\ref{fig:Figure 12}D depicts a triangulation of the sphere with only two 2-simplices.

However, in our triangulation the different 1-simplices that contain $p$ have each a different end point. This implies that each segment is a part of a different 1-simplex, that these 1-simplices have distinct ``other vertices". The argument follows, as Figure~\ref{fig:Figure 12}E depicts.

vi) In this case, two of the segments continue into 1-simplices that end in the same vertex $q$. These are the pullbacks of the 1-simplex of $X(S)$ that contains $i(p)$, and $i(q)$ is the other vertex of this 1-simplex. Every other segment continues into a 1-simplex with a distinct ``other vertex". The argument follows.

vii) Similar to (v).
\end{proof}

\begin{figure}
\begin{center}
\includegraphics{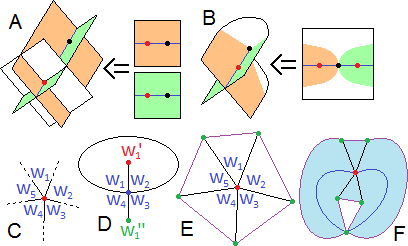}
\caption{A,B - the pullback of an edge made of double values}
\label{fig:Figure 12}
\end{center}
\end{figure}

We will use the properties of the triangulations of $F$ and its connection to that of $S$ to prove the ``if" direction of Theorem~\ref{GenLink}:

\begin{proof}[Proof of the ``if" direction of Theorem~\ref{GenLink}]

We prove the ``if" direction for the hardest cases - triple and branch values. All other cases are similar to, but simpler than, the triple value case.

Let $\sigma$ be a triple value. There are 3 small sheets in $F$ that $i$ embeds in $M$ in such a way that their images intersect transversally in $\sigma$. Each of the sheets contains one of the preimages of $\sigma$. Each preimage $\sigma_i$ has a neighborhood like those mentioned in the proof of Lemma~\ref{TriLem}(v). We draw these neighborhoods side by side in \ref{fig:Figure 13}A. As per Lemma~\ref{TriLem}(v), each such neighborhood extends into the star of one of the $\sigma_i$s, and this star is a topological disc. We draw each of the stars in Figure~\ref{fig:Figure 13}B.

The intersection of the images of each two of the three sheets forms one of the arc segments that intersect at $\sigma$. In each segment, there are two 1-simplices of $X(S)$ that end in $\sigma$. We index the arc segments ``1", ``2" and ``3", we name the two 1-simplices in the $l$th segment $l^+$ and $l^-$ ($1^+$, $2^-$ etc) as depicted in Figure~\ref{fig:Figure 13}C colored in blue. We use the term $\partial l^+$ to refer to the other vertex of the pullback $l^+$, ``other" meaning not a preimage of $\sigma$. We use $\partial l^-$ similarly. In Figure~\ref{fig:Figure 13}, we depict $\partial l^{\pm}$ in green.

One of the 3 stars contains pullbacks of the 1st and 2nd arc segments, one contains pullbacks of the 1st and 3rd arc segments and one contains pullbacks of the 2nd and 3rd arc segments. In Figures~\ref{fig:Figure 13}A,B, we mark the pullbacks of the 1-simplices from $X(S)$ in blue, and we indicate which 1-simplex of $S$ ($1^+$, $2^-$ etc) each blue 1-simplex is a pullbacks of. The other end of each pullback of $l^{\pm}$ ends in a preimage of $\partial l^{\pm}$. We mark these preimages in green.

The link of each preimage of $\sigma$ is thus a circle that contains 4 green vertices - preimages of $\partial l_1^{\pm}$ and $\partial l_2^{\pm}$ for some $l_1,l_2$. Note that the preimages of $\partial l_1^+$ and $\partial l_1^-$ are situated on opposite sides of the circle - a path that connects them must cross either $\partial l_2^+$ or $\partial l_2^-$. For instance, on the first (leftmost) circle in Figure~\ref{fig:Figure 12}B, $\partial 1^+$ and $\partial 1^-$ are at the top and bottom of the circle respectively, while $\partial 2^+$ and $\partial 2^-$ are on the left and right.

Intuitively, $Lk'(\sigma)$ is created by taking the 3 circles and, for each $l$, gluing the two preimages of $\partial l^+$ to each other, and gluing the two preimages of $\partial l^-$ to each other. This clearly produces a graph that is homeomorphic to the 1-skeleton of an octahedron. However, there are two fine points that we must address before this proof can be considered as complete.

a) We must explain why no other parts of the 3 links are glued together. Let us look at the different vertices and edges of each link. Begin with the green vertices. We know that the two preimages of each $\partial l^{\pm}$ are glued together, but can preimages of different $\partial l^{\pm}$s also be glued together? For instance, can preimages of $\partial 2^+$ and $\partial 3^+$ be glued together? The answer is no, since we know that they have different $i$ images - that $\partial 2^+$ and $\partial 3^+$ are different vertices of $S$. Formally, we know this since we know that $2^+$ and $3^+$ are different 1-simplices in $S$, and two different 1-simplices in $S$ cannot connect $2^+=3^+$ to $\sigma$.

Similarly, none of the black vertices of the link can be glued together to any other black vertex or to any green vertex. This is because each black vertex is connected to a preimage of $\sigma$ with a black 1-simplex - the preimage of a 1-simplex in $S$ that is not in $X(S)$. This sort of 1-simplex can only have one preimage in $F$. Had two vertices been glued together and at least one was black, then the two 1-simplices that connect them to preimages of $\sigma$ would be pullbacks of the same 1-simplex is $S$. But as we just explained, this 1-simplex can only have 1 preimage.

Lastly, look at the edges of the links. Two edges can be glued together only if the vertices at their ends are glued together, but this is impossible. Each edge either has at least one black vertex, which cannot be glued to other vertices, or it connects a preimage of some $\partial l_1^{\pm}$ to a preiamge of some $\partial l_2^{\pm}$ for $l_1 \neq l_2$ (We depicted this for $\partial 1^-$ and $\partial 3^+$ and for $\partial 2^+$ and $\partial 3^-$). This edge could only be glued to another edge that connects preimages of $\partial l_1^{\pm}$ and $\partial l_2^{\pm}$, but this is impossible since each star contains pullbacks of a different pair of arc segments.

b) While they are drawn as disjoint sets, the different stars may meet. As per the proof of Lemma~\ref{TriLem}(i), a point in the intersection of two stars must reside on the boundary (link) of each star. Obviously, if some points on the links have different images in $S$, they cannot coincide. Therefore, as per the above, the only points that can coincide are different preimages of the same $\partial l^{\pm}$. Figure~\ref{fig:Figure 13}D demonstrates this for the preimages of $\partial 3^+$ - the two rightmost discs of Figure~\ref{fig:Figure 13}B are glued together at these preimages. In the proof Lemma~\ref{TriLem}(v) we saw exactly under which conditions these two preimages coincide- precisely when the said $\partial l^{\pm}$ is a branch value. In any case, since the preimages of $\partial l^{\pm}$ are glued together by $i$, gluing them together before that, in $F$, will not change the topology of $Lk'(\sigma)$.

For regular, double, DB and RB values, the proof is similar but simpler. Branch values have a slight difference. A branch value $\sigma$ has one preimage $\sigma_1$. Only one 1-simplex in $X(S)$ ends in $\sigma$. We refer to it as $\tau$ and to its other end as $\partial \tau$. The star of $\sigma_1$ will contain both of the preimages of $\tau$. It will usually be a disc and $Lk(\sigma_1)$ will be a circle with two distinct vertices on it - the preimages of $\partial \tau$. These vertices will be glued together by $i$. As in the proof for triple values, $i$ will not glue any other vertices or edges together, and so $Lk'(\sigma)$ will be the graph one gets by taking a circle and gluing two points on it together - an 8-graph. However, if $\partial \tau$ is also a branch value, then the two perimages will already coincide in $F$. In this case, $St(\sigma_1)$ will look like Figure~\ref{fig:Figure 12}F. As in the case where
$\sigma$ is a triple value, this will not affect $Lk'(\sigma)$. In fact, one can see that in this case $Lk(\sigma_1)$ is already an 8-graph. 
\end{proof}

\begin{figure}
\begin{center}
\includegraphics{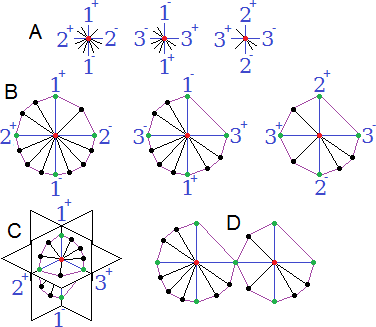}
\caption{The $Lk'$ of a triple value is the 1-skeleton of a octahedron}
\label{fig:Figure 13}
\end{center}
\end{figure}

Now that we know how to verify that $S$ is a \gls{genericsurface} in $M$, we will examine the complexity of this process. As per Theorem~\ref{GenLink} we must verify that each link $Lk'(\sigma)$ is homeomorphic to one of the topological graph types specified in Lemma~\ref{GenLink}. We begin by calculating the intersection of $Lk'(\sigma)$ with $\partial Lk(\sigma)$ for every boundary 0-simplex $\sigma$. Recall that we calculated $\partial Lk$ in Remark~\ref{save}(5). Intersecting list of edges/vertices in $Lk'$ with that of $\partial Lk$ takes $O(\#Lk)$ time since these lists are all ordered. One then verifies that there are 0 such edges and 2 or 4 such vertices. Doing this for every $\sigma$ takes $O(n)$ time.

The next step is verifying that every $Lk'$ has the right number of vertices of every degree $\neq 2$ and, if $\sigma \in \partial M$, that the vertices that are in $\partial Lk$ are exactly those vertices of degree 1:

\begin{lem} \label{Deg}
Verifying this takes $O(n)$ time.
\end{lem}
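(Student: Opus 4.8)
The plan is to process each $0$-simplex $\sigma$ of $S$ separately, bound the work on $\sigma$ by the size $\#Lk'(\sigma)$ of its stored $S$-link, and sum using Remark~\ref{AllStIsn}(1). For a fixed $\sigma$, first compute the degree of every vertex of $Lk'(\sigma)$ exactly as in the proof of Lemma~\ref{LongEdgeComp}: initialise $Ad(v)=\emptyset$ for every vertex $v$ of $Lk'(\sigma)$ and, in one pass over the edge list of $Lk'(\sigma)$, add each endpoint of each edge to the adjacency list of the other, so that $\deg(v)=\#Ad(v)$. This costs $O(\#Lk'(\sigma))$. Then scan the vertices once more and tally the number $n_1$ of degree-$1$ vertices and the number $n_4$ of degree-$4$ vertices, while checking that no vertex has degree $0$, $3$, or $\geq 5$; if some vertex does, $Lk'(\sigma)$ cannot be homeomorphic to any of the six graphs of Theorem~\ref{GenLink}, and the input is rejected. (Since $S$ was already verified to be pure, there are in fact no degree-$0$ vertices, but it costs nothing to screen for them.)

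Next, compare $(n_1,n_4)$ against the admissible patterns read off Theorem~\ref{GenLink}: a circle gives $(n_1,n_4)=(0,0)$; the $2$-vertex, $4$-edge multigraph gives $(0,2)$; the $1$-skeleton of an octahedron gives $(0,6)$ (six degree-$4$ vertices, by Corollary~\ref{Oct}); the $8$-graph gives $(0,1)$; a properly embedded interval gives $(2,0)$; and a properly embedded ``X'' gives $(4,1)$. Using the stored bit that records whether $\sigma$ is internal or lies in $\partial M$ (Remark~\ref{save}(2)), reject $\sigma$ unless $(n_1,n_4)\in\{(0,0),(0,1),(0,2),(0,6)\}$ in the internal case, or $(n_1,n_4)\in\{(2,0),(4,1)\}$ in the boundary case. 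Finally, if $\sigma\in\partial M$, verify that the degree-$1$ vertices are exactly the elements of the already-computed list $P=Lk'(\sigma)\cap\partial Lk(\sigma)$; it suffices to check that $\#P=n_1$ and that $\deg(v)=1$ for every $v\in P$, because $P$ is a repetition-free list of vertices, so these two conditions force $P$ to coincide with the size-$n_1$ set of degree-$1$ vertices. Every step here is $O(\#Lk'(\sigma))$.

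Summing over all $0$-simplices $\sigma$ of $S$ yields $\sum_\sigma O(\#Lk'(\sigma))=O(\sum_\sigma \#Lk'(\sigma))=O(n)$ by Remark~\ref{AllStIsn}(1), which is the claim. The argument is essentially bookkeeping; the one place that needs care — the nearest thing to an obstacle — is the last comparison: a naive implementation that builds the list of degree-$1$ vertices of $Lk'(\sigma)$, sorts it, and matches it against $P$ would cost $O(\#Lk'(\sigma)\log\#Lk'(\sigma))$, hence $O(n\log n)$ overall, whereas the $\#P=n_1$ counting test above keeps it linear. (Equivalently, one could mark the elements of $P$ in a timestamped boolean array indexed by the vertices of $M$, giving $O(1)$ membership tests with $O(1)$ re-initialisation per $\sigma$.)
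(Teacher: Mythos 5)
Your proof is correct and follows essentially the same route as the paper's: compute vertex degrees of $Lk'(\sigma)$ via the adjacency-list pass of Lemma~\ref{LongEdgeComp} in $O(\#Lk'(\sigma))$ time, tally the counts of degree-$1$ and degree-$4$ vertices, compare against the same six admissible patterns, check the boundary condition against $Lk'(\sigma)\cap\partial Lk(\sigma)$, and sum over $\sigma$ using Remark~\ref{AllStIsn}(1). Your extra care about the final set comparison is fine but unnecessary, since both sets involved have constant size (at most $6$ and at most $4$ elements respectively), so the naive comparison is already $O(1)$ per vertex, as the paper observes.
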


\begin{proof}
As per Lemma~\ref{LongEdgeComp}, calculating the degree $deg(v)$ of every vertex in $Lk'(\sigma)$, the lists of the degree $\neq 2$ vertices, the long edges, and the circle of $Lk'(\sigma)$ all takes $O(\#Lk'(\sigma))$ time. In order to be one of the graphs listed in Theorem~\ref{GenLink}, the graph can have no more than 6 vertices of degree $\neq 2$. Verifying this takes $O(1)$ time. Verifying that each of them is of degree 1 or 4, and count the numbers $Ver_1$ and $Ver_4$ of vertices of each degree, also takes $O(1)$ time. If $\sigma \in \partial M$ then $(Ver_1,Ver_4)$ should be equal to $(2,0)$ or $(4,1)$. Otherwise, it should be equal to $(0,0)$, $(0,1)$, $(0,2)$ or $(0,6)$. Lastly, if $\sigma \in \partial M$, one should verify that the vertices of $Lk'(\sigma) \cap \partial Lk(\sigma)$ are exactly the vertices of degree 1. Verifying all of this clearly takes $O(1)$ time. Doing this for every $\sigma$ takes $O(n)$ as usual.
\end{proof}

\begin{rem} \label{save2}
1) The value of $(Ver_1,Ver_4)$ implies the kind of value that $\sigma$ should be ($(0,6)$ for a triple value, $(4,1)$ of a DB value, etc). As in Remark~\ref{save}, one can store this information in two ways. Firstly, by creating lists of all the ``potential triple values", all the ``potential DB values", etc. The adjective "potential" is used since we only verified that each graph has the right number or vertices of each degree, but it may still have the wrong homeomorphism type. We will verify this shortly, and then it will be appropriate to forgo the ``potential" adjective.

Secondly, one may create a list of $\#V$ elements such that, if the $r$'th vertex of $M$ is in $S$, the $r$th entry in the list is the pair $(Ver_1,Ver_4)$ of this vertex (if the $r$th vertex is not in $S$ then the $r$th entry is irrelevant). This can be used to check, in $O(1)$ time, what type of value is a given 0-simplex in $S$.

2) In later calculations, we will refer to the $k$'th entry in the list of triple values as $TV_k$, as per the conventions of section~\ref{PremSec}. Formally, $TV_k$ is just a number - the index of the appropriate vertex of $M$. We can create a similar list of $\#V$ entries whose $r$th entry is $k$ iff the $r$th vertex in $M$ is $TV_k$. It takes $O(n)$ time to write this list, and it can be used to check, in $O(1)$ time, what is the matching $k$ of a given triple value.

3) We also save all the information calculated about the topological graph structure of every $Lk'(\sigma)$ - the list of degree $\neq 2$ vertices, the degree $deg(v)$ of each vertex, the lists of long edges and circles, to be used later. As usual, it will take $O(1)$ time to access each such list but it will take $O(\#Lk'(\sigma))$ to search it.
\end{rem}

Lastly, we must make sure that every potential double/triple/etc value is indeed a double/triple/etc value - that $Lk'$ has the required homeomorphism type.

\begin{lem} \label{Surf0}
This takes $O(n)$ time.
\end{lem}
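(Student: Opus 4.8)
The plan is to verify, one $0$-simplex of $S$ at a time, that the already-computed simplified graph of $Lk'(\sigma)$ has exactly the homeomorphism type dictated by the ``potential type'' of $\sigma$, and to argue that each such verification costs only $O(1)$. First I would recall the data already on hand: by Remark~\ref{save2}, every $0$-simplex $\sigma$ of $S$ carries its pair $(Ver_1,Ver_4)$, hence its potential type; and by Lemma~\ref{LongEdgeComp} together with Remark~\ref{save2}(3) we have stored the simplified-graph data of $Lk'(\sigma)$ --- its list of degree-$\neq 2$ vertices with their degrees, its list of long edges (each recorded with its two endpoints, or flagged as a long loop), and its list of circles. For a boundary $\sigma$ we have, in addition, already checked (just before Lemma~\ref{Deg}, and in Lemma~\ref{Deg}) that $Lk'(\sigma)\cap\partial Lk(\sigma)$ is exactly the set of degree-$1$ vertices of $Lk'(\sigma)$.

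Next I would invoke Lemma~\ref{SimpHomeo}: $Lk'(\sigma)$ is homeomorphic to a fixed model graph iff the two simplified graphs have the same number of circles and graph-isomorphic multigraph parts, the multigraph part of $Lk'(\sigma)$ being the one whose vertices are the degree-$\neq 2$ vertices and whose edges are the long edges. So the whole check reduces to comparing, against the model dictated by the potential type, the number of circles together with the incidence pattern of the long edges among the at most six degree-$\neq 2$ vertices. Explicitly: a potential regular value must have exactly one circle and no degree-$\neq 2$ vertices (hence no long edges); a potential double value must have no circles and its four long edges must all join its two degree-$4$ vertices, none being a loop; a potential branch value must have no circles and exactly two long edges, both loops at its single degree-$4$ vertex; a potential triple value must have no circles, no long loops, and its twelve long edges must form a \emph{simple} graph on its six degree-$4$ vertices --- by Corollary~\ref{Oct} this already forces the $1$-skeleton of the octahedron, so nothing finer is needed; a potential RB value (on $\partial M$) must have no circles and exactly one long edge, joining its two degree-$1$ vertices; and a potential DB value (on $\partial M$) must have no circles and four long edges, each joining the degree-$4$ vertex to a distinct one of the four degree-$1$ vertices. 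Together with the boundary behaviour already verified and the uniqueness of the relevant (proper) embeddings from Lemma~\ref{UnEmb}, passing these tests is equivalent to $Lk'(\sigma)$ having the required homeomorphism type, hence by Theorem~\ref{GenLink} to $\sigma$ being genuinely a value of that type.

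Each test inspects only $O(1)$ of the stored data: a couple of list lengths (the number of circles and the number of long edges, read off in constant time) and a bounded number of long-edge records, since there are at most six degree-$\neq 2$ vertices and --- the degrees being pinned down by the potential type --- the number of long edges is then pinned down as well (it equals half the sum of the degrees of the degree-$\neq 2$ vertices, hence at most twelve). Thus each $0$-simplex of $S$ costs $O(1)$, and since $M$ has at most $4n$ vertices the whole pass costs $O(n)$. (If one preferred not to use the bound on the number of long edges and simply iterated over every stored long-edge and circle list, Remark~\ref{AllStIsn}(1) would still give $\sum_\sigma\#Lk'(\sigma)=O(n)$, so the total is $O(n)$ either way.) I would close by noting that, combined with the manifold check and with Lemma~\ref{Surf2&1}, this completes the verification that $(M,S)$ is a valid abstract generic surface, within the linearithmic bound of Theorem~\ref{ValidQuad}, the dominant cost being the sorting steps of the earlier lemmas.

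The complexity bookkeeping here has no real obstacle; the one point that warrants care is checking that the purely graph-theoretic tests above genuinely determine the homeomorphism type --- that ``correct number of circles plus correct multigraph'' is all one must check. For the four internal types this is exactly Lemma~\ref{SimpHomeo} plus Corollary~\ref{Oct}; for the two boundary types one additionally needs that a properly embedded arc, respectively a properly embedded ``X'', in a disc is unique up to homeomorphism of the pair, which is Lemma~\ref{UnEmb}(2) and is already established. So the ``hard part'' is merely assembling these earlier results in the right order, not proving anything new.
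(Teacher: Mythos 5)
Your proposal is correct and follows essentially the same route as the paper: for each potential value you compare the already-stored circle count and long-edge incidence data of $Lk'(\sigma)$ against the model graph of that type, observe that the bounded number of degree-$\neq 2$ vertices and long edges makes each comparison $O(1)$, and sum over the $O(n)$ vertices. The only cosmetic difference is that you spell out the branch/RB checks explicitly where the paper notes they are automatic once the degree data from Lemma~\ref{Deg} is in hand, and you cite Lemma~\ref{SimpHomeo} and Corollary~\ref{Oct} a bit more explicitly to justify that the combinatorial tests determine the homeomorphism type.
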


\begin{proof}
For a potential regular value $\sigma$ to be an actual regular value, $Lk'(\sigma)$ should be a circle. It is known that this graph has only degree 2 vertices which implies that it is a collection of circles. All that is left is to check that there is only one such circle. Since we already calculated the lists of circles of $Lk'(\sigma)$, this takes $O(1)$ time to verify. One must similarly verify that for every other  potential value $Lk'$ has no circles, as the formulation of Theorem~\ref{GenLink} requires.

There is nothing more to check for potential RB and branch values - A multigraph with no circles whose only vertices of degree $\neq 2$ are two degree-1 vertices must be a line, and we already verified that that this line is properly embedded in the disc $Lk(\sigma)$, since its intersection with the boundary circle of the line is equal to the set of degree-1 vertices. Similarly, a graph with only 1 degree-4 vertex and no circles must be an 8-graph.

For potential double values one must check that all the long edges begin in one of the degree 4 vertices and end in the other - that none of them are loops. Finding the beginning and ending vertices of a given long edge takes $O(1)$ time and, since there are only 4 such edges (due to vertex-degree constraints), this whole check takes $O(1)$ time. For potential DB values, one must check that every edge connects the degree-4 vertex with one of the degree-1 vertices. This similarly takes $O(1)$ time.

For a potential triple value one must check that every degree 4 vertex is connected via a long edge to 4 of the other vertices - that there are no loops and no two vertices that are connected via two or more long edges. This will again take $O(1)$ time, due to the fact that a potential triple value has a constant number of degree-4 vertices (6), and long edges (12). Since the inspection of every $\sigma$ takes only $O(1)$ time, it takes $O(n)$ time to inspect all $\sigma$s. 
%
%
\end{proof}

This concludes the verification that $(M,S)$ is a valid input. The various results of sections \ref{DTsec1}, \ref{DTsec2}, and \ref{DTsec3} - in particular Lemmas~\ref{PureComp}, \ref{St'Lem}, \ref{MisMan}, \ref{Surf2&1}, \ref{Deg}, and \ref{Surf0} - imply that:

\begin{thm} \label{ValidQuad}
One can check that a \gls{genericsurface} $(M,S)$ is valid in linearithmic $O(n \cdot \log(n))$ time.
\end{thm}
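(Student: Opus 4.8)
The plan is simply to assemble the validity check as a finite pipeline of steps, each of which has already been shown (in Sections~\ref{DTsec1}, \ref{DTsec2}, and \ref{DTsec3}) to run in at most linearithmic time, and then to observe that a bounded composition of $O(n \cdot \log(n))$ procedures is again $O(n \cdot \log(n))$. The only thing requiring care is the \emph{order} of the steps, since each step is permitted to assume the output produced — and the invariants established — by the ones before it.

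First I would run the check of Lemma~\ref{PureComp}: that $M$ is a genuine $3$-dimensional simplicial complex with all tuples in range and in increasing order, that $S$ is a pure $2$-dimensional subcomplex of $M$, and that both are pure. This costs $O(n \cdot \log(n))$ and, by Remark~\ref{SortSimp}, leaves every simplex list lexicographically sorted. Next, using Lemma~\ref{St'Lem} together with the sorting observation of Remark~\ref{AllStIsn}(2), precompute and store the sorted complexes $St(\sigma)$, $Lk(\sigma)$, $St'(\sigma)$, $Lk'(\sigma)$ for all relevant $\sigma$, in total time $O(n \cdot \log(n))$. With $M$ now known to be a pure complex, apply Lemma~\ref{MisMan} to verify in $O(n \cdot \log(n))$ that $|M|$ is a $3$-manifold; this step also produces, via Remark~\ref{save}, the internal/boundary classification of simplices and the boundary circles $\partial Lk(\sigma)$, which later steps need.

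Having certified the ambient manifold, I would then verify that $S$ is a \gls{genericsurface} inside $M$ by checking, in order of decreasing dimension, that every simplex of $S$ has a neighbourhood of one of the shapes of Definition~\ref{Generic}. Dimensions $2$ and $1$ are dispatched by Lemma~\ref{Surf2&1} in $O(n)$ time. For dimension $0$ we invoke the link characterization of Theorem~\ref{GenLink}: for each vertex $\sigma$ of $S$ this means intersecting the sorted lists of $Lk'(\sigma)$ with $\partial Lk(\sigma)$ (total $O(n)$ over all $\sigma$, since the lists are sorted and the link sizes sum to $O(n)$), extracting the simplified-graph data — vertex degrees, degree-$\neq 2$ vertices, long edges, circles — by Lemma~\ref{LongEdgeComp} in time $O(\#Lk'(\sigma))$ and hence $O(n)$ overall by the same summation, running the degree-and-boundary bookkeeping check of Lemma~\ref{Deg} in $O(n)$, and finally running the homeomorphism-type check of Lemma~\ref{Surf0} in $O(n)$, which is legitimate because the six candidate graphs are pinned down up to homeomorphism by their simplified-graph structure (Lemma~\ref{SimpHomeo}, with Corollary~\ref{Oct} handling the octahedral case).

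There is no genuine obstacle beyond the accounting itself: the input $(M,S)$ is valid precisely when every one of these checks succeeds, and the most expensive individual cost is the $O(n \cdot \log(n))$ of Lemmas~\ref{PureComp} and \ref{MisMan} (all remaining checks being $O(n)$), so the whole verification runs in $O(n \cdot \log(n))$ time. The one point worth double-checking is that no step needs an invariant that a later step would establish — that Lemma~\ref{MisMan} is applied only after purity is known, that the link-based tests for $S$ are applied only after the links have been built and sorted and $M$ has been certified a $3$-manifold, and that the boundary data of Remark~\ref{save} is in hand before the $Lk' \cap \partial Lk$ intersections — which is exactly the ordering laid out above.
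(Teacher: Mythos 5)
Your proposal is correct and follows essentially the same route as the paper, which likewise obtains Theorem~\ref{ValidQuad} by chaining Lemmas~\ref{PureComp}, \ref{St'Lem}, \ref{MisMan}, \ref{Surf2&1}, \ref{Deg}, and \ref{Surf0} (plus the $O(n)$ intersection of $Lk'(\sigma)$ with $\partial Lk(\sigma)$) and noting that the dominant terms are the $O(n \cdot \log(n))$ steps. Your explicit attention to the order in which invariants are established is a welcome clarification but not a departure from the paper's argument.
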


\section{Identifying the relevant parts of the surface} \label{idensec}

As we explained in section~\ref{PremSec}, the algorithm needs to identify some parts of the surface - the double arcs, the two intersecting surface strips at each arc, the three intersecting surface sheets at each triple value, and the three segments of double arc that pass each triple value. In this section we will explain how the algorithm does this, and study the complexity of this process.

1) Begin by identifying the 3 intersecting arc segments $TV_k^1$, $TV_k^2$ and $TV_k^3$ at each triple value $TV_k$. Observe the star of a triple value - Figure~\ref{fig:Figure 11}A depicts the star and link of a triple value. As one can see, each of the intersecting arc segments begins at one of the degree-4 vertices $v$ of $Lk'(TV_k)$, goes into $TV_k$ itself (via the unique 1-simplex that connects them), and then continues into the antipodal vertex $v'$ in $Lk'(TV_k)$ - the only vertex that $v$ is not connected to via a long edge.

\begin{defn} \label{SegEnd}
Name the degree-4 vertices of $Lk'(TV_k)$ $a_1^+$, $a_1^-$, $a_2^+$, $a_2^-$, $a_3^+$ and $a_3^-$ in such a way that every $a_l^+$ is antipodal to $a_l^-$. For every $k,l$, the arc segment $TV_k^l$ will be the path $a_l^-,TV_k,a_l^+$ when $a_l^{\pm}$ are taken from $Lk(TV_k)$.
\end{defn}

\begin{lem} \label{Iden1}
Defining these arc segments takes $O(n)$ time.
\end{lem}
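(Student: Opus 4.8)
The plan is to note that, at a single triple value $TV_k$, all the combinatorial data needed to name its three arc segments is already stored and has size bounded by an absolute constant, so the work is $O(1)$ per triple value; since $S$ has at most $\#V \le 4n$ vertices, and the list of triple values was already compiled (Remark~\ref{save2}), the total is $O(n)$.

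First I would recall the available data: by Remark~\ref{save2}, for each triple value $TV_k$ the algorithm has stored the list of the six degree-$4$ vertices of the topological graph $Lk'(TV_k)$ together with (Remark~\ref{save2}(3)) its list of twelve long edges, computed via Lemma~\ref{LongEdgeComp}. By Theorem~\ref{GenLink}(1C), verified algorithmically in Lemma~\ref{Surf0}, $Lk'(TV_k)$ is the $1$-skeleton of an octahedron, so by Corollary~\ref{Oct} each degree-$4$ vertex is joined by a single long edge to exactly four of the other five degree-$4$ vertices; the fifth is its unique antipode. To build the antipodal pairing for a fixed $TV_k$, I would run once over the (at most twelve) long edges, recording for each degree-$4$ vertex $v$ the four degree-$4$ vertices joined to $v$ by a long edge; the degree-$4$ vertex missing from that list is the antipode of $v$. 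As all lists involved are of bounded length, this costs $O(1)$. I would then fix an arbitrary labelling of the three antipodal pairs as $\{a_1^+,a_1^-\}$, $\{a_2^+,a_2^-\}$, $\{a_3^+,a_3^-\}$ (Definition~\ref{SegEnd} only requires $a_l^+$ and $a_l^-$ to be antipodal) and, for each $l$, store $TV_k^l$ as the path $a_l^-,\,TV_k,\,a_l^+$ --- i.e. the two $1$-simplices $\{a_l^-,TV_k\}$, $\{TV_k,a_l^+\}$ together with the vertex $TV_k$. These cone edges lie in $S$ because every vertex of $Lk'(TV_k)$ is joined to $TV_k$ by a cone $1$-simplex of $St'(TV_k)$ (Lemma~\ref{TriLem}(i)), and at a degree-$4$ vertex this edge carries four triangles of $St'(TV_k)$, hence lies in $X(S)$; forming and storing the sorted index pairs is again $O(1)$.

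Summing over the $O(n)$ triple values yields the claimed $O(n)$ bound. I do not expect a genuine obstacle: the only points deserving a line of justification are that the three antipodal pairs really partition the six degree-$4$ vertices into three arc segments --- which is exactly the octahedral structure of Corollary~\ref{Oct}, already established in Lemma~\ref{Surf0} --- and that the two edges making up $TV_k^l$ are double-value edges, which holds because along an arc segment passing through a triple value every point is a double value.
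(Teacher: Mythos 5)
Your proposal is correct and fills in exactly the details the paper leaves out: the paper's own proof of Lemma~\ref{Iden1} consists of the single word ``Trivial,'' relying on the same observations you make --- the octahedral structure of $Lk'(TV_k)$ gives each degree-4 vertex a unique antipode, the relevant lists are already stored by Remark~\ref{save2}, the per-triple-value work is $O(1)$, and there are $O(n)$ triple values. No gap, and no divergence from the paper's intended argument.
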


\begin{proof}
Trivial.
\end{proof}

2) We move on to the 3 intersecting sheets at each $TV_k$. For every $1 \leq l_1 < l_2 \leq 3$, look at the path in $Lk(TV_k)$ that begins in $a_{l_1}^+$, continues into $a_{l_2}^+$ via the unique long edge that connects them, continues into $a_{l_1}^-$, into $a_{l_2}^-$ and returns into $a_{l_1}^+$. As Figure~\ref{fig:Figure 11}B depicts, this will be the boundary of one of the intersecting sheets at $TV_k$ - the one that contains $TV_k^{l_1}$ and $TV_k^{l_2}$. We called it $D_k^{\{l_1,l_2\}}$ in Definition~\ref{3Discs}. We consider finding these paths as identifying the sheets, since the simplices of the sheets are just the simplices of the path and the union of each such simplex with $TV_k$.

\begin{lem} \label{Iden2}
Identifying these paths takes $O(n)$ time.
\end{lem}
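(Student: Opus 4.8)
The plan is to recycle data already prepared during the verification phase and to observe that everything here is local and of bounded combinatorial size around each triple value, so the argument is essentially the bookkeeping of Lemma~\ref{Iden1}. Recall from Definition~\ref{SegEnd} that for each triple value $TV_k$ we have labelled the degree-$4$ vertices of $Lk'(TV_k)$ as $a_1^{\pm},a_2^{\pm},a_3^{\pm}$ with $a_l^{+}$ antipodal to $a_l^{-}$, and from Remark~\ref{save2}(3) that the list of long edges of $Lk'(TV_k)$ has been stored. For each of the three pairs $1\le l_1<l_2\le 3$ I would locate, inside that list, the four long edges joining $a_{l_1}^{+}$ to $a_{l_2}^{+}$, $a_{l_2}^{+}$ to $a_{l_1}^{-}$, $a_{l_1}^{-}$ to $a_{l_2}^{-}$, and $a_{l_2}^{-}$ to $a_{l_1}^{+}$; concatenating them gives the closed path in $Lk(TV_k)$ described in the text, and coning each of its simplices over $TV_k$ produces and stores the subcomplex $D_k^{\{l_1,l_2\}}$ of Definition~\ref{3Discs}.

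For the complexity, the point is that $Lk'(TV_k)$ has only a bounded number of vertices of degree $\ne 2$ (namely $6$) and of long edges (namely $12$), so picking out the four long edges of the cycle by their endpoint labels is a constant-size search. Writing out the cycle as an honest subcomplex, and then forming its cone over $TV_k$, costs time proportional to the number of $1$-simplices lying on those four long edges, hence $O(\#Lk'(TV_k))$; repeating this for all three pairs is still $O(\#Lk'(TV_k))$. Summing over all triple values and using the bound $\sum_{\sigma}\#Lk'(\sigma)=O(n)$ from Remark~\ref{AllStIsn}(1) gives $\sum_k \#Lk'(TV_k)=O(n)$, which is the claim.

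The only two points that need a word of (already-available) justification are the following, and neither is a real obstacle. First, each of the four consecutive vertex pairs of the cycle really is joined by exactly one long edge: this is because $Lk'(TV_k)$ is the $1$-skeleton of an octahedron (Corollary~\ref{Oct}), in which two vertices are adjacent iff they are not antipodal, and distinct $l_1,l_2$ force the pairs in question to be non-antipodal, while the ``exactly one'' was verified in the proof of Lemma~\ref{Surf0}. Second, the subcomplex obtained by coning that $4$-cycle over $TV_k$ is indeed the intersecting sheet containing $TV_k^{l_1}$ and $TV_k^{l_2}$, which is exactly what Figure~\ref{fig:Figure 11}B displays: the four cycle vertices $a_{l_1}^{\pm},a_{l_2}^{\pm}$ are precisely the $Lk(TV_k)$-endpoints of the arc segments $TV_k^{l_1}$ and $TV_k^{l_2}$ (Definition~\ref{SegEnd}), so both segments lie on the coned disc, and by the local model of a triple value in Figure~\ref{fig:Figure 11}A this disc is one of the three surface sheets meeting at $TV_k$.
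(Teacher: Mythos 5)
Your proposal is correct and follows essentially the same route as the paper: identify the long edges of $Lk'(TV_k)$, select the four joining the antipodal-pair vertices in the prescribed cyclic order, concatenate them into the boundary cycle of $D_k^{\{l_1,l_2\}}$ in $O(\#Lk'(TV_k))$ time per triple value, and sum over all triple values via the bound of Remark~\ref{AllStIsn}(1). The extra justifications you supply (uniqueness of the connecting long edges via the octahedron structure, and the identification of the coned cycle with the surface sheet) are consistent with what the paper establishes in the surrounding text and in Lemma~\ref{Surf0}.
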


\begin{proof}
Using Lemma~\ref{LongEdgeComp}, we can identify the long edges of $Lk'(TV_k)$ in \\$O(\#Lk'(TV_k))$ time. There is a unique long edge $v_0,...,v_r$ that either begins in $a_1^+$ and ends in $a_2^+$ or the other way around - the way that Lemma~\ref{LongEdgeComp} identifies each arc chooses the direction of the arc arbitrarily. If the direction is wrong (from $a_2^+$ to $a_1^+$), we can reverse the direction in $O(r)$ time.

Next, we take the long edges from $a_2^+$ to $a_1^-$, from $a_1^-$ to $a_2^-$ and from $a_2^-$ to $a_1^+$ and concatenate them. We might have to reverse the order of each of them. Since, $a_2^+$ appears twice, once in the end of the first path and once in the beginning of the second path, delete one instance from the concatenate list. Do the same with $a_1^-$ and $a_2^-$. We now have the boundary of $D_k^{\{1,2\}}$ as a path in $Lk'$ (or just in $M$). We can find the boundaries of $D^{1,3}$ and $D^{2,3}$ similarly.

This process involved identifying the long edges in $O(\#Lk'(TV_k))$ time, possibly reversing any of them, which again will take at most $O(\#Lk'(TV_k))$ time, and concatenating, which takes $O(1)$ time. Thus, it takes $O(\#Lk'(TV_k))$ time in total for a single $TV_k$, and $O(n)$ time in total for all $TV_k$'s.
\end{proof}

3) We move on to identifying the double arcs. The union of all double arcs is the intersection set $X(S)$ - the sub-complex of $S$ that contains all the edges that are made of double values, and all the vertices that are double, triple, branch and DB values. We identified these while verifying the validity of the surface $(M,S)$ (Remarks~\ref{Save3} and \ref{save2}(1)). The degree-2 vertices of $X(S)$ are the double values, and so the long edges of $X(S)$ connect between different triple, branch and DB values of $S$.

A double arc is a path in $X(S)$. Similarly to a long edge, it goes from one triple, branch or DB value to another. If it enters a DB or branch value, it ends - these are degree 1 vertices so a path that enters one cannot continue. However, when it reaches a triple value, it crosses it via one of the 3 arc segments. The meaning of this is that if at some point the path contains the sequence of vertices $...,v_{r-2},v_{r-1},v_r,v_{r+1},v_{r+2},...$ and $v_r$ is the triple value $TV_k$, then the sequence $v_{r-1},TV_k,v_{r+1}$ must be one of the arc segments $TV_k^l$ that crosses $TV_k$. In particular, one of the vertices $v_{r-1},v_{r+1}$ is the $a_l^+$ from $Lk'(TV_k)$, and the other is the antipode $a_l^-$.

In particular, a double arc will be a path $v_0,...,v_s$ in $X(S)$ such that:

a) There are instances $0<r_1<...<r_{m-1}<s$ such that $v_{r_i}$ is equal to some triple value $TV_{k_i}$ and that there are $l_i=1,2,3$ such that the vertices $v_{r_i-1},v_{r_i+1}$ are the vertices $a_{l_i}^+$ and $a_{l_i}^-$ from $Lk'(TV_{k_i})$.

b) If the arc is open (begin and ends in a degree-1 vertex), then each of the values $v_0$ and $v_s$ is a branch value or a DB value. If the arc is closed (closes into a loop), then $v_0=v_s$ is also a triple value $TV_{k_i}$, and $v_1,v_{s-1}$ are the vertices $a_{l_i}^+$ and $a_{l_i}^-$ from $Lk'(TV_{k_i})$.

c) In any case, the sequences $v_0,...,v_{r_1}$, $v_{r_{m-1}},...,v_s$, and $v_{r_i},...,v_{r_{i+1}}$ for every $i=1,...,m-2$, are long edges of $X(S)$ - they begin and end in a vertex of degree 6 or 1 and all the other vertices in them are of degree 2 (they are double values.)

In order to identify the double arcs, we create a list $DA_0,..,DA_{N-1}$ whose entries are the double arcs of $S$. In particular, each of them is a list of integers (the indices of vertices of $M$). This indexes the double arcs - the $j$th entry in the list, $DA_j$, is the $j$th double arc.

If a vertex $v_{r_i}$ in $DA_j$ is equal to the triple value $TV_{k_i}$, and $v_{r_i-1},v_{r_i+1}$ are the vertices $a_{l_i}^+$ and $a_{l_i}^-$ from $Lk'(TV_{k_i})$, then the double arc $DA_j$ contains the arc segment $TV_{k_i}^{l_i}$. This means that the index function $j(k_i,l_i)$ must be equal to $j$. While identifying the double arcs, we will define the index function as well.

\begin{lem} \label{ArcLin}
Identifying the double arcs and the index function takes $O(n^2)$ time.
\end{lem}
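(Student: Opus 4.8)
The plan is to describe an algorithm that traverses the intersection graph $X(S)$, assembling each double arc from long edges glued at triple values, and to account for the run-time contribution of each piece. First I would gather the prerequisites already computed in earlier sections: the list of $1$-simplices of $X(S)$ (Remark~\ref{Save3}), the classification of each vertex of $S$ as a regular / double / triple / branch / DB value together with the index-$k$ of each triple value (Remark~\ref{save2}), the topological-graph data of every $Lk'(\sigma)$ (Remark~\ref{save2}(3)), and the arc-segment labelling $a_l^{\pm}$ of each degree-$4$ vertex of $Lk'(TV_k)$ (Definition~\ref{SegEnd}). I would also, as in the proof of Lemma~\ref{LongEdgeComp}, precompute for every vertex $v$ the adjacency list $Ad(v)$ \emph{within} $X(S)$; since $X(S)$ has $O(n)$ vertices and $O(n)$ edges, this takes $O(n)$ time, and the sum of degrees is $O(n)$.

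Next I would run the long-edge/circle decomposition of $X(S)$ as in Lemma~\ref{LongEdgeComp}, obtaining in $O(n)$ time the list of long edges (each a path whose interior vertices are the degree-$2$ double values and whose endpoints are triple, branch, or DB values) and the list of circles (closed arcs made entirely of double values, which are automatically double arcs). The substantive step is then to concatenate long edges into double arcs. I would maintain, for each triple value $TV_k$ and each of its three arc-segment labels $l$, a pointer to the (at most two) long edges whose relevant endpoint at $TV_k$ is $a_l^{+}$ or $a_l^{-}$ — this pairing is read off in $O(1)$ per long-edge endpoint from the stored $Lk'(TV_k)$ data, so $O(n)$ overall. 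To build a double arc I would start from an unused long edge, walk to its endpoint; if that endpoint is a branch or DB value the arc terminates there; if it is a triple value $TV_k$ reached through segment-side $a_l^{\varepsilon}$, I cross the triple value by jumping to the long edge attached at $a_l^{-\varepsilon}$ and continue, marking long edges as used, until I return to the start (closed arc, also check the $v_1,v_{s-1}=a_{l}^{\pm}$ condition) or hit a degree-$1$ value (open arc). Each long edge is appended to at most one double arc and is touched a constant number of times apart from the append itself, and appending a long edge of length $\ell$ costs $O(\ell)$; since $\sum \ell = O(n)$, the total work of the concatenation loop, \emph{if one only pays for the long edges}, is $O(n)$. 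Whenever a long edge is appended to $DA_j$ and its two triple-value crossings are identified, I simultaneously set $j(k_i,l_i):=j$ for the crossing data, at $O(1)$ per crossing.

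The point where the quoted $O(n^2)$ bound (rather than $O(n)$) enters, and the step I expect to be the main obstacle to a tighter bound, is bookkeeping: to avoid re-walking a long edge or re-starting an arc, one must repeatedly look up ``has this long edge been used yet'' and ``which double arc, if any, already contains this arc-segment,'' and under the conventions of Remark~\ref{Calculate}(2) these membership queries against the (possibly unsorted, pointer-linked) stored structures are charged a linear search rather than $O(1)$ or $O(\log n)$; since there are $O(n)$ such queries over $O(n)$-length lists this yields $O(n^2)$. One could in principle reduce this to $O(n\log n)$ with sorted lists or to $O(n)$ with an auxiliary boolean array indexed by edge, but I would not optimize further here: the lifting algorithm's overall complexity is dominated by later $O(n^2)$ steps and, ultimately, by the exponential $3$-sat solve, so proving the clean bound $O(n^2)$ suffices. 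Thus the lemma follows: the traversal and the index function $j(k,l)$ are produced in $O(n^2)$ time.
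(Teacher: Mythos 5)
Your proposal follows essentially the same route as the paper: decompose $X(S)$ into long edges and circles via Lemma~\ref{LongEdgeComp}, concatenate long edges into double arcs by jumping across each triple value from $a_l^{\pm}$ to its antipode $a_l^{\mp}$, and set $j(k,l)$ at each crossing; the paper likewise charges the $O(n^2)$ to the $O(n)$ linear searches (through the list of long edges and the lists of values) performed at each of the $O(n)$ crossings, and your observation that the bound is not tight is consistent with this. One small detail to fix: you seed each arc from an arbitrary unused long edge and walk in a single direction, which for an \emph{open} arc recovers only the portion from the starting edge to one end; either walk in both directions or, as the paper does, start the open arcs from the DB and branch values and only afterwards sweep up the remaining long edges into closed arcs.
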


\begin{proof}
We begin by identifying the long edges of $X(S)$. According to Lemma~\ref{LongEdgeComp}, this takes $O(\#X(S))$ time, which is clearly less then $O(n)$. We also store a copy of the lists of all DB and branch values of $S$.

We define an empty list $DA_0$ - we will add vertices to it until it is a full double arc. We identify the open double arcs first, if there are are any. We pick a DB or branch value $v_0$ (if there are any) to be the beginning of the arc and search the list of long edges for the one that begins, or ends, in $v_0$. This takes $O(n)$ time. If this edge ends in $v_0$, we reverse it. It now has the form $v_0,...,v_{r_1}$. We concatenate this into $DA_0$, delete this long edge from the list of long edges, and delete $v_0$ from the copy of the list of branch or DB values.

$DA_0$ now ends in $v_{r_1-1},v_{r_1}$. We search for $v_{r_1}$ in the lists of DB, branch and triple values. It takes $O(n)$ time. If $v_{r_1}$ is a triple value, called $TV_k$, then $v_{r_1-1}$ is equal to one of the vertices $a_l^{\pm}$ of $Lk'(TV_k)$. Finding which one of them it is takes $O(1)$ time. For these $k$ and $l$, the $k,l$th arc segment is part of $DA_0$, so we set $j(k,l)=0$. We define $v_{r_1+1}$ to be the antipodal vertex $a_l^{\mp}$. We than search the list of long edges for the one that begins with $v_{r_1},v_{r_1+1}$, or ends with $v_{r_1+1},v_{r_1}$ - it must exist since $v_{r_1}$ is a vertex of degree 6 and is adjacent to $v_{r_1+1}$.

If the long edge ends in $v_{r_1+1},v_{r_1}$, we reverse it. It now has the form $v_{r_1},...,v_{r_2}$. Concatenate it into $DA_0$, which will then have the form $v_0,...,v_{r_1-1},$ $v_{r_1},v_{r_1+1},v_{r_1+2}...,v_{r_2}$. If $v_{r_2}$ is again a triple value $TV_k$, we repeat the process - we find the $l=1,2,3$ for which $v_{r_2-1}=a_l^{\pm}$ and set $j(k,l)=0$, set $v_{r_2+1}=a_l^{\mp}$, find the long edge that begins with $v_{r_2},v_{r_2+1}$ (we might have to reverse its order) and concatenate it into $DA_0$. Lastly, we delete this long edge from the list of long edges.

We continue in this way until some $v_{r_m}$ is not a triple value, in which case the end of the double arc has been reached. We then delete the long edge from the list of long edges and delete $v_{r_m}$, which is a DB or branch value, from the appropriate copy list. Next, we start a new double arc as an empty list $DA_1$ and repeat the process. This time, for every arc segment $TV_k^l$ we encounter, we set $j(k,l)=1$. We continue identifying open double arcs as long as there are new DB or branch values left (we deleted all of the ones we already used from the list). As soon as they are all done, any remaining long edges in $X(S)$ (if there are any) are parts of closed double arcs.

The next double arc we define, $DA_j$, will be a closed one. We begin by choosing one of the remaining long edges, $v_0,...,v_{r_1}$, and concatenate it into the currently empty list $DA_j$- now $v_{r_1}$ is definitely a triple value $TV_k$. As before, we find the $k$ and $l$ for which $v_{r_1-1}$ is $a_l^{\pm}$, set $v_{r_1+1}$ to be the antipodal vertex $a_l^{\mp}$, set $j(k,l)=j$, and delete the current long edge from the list. Then, we find the next long edge - the one that begins with $v_{r_1},v_{r_1+1}$ (we may have to reverse it).

It may be that $v_{r_1},v_{r_1+1}$ are equal to $v_0,v_1$- if so, then we have come full circle and $v_0,...,v_{r_1}$ is the entire double arc. Otherwise, we concatenate the new long edge into $DA_j$. Now $DA_j$ is equal to $v_0,...,v_{r_2}$. We continue in this way, adding new long edges $v_{r_i},...,v_{r_{i+1}}$ until $(v_{r_{i+1}},v_{r_{i+1}+1})=(v_0,v_1)$. As soon as this happens, we are done with $DA_j$. If there are any long edges left, we begin to construct the next closed double arc $DA_{j+1}$ using the same method. We continue in this way until we have used all the long edges.

In terms of complexity, we have searched for each triple, DB and branch value in either the list of long edges or the lists of DB, branch and triple values (or in all lists). We actually had to search for each triple value three times, once per each time a double arc crossed it. This takes $O(n)$ per vertex and $O(n^2)$ in total. No other action in this process takes as long. For instance, even if we had to reverse all of the long edges it would still take only $O(n)$ time, since the sum of their lengths is $O(n)$.
\end{proof}

4) Lastly, the algorithm must identify the two intersecting strips at each double arc. This includes naming them - calling one of them the 0 strip and the other the 1 strip as per Definition~\ref{01Strip}. This may not always be possible, as the surface may have a non-trivial closed double arc - an arc for which the two surface strips merge into one. The identifying algorithm will also check if and when this happens. If the surface strips are distinct, the algorithm will use this information (which strip is the 0/1 strip) to define the parameters $s(k,l)$ of Definition~\ref{sParam}. The guiding idea here is this:

\begin{rem} \label{WhyDes}
Technically, identifying and naming the surface strips of a double arc is just a way to indicate, at each small interval inside the arc, which of the two surfaces that intersect along this interval should be lifted higher than the other. Every sufficiently small segment of the arc will either be contained in a 1-simplex or will begin and end at two adjacent 1-simplices, at different sides of a 0-simplex.

1) For an interval that is contained in a 1-simplex $\sigma$, it is enough to look at the neighborhood $St(\sigma)$ and the surface $St'(\sigma)$ within it. The 4 triangles of $St'(\sigma)$ form 2 transversely intersecting surfaces, each made of 2 triangles, whose intersection is $\sigma$. In Figure~\ref{fig:Figure 14}A, one surface is colored light blue and the other is beige. As can be seen, 2 of the triangles form one of the surfaces iff they are on opposite sides of $\sigma$. In order to identify the surface strips, we simply need to indicate which of the triangles are parts of the 0 strip and which are parts of the 1 strip.

2) For an interval that crosses a 0-simplex, look at the star of said 0-simplex. Figure~\ref{fig:Figure 14}B depicts the neighborhood of a 0-simplex that is a double value, but the same idea applies for triple values. The ``fan of triangles" marked in pink must all come from the same surface strip. In particular, the two triangles in ends of the fan (blue) must come from the same surface strip. Each of them belong to the star $St'$ of one of the 1-simplices of $X(S)$ that meet at the 0-simplex (red). When we indicate which triangles at the star of each 1-simplex are in the 0/1 strip, we must do so in a ``continuous way" - such that the two triangles at the ends of each fan come from the same strip.

If we do this, we do not need to indicate which strip do the other (pink) triangles of the fan come from - they will come from the same strip as the end (blue) triangles. This means that, in order to identify and name the surface strips, it is enough to go over the stars $St'$ of each 1-simplex of $X(S)$ and indicate which triangles come from which strip, as long as we do so in a continuous way.
\end{rem}

\begin{figure}
\begin{center}
\includegraphics{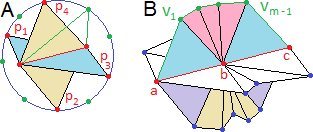}
\caption{The intersecting surface strips in the stars of simplices along the double arc} 
\label{fig:Figure 14}
\end{center}
\end{figure}

This leads us to define:

\begin{defn} \label{Des}
1) A designation on a 1-simplex $\sigma$ in $X(S)$ is a choice of number, either 0 or 1, for each of the 4 triangles in $St'(\sigma)$, such that two triangles have the same number iff they are on opposite sides of $\sigma$.

2) A designation of a double arc is a choice of designation for every 1-simplex in it. A designation can represent a choice of names for the surface strips, where all of the triangles that come from the 0 strip have the designation 0.

3) Let $a,b,c$ be a sub-sequence of consecutive vertices on the double arc $u_0,u_1,...$, $u_s$. $b$ must be either a double value or a triple value. If $b$ is a double value, then Figure~\ref{fig:Figure 14}B depicts $St(b)$. $St'(b)$ can be seen in the figure, composed of triangles. The red line is the local segment of the double arc made of the vertices $a,b,c$ and the edges $\{a,b\},\{b,c\}$. $a$ and $c$ are the two degree-4 vertices in $Lk'(b)$. In green, we indicate one of the 4 long edges in $Lk'(b)$ - a path of the form $a=v_0,v_1,...,v_r=c$. By definition $\{a,v_1,b\}$ and $\{b,v_{r-1},c\}$, indicated in blue in the figure, are 2-simplices in (respectively) $St'(\{a,b\})$ and $St'(\{b,c\})$. 

a) We define the 2-simplex ``following $\{a,v_1,b\}$" to be $\{b,v_{r-1},c\}$. This definition applies for each of the 4 long edges of $Lk'(b)$, and so each 2-simplex in $St'(\{a,b\})$ has a unique ``following 2-simplex" in $St'(\{b,c\})$.

b) We say that a designation on the arc is continuous at $a,b,c$ if every 2-simplex in $St'(\{a,b\})$ has the same designation (0 or 1) as its following 2-simplex in $St'(\{b,c\})$.

4) There are similar definitions when $b$ is a triple value. In this case, there is a $k=0,...,K-1$ for which $b=TV_k$ and there is an $l=1,2,3$ such that $a$ and $c$ are the antipodal vertices $a_l^+$ and $a_l^-$ in $Lk'(TV_k)$. Instead of looking at all of $St'(b)$, we focus on the union of the two surface sheets that contain $TV_k^l$ (for instance, if $l=1$ it will be the union of $D_k^{\{1,2\}}$ and $D_k^{\{1,3\}}$). This union will again look like Figure~\ref{fig:Figure 14}B. Its boundary, the union of the boundaries of the two discs, is again a graph with two degree-4 vertices, $a$ and $c$, and 4 long edges between them.

Other than the fact that we use this graph instead of all $Lk'(b)$, the definitions for ``following 2-simplex" and ``continuous designation" at $a,b,c$ are identical to those given in item 3 for double values.

5) If the arc is closed, then $u_0=u_s$ is a triple value too. One can make the same definitions as in item 4 for $a=u_{s-1}$, $b=u_s=u_0$ and $c=u_1$.

6) A designation on a double arc $u_0,u_1,...,u_s$ is said to be continuous if it is continuous of every triple $u_{i-1},u_i,u_{i+1}$ ($i=1,...,{s-1}$) and, if the arc is closed, for $u_{s-1},u_s=u_0,u_1$.
\end{defn}

\begin{rem} \label{DesRem}
1) As per Remark~\ref{WhyDes}, a designation represents a naming of the surface strips iff it is continuous.

2) The correspondence that sends a 2-simplex in $St'(\{a,b\})$ to its following 2-simplex in $St'(\{b,c\})$ is clearly 1-1.

3) Additionally, as one can deduce from the blue and purple triangles in Figure~\ref{fig:Figure 14}B, if two triangles in $St'(\{a,b\})$ are on opposite sides of $\{a,b\}$, then their following 2-simplices are on opposite sides of $St'(\{b,c\})$. This implies that if a single 2-simplex in $St'(\{a,b\})$ has the same designation as its following 2-simplex in $St'(\{b,c\})$, then the same is true for all simplices in $St'(\{a,b\})$ and the designation is continuous in $a,b,c$.

4) The parameters $s(k,l)$ can be deduced from the designations. For instance, if a double arc crosses a triple value $TV_k$ through the arc segment $TV_k^1$, then a continuous designation will give the same designation to all the four 2-simplices that come from the sheet $D_k^{1,2}$ (two from $St'(\{a,b\})$ and two from $St'(\{b,c\})$). This designation, 0 or 1, will be the surface strip that $D_k^{1,2}$ belongs to.

As per Definition~\ref{sParam}, if $D_k^{1,2}$ is in the $1$ strip then the parameter $s(k,1)$ should be equal to $0$. Otherwise, $D_k^{1,2}$ is in the $0$ strip and the parameter $s(k,1)$ should be equal to $1$. Similar indications can tell us the values of $s(k,2)$ and $s(k,3)$. This can be used to calculate the parameters $s(k,l)$ while we identify and name the surface strips.
\end{rem}

\begin{lem} \label{DesLem}
1) Given a double arc $DA_j=u_0,...,u_q$, one can identify and name the surface strips of $DA_j$, calculate the values of the parameters $s(k,l)$ for every arc segment $TV_k^l$ for which $j(k,l)=j$, and, in case $DA_j$ is a closed double arc, check if it is non-trivial. This can be done in at most $\sum_{i=1}^q O(\#Lk'(u_i)) + O(\#Lk'(\{u_0,u_1\}))$ time.

2) One can identify and name the surface strips of every double arc, check if any of the closed arcs are non-trivial, and calculate the values of the parameters $S(k,l)$ for every $k$ and $l$ in $O(n)$ time.
\end{lem}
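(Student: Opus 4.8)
The plan is to walk along the double arc $DA_j = u_0,\ldots,u_q$, propagating a designation from one end to the other by means of the ``following $2$-simplex'' correspondence of Definition~\ref{Des}, and --- in the closed case --- to detect non-triviality by checking whether the designation obtained upon returning to the start agrees with the one we began with. We freely use the data already computed: $X(S)$ (Remark~\ref{Save3}), the lists of triple/DB/branch values and their links $Lk'$ (Remark~\ref{save2}), the arcs and the index function (Lemma~\ref{ArcLin}), and the sheet boundaries produced in Lemma~\ref{Iden2}.

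First I would handle the open case, where $u_0$ and $u_q$ are DB or branch values. Examine $St'(\{u_0,u_1\})$ and fix one of its two designations; this costs $O(\#Lk'(\{u_0,u_1\}))$. Then, for $i=1,\ldots,q-1$ in order, given the designation already fixed on $St'(\{u_{i-1},u_i\})$, use Lemma~\ref{LongEdgeComp} to compute the long edges of $Lk'(u_i)$ --- or, when $u_i$ is a triple value $TV_k$ crossed by $DA_j$ through $TV_k^l$, of the relevant boundary graph (the union of the boundaries of the two sheets $D_k^{\{\cdot,\cdot\}}$ that contain $TV_k^l$) --- read off from them the ``following $2$-simplex'' map, and transport the designation across it to $St'(\{u_i,u_{i+1}\})$. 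By Remark~\ref{DesRem}(3) it suffices to match a single $2$-simplex, and the resulting designation is automatically continuous at $u_{i-1},u_i,u_{i+1}$. Whenever $u_i$ is a triple value we also read off, per Remark~\ref{DesRem}(4), which strip each of the two sheets $D_k^{\{\cdot,\cdot\}}$ belongs to, and record the corresponding $s(k,l)$. Since $u_0$ and $u_q$ have degree $1$ in $X(S)$, no further continuity constraint arises, so the output is a continuous designation of all of $DA_j$, i.e.\ a naming of its two surface strips. Step $i$ costs $O(\#Lk'(u_i))$ by Lemma~\ref{LongEdgeComp}, for a total of $\sum_i O(\#Lk'(u_i)) + O(\#Lk'(\{u_0,u_1\}))$, as claimed.

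For the closed case, $u_0 = u_q$ is a triple value. I would run the same propagation for $i=1,\ldots,q-1$ and then perform one extra continuity test at the triple $u_{q-1},u_q=u_0,u_1$: compute the following-$2$-simplex map from $St'(\{u_{q-1},u_0\})$ to $St'(\{u_0,u_1\})$ and compare the transported designation with the one originally fixed there. If they agree, the designation is continuous everywhere, $DA_j$ is trivial, and we have named its strips and (as above) computed the relevant $s(k,l)$'s. If they disagree, then one full traversal of $DA_j$ interchanges the two legs of the $X$-fibre, so the neighbourhood of $DA_j$ is a single closed strip and $DA_j$ is non-trivial. Verifying this last equivalence carefully --- that ``the designation flips on return'' is exactly the condition that the $X$-bundle over $S^1$ is glued by a side-swapping homeomorphism, which is the combinatorial incarnation of the argument behind Lemma~\ref{TrivLem} --- is the only genuinely non-routine step of the proof. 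The extra test costs $O(\#Lk'(u_0))$, so part~(1) follows (taking $u_q = u_0$ in the sum).

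Part~(2) follows by summing part~(1) over all $N$ double arcs, using that every $0$-simplex of $X(S)$ is visited only a bounded number of times by the union of the arcs: a double value (degree $2$ in $X(S)$) lies on exactly one arc, a DB or branch value (degree $1$) is an endpoint of exactly one arc, and a triple value (degree $6$) is crossed exactly three times in total. Hence $\sum_j \sum_i \#Lk'(u_i^{(j)}) \leq 3\sum_{\sigma \in X(S)} \#Lk'(\sigma) \leq 3\sum_{\sigma \in M} \#Lk'(\sigma) = O(n)$ by Remark~\ref{AllStIsn}(1), and similarly $\sum_j \#Lk'(\{u_0^{(j)},u_1^{(j)}\}) \leq \sum_{\sigma} \#Lk'(\sigma) = O(n)$, the last sum running over $1$-simplices of $X(S)$. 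Thus naming all surface strips, flagging any non-trivial closed arc, and computing every parameter $s(k,l)$ together take $O(n)$ time.
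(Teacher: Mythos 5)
Your proposal is correct and follows essentially the same route as the paper: seed a designation on $St'(\{u_0,u_1\})$, propagate it via the following-$2$-simplex correspondence (reading off $s(k,l)$ at triple values), test continuity at the closing triple for closed arcs, and bound the total cost by counting how often each simplex of $X(S)$ is visited. The only stylistic difference is in the closed case: where you appeal to the topological picture of the side-swapping $X$-bundle gluing, the paper argues combinatorially that a continuous designation is unique up to a global flip, so failure of continuity at the closing triple rules out any continuous designation and hence forces the strips to merge — but both arguments establish the same equivalence.
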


\begin{proof}
(1 $\Rightarrow$ 2): Each 1-simplex in $X(S)$ appears exactly once, as the 1-simplex $\{u_{i-1},u_i\}$ of some double arc. In particular, the sum of the expressions $\\ O(\#Lk'(\{u_{i-1},u_i\}))$ from all double arcs is smaller than the sum $\sum O(\#Lk'(\sigma))$ that goes over every 1-simplex in $X(S)$.

Each 0-simplex in $X(S)$ appears at most 3 times as a 0-simplex $u_i$, $i>0$, in some double arc (since triple values are crossed by double arcs 3 times). We excluded $i=0$, since in closed arcs $v_0=v_s$, and we do not want to count this vertex twice. It follows that the sum of the expressions $\sum_{i=1}^q O(\#Lk'(u_i))$ from all double arcs is smaller than 3 times the sum $\sum O(\#Lk'(\sigma))$ that goes over every 0-simplex in $X(S)$.

Together, they are smaller than 3 times the sum $\sum O(\#Lk'(\sigma))$, going over every simplex $\sigma$ in $S$, which is bounded by $O(n)$ per Remark~\ref{AllStIsn}(1).

(1): The linear time algorithm referred to in (1) has 3 steps:

a) Defining a designation on the 1-simplex $\{u_0,u_1\}$: an algorithm that does this must first find which of the four 2-simplices in $St'(\{u_0,u_1\})$ are on opposite sides of the 1-simplex. $Lk(\{u_0,u_1\})$ is a circle. Using the algorithm of Lemma~\ref{LongEdgeComp} on this circle orders its vertices - it produces a list $v_0,v_1,...,v_r=v_0$ such that each $v_i$ is adjacent to $v_{i+1}$ in the circle. This takes $O(\#Lk(\{u_0,u_1\}))$ time. 

We go over the circle in order, from $v_0$ to $v_{s-1}$, and write down which vertices are in $Lk'(u_0 \cup u_1)$. $Lk'$ is a set of four points - the ``other vertex" of each of the four 2-simplices in $St'(u_0 \cup u_1)$. It follows that checking if a vertex $v_i$ is in $Lk'$ takes $O(1)$ time, and doing so for very vertex again takes $O(\#Lk(\{u_0,u_1\}))$ time. This check involves defining variables $p_1,...,p_4$ and setting $p_1=v_i$ the first time $v_i$ is in $Lk'$, setting $p_2=v_i$ the second time this happens (it will be a different $i$ by then), and so on.

The vertices $p_1$ and $p_3$ will then be on ``opposite sides" of the circle $Lk(\{u_0,u_1\})$ and, as Figure~\ref{fig:Figure 14}A depicts, the 2-simplices $\{u_0,u_1,p_1\}$ and $\{u_0,u_1,p_3\}$ will be on opposite sides of $\{u_0,u_1\}$. The same holds for $p_2$ and $p_4$. Define a designation $d$ of $St'(\{u_0,u_1\})$ by setting $d(\{u_0,u_1,p_1\})=d(\{u_0,u_1,p_3\})=0$ and $d(\{u_0,u_1,p_2\})=d(\{u_0,u_1,p_4\})=1$. This takes $O(1)$ time, and so step (a) takes $O(\#Lk(\{u_0,u_1\}))$ time in total.

b) Continuing this designation in a continuous fashion to every 1-simplex of the form $\{u_i,u_{i+1}\}$, and defining the parameters $s(k,l)$ meanwhile:

Assume via induction that you already defined the designation $d$ on every every 1-simplex of the form $\{u_i,u_{i+1}\}$ for $i<m$, and that it is continuous for every triple $u_{i-1},u_i,u_{i+1}$, $i=1,...,m-1$.  You need to define $d$ on $\{u_m,u_{m+1}\}$ in such a way that it will be continuous on $u_{m-1},u_m,u_{m+1}$.

The first thing to do is to check if $u_m$ is a double value or a triple value. As per Remark~\ref{save2}(5) this takes $O(1)$ time. If $u_m$ is a double value, look at the graph $Lk'(u_m)$ and calculate its long edges. This takes $O(\#Lk'(u_m))$ time. For every long edge $u_{m-1}=v_0,v_1,...,v_r=u_{m+1}$, set $d(\{u_m,u_{m+1},v_{r-1}\})$ to be equal to $d(\{u_{m-1},u_m,v_1\})$ (which was already defined in the previous step of the induction). This takes $O(1)$ time.

As per Remark~\ref{DesRem}(3), this will give 2-simplices in $St'(\{u_m,u_{m+1}\})$ that are on different sides of $\{u_m,u_{m+1}\}$ the same $d$ value, and so $d$ really is a designation on the 1-simplex $\{u_m,u_{m+1}\}$. The designation $d$ is also continuous at $u_{m-i},u_m,u_{m+1}$.

In case $u_m$ is a triple value, it takes $O(1)$ time to find the $k$ for which $u_m=TV_k$. It also takes $O(1)$ time to see which of the 3 arc segments $TV_k^l$ (identified in the beginning of this chapter) contains $u_{m-1}$. Look at the union of the boundaries of the two sheets at $TV_k$ that contains this arc segment (also identified earlier in this chapter) and compute their union. This is a sub-complex of $Lk'(u_m)$, and so calculating the union and finding its long edges takes $O(\#Lk'(u_m))$ time. Now you can proceed as in the case where $u_m$ is a double value.

In case $u_m$ is a triple value, you can also compute the appropriate parameter $s(k,l)$ in $O(\#Lk(u_m))$ time. In order to do this, look at the boundary $v_0,...,v_s$ of one of the sheets $D_k^{\{l,l_2\}}$ that contains $TV_k^l$. It must contain $u_{m-1}$. Search for the $0 \leq i \leq s-1$ for which $v_i=u_{m-1}$. This takes $O(s)$ time. Since the boundary of the said disc is a sub-complex of $Lk'(u_m)$, this is bounded by $O(\#Lk(u_m))$.

Look at the 2-simplex $\{u_{m-1},u_m,v_{i+1}\}$. It is contained in $St'(\{u_{m-1},u_m\})$ and in the disc $D_k^{\{l,l_2\}}$. The disc $D_k^{\{l,l_2\}}$ is thus a part of the $d(\{u_{m-1},u_m,v_{i+1}\})$ surface strip of the arc. You can deduce the parameter $s(k,l)$ as per Remark~\ref{DesRem}(4), in $O(1)$ time.

%
In summation, step (b) takes $\sum_{i=1}^{q-1} O(\#Lk'(u_i))$ time. It produces a designation on the whole double arc that is continuous at $u_{i-1},u_i,u_{i+1}$ for every $i=1,...,q-1$ (recall that the double arc is the sequence $u_0,...,u_q$), and computes the parameter $s(k,l)$ for almost every arc segment $TV_k^l$ on the double arc. The only exception is that for closed arcs, $u_q=u_0$ is a triple value and the sequences $u_{q-1},u_q,u_1$ is also an arc segment.

c) If the arc is open, then you are done - the designation is continuous by definition, and thus identifies and names the surface strips, and you have calculated all of the parameters $s(k,l)$ of this arc. If the arc is closed, you must check if it is trivial. Observe the sequence $u_{q-1},u_q,u_1$. $u_q=u_0$ is a triple value, and the sequence is some arc segment $TV-k^l$. As in (b), calculate the long edges of the union of the boundaries of the two appropriate sheets at $TV_k$. This takes $O(\#Lk'(u_q))$ time. Look at a long edge $u_{q-1}=v_0,v_1,...,v_s=u_1$.

Compare the designations of the 2-simplices $\{u_{q-1},u_q,v_1\}$ and $\{u_0 \cup u_1 \cup v_{s-1}\}$. If they have the same designation, then the designation $d$ is continuous at $u_{q-1},u_q,u_1$ (Remark~\ref{DesRem}(3)), and since it is also continuous everywhere else it identifies and names two distinct surface strips along the double arc. In particular, the arc is trivial. You can also calculate the parameter $s(k,l)$ in $O(\#Lk'(u_q))$ time using the method from (b).

If the said following 2-simplices have different designations, then the designation $d$ is not continuous at $u_{q-1},u_q,u_1$. This implies that there is no continuous designation on the whole double arc. If there was such a designation $d'$, then you could assume WLOG that it would agree with $d$ on the 2-simplices of $St'(u_0 \cap u_1)$ - otherwise you could change the $d'$-value of every 2-simplex in the arc and still have a continuous designation. Because both $d$ and $d'$ are continuous at every triple $u_{i-1},u_i,u_{i+1}$, then, by induction, they must agree on the the 2-simplices of every $St'(\{u_i,u_{i+1}\})$ ($i=0,..,s-1$). It follows that $d'$ is equal to $d$ and cannot be continuous at $u_{q-1},u_q,u_1$.

Since the double arc has no continuous designation, there is no way to identify and name the two surface strips. This implies that they merge into one strip, meaning that the arc is non-trivial. There is no reason to calculate $s(k,l)$ in this case, since having a non-trivial arc implies the surface is non-liftable.

Step (c) takes $O(\#Lk'(u_q))$ time in any case. Summing the runtime of the 3 steps shows that the algorithm takes $\sum_{i=1}^q O(\#Lk'(u_i)) + O(\#Lk'(u_0 \cup u_1))$ time in total.
\end{proof}

Lemmas~\ref{Iden1}, \ref{Iden2}, \ref{ArcLin}, and \ref{DesRem} combine into the following:

\begin{thm} \label{IdenQuad}
Identifying the relevant parts of the surface takes quadratic $O(n^2)$ time. 
\end{thm}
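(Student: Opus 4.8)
The plan is to assemble the theorem directly from the four lemmas that precede it, since ``identifying the relevant parts of the surface'' was decomposed in this subsection into exactly four sequential sub-tasks. First I would recall that sub-task (1), identifying the three arc segments $TV_k^1$, $TV_k^2$, $TV_k^3$ at each triple value, is handled by Lemma~\ref{Iden1} in $O(n)$ time, and that sub-task (2), identifying the three intersecting sheets $D_k^{\{l_1,l_2\}}$ at each triple value, is handled by Lemma~\ref{Iden2}, again in $O(n)$ time. Next, sub-task (3) --- identifying the double arcs $DA_0,\dots,DA_{N-1}$ together with the index function $j(k,l)$ --- is handled by Lemma~\ref{ArcLin} in $O(n^2)$ time. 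Finally, sub-task (4) --- identifying and naming the $0$ and $1$ strips along every double arc, checking that no closed double arc is non-trivial, and computing the parameters $s(k,l)$ --- is handled by Lemma~\ref{DesLem}(2) in $O(n)$ time.

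Since these four sub-tasks are performed one after another, the total run-time is the sum $O(n)+O(n)+O(n^2)+O(n)=O(n^2)$, which is exactly the claimed bound. So the proof itself amounts to a one-line summation; all of its substance has already been discharged in the constituent lemmas, and nothing beyond citing them and adding the four estimates is required.

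If there is any genuinely hard part, it is already behind us: it lives in the proof of Lemma~\ref{ArcLin}, where the sole quadratic contribution arises because, while tracing out each double arc, the algorithm must locate each of the $O(n)$ vertices it passes through inside the list of long edges of $X(S)$ and/or the lists of DB, branch and triple values, at a cost of $O(n)$ per lookup (with triple values searched three times each). Every other step above is linear and is therefore absorbed into this quadratic term, so no further estimate is needed; the only point to be careful about is that all four sub-tasks really are carried out in sequence, so their costs add rather than merely being taken in maximum --- which in any case does not change the $O(n^2)$ conclusion.
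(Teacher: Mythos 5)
Your proposal is correct and is exactly the paper's argument: the theorem is obtained by summing the bounds of Lemmas~\ref{Iden1}, \ref{Iden2}, \ref{ArcLin}, and \ref{DesLem}, with the quadratic term coming solely from Lemma~\ref{ArcLin}. (You even cite Lemma~\ref{DesLem} correctly where the paper's text mistakenly points to Remark~\ref{DesRem}.)
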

%
%

\section{A certificate for the lifting problem} \label{NPsec}
As was explained in section~\ref{CompOfLift}, in order to prove Theorem~\ref{Thm2} (the lifting problem is NP), we need to define a type of certificate that represents a \gls{liftingAt} of the \gls{genericsurface} $(M,S)$, and to devise a polynomial time ``certificate verifying algorithm" that checks whether this is really a legitimate lifting of the surface.

%

Recall that the $S$-star $St'(\sigma)$ of a 1-simplex $\sigma$ in a double arc $DA_j$ contains four 2-simplices. Two of these are parts of the 0 strip of $DA_j$ and the other two are parts of the 1 strip. A \gls{liftingAt} of the surface will make one of the strips into ``the higher strip" and in particular will make two of these 2-simplices ``high" and the other two ``low".

\begin{defn} \label{Cer}
A certificate for the lifting problem of a \gls{genericsurface} $(M,S)$ contains the following information:

a) A binary number $Triv$ that contains the value $0$ if the surface has a non-trivial closed double arc, and $1$ otherwise.

b) If $Triv=1$, it contains additional information reminiscent of defintion \ref{Des}. For every 1-simplex $\sigma$ in $X(S)$ it contains a choice of value, either ``H" or ``L", such that two 2-simplices have the value ``H" and the other two have the value ``L". This represents the \gls{liftingAt} in which the 2-simplices marked ``H" belong to the higher surface strip.
%
\end{defn}
%

\begin{lem} \label{CerIsLeg}
Given a \gls{genericsurface} $(M,S)$ and a certificate, there is a quadratic ($O(n^2)$) time algorithm that verifies that the certificate defines a legitimate lifting.
\end{lem}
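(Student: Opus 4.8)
The plan is to re-use the preprocessing of Chapter~\ref{SecNP} and Section~\ref{idensec} almost verbatim and then read the certificate against it. First I would verify that $(M,S)$ is a genuine generic surface; by Theorem~\ref{ValidQuad} this costs $O(n\cdot\log(n))$. Next I would run the identification routine of Section~\ref{idensec}: list the double arcs $DA_0,\ldots,DA_{N-1}$, the triple values $TV_0,\ldots,TV_{K-1}$, the arc segments $TV_k^l$, the sheets $D_k^{\{l_1,l_2\}}$ and the index function $j(k,l)$, detect whether any closed double arc is non-trivial, and, if none is, name a $0$ strip and a $1$ strip on each double arc, compute the parameters $s(k,l)$ and assemble the lifting formula~(\ref{LiftForm}). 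By Theorem~\ref{IdenQuad} all of this takes $O(n^2)$ time.

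Now handle the non-trivial-arc and $Triv=0$ cases together. If the routine found a non-trivial closed double arc, the surface is not liftable (Lemma~\ref{TrivLem}, Theorem~\ref{LiftThm}), so no certificate presents a legitimate lifting; if the certificate reports $Triv=0$ it carries no lifting data; in either situation the algorithm returns ``no''. Otherwise $Triv=1$ and no non-trivial closed arc exists, and the certificate supplies, for each $1$-simplex $\sigma$ of $X(S)$, an ``H''/``L'' label on each of the four $2$-simplices of $St'(\sigma)$. I would first check that this labelling is actually a designation in the sense of Definition~\ref{Des} --- exactly two $2$-simplices labelled ``H'' and two labelled ``L'', with two $2$-simplices sharing a label iff they lie on opposite sides of $\sigma$ --- determined, as in step~(a) of the proof of Lemma~\ref{DesLem}, from the ordered circle $Lk(\sigma)$ and the cyclic order of its four ``other vertices'' $p_1,\ldots,p_4$. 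Then I would check that the designation is \emph{continuous} along every double arc, using the ``following $2$-simplex'' test of Definition~\ref{Des} and the arc traversal of steps~(b)--(c) of the proof of Lemma~\ref{DesLem}, where by Remark~\ref{DesRem}(3) a single $2$-simplex per triple suffices; the same summation argument as in Lemma~\ref{DesLem}(1$\Rightarrow$2) bounds this by $O(n)$. If the labelling fails to be a designation on some $1$-simplex or fails continuity on some arc, the certificate does not describe a lifting attempt and the algorithm returns ``no''; otherwise, by Remark~\ref{DesRem}(1), it names two surface strips on each arc and hence encodes a genuine lifting attempt (and this in passing confirms $Triv=1$, since a non-trivial arc admits no continuous designation).

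Finally I would convert the certificate into the vector $x=(x_0,\ldots,x_{N-1})\in\{0,1\}^N$ of Definition~\ref{01Strip}: for each double arc $DA_j$ pick any $1$-simplex $\sigma$ on it, recall from the identification step which two $2$-simplices of $St'(\sigma)$ constitute the $0$ strip, and set $x_j=0$ if the certificate labels them ``H'' and $x_j=1$ otherwise. This is well defined over the whole arc because the certificate's designation and the algorithm's $0/1$ designation are both continuous and hence, by the uniqueness argument in step~(c) of the proof of Lemma~\ref{DesLem}, agree up to a single global flip on the arc. By Theorem~\ref{Thm1}, the lifting attempt encoded by the certificate is legitimate iff $x$ satisfies the lifting formula~(\ref{LiftForm}); evaluating the formula costs $O(K)\le O(n)$, so the algorithm returns ``yes'' exactly in that case. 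Summing the stages gives $O(n\cdot\log(n))+O(n^2)+O(n)=O(n^2)$, as claimed. The only genuinely new content is the middle stage --- confirming that the raw ``H''/``L'' data is a well-formed, globally continuous designation before interpreting it as a lifting attempt --- and that is where the care is needed: the verifier must not trust the shape of the certificate, so it re-derives the ``opposite sides'' structure on each $St'(\sigma)$ and re-checks continuity along every arc from scratch, and only then is it entitled to apply Theorem~\ref{Thm1}.
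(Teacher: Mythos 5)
Your proposal is correct and follows essentially the same route as the paper's proof: run the full validation and identification preprocessing in $O(n^2)$, check the certificate's ``H''/``L'' data against the computed surface strips, extract the vector $x$, and evaluate the lifting formula. The only cosmetic difference is that you verify the certificate by checking the designation axioms and continuity directly and then invoke uniqueness of continuous designations, whereas the paper verifies it by comparing every labelled $2$-simplex against the algorithm's own continuous designation (agreement up to a global flip per arc determined by $x_j$) --- these checks are equivalent and both cost $O(n)$.
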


\begin{rem}
Lemma~\ref{CerIsLeg}, along with the explanation given after Theorem~\ref{Thm2}, imply Theorem~\ref{Thm2}.
\end{rem}

\begin{proof}
Begin by running all of the algorithms of subsections \ref{DTsec1}-\ref{idensec} on $(M,S)$ - verifying that it is a valid \gls{genericsurface}, and identifying all of its relevant parts. This will take $O(n^2)$ time. In particular, it will discover if $S$ has a non-trivial double arc, and compare this information with the value of $triv$.

The algorithm will also compose a list of all the 1-simplices of $X(S)$. The certificate also contains a list of that should be the 1-simplices of $X(S)$. First, make sure that the latter list has at most $6n$ entries - $M$ cannot contain more 1-simplices than that. Next, compare the two lists. This takes $O(n \cdot \log(n))$ time. The lists must contain the exact same elements in order for the certificate to be legitimate.

Next, for every 1-simplex $\sigma=(v,u)$ in this list, the certificate contains a list of what should be the four 2-simplices in $St'(\sigma)$. The algorithm calculates this as well. You must compare the list of 2-simplices compiled by the algorithm with the one provided by the certification.

If the certificate passed all the checks so far, define a vector $(x_0,...,x_{N-1})$ of binary variables that will be used to represent the \gls{liftingAt} as per Definition~\ref{01Strip}.

Now, for every $j=0,...,N-1$, observe the double arc $DA_j=v_0,v_1,...,v_s$. In order to identify the surface strips, the algorithm produces continuous designations on every double arc (see Definition~\ref{Des}). Pick one of the two higher 2-simplices in $St'((v_0,v_1))$, the ones that the certificate gave the value ``H", and check what value the designation gave it. If it is $1$, then the 1-strip must be the higher strip according to this certificate. To represent this, set $x_j=1$. Otherwise, the 0 strip must be the higher strip, so set $x_j=0$.

The certificate must be consistent - if $x_j=1$, then every 2-simplex that belongs to the 1-strip along $DA_j$ (= that the designation of $DA_j$ gives the value $1$) must belong to the higher strip (= must be given the value ``H" by the certificate). Additionally, every 2-simplex that belongs to the 0 strip along $DA_j$ must belong to the lower strip. If $x_j=0$, it is the other way around. For every $t=1,...,s$, go over the four 2-simplices in $St'((v_{t-1},v_t))$ and verify that this occurs. Collectively, this check takes $O(n)$ time for all double arcs.

Now that you know that the certificate describes a real \gls{liftingAt}, and you have encoded this \gls{liftingAt} via a vector $(x_0,...,x_{N-1})$, check if the \gls{liftingAt} is legitimate by placing the values of the $x_j$s in the lifting formula and see if they satisfy it. This takes $O(K) \leq o(n)$ time.
%
%
\end{proof}

\chapter{The Main Theorem} \label{SecMain}

The remainder of the thesis revolves around proving that the lifting problem is NP-hard. In this short chapter we will explain our strategy for proving this. In section~\ref{Sym3Sec}, we proved that the proper symmetric 3-sat problem is NP-complete. We can thus prove the the lifting problem is NP-complete by reducing the proper symmetric 3-sat problem to the lifting problem in polynomial time. In particular, we would like to match every formula of this sort with a \gls{genericsurface} that ``realizes" it - has the given formula as its lifting formula. There are two points we must sharpen in this regard.

Firstly, as explained in section~\ref{PremSec}, in order to define the lifting formula of a \gls{genericsurface}, one must choose:

a) The order of the triple values, from $TV_0$ to $TV_{K-1}$ - they affect the order of the clauses of the formula.

b) The order of the intersecting surface strips at any triple value - which of them is $TV_k^1$, $TV_k^2$ and $TV_k^3$. This affects the order of the literals in any clause.

c) The order of the double arcs, from $DA_0$ to $DA_{N-1}$ - this determines the index of every variable. For instance, if the names of the arcs $DA_2$ and $DA_5$ are switched, then the names of the variables will switch with them - every instance of $x_2$ in the formula will be replaced with $x_5$ and vice versa. In particular, the order of the double arcs affects the index function $j(k,l)$.

d) The choice of which of the two intersecting surface strips at each double arc will be the 0-strip and which will be the 1-strip. This choice affects the values of the parameters $s(k,l)$.

This all leads to the following definition:

\begin{defn} \label{Realize}
A \gls{genericsurface} $(M,S)$ realizes a \gls{symmetric} $F$ if, for some choice of indexing for the double arcs, triple values, arc segments and surface strips at every double arc, the resulting lifting formula will be equal to $F$ up to a change in the order of the clauses.
\end{defn}

We will usually construct a surface that does not realize the given formula per se, but realizes an almost equal formula.

\begin{defn} \label{AlmostRealize}
1) Two symmetric 3-sat formulas are said to be ``almost equal", if one can be changed into the other by reordering the clauses and the literals within each clause, and add or remove the opposite pairs of clauses of the form $(x_j \vee x_j \vee \neg x_j)$ and $(x_j \vee \neg x_j \vee \neg x_j)$ for different variables $j$.

2) A \gls{genericsurface} $(M,S)$ almost realizes a \gls{symmetric} $F$, if it realizes an almost equal formula to $F$.
\end{defn}

Two almost equal formulas are equivalent, since both the clauses $(x_j \vee x_j \vee \neg x_j)$ and $(x_j \vee \neg x_j \vee \neg x_j)$ are tautologies. If a surface $(M,S)$ almost realizes a formula $F$, then, should you choose the correct indexing for the double arcs and surface strips, the lifting formula $G$ will be almost equal to $F$. This implies that $(M,S)$ is liftable iff $G$ is solvable iff $F$ is solvable. In particular, it is possible to prove that the lifting problem is NP-hard by devising a polynomial time algorithm that receives a \gls{sym&pro} and produces a \gls{genericsurface} $(M,S)$ that almost realizes the formula.
%

We will prove a slightly stronger theorem:

\begin{thm} \label{Thm3}
There is a polynomial time algorithm that receives a proper symmetric $3$-sat formula, and produces a \gls{genericsurface} $(M,S)$ that almost realizes the formula. Additionally, the $3$-manifold $M$ will always be homeomorphic to the closed ball $D^3$, $S$ will contain no branch values, and the underlying surface will always be closed and orientable. 
\end{thm}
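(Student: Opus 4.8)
The plan is to carry out the proof in Chapters~\ref{SecGraph} and~\ref{SecNPHrad} by reversing the construction of the lifting formula. In Chapter~\ref{SecGraph} I would introduce the \gls{DADG}: to the long edges and vertices of the intersection graph $X(S)$ of an orientable \gls{genericsurface} one attaches, at each triple value, the ``daisy'' recording which \gls{EoE}s are \gls{consecutive}, an arrow on each \gls{consecutive} pair marking its \gls{preferred} end, and a grading. The central lemma of that chapter is that the \gls{DADG} of an orientable surface determines the lifting formula: the index function $j(k,l)$ is read from which double arc crosses which arc segment, and each parameter $s(k,l)$ is read from the relevant arrow together with the grading. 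This yields a purely combinatorial \emph{graph lifting formula} $\mathrm{GLF}(G)$ attached to a \gls{DADG} $G$. Consequently Theorem~\ref{Thm3} reduces to two polynomial-time constructions: (i) from a \gls{sym&pro} $F$ produce a \gls{DADG} $G$ with $\mathrm{GLF}(G)$ almost equal to $F$; (ii) from $G$ produce an orientable \gls{genericsurface} $(M,S)$ with $M\cong D^3$, no branch values and closed underlying surface, whose \gls{DADG} is $G$.

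For step (i), write $F$ as a conjunction of $K$ mirror pairs of clauses over variables $x_0,\dots,x_{N-1}$. I would allocate one double arc $DA_j$ per variable and one triple value $TV_k$ per mirror pair, wiring the three arc segments of $TV_k$ to the arcs $DA_{j(k,1)},DA_{j(k,2)},DA_{j(k,3)}$ dictated by $F$ and choosing the arrows and grading so that the recipe above returns exactly the parameters $s(k,l)$ of $F$. The arcs then have to be completed into a \emph{legal} \gls{DADG}, which forces some auxiliary triple values; these can always be arranged so that each contributes only a tautological pair $(x_j\vee x_j\vee\neg x_j)\wedge(x_j\vee\neg x_j\vee\neg x_j)$, which is precisely why ``almost equal'' (Definition~\ref{AlmostRealize}) is the right target rather than ``equal''. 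I would then verify that this \gls{DADG} admits a grading and is \gls{Height1}, i.e.\ has no \gls{indecisive} edges — the wiring is designed so that every edge is either \gls{preferred} or non-preferred. The \gls{DADG} has size $O(N+K)$, so the construction is polynomial.

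Step (ii) is the geometric core. From a \gls{Height1} \gls{DADG} $G$ I would assemble, inside $\R^3$, a regular neighborhood of $X(S)$ by gluing standard local models — an ``X-bundle'' along each long edge and the standard triple-value model of Figure~\ref{fig:Figure 6} at each triple value — according to $G$; the grading and the \gls{Height1} hypothesis are what guarantee that these pieces glue into an \emph{orientable} ribbon surface and that the gluing can be realized inside a ball. The free boundary of this ribbon is a disjoint union of unknotted, unlinked circles in the interior of a large ball $D^3$; capping each with a small embedded disc (pairwise disjoint and disjoint from the ribbon, possible since $D^3$ is simply connected) yields a closed orientable \gls{genericsurface} $S\subset D^3$ whose \gls{DADG} is $G$ and, since every local model is a transverse-intersection model and every cap is a product near its boundary, with no branch values. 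Producing a triangulation of this $(D^3,S)$ takes time polynomial in $|G|$, hence in $N+K$.

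Putting the pieces together, $(M,S)$ has lifting formula $\mathrm{GLF}(G)$, which is almost equal to $F$, so $(M,S)$ almost realizes $F$; the total running time is polynomial; and $M\cong D^3$, $S$ has no branch values and its underlying surface is closed and orientable. Since almost equal formulas are equivalent (Definition~\ref{AlmostRealize}), this reduces the \gls{sym&pro} problem to the lifting problem in polynomial time, so together with Theorem~\ref{SymThm} it also proves the lifting problem NP-hard. The step I expect to be hardest is (ii): showing that every \gls{Height1} \gls{DADG} is actually realizable, that the local models glue coherently with orientations and grading, that the whole ribbon embeds in a ball, and that the boundary circles can be capped without creating new double arcs or branch values. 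By comparison, step (i) is careful combinatorial bookkeeping once the graph lifting formula of Chapter~\ref{SecGraph} is available.
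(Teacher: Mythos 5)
Your overall architecture --- formula $\to$ \gls{Height1} \gls{DADG} $\to$ surface, with the auxiliary triple values contributing tautological pairs $(x_j \vee x_j \vee \neg x_j)\wedge(x_j \vee \neg x_j \vee \neg x_j)$ as the reason ``almost realizes'' is the right target --- is exactly the paper's, and your step (i) matches Theorem~\ref{Thm4} in all essentials (the paper makes the bookkeeping precise via the counts $C_j^{\pm}$: height-1 forces $C_j^+=C_j^-$ on every arc, and the padding clauses restore that balance).

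Step (ii), however, contains a genuine gap. You assemble a regular neighborhood of the intersection graph from local models and then assert that its free boundary is ``a disjoint union of unknotted, unlinked circles'' which can be capped by pairwise disjoint embedded discs disjoint from the ribbon, ``possible since $D^3$ is simply connected.'' Simple connectivity of the ambient ball is irrelevant here: the boundary circles live on the boundary of a thickened handlebody and are in general homologically nontrivial in the complement of the ribbon, so no such discs exist. Indeed, if this capping argument worked for an arbitrary assembly of local models it would realize non-\gls{gradable} \gls{DADG}s in $D^3$, contradicting Theorem~\ref{ADGThm}; the gradability/height-1 hypothesis must enter precisely at this point, and in the paper's Chapter~\ref{BHrep} version of this idea it does so via a linking-number computation (Lemma~\ref{partial}) showing the boundary is null-homologous in the handlebody complement --- which yields an embedded \emph{surface}, not discs, and non-constructively at that. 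For Theorem~\ref{Thm3} the paper sidesteps capping entirely with the casing-and-tube construction of sections~\ref{CaseTueSec}--\ref{SBsec}: each triple value is realized by a ``casing'' whose entire boundary consists of top and bottom sockets, preferred edges are realized by top tubes and non-preferred edges by bottom tubes (this is where height-1 is used), and gluing tubes to sockets already produces a closed generic surface with an explicit polynomial-size triangulation. You would need either to adopt that construction or to replace your capping claim with the homological argument (and then still explain how to triangulate the resulting surface in polynomial time).
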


This proves that a limited variant of the lifting problem, the lifting problem of orientable closed immersed surfaces in $D^3$, is still NP-hard (and thus NP-complete). One can also replace $D^3$ with any compact 3-manifold $X$, as we will show at the very end of section~\ref{SBsec}.

The main difficulty in creating such an algorithm is finding a way to manufacture a closed \gls{genericsurface} with a given lifting formula. The additional requirement that the surface must be orientable adds to the difficulty, but it also provides us with new tools to work with. For an oriented surface, there is a correlation between the lifting formula and the orientation on the surface that we will utilize to create surfaces with given formulas.

\chapter{The Lifting Formula of Oriented Surfaces} \label{SecGraph}

In this chapter, we will explain the connection between the orientation of an oriented \gls{genericsurface} and its lifting formula. In the following chapter, we will use this to create surfaces that (almost) realize given formulas, and thus to prove Theorem~\ref{Thm3}.

%
\section{Thrice-oriented surfaces and their 0 and 1 strips}

In order to define the lifting formula of a \gls{genericsurface}, one needs to choose, among other things, which of the two intersecting surface strips at each double arc is the 0-strip and which is the 1-strip. If the surface is oriented, resides inside an oriented 3-manifold, and all of its double arcs are oriented (have a chosen direction of progress), then there is a canonical way to choose the 0- and 1-strips so that they correspond to all these orientations. In this chapter, we will explain these canonical 0- and 1-strips.

Recall that a \gls{genericsurface} is a simplicial map $i:F \to M$ that complies with the demands of Definition~\ref{Generic}, though we usually use the term to refer to the image $S$ of such a map. When referring to a \gls{genericsurface} as oriented, we mean that the underlying surface $F$ is oriented. That being said, there is an alternative definition for an orientation on $S$ that does not necessitate dealing with $F$:

\begin{defn} \label{Ori}
1) An orientation on a \gls{genericsurface} $S \subseteq M$ is a choice of orientation for every triangle $\sigma$ in $S$. Such an orientation is said to be continuous at an 1-simplex $e$ of $S$ when:

If $e$ is made of regular values, then $St'(e)$ is a disc, the orientation is said to be continuous at $e$ if it restricts to a continuous orientation on the disc $St'(e)$. In particular, $St'(e)$ is made of two 2-simplices. Each of the triangles induces an orientation on its boundary and thus on $e$. These two induced orientations must disagree in order for the orientation of $S$ to be continuous at $e$.

If $e$ is made of double values, then $St(e)\cap S$ is made of 2 intersecting discs. The orientation is said to be continuous at $e$ if it restricts to a continuous orientation on the each of the two discs.

A continuous orientation on $S$ is an orientation that is continuous at every 1-simplex.

2) A \gls{genericsurface} that has a continuous orientation is said to be orientable. 
\end{defn}

\begin{rem}
Definition~\ref{Ori} generalizes the concept of orientation on an embedded surface $S$ in $M$.
\end{rem}

\begin{lem}
If a \gls{genericsurface} is treated as the image of a function $i:F \to M$, as it was before Remark~\ref{SurfaceSet}, then an orientation on $i(F)=S$ is an equivalent notion to an orientation of $F$. In particular, $S=i(F)$ is orientable iff the underlying surface $F$ is orientable.
\end{lem}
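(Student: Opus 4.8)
The plan is to exhibit mutually inverse maps between "orientations of $F$ in the simplicial sense" and "continuous orientations on $S$ in the sense of Definition~\ref{Ori}", and to check that orientability transfers across this correspondence. First I would recall that, because $i:F\to M$ is a generic proper PL map, every $2$-simplex $\tau$ of $S$ has exactly one preimage $2$-simplex $\tilde\tau$ in $F$ (the interior points of $\tau$ are regular values, which have a single preimage, as recorded in Definition~\ref{Generic} and used in Lemma~\ref{TriLem}). Hence $i$ restricts to a bijection between the $2$-simplices of $F$ and the $2$-simplices of $S$. Given an orientation of $F$ — i.e.\ a coherent choice of orientation on each $\tilde\tau$ — push it forward via this bijection to get an orientation (a choice of orientation on each $\tau$) on $S$; conversely, pull back. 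So the only content is that \emph{coherence} on $F$ corresponds exactly to \emph{continuity at every $1$-simplex} on $S$.

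The core of the argument is local, one $1$-simplex of $S$ at a time, and splits according to the type of value making up that $1$-simplex. If $e$ is made of regular values, then by Lemma~\ref{Surf2&1} (and its proof) $St'(e)$ consists of exactly two $2$-simplices $\tau_1,\tau_2$, and each has a unique preimage; moreover the $1$-simplex $e$ itself has a unique preimage $\tilde e$ in $F$ lying in a single edge-neighborhood, whose star $St(\tilde e)$ consists of $\tilde\tau_1,\tilde\tau_2$. Two adjacent oriented triangles of $F$ sharing the edge $\tilde e$ are coherently oriented iff the orientations they induce on $\tilde e$ are opposite; since $i$ is a PL homeomorphism near the interior of $\tilde e$, this is exactly the condition that the induced orientations on $e$ disagree, which is Definition~\ref{Ori}(1)'s continuity condition at $e$. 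If $e$ is made of double values, then $e$ has two preimage $1$-simplices $\tilde e_1,\tilde e_2$ in $F$, lying on the two different sheets (Lemma~\ref{TriLem}(iii)); $St(e)\cap S$ is a union of two transversely intersecting discs, each of which is $i(St(\tilde e_m))$ for one of the two preimages. Coherence of the orientation of $F$ along each $\tilde e_m$ is, by the regular-value case applied inside each sheet, equivalent to the pushed-forward orientation restricting to a continuous orientation on each of the two discs — which is precisely the double-value clause of Definition~\ref{Ori}(1). Running over all $1$-simplices of $S$, an orientation of $F$ is coherent everywhere iff its pushforward is continuous at every $1$-simplex of $S$, i.e.\ is a continuous orientation of $S$; and the pullback of a continuous orientation of $S$ is visibly coherent by the same equivalences. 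This gives the desired bijection, and in particular $F$ is orientable iff $S$ admits a continuous orientation, i.e.\ iff $S$ is orientable in the sense of Definition~\ref{Ori}(2).

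I expect the main obstacle to be the bookkeeping in the double-value case: one must be careful that the two sheets meeting along $e$ are genuinely separated after pulling back to $F$ (so that "coherence of $F$" really does decompose as two independent sheet-wise conditions), and that no pathology of the triangulation of $F$ — of the kind catalogued in Lemma~\ref{TriLem}(ii)–(iv) — lets two $2$-simplices on opposite sheets share an edge in $F$. Lemma~\ref{TriLem}(ii) ($i(a)\neq i(b)$ for $1$-simplices) and (iii) (distinct sheet preimages) are exactly what rules this out, so the argument should go through cleanly; I would cite those items explicitly rather than re-deriving them. Triple, branch, DB and RB values do not need separate treatment here, since continuity of an orientation is only ever tested on $1$-simplices, and every $1$-simplex of $S$ is made either of regular values or of double values.
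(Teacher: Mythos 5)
Your proposal is correct and follows essentially the same route as the paper's proof: use the bijection that $i$ induces on $2$-simplices to transport orientations, then verify continuity edge-by-edge, splitting into the regular-value case (one preimage edge, star maps homeomorphically) and the double-value case (two preimage edges, one per intersecting disc). Your extra care about triangulation pathologies via Lemma~\ref{TriLem} is a reasonable addition but not a departure from the paper's argument.
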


\begin{proof}
$i$ serves as a 1-1 correspondence between the triangles of $F$ and $S$. There is also a 1-1  correspondence between orientations on $F$ and orientations on $S$. For every 2-simplex $\{v_1,v_2,v_3\}$ in $F$, the trio $(v_1,v_2,v_3)$ agrees with the orientation on $F$ iff the trio $(i(v_1),i(v_2),i(v_3))$ agrees with the orientation on $S$.

The star $St(e)$ of every edge $e$ in $S$ is sent homeomorphically into either the star $St(i(e)) \cap S$ (if $i(e)$ is made of regular values), or one of the two intersecting surfaces in $St(i(e)) \cap S$ (if $i(e)$ is made of double values). It is clear that an orientation on $F$ is continuous on a 1-simplex $e$ of $F$ iff the corresponding orientation on $i(St(e))$ is continuous. It follows that an orientation on $F$ is continuous iff the corresponding orientation on $S$ is continuous.
\end{proof}

\begin{defn} \label{Trice}
A \gls{Thrice} \gls{genericsurface} is a \gls{genericsurface} $(M,S)$ with orientations on $M$, $S$, and every double arc of $S$.
\end{defn}

\begin{rem} \label{OriFig}
1) In \citep{Car&Sai1}, Carter and Saito encoded the different \gls{lifting}s of the surface as different orientations of the double arcs. We only need one, arbitrarily chosen, orientation on each arc. We use them to deduce what are the 0 and 1 strips at every double arc, and thus to deduce the lifting formula. Subsequently, the different \gls{lifting}s will be encoded via the solutions of the formula.

2) We would like to depict the orientations of the surface in illustrations. In all illustrations, we follow the convention that the orientation of the 3-manifold that contains the surface coincides with the usual right hand orientation.

3) The orientation of a double arc will usually be represented by an arrow on this arc, indicating the direction of progress.

4) We depict the orientation of the surface in the same way one depicts an orientation of an embedded surface in a 3-manifold. Let $p$ be a point on the embedded surface $S \subseteq M$. $v \in T_pM \setminus T_pS$ has two connected components, each containing the vectors that point toward one side of the surface. All vectors $v$ from one of these components will uphold the property that, for every pair of independent tangent vectors $v_1,v_2 \in T_pF$, the pair $(v_1,v_2)$ represents the orientation on $S$ iff the trio $(v_1,v_2,v)$ represents the orientation on $M$. This connected component is called ``the \gls{preferred} side of the surface at $p$" and vectors from it are said to be \gls{preferred}. Vectors from the other component do not uphold said property and are said to be not-\gls{preferred}.

In illustrations, one usually draws little arrows, originating from various points all over the surface, that represent \gls{preferred} vectors. Using the convention that the 3-manifold has the right-hand orientation, one knows when a trio of vectors $(v_1,v_2,v)$ represents the orientation on $M$, and using a \gls{preferred} vector $v$ one can tell when a pair $(v_1,v_2)$ represents the orientation on $S$. Since the orientation is continuous, the ``\gls{preferred} side" of the surface varies continuously as you move around the surface. This only means that all the little arrows in the illustration point towards the same side of the surface.

Our surface is technically triangulated and not smooth, but each triangle of the surface is linearly, and thus smoothly, embedded in the 3-manifold. We can use arrows to depict the orientation on each triangle, and it will always be clear that the orientation will be continuous in terms of Definition~\ref{Ori} - that arrows on triangles from both sides of any edge will point in the same direction.
\end{rem}

\begin{defn} \label{Ori01}
Given a \gls{Thrice} surface, the canonical way to define the 0 and 1 strip around a double arc is as follows:

Let $p$ be a double value on the arc. Figure~\ref{fig:Figure 15}A depicts the neighborhood of this value. Let $N$ be a vector originating in $p$ and pointing in the direction of progress. In particular, $N$ is tangent to both of the surface strips that intersect at the arc. In Figure~\ref{fig:Figure 15}A, $N$ is colored in red.

Choose two vectors $h_0$ and $h_1$ that originate from $p$, such that each $h_i$ is tangent to a different one of the two surfaces strips, but neither of them is tangent to the double arc itself. Make sure that for each $i=0,1$, the pair $(h_i,N)$ represents the orientation of the surface strip it is on, and that the trio $(h_0,h_1,N)$ represents the orientation of the surface strip. In Figure~\ref{fig:Figure 15}A, $h_0$ is orange and $h_1$ is green.

Among the two surface strips that intersect at the arc, the canonical 0-strip (resp. 1-strip) will be the surface strip that the vector $h_0$ (resp. $h_1$) is tangent to. Figure~\ref{fig:Figure 15}B depicts the 0-strip in green, and the 1-strip in orange.
\end{defn}

\begin{figure}
\begin{center}
\includegraphics{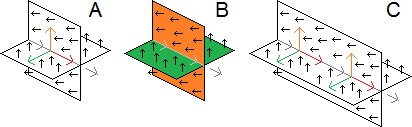}
\caption{The surface strips of a \gls{Thrice} surface} 
\label{fig:Figure 15}
\end{center}
\end{figure}

\begin{rem} \label{OriNonTriv}
1) This is clearly well defined - independent of the choice of $p$, $N$, $h_0$ and $h_1$. $N$ is clearly unique up to multiples with a positive constant, and each of $h_0$ and $h_1$ is unique up to a combination of multiplication and addition of multiplicities of $N$, which implies that $h_0$ will always be on the same surface strip regardless of how you choose it. As for the choice of $p$, Figure~\ref{fig:Figure 15}C shows that choosing a different point farther on the arc will result in the same 0 and 1 strips.

2) In particular, this implies that every closed double arc in a thrice-orientable \gls{genericsurface} is trivial (as there are distinct 0 and 1 surface strips for any such arc.)

3) This definition is given in the language of smooth surfaces but it can be easily modified for use with triangulated surfaces too.
%
\end{rem}

\section{Digital arrowed daisy graph} \label{DADGsec}
%
%

We are attempting to prove Theorem~\ref{Thm3} by devising an algorithm that realizes a \gls{symmetric} via a \gls{genericsurface}. Recall that in section~\ref{idensec}, we deduced the lifting formula of a surface from 3 ``invariants" of the surface. The first two were: a) the simplified graph structure of the intersection graph $X(S)$ of the surface and b) the 3 intersecting arc segments at each triple value. These provide indication as to the double arcs of the surface and which arc segments belonged to each arc - the information encoded in the index function $j(k,l)$.

The third invariant was more complicated. Recall that at each triple value there are 3 intersecting surfaces, and that each arc segment is the intersection of two of these surfaces. Each of the two surfaces is a part of one of the two long surface strips that intersects at the double arc, and the invariant is the information ``which strip contains which sheet" at each arc segment.

Intuitively, the first part of the algorithm should be to decide what the 3 above invariants of the surface should be. We will then create a \gls{genericsurface} with these properties, which will thus have to realize the given formula. Unfortunately, the third invariant is difficult to use - this applies to both deciding what it should be and creating a surface with this invariant. Luckily, \gls{Thrice} \gls{genericsurface}s have an alternative invariant that one can use to deduce the lifting formula - the ``\gls{preferred} direction" at each arc segment.

In this section, we will explain this alternative invariant and how it is encoded, along with the first two invariants, in an ``enriched graph structure". It will be similar to the ``arrowed daisy graphs" Li defined in \cite{Li1}. For algorithmic purposes, this ``enriched graph structure" will be represented by a formal data type that a computer can handle. We will thus call it a Digital Arrowed Daisy Graph, or \gls{DADG} for short.

In the next section, we will prove that the \gls{DADG} structure of a \gls{genericsurface} really does determine the surface's lifting formula.

\begin{defn} \label{Pref}
In addition to the simplified graph structure of the intersection graph (see Definition~\ref{LongEdge}), the \gls{DADG} structure of a \gls{Thrice} \gls{genericsurface} will encode the following information:

1) The intersection graph of a \gls{Thrice} \gls{genericsurface} is a \textbf{directed} multigraph - the direction of progress on any double arc induces a direction on the (long) edges that compose it. The \gls{DADG} will encode that directed multigraph structure.

2) We consider every edge of the \gls{DADG} as being made of 3 parts: two small segments at the ends of the edge, which we refer to as the ``beginning" and ``ending" of the edge, and the rest of the edge, which we refer to as the ``bulk" or ``length" of the edge. If the edge is parametrised as the interval $[0,1]$, where the direction of progress points from $0$ to $1$, then the beginning, ending, and bulk of the edge will respectively be the subsets $[0,\epsilon]$, $[1-\epsilon,1]$, and $[\epsilon,1-\epsilon]$ for some $\epsilon$.

We use the term ``\gls{EoE}" to refer to either the beginning or ending of some edge. We may refer to the set of some, or all, ``ends of edges" - the set containing the beginnings and endings of all edges. In order to indicate the directed multigraph structure of the intersection graph, it is sufficient to indicate which edges begin, and which edges end, at each vertex. This is equivalent to indicating which ends of edges lie on which vertices. The \gls{DADG} will contain this information.

3) When a (directed) double arc passes a triple value $TV_k$ via some arc segment $TV_k^l$, one edge of the arc ends at the triple value and the other begins on the other side of it. The arc segment is thus made of two of the ends of edges that lie on the triple value - the ending of the first edge and the beginning of the second edge. Figures~\ref{fig:Figure 16}A and B depict this. The ``edge-beginning" is dubbed ``b" and the ending is dubbed ``a". The ending is supposed to be hidden behind some surfaces, and so we depicted it as a dashed line. The direction of progress is indicated by the red arrows along the arc.

\begin{figure}
\begin{center}
\includegraphics{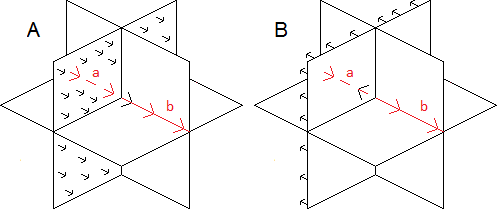}
\caption{The \gls{preferred} ``\gls{EoE}" of an arc segment}
\label{fig:Figure 16}
\end{center}
\end{figure}

For every triple value, the \gls{DADG} will indicate which two of the six ends of edges of the triple value compose each arc segment. In \cite{Li1}, Li described the same information in his ``daisy graphs". His long edges were not oriented, so there was no distinction between the beginning and ending of an edge, but he did indicate which two ends of edges composed each arc segment.

4) In addition to the arc segment $TV_k^l$ and its direction of progress, Figures~\ref{fig:Figure 16}A and B indicate the orientation (as per Remark~\ref{OriFig}(2-4)) of one of the 3 surface sheets that intersect at the triple value - the one that intersects $TV_k^l$ transversally (see Definition~\ref{3Discs}). 

The \gls{preferred} direction of the sheet points towards one of the two ends of edges that composed $TV_k^l$. We refer to this \gls{EoE} as the ``\gls{preferred}" \gls{EoE} of $TV_k^l$. In Figure~\ref{fig:Figure 16}A the \gls{preferred} \gls{EoE} is the beginning of the edge that begins in $TV_k^l$ and in Figure~\ref{fig:Figure 16}B it is the ending of the edge that ends in $TV_k^l$. For every triple value and arc segment, the \gls{DADG} will indicate which of the segment's two ends of edges is the \gls{preferred} one. In \cite{Li1}, Li conveyed the same information in his arrowed daisy graphs.
%
\end{defn}

\begin{rem} \label{IndDep}
In order to describe a graph via its vertices and edges, one must first name each the vertices. Naming the vertices in different ways will create graphs that are isomorphic, but technically not identical. For example, the graphs $(\{a,b\},\{\{a,b\}\})$ and $(\{o,3\},\{\{o,3\}\})$ are isomorphic but not identical. Conceptually, isomorphic graphs are graphs that would have been identical, had their vertices been similarly named.

The \gls{DADG} structure of a surface also depends on how one names the vertices - the DB, branch and triple values of the graph. We name them by indexing them. We will index each type of value separately, and give the name $BV_k$ / $DV_k$ / $TV_k$ to the $k$ branch / DB /triple value respectively. The \gls{DADG} of the surface will depend on the indexing of the values. For instance, switching the indexes of the $3$rd and $5$th vertices will change the \gls{DADG} of the surface. While we will not formally define morphisms of \gls{DADG}, it is clear that \gls{DADG}s produced by different indexings of the same surface should be considered isomorphic.

In order to describe the \gls{DADG} structure of a surface, we also need to index the (long) edges of the intersection graph, and the 3 arc segments at each triple value. Again, the \gls{DADG} will depend on this choice, but only up to isomorphism. As in the previous chapter, we will denote the $l$th arc segment at the $k$th triple value as $TV_k^l$. 

We will use the indexing of the edges to name the ends of edges of the graph. The beginning of the $r$th edge will be dubbed the ``the $(r,0)$th \gls{EoE}", and the ending of this edge will be dubbed ``the $(r,1)$th \gls{EoE}".

In order to encode the information of Definition~\ref{Pref}, the \gls{DADG} will indicate which ends of edges lie on each triple value, which two ends of edges form each arc segment, and which of these two is the \gls{preferred} \gls{EoE}.
\end{rem}

We need to define both the \gls{DADG} data type - what sort of information fields a \gls{DADG} contains, and the \gls{DADG} structure of a \gls{Thrice} \gls{genericsurface} - what values do these fields contain in a \gls{DADG} that describes the surface.

\begin{defn} \label{SurfDADG}
1) The \gls{DADG} structure of a \gls{Thrice} \gls{genericsurface} contains the following:

i) Non negative integers $C$, $B$, $D$ and $T$ that are respectively the numbers of disjoint circles, branch values, DB values and triple values of the surface.

ii) A list $BV$ of $B$ entries, whose $k$th entry is the index of the \gls{EoE} that lies on the $k$th branch value (there is only one such \gls{EoE} since branch values are degree-1 vertices of the intersection graph). The index of the \gls{EoE} is as per Remark~\ref{IndDep} - if the $r$th edge of the intersection graph begins (respectively, ends) at the $k$th branch value, then the $k$th entry in $BV$ is $(r,0)$ (resp. $(r,1)$). 

iii) A list $DV$ of $D$ entries that similarly indicate what \gls{EoE} lies on which DB value of the surface. 

iv) A list of $T$ entries of the following form: each entry $TV_k$ is made of three sub-entries $TV_k=(TV_k^1,TV_k^2,TV_k^3)$. Each of the sub-entries is the pair of the indexes $((r_{2l-1},s_{2l-1}),(r_{2l},s_{2l}))$ of the two ends of edges that compose the arc segments $TV_k^l$, and the second index $(r_{2l},s_{2l})$ is the index of the \gls{preferred} \gls{EoE} of this arc segment.

In particular, $TV_k=(((r_1,s_1),(r_2,s_2)),((r_3,s_3),(r_4,s_4)),((r_5,s_5),(r_6,s_6)))$ and the $(r_i,s_i)$-s are the indices of the 6 ends of edge that lie on the triple value.
\end{defn}

Note that, since every arc segment $TV_k^l$ is composed of the beginning of one edge and the ending of another, one of the binary variables $s_{2l-1}$ and $s_{2l}$ is equal to $0$ and the other equals $1$. Note also that, due to the vertex degree formula, the \gls{DADG} has $3T+\frac{1}{2}(B+D)$ edges. We use this to define a ``\gls{DADG} data type" in general - without deriving it from a \gls{genericsurface}. 

\begin{defn} \label{DADG}
1) A \gls{DADG} is a data type that contains the following information:

i) Non negative integers $C$, $B$, $D$ and $T$.

ii) A list $BV$ of $B$ pairs of the form $(r,s)$, where $r$ is an integer between $0$ and $3T+\frac{1}{2}(B+D)-1$, and $s$ is binary.

iii) A similar list $DV$ of $D$ such pairs. 

iv) A list of $T$ entries of the form $(((r_1,s_1),(r_2,s_2)),((r_3,s_3),(r_4,s_4)),((r_5,s_5)$, $(r_6,s_6)))$ where each $(r_i,s_i)$ is as in (ii), and for each $l=1,2,3$ one of the binary variables $s_{2l-1}$ and $s_{2l}$ is equal to $0$ and the other is equal to $1$.

For a \gls{DADG} to be valid, each of the $6T+B+D$ possible values of $(r,s)$ must appear exactly once. Due to the pigeon coop principle, it is enough to demand that it appear at least once or at most once.

2) A \gls{Thrice} \gls{genericsurface} realizes a \gls{DADG} if, for some choice of indexing for the values of the surface, the long edges of the intersection graph, and the 3 arc segment at each triple value, the \gls{DADG} of the surface (as in (1)) will be equal to the given \gls{DADG}.

A \gls{DADG} $G$ is said to be realizable in an orientable 3-manifold $M$ if there is a \gls{Thrice} \gls{genericsurface} in $M$ that realizes $G$.
\end{defn}

Definitions~\ref{SurfDADG} and \ref{DADG} are clearly compatible - the \gls{DADG} of a \gls{Thrice} surface is indeed a \gls{DADG}. As you will see in section~\ref{InfHom}, the other direction is also true, since every \gls{DADG} can be realized by a \gls{genericsurface} in some 3-manifold. See Theorem~\ref{ADGThm2}.

\begin{rem}
1) The ``size" of a \gls{DADG} is the total number of vertices it has - $D+B+T$. This linearly bounds the number of edges of the \gls{DADG} ($3T+\frac{1}{2}(B+D)$), and so it bounds the size of the \gls{DADG} in the meaning of Remark~\ref{Calculate}(2) - it works if you assume that there is a boundary on the size of all integers you may use. Otherwise, the size of the \gls{DADG} is $O((D+B+T) \cdot \log(D+B+T)+log(C))$.

2) We treat any \gls{DADG} as a directed multigraph with additional structure, and we use the same notation for a general \gls{DADG} as we would for a \gls{DADG} that represents a surface. A \gls{DADG} $G$ has one vertex per every entry in each of the 3 lists $BV$, $DV$ and $TV$. The vertex that matches the $k$th entry in $BV$ / $DV$ / $TV$ is called the $k$th branch / DB / triple value of $G$. Branch and triple values are degree-1 vertices and triple values are degree 6 vertices, and if the the pair $(r,0)$ (resp. $(r,1)$) is in the $k$th entry in the list $BV$ / $DV$ / $TV$, then the $r$th edge begins (resp. ends) in the $k$th branch / DB / triple value.

Additionally, we refer to the beginnings and endings of edges as ``ends of edges", and we refer to the beginning (resp. end) of the $r$th edge as the $(r,0)$ (resp. $(r,1)$) \gls{EoE}. We also refer to each of the 3 pairs of ends of edges in a triple value $TV_k$ as the arc segments of the triple values; and for each $k=0,...,T-1$ and $l=1,2,3$, we name the arc segment $((r_{2l-1},s_{2l-1}),(r_{2l},s_{2l}))$ in the triple value $TV_k$ $TV_k^l$. Lastly, we say that the latter \gls{EoE}, $(r_{2l},s_{2l})$, is the \gls{preferred} \gls{EoE} of $TV_k^l$.

3) We can depict the additional ``enhanced" graph structure in a diagram of a \gls{DADG}: it is a graph diagram where every edge has an (colored) arrow on it that indicates the direction of progress. It is not a planar graph - edges may go over/under one another. We only mark the indexes of vertices, edges, and arc segments when required. We indicate DB values with purple dots to distinguish them from branch values. We draw each triple value as the intersection of 3 lines - the arc segments, and the edges before and after the triple value have the same direction of progress.

We also indicate the \gls{preferred} direction on each arc segment with a small black arrow based at the triple value - there are thus 3 such arrows on each triple value. The \gls{EoE} that the arrow points towards - either the beginning of the edge that begins in this arc segment, or the \gls{EoE} that ends there - is the \gls{preferred} \gls{EoE} of the arc segment. For example, in Figure~\ref{fig:Figure 17}, the edge $1$ ends in some arc segment and the edge $2$ begins there. The ending of $1$ is the \gls{preferred} \gls{EoE} at this arc segment. 
\end{rem}

\begin{figure}
\begin{center}
\includegraphics{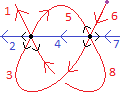}
\caption{A digram of a \gls{DADG}} 
\label{fig:Figure 17}
\end{center}
\end{figure}

\section{The graph lifting formula}

In this section, we will prove that the \gls{DADG} structure of a \gls{genericsurface} determines its lifting formula. Firstly, we will show how to deduce the index function $j(k,l)$ from the \gls{DADG}. Toward this end, one needs to identify the double arcs of the surface. 

\begin{defn} \label{consec}
We say that two ends of edges in a \gls{DADG} are ``\gls{consecutive}", if they form one of the arc segments of the \gls{DADG}, and we say that two edges are \gls{consecutive} if some end of one of them is \gls{consecutive} to some end of the other one.

We say that two edges $e$ and $f$ in a \gls{DADG} are on the same continuation (or just in continuation), if there are edges $e=e_0,e_1,...,e_q=f$ such that $e_i$ and $e_{i+1}$ are \gls{consecutive} for any $i$. ``Being in continuation" is clearly an equivalence relation on edges.
\end{defn}

As we explained in section~\ref{idensec}, when we identified the double arcs (in the proof of Lemma~\ref{ArcLin}), each double arc is made of several (long) edges of the intersection graph. As you progress along a double arc and cross a triple value via an arc segment, you move from one edge in the arc - the one that ends in the arc segment, into the next edge of the arc - the one that begins in the arc segment. Long edges are a part of the same double arc iff you can progress like this from one into the other, and each double arc is either made of such a collection of long edges, or is a long circle of the intersection graph.

In the language of \gls{DADG}s, each long edge of the intersection graph corresponds to an edge of the \gls{DADG}, and per Definition~\ref{Pref}(3), \ref{SurfDADG}(iv), and \ref{consec}, when you cross a triple value, you move between \gls{consecutive} edges. It follows that two edges of the \gls{DADG} come from the same double arc iff they are in continuation. In other words, each double arc of the surface corresponds to a unique equivalence class of the ``same continuation relation", or a disjoint circle of the \gls{DADG}.

In order to calculate the index function $j(k,l)$, you must first index the double arcs as $DA_0,...,DA_{N-1}$. $j(k,l)$ is the index $j$ of the double arc $DA_j$ that contains the arc segment $TV_k^l$. The arc segment is represented in the \gls{DADG} via the ends of edges $((r_{2l-1},s_{2l-1}),(r_{2l},s_{2l}))$ that compose it. It is the same long edge that $r_{2l-1}$ and $r_{2l}$ belong to - you only need to check which edge it is. 

One may use this to define the double arcs of a general \gls{DADG} (one that does not come from a \gls{genericsurface}), as well as the index function. 

\begin{defn} \label{DeduceInd}
1) A ``double arc" of a \gls{DADG} is either a disjoint circle or an equivalence class of the same continuation relation. As per the above, the double arcs of the \gls{DADG} structure of a \gls{genericsurface} correspond to the double arcs of the surface.

2) If you index the different double arcs as $DA_0,...,DA_{N-1}$, then you can determine the value of the index function $j(k,l)$. Observe the \gls{consecutive} ends of edges that compose the arc segment $TV_k^l=((r_{2l-1},s_{2l-1}),(r_{2l},s_{2l}))$. $j(k,l)$ is the index $j$ of the double arc which contains the edges $r_{2l-1}$ and $r_{2l}$.
\end{defn}

For instance, in Figure~\ref{fig:Figure 17} the following pairs of edges are \gls{consecutive} - 1 and 8, 2 and 4, 3 and 5, 3 and 6, 4 and 7, 5 and 8. It follows that $\{1,3,5,6,8\}$ and $\{2,4,7\}$ are equivalence classes of the same continuation relation, and are thus the double arcs of the \gls{DADG}. We coloured the former in red and the latter in blue. One can clearly see which arc segments belong to which double arcs. 

Unfortunately, the \gls{DADG} structure of a \gls{genericsurface} does not indicate the parameters $s(j,k)$ of the surface. However, it does indicate similar parameters which we call $s'(k,l)$. Recall that every arc segment $TV_k^l$ is made of 2 ends of edges, one ``beginning" and one ``ending", and that one of these ends of edges is \gls{preferred}. The parameter $s'(k,l)$ indicates which of them is \gls{preferred}.

\begin{defn} \label{s'}
Let $G$ be a \gls{DADG}. For every $k=0,...,T-1$ and $l=1,2,3$, look at the arc segment $TV_k^l=((r_{2l-1},s_{2l-1}),(r_{2l},s_{2l}))$. We define the parameter $s'(k,l)$ to be equal to $s_{2l-1}$. By referring to the binary parameters $s'(k,l)$ of a \gls{Thrice} \gls{genericsurface} (whose vertices and arcs segments are indexed), we mean the parameters $s'(k,l)$ of its \gls{DADG}.
\end{defn}

\begin{cor} \label{s'means}
For every $k,l$, $s'(k,l)=1$ iff the \gls{preferred} \gls{EoE} at the arc segment $TV_k^l$ is the beginning.
\end{cor}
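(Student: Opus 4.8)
The statement is essentially an unwinding of Definitions~\ref{SurfDADG}(iv), \ref{DADG}(1)(iv), and \ref{s'}, together with the naming convention of Remark~\ref{IndDep}. First I would recall that, by Remark~\ref{IndDep}, for an edge indexed $r$ the $(r,0)$th \gls{EoE} is its beginning and the $(r,1)$th \gls{EoE} is its ending; hence a generic \gls{EoE} written as $(r,s)$ is a beginning precisely when $s=0$ and an ending precisely when $s=1$.

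Next I would write the arc segment in the notation of Definition~\ref{s'} as $TV_k^l=((r_{2l-1},s_{2l-1}),(r_{2l},s_{2l}))$. By Definition~\ref{SurfDADG}(iv) (and the parallel requirement in Definition~\ref{DADG}(1)(iv)), the \gls{preferred} \gls{EoE} of $TV_k^l$ is the \emph{second} listed \gls{EoE}, namely $(r_{2l},s_{2l})$. Also, as noted immediately after Definition~\ref{SurfDADG}, since an arc segment consists of the beginning of one edge and the ending of another, exactly one of $s_{2l-1}$ and $s_{2l}$ equals $0$ and the other equals $1$; in particular $s_{2l}=1-s_{2l-1}$.

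Combining these: $s'(k,l)=s_{2l-1}=1$ holds iff $s_{2l}=0$ iff the \gls{preferred} \gls{EoE} $(r_{2l},s_{2l})$ is a beginning, which is exactly the claimed equivalence. I would phrase this as a short chain of iff's and conclude. There is no real obstacle here — the only thing to be careful about is keeping straight which of the two ends of the pair carries the ``preferred'' label (the second one, $(r_{2l},s_{2l})$) versus which one defines $s'$ (the first one's binary component, $s_{2l-1}$); the disjointness relation $s_{2l}=1-s_{2l-1}$ is what bridges them.

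\begin{proof}
Fix $k$ and $l$ and write the arc segment as $TV_k^l=((r_{2l-1},s_{2l-1}),(r_{2l},s_{2l}))$, as in Definition~\ref{s'}. By Definition~\ref{SurfDADG}(iv), the \gls{preferred} \gls{EoE} of $TV_k^l$ is the second listed pair, $(r_{2l},s_{2l})$. By the naming convention of Remark~\ref{IndDep}, an \gls{EoE} of the form $(r,s)$ is the beginning of its edge when $s=0$ and the ending when $s=1$; thus the \gls{preferred} \gls{EoE} is a beginning iff $s_{2l}=0$. Finally, since the arc segment is composed of the beginning of one edge and the ending of another (the remark following Definition~\ref{SurfDADG}), exactly one of $s_{2l-1},s_{2l}$ is $0$ and the other is $1$, so $s_{2l}=0$ iff $s_{2l-1}=1$, i.e. iff $s'(k,l)=s_{2l-1}=1$. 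Hence $s'(k,l)=1$ iff the \gls{preferred} \gls{EoE} at $TV_k^l$ is the beginning.
\end{proof}
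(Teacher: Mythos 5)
Your proof is correct and follows essentially the same route as the paper's: identify the preferred \gls{EoE} as the second pair $(r_{2l},s_{2l})$, use the convention that $(r,s)$ is a beginning iff $s=0$, and bridge to $s'(k,l)=s_{2l-1}$ via the fact that exactly one of $s_{2l-1},s_{2l}$ is $0$. The paper's version is just a terser chain of the same iff's.
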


\begin{proof}
Observe the arc segment $TV_k^l=((r_{2l-1},s_{2l-1}),(r_{2l},s_{2l}))$. By definition, $(r_{2l},s_{2l})$ is the \gls{preferred} \gls{EoE}. It is the ``beginning of edge" at $TV_k^l$ iff $(r_{2l-1},s_{2l-1})$ is the ``ending of edge" there iff $s'(k,l)=s_{2l-1}=1$.
\end{proof}

The parameters $s'(k,l)$ are not necessarily equal to the parameters $s(k,l)$, but they are close enough to be useful. Specifically:

\begin{lem} \label{ParamLem}
For every $k=0,...,T-1$ one of the following will hold: either $s'(k,1)=s(k,1)$, $s'(k,2)=s(k,2)$ and $s'(k,3)=s(k,3)$, or $s'(k,1)= \neg s(k,1)$, $s'(k,2)= \neg s(k,2)$ and $s'(k,3)= \neg s(k,3)$.
\end{lem}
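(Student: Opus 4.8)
The plan is to reduce the statement to a purely local analysis at the triple value $TV_k$ and then to exploit the symmetry of that local picture, so that only one configuration needs to be checked by hand. A neighbourhood of $TV_k$ is PL-homeomorphic to the union of the three coordinate planes of $\R^3$, and all six quantities $s(k,1),s(k,2),s(k,3),s'(k,1),s'(k,2),s'(k,3)$ are determined by local data only: the orientation of the ambient $3$-manifold (which we may take to be the standard right-hand one, as in Remark~\ref{OriFig}(2)), the directions of the three arc segments, and the orientations of the three surface sheets $D_k^{\{1,2\}},D_k^{\{1,3\}},D_k^{\{2,3\}}$. Indeed, the canonical $0$- and $1$-strips of Definition~\ref{Ori01} are built locally from a double value on the arc, and the \gls{preferred} \gls{EoE} of an arc segment depends, by Definition~\ref{Pref}(4), only on the orientation of the transversal sheet at $TV_k$ and on the direction of progress along the arc. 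So there are $2^6=64$ local configurations, and it suffices to show that in each of them the discrepancy vector $d=(d_1,d_2,d_3)$, where $d_l:=s(k,l)\oplus s'(k,l)\in\{0,1\}$ (so $d_l=0$ iff $s(k,l)=s'(k,l)$), satisfies $d_1=d_2=d_3$.

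Next I would record how $d$ transforms under two kinds of elementary moves on a configuration. \textbf{Reversing one arc direction} $N_l$: in Definition~\ref{Ori01}, replacing $N$ by $-N$ forces $h_0\mapsto-h_0$ and $h_1\mapsto-h_1$, which reverses the orientation of the frame $(h_0,h_1,N)$ and therefore interchanges the canonical $0$- and $1$-strips along $TV_k^l$; hence $s(k,l)$ flips. At the same time, reversing the arc interchanges the ``beginning'' and ``ending'' of the two edges meeting at $TV_k^l$ while fixing the physical preferred \gls{EoE}, so by Corollary~\ref{s'means} $s'(k,l)$ flips as well; and no parameter at the other two arc segments is touched. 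Hence this move leaves $d$ unchanged. \textbf{Reversing one sheet's orientation}, say that of $D_k^{\{1,2\}}$: this sheet is one of the two strips along $TV_k^1$ and along $TV_k^2$, so the canonical-strip assignment flips there, flipping $s(k,1)$ and $s(k,2)$; and it is the transversal sheet at $TV_k^3$, so by Remark~\ref{OriFig}(4) its \gls{preferred} direction reverses, which flips the preferred \gls{EoE} of $TV_k^3$ and hence $s'(k,3)$; every other parameter is untouched. So this move flips $d_1$, $d_2$ and $d_3$ simultaneously, and the analogous statements for $D_k^{\{1,3\}}$ and $D_k^{\{2,3\}}$ follow by the evident cyclic symmetry.

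These six moves are commuting involutions acting freely and transitively on the $64$ configurations, and each of them preserves the property ``$d_1=d_2=d_3$'' (the three arc reversals fix $d$ outright, the three sheet reversals negate it coordinatewise). Hence it is enough to verify the property for a single base configuration. For this I would take the planes $\{z=0\},\{y=0\},\{x=0\}$ oriented by $(e_1,e_2),(e_1,e_3),(e_2,e_3)$ respectively and the three axes oriented by $+e_1,+e_2,+e_3$, and compute directly: running Definition~\ref{Ori01} at each axis yields $(s(k,1),s(k,2),s(k,3))=(1,0,1)$, while the \gls{preferred} normals of the three sheets are $e_3,-e_2,e_1$, which by Definition~\ref{Pref}(4) and Corollary~\ref{s'means} give $(s'(k,1),s'(k,2),s'(k,3))=(1,0,1)$ as well. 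Thus $d=(0,0,0)$ in the base configuration, so $d\in\{(0,0,0),(1,1,1)\}$ in every configuration, which is exactly the dichotomy asserted in the lemma. (Note that whether two of the three arc segments happen to belong globally to the same double arc is irrelevant: the local computation, and hence the conclusion, is the same in all $64$ local configurations, and the actual surface realizes one of them.)

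The part I expect to be fussiest is the orientation bookkeeping: checking that $N\mapsto-N$ genuinely interchanges $h_0$ and $h_1$ (rather than leaving them in place), pinning down precisely which of the six parameters each sheet-reversal touches, and carrying out the base-case computation without a sign slip. None of this is deep, but it is the sort of calculation where one must fix all sign conventions up front and hold to them; as a safeguard I would perform the base-case computation in two independent ways — directly via Definition~\ref{Ori01}, and via the \gls{preferred}-normal description together with Corollary~\ref{s'means} — and confirm that they agree.
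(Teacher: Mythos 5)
Your proof is correct and follows essentially the same route as the paper's: the heart of both arguments is that reversing the orientation of any one sheet flips $s(k,l) \leftrightarrow s'(k,l)$ for all three $l$ simultaneously (it flips $s$ at the two arc segments the sheet contains and $s'$ at the one it is transversal to), reducing the claim to base cases. The only differences are cosmetic -- the paper normalizes the arc directions by rotating the picture and checks two pictorial base configurations (Figures 19A and 19B), whereas you add arc-reversal as a further $d$-preserving involution and verify a single base configuration by an explicit coordinate computation.
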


\begin{proof}
Figures~\ref{fig:Figure 18}A and B depict two ways that the neighborhood of $TV_k$ may look like. In every figure, the arc segments $TV_k^1$, $TV_k^2$ and $TV_k^3$ are respectively coloured in red, blue and purple, and the orientations on the arcs point towards the reader. In both figures, the 3-manifold $M$ is assumed to be oriented, and that orientation coincides with the usual right-hand orientation of the manifold, as in Definition~\ref{OriFig}(2).

\begin{figure}
\begin{center}
\includegraphics{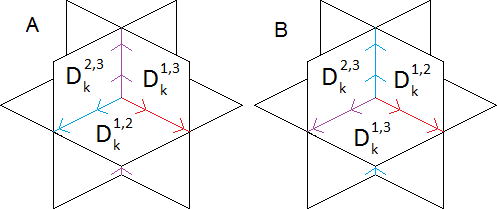}
\caption{Positions of the arcs' segments with regard to orientation} 
\label{fig:Figure 18}
\end{center}
\end{figure}

From a certain perspective, the neighborhood of every triple value will look like this - just spin it until the orientations on all 3 arc segments points towards you. The only difference is, that in Figure~\ref{fig:Figure 18}A the trio of vectors $(v_1,v_2,v_3)$, defined such that each $v_l$ points in the direction of the orientation on $TV_k^l$, agrees with the orientation of $M$, and in Figure~\ref{fig:Figure 18}B it does not.

These figures do not depict the orientations on the 3 intersecting surface sheets at $TV_k$. We added these in Figures~\ref{fig:Figure 19}. These figures depict only some of the possible configurations of orientation that can occur. There are 16 in total - $2^3=8$ possibilities for the orientation of the sheets times $2$ for the configurations in Figures~\ref{fig:Figure 19}A and B. We only depict 5 of these - that is sufficient for our purpose.

\begin{figure}
\begin{center}
\includegraphics{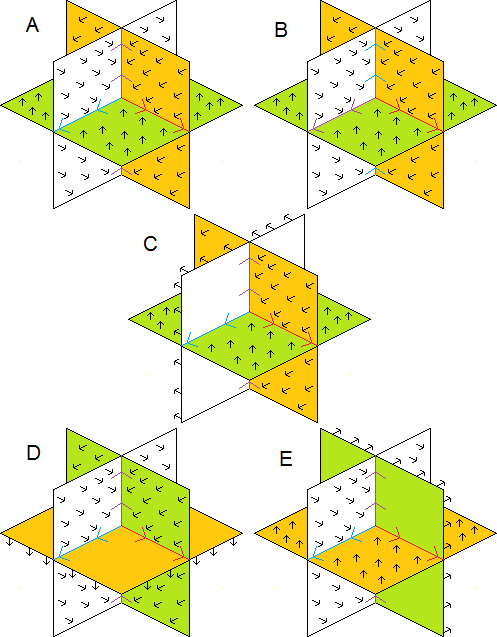}
\caption{The effect of orientation on $s(k,l)$ and $s'(k,l)$} 
\label{fig:Figure 19}
\end{center}
\end{figure}

The arc segment $TV_k^1$ is a part of the double arc $DA_{j(k,1)}$. One of the two surface sheets that intersect at this arc segment is a part of the 0 strip of this arc and the other is a part of the 1 strip. The orientations of the arc, surface and 3-manifolds decide which sheet is a part of which strip, as per Definition~\ref{Ori01}. In the Figures~\ref{fig:Figure 19}A-E, we colored these 0 and 1 strips in green and orange respectively.

In Figure~\ref{fig:Figure 19}A Definition~\ref{sParam} implies that $s(k,1)=1$ (since the surface sheet $D^{1,2}$ with the 1 strip) and Corollary~\ref{s'means} implies that $s'(k,1)=1$ (since the direction of progress on $TV_k^1$ agrees with the \gls{preferred} direction). Symmetry implies that $s(k,2)=s'(k,2)=s(k,3)=s'(k,3)=1$ as well. In particular $s(k,1) \leftrightarrow s'(k,1)=s(k,2) \leftrightarrow s'(k,2)=s(k,3) \leftrightarrow s'(k,3)=1$. Similarly, in Figure~\ref{fig:Figure 19}B $s'(k,1)=s'(k,2)=s'(k,3)=1$ but $s(k,1)=s(k,2)=s(k,3)=0$. And in particular $s(k,1) \leftrightarrow s'(k,1)=s(k,2) \leftrightarrow s'(k,2)=s(k,3) \leftrightarrow s'(k,3)=0$.

In these first two configurations, all 3 expressions ($s(k,1) \leftrightarrow s'(k,1)$, $s(k,2) \leftrightarrow s'(k,2)$ and $s(k,3) \leftrightarrow s'(k,3)$) are indeed equal. You can ``move" from those two configurations to each of the other 16 configurations by changing the orientations of one or more of the 3 intersecting surfaces. It will thus suffice to prove that each of these changes either changes the values of all 3 expressions, or does not change the value of any of them.

The difference between Figure~\ref{fig:Figure 19}A and Figure~\ref{fig:Figure 19}C demonstrates what happens when we change the orientation on the sheet $D_k^{2,3}$, that is perpendicular to $TV_k^1$. According to Corollary~\ref{s'means}, $s'(k,l)$ changes. On the other hand, $s(k,l)$ does not depend on the orientation of $D_k^{2,3}$ and so it remains as is.

The difference between Figure~\ref{fig:Figure 19}A and Figure~\ref{fig:Figure 19}D or \ref{fig:Figure 19}E demonstrates what happens when we change the orientation of any of the other sheets, the ones that contain $TV_k^1$. By Definition~\ref{Ori01}, this changes the names of the surface strips at the double arc $DA_{j(k,1)}$ that contains $TV_k^1$, and by Definition~\ref{sParam}, this changes the value of $s(k,l)$. On the other hand, $s'(k,l)$ does not depend on the naming of the surface strip, and so it remains the same.

In particular, changing the orientation on any of the 3 surfaces will change the value of $s(k,1) \leftrightarrow s'(k,1)$. For reasons of symmetry, the same holds for $s(k,2) \leftrightarrow s'(k,2)$ and $s(k,3) \leftrightarrow s'(k,3)$ - changing the orientation of any of the surfaces will change their value. The lemma follows.
\end{proof}

Lemma~\ref{ParamLem} implies

\begin{thm} \label{ParamThm}
The lifting formula of a \gls{Thrice} \gls{genericsurface} has the exact same clauses as the following formula, but they may appear in a slightly different order.
\end{thm}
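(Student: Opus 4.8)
The plan is to read Theorem~\ref{ParamThm} off directly from Lemma~\ref{ParamLem}. The ``following formula'' referred to in the statement is the graph lifting formula built from the \gls{DADG}: it is the conjunction over the $T$ triple values of the pairs of mirror $3$-clauses
\[
D_k^{+} \equiv (x_{j(k,1)} \leftrightarrow s'(k,1)) \vee (x_{j(k,2)} \leftrightarrow s'(k,2)) \vee (x_{j(k,3)} \leftrightarrow s'(k,3)),
\]
\[
D_k^{-} \equiv (x_{j(k,1)} \leftrightarrow \neg s'(k,1)) \vee (x_{j(k,2)} \leftrightarrow \neg s'(k,2)) \vee (x_{j(k,3)} \leftrightarrow \neg s'(k,3)),
\]
where $j(k,l)$ is the index function of Definition~\ref{DeduceInd} and $s'(k,l)$ is as in Definition~\ref{s'}. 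First I would record the two bookkeeping facts that make the comparison meaningful: the number $T$ of triple values of the \gls{DADG} equals the number $K$ of triple values used in Definition~\ref{LiftFormDef}, and the index function $j(k,l)$ appearing in the lifting formula (\ref{LiftForm}) is the same as the one of Definition~\ref{DeduceInd}, since the latter was set up precisely to reproduce the former. Hence the lifting formula and the graph lifting formula differ \emph{only} in whether they use the parameters $s(k,l)$ or the parameters $s'(k,l)$, with the literals occupying the same positions in corresponding clauses.

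Next I would fix a triple value $TV_k$ and compare the unordered pair of clauses $\{C_k^{+},C_k^{-}\}$ that $TV_k$ contributes to the lifting formula (\ref{LiftForm}) with the unordered pair $\{D_k^{+},D_k^{-}\}$ it contributes to the graph lifting formula, where $C_k^{\pm}$ are defined exactly as $D_k^{\pm}$ but with $s'$ replaced by $s$. By Lemma~\ref{ParamLem} there are exactly two cases. If $s'(k,l)=s(k,l)$ for all $l=1,2,3$, then $D_k^{+}$ and $C_k^{+}$ are the same clause and $D_k^{-}$ and $C_k^{-}$ are the same clause. If instead $s'(k,l)=\neg s(k,l)$ for all $l$, then $D_k^{+}$ equals $C_k^{-}$ and $D_k^{-}$ equals $C_k^{+}$, because negating all three literals of a $3$-clause yields its mirror clause (Definition~\ref{Sym3Sat}) and the mirror of the mirror is the original clause. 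In either case $\{D_k^{+},D_k^{-}\}=\{C_k^{+},C_k^{-}\}$ as a pair (multiset) of clauses, the only possible discrepancy being a transposition of the two clauses inside the block indexed by $k$.

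Finally I would take the conjunction over all $k=0,\dots,T-1=K-1$: since the block of clauses coming from each $TV_k$ is the same multiset in both formulas, the lifting formula and the graph lifting formula have the same multiset of clauses, so they agree up to a reordering of clauses (in fact up to transpositions within the $K$ mirror-pair blocks), which is the assertion of Theorem~\ref{ParamThm}. I do not expect a real obstacle: the mathematical content is entirely carried by Lemma~\ref{ParamLem}, and the only points needing care are the identifications $T=K$ and the agreement of the two index functions $j(k,l)$, together with interpreting ``the exact same clauses'' as multiset equality so that repeated clauses are handled correctly (repetitions cannot actually occur once properness is also invoked, but the statement does not exclude them a priori).
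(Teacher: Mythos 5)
Your proposal is correct and follows essentially the same route as the paper: both arguments fix a triple value $TV_k$, invoke the dichotomy of Lemma~\ref{ParamLem} to conclude that the pair of mirror clauses contributed by $TV_k$ is the same unordered pair in the lifting formula and the graph lifting formula (either matching directly or swapping via the mirror-clause involution), and then conjoin over all $k$. Your added bookkeeping about $T=K$ and the agreement of the index functions is a reasonable explicit supplement to what the paper leaves implicit.
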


\begin{multline} \label{GraphLiftForm}
\bigwedge_{k=0}^{K-1} (((x_{j(k,1)} \leftrightarrow s'(k,1)) \vee (x_{j(k,2)} \leftrightarrow s'(k,2)) \vee (x_{j(k,3)} \leftrightarrow s'(k,3))) \wedge \\
((x_{j(k,1)} \leftrightarrow \neg s'(k,1)) \vee (x_{j(k,2)} \leftrightarrow \neg s'(k,2)) \vee (x_{j(k,3)} \leftrightarrow \neg s'(k,3)))).
\end{multline}

\begin{proof}
Lemma~\ref{ParamLem} implies that for every $k=0,...,K-1$:

1) The 3-clause $F_k \equiv (x_{j(k,1)} \leftrightarrow s(k,1)) \vee (x_{j(k,2)} \leftrightarrow s(k,2)) \vee (x_{j(k,3)} \leftrightarrow s(k,3))$ is equal to one of the following two 3-clauses, either $(x_{j(k,1)} \leftrightarrow s'(k,1)) \vee (x_{j(k,2)} \leftrightarrow s'(k,2)) \vee (x_{j(k,3)} \leftrightarrow s'(k,3))$, or its \gls{mirror} $(x_{j(k,1)} \leftrightarrow \neg s'(k,1)) \vee (x_{j(k,2)} \leftrightarrow \neg s'(k,2)) \vee (x_{j(k,3)} \leftrightarrow \neg s'(k,3))$.

2) The \gls{mirror} $F_k \equiv (x_{j(k,1)} \leftrightarrow \neg s(k,1)) \vee (x_{j(k,2)} \leftrightarrow \neg s(k,2)) \vee (x_{j(k,3)} \leftrightarrow \neg s(k,3))$ is equal to the other one of the two clauses.

The theorem follows.
\end{proof}

\begin{defn} \label{GraphLift}
1) Formula (\ref{GraphLiftForm}) is called the graph lifting formula. It is defined for every \gls{DADG} whose double arcs are indexed. The graph lifting formula of a \gls{Thrice} \gls{genericsurface} whose long edges, branch, DB, and triple values and arc segments are indexed, is the the graph lifting formula of its \gls{DADG} structure.

2) A \gls{DADG} $G$ realizes a \gls{symmetric} $F$ if, for some indexing of its double arcs, the graph lifting formula of the \gls{DADG} is equal to $F$ up to a change in the order of the clauses. Similarly, $G$ ``almost realizes" $F$ if it realizes a formula $F'$ that is almost equal to $F$, in the sense of Definition~\ref{Realize}.
\end{defn}

\chapter{The Lifting Problem is NP-hard} \label{SecNPHrad}

This chapter is dedicated to proving Theorem~\ref{Thm3}, and thus that the lifting problem is NP-hard. Notice that Theorem~\ref{ParamThm} implies that a \gls{genericsurface} realizes/almost realizes a proper \gls{symmetric} iff it realizes a \gls{DADG} that realizes/almost realizes this 3-sat formula. This allows us to divide the algorithm into two parts.

In the first part, we create a \gls{DADG} that almost realizes the given formula. This is done in polynomial time, and the size of the \gls{DADG} is linearly bounded by the size of the given formula (if the formula has $2K$ clauses, the \gls{DADG} has $O(K)$ vertices). 

The second part receives the \gls{DADG} and creates a \gls{genericsurface} $(M,S)$ that realizes the \gls{DADG}, and thus the formula. The second part is done in polynomial time with regards to the size of the \gls{DADG}, and thus to the size of the formula. Additionally, as Theorem~\ref{Thm3} demands, $M$ will always be homeomorphic to the closed ball $D^3$, $S$ will be orientable and contain no branch values, and it will also be a closed \gls{genericsurface} - it will be contained in the interior of $M$ and will have no RB or DB values. It will also have no disjoint circles since those are unnecessary - they do not affect the lifting formula.

In order for the surface to uphold these properties, the \gls{DADG} will also contain no DB or branch values - only triple values. Furthermore, there are \gls{DADG}s that simply cannot be realized with a \gls{genericsurface}. Not only will all the \gls{DADG}s produced by the algorithm be realizable, but they will also uphold a very special property - they will all be what we refer to as ``\gls{Height1}" \gls{DADG}s. In the first section of this chapter, we will explain what \gls{DADG}s are realizable, what are \gls{Height1} \gls{DADG}s, and why they should be used.

\section{Gradable and height-1 DADGs}

\begin{defn} \label{Grading}
A grading of a \gls{DADG} is a choice of a number $g(e)$ (called ``the grade of $e$") for every edge $e$ of the \gls{DADG}, that upholds the following: 1) All the edges that contain \gls{preferred} ends of edges of $TV_k$ have the same grade. 2) All the edges that contain non-\gls{preferred} ends of edges of $TV_k$ have the same grade. 3) The grade of the \gls{preferred} edges at $TV_k$ is greater by 1 than the grade of the non-\gls{preferred} edges.

Using formal terminology, a grading is a function that assigns any number $r \in \{0,...,3K-1\}$ (representing the $r$th edge) an integer $g(r)$ such that, for any triple value $TV_k=(((r_1,s_1),(r_2,s_2)),((r_3,s_3),(r_4,s_4)),((r_5,s_5),(r_6,s_6)))$ $g(r_2)=g(r_4)=g(r_6)=g(r_1)+1=g(r_3)+1=g(r_5)+1$.
\end{defn}

In chapter \ref{BHrep}, we will prove that a \gls{DADG} is realizable via a \gls{genericsurface} in $D^3$, or any 3 manifold $M$ for which $H_1(M;\Z)$ is periodic (all of its elements have a finite order) iff it has a grading. We will also show that if $M$ is compact and $H_1(M;\Z)$ is infinite, then any \gls{DADG} can be realized there. See Theorems~\ref{ADGThm} and \ref{ADGThm2} for details.

Figure~\ref{fig:Figure 20}B depicts an example of a graded \gls{DADG}. On the other hand, the \gls{DADG} in Figure~\ref{fig:Figure 20}A is not \gls{gradable}. If it had a grading, then the red and green edges would have the same grade, since they both have non-\gls{preferred} ends at the upper triple value. In contradiction of this, the green edge has a \gls{preferred} end at the bottom triple value, and the red edge has a non-\gls{preferred} end there, and so they must have different gradings.

\begin{figure}
\begin{center}
\includegraphics{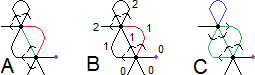}
\caption{A: a non-\gls{gradable} \gls{DADG}, A: a graded \gls{DADG} that is not \gls{Height1}, c: a \gls{Height1} \gls{DADG}} 
\label{fig:Figure 20}
\end{center}
\end{figure}

As per the above, the algorithm must realize every \gls{sym&pro} with a \gls{DADG} that is itself realizable via a \gls{genericsurface} in $D^3$. That means that the \gls{DADG} is \gls{gradable}. Furthermore, the \gls{DADG}s produced by the algorithm will uphold a property that is stronger then gradability - they will all be ``\gls{Height1}" \gls{DADG}s:

\begin{defn} \label{H1}
1) An \gls{EoE} in a \gls{DADG} is said to be \gls{preferred} (resp. non-\gls{preferred}) if it resides on a triple value (not a branch or DB value), and if it is the \gls{preferred} (resp. not the \gls{preferred}) \gls{EoE} of the arc segment that contains it.

2) An edge is said to be \gls{preferred}/non-\gls{preferred}/\gls{indecisive} iff both of its ends are \gls{preferred}/both are non-\gls{preferred}/it has one of each. If at least one of the edge's ends resides on a DB or branch value, we say that it is irrelevant.

3) A \gls{Height1} \gls{DADG} is a \gls{DADG} that has at least one triple value and no \gls{indecisive} edges - all of its edges are either \gls{preferred}, non-\gls{preferred} or irrelevant.
\end{defn}

\begin{lem} \label{H1L}
\Gls{Height1} \gls{DADG}s are \gls{gradable}.
\end{lem}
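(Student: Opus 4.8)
The plan is to write down an explicit grading. The natural candidate assigns grade $1$ to every \gls{preferred} edge and grade $0$ to every non-\gls{preferred} edge; essentially all the work is deciding how to grade the irrelevant edges and then checking the three conditions of Definition~\ref{Grading} at each triple value.

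First I would record the structural observation on which everything rests: in a \gls{Height1} \gls{DADG}, if an edge $e$ has one of its ends equal to the \gls{preferred} \gls{EoE} of some arc segment of a triple value $TV_k$, then $e$ is either a \gls{preferred} edge or an irrelevant edge. Indeed, the other end of $e$ lies either on a DB or branch value, in which case $e$ is irrelevant by Definition~\ref{H1}(2); or it lies on a triple value, in which case $e$ is relevant, and since the \gls{DADG} is \gls{Height1} it is not \gls{indecisive}, so both of its ends are \gls{preferred} and $e$ is a \gls{preferred} edge. By the same argument, an edge with one of its ends equal to a non-\gls{preferred} \gls{EoE} of $TV_k$ is either a non-\gls{preferred} edge or an irrelevant edge. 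In particular no edge can carry both a \gls{preferred} \gls{EoE} and a non-\gls{preferred} \gls{EoE} of the same $TV_k$ (nor a \gls{preferred} \gls{EoE} of one triple value and a non-\gls{preferred} \gls{EoE} of another), since that would make it \gls{indecisive}.

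Next I would define the grading $g$. Put $g(e)=1$ if $e$ is a \gls{preferred} edge and $g(e)=0$ if $e$ is a non-\gls{preferred} edge. If $e$ is irrelevant and one of its ends lies on a triple value (necessarily as the \gls{preferred} or the non-\gls{preferred} \gls{EoE} of a unique arc segment), set $g(e)=1$ in the first case and $g(e)=0$ in the second; if both ends of $e$ lie on DB or branch values, set $g(e)=0$ (any value works here). This is well defined because each \gls{EoE} belongs to exactly one vertex, so an irrelevant edge touches at most one triple value, and that touch determines a single value. Then I would verify Definition~\ref{Grading} at an arbitrary triple value $TV_k=(((r_1,s_1),(r_2,s_2)),((r_3,s_3),(r_4,s_4)),((r_5,s_5),(r_6,s_6)))$: by the conventions following Definition~\ref{DADG} the edges $r_2,r_4,r_6$ carry the \gls{preferred} ends of edges of $TV_k$ and $r_1,r_3,r_5$ carry the non-\gls{preferred} ones, so by the structural observation each of $r_2,r_4,r_6$ is a \gls{preferred} edge or an irrelevant edge whose triple-value end at $TV_k$ is \gls{preferred}, giving $g(r_2)=g(r_4)=g(r_6)=1$, while each of $r_1,r_3,r_5$ is non-\gls{preferred} or irrelevant with non-\gls{preferred} end at $TV_k$, giving $g(r_1)=g(r_3)=g(r_5)=0$. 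Hence $g(r_2)=g(r_4)=g(r_6)=g(r_1)+1=g(r_3)+1=g(r_5)+1$, which is exactly the formal grading condition; since $TV_k$ was arbitrary, $g$ is a grading.

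I do not expect a real obstacle: the argument is short and the only place that demands attention is the bookkeeping for irrelevant edges, where one must check that the value chosen for such an edge never clashes with the constraint imposed by the single triple value it can meet. The \gls{Height1} hypothesis is precisely what removes the \gls{indecisive} case, and with it the only source of such a clash; without it an edge could be forced to grade $1$ by one triple value and to grade $0$ by another, exactly as in the non-\gls{gradable} example of Figure~\ref{fig:Figure 20}A.
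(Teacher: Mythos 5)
Your proof is correct and follows the same basic strategy as the paper's: an explicit grading with $1$ on preferred edges and $0$ on non-preferred ones, verified directly against the formal condition at each triple value. The one place you diverge is the treatment of irrelevant edges, and there your version is the more careful one. The paper assigns grade $0$ to \emph{every} irrelevant edge and then asserts that at any triple value the edges $r_2,r_4,r_6$ carrying the preferred ends of edges are preferred edges and $r_1,r_3,r_5$ are non-preferred edges; that assertion fails when one of those six edges has its other end on a DB or branch value, in which case the paper's grading gives an edge with a preferred end at $TV_k$ the grade $0$ while the condition $g(r_{2l})=g(r_{2l-1})+1$ demands $1$. Your structural observation --- that in a height-1 DADG an edge meeting a triple value at a preferred (resp.\ non-preferred) end of edge is either a preferred (resp.\ non-preferred) edge or an irrelevant one, and that an irrelevant edge meets at most one triple value, so grading it by the type of that unique end is well defined --- is exactly what is needed to cover the general case (DADGs with DB and branch values) for which the lemma is stated, whereas the paper's grading only works verbatim when every edge incident to a triple value is relevant.
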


\begin{proof}
You can give a \gls{Height1} \gls{DADG} the following grading: give every \gls{preferred} edge the grade 1 and every non-\gls{preferred} or irrelevant edge the grade 0. For any triple value $TV_k=(((r_1,s_1)$, $(r_2,s_2)),((r_3,s_3),(r_4,s_4)),((r_5,s_5),(r_6,s_6)))$ in a \gls{Height1} \gls{DADG} $r_2$, $r_4$ and $r_6$ are \gls{preferred} edges while $r_1$, $r_3$ and $r_5$ are non-\gls{preferred} edges. It follows that $g(r_2)=g(r_4)=g(r_6)=1$, and $g(r_1)=g(r_3)=g(r_5)=0$, which is in accordance with the definition of a grading.
\end{proof}

For example, the \gls{DADG} in Figure~\ref{fig:Figure 20}B is not \gls{Height1}, since the red edge is \gls{indecisive}. The \gls{DADG} in Figure~\ref{fig:Figure 20}C is \gls{Height1} - the green edges are all \gls{preferred}, the blue edge is non \gls{preferred} and the black edges are all irrelevant. This \gls{DADG} can be graded by giving all the green edges the grade 1, and all the blue or black edges the grade 0, as the proof of Lemma~\ref{H1L} suggests.

The reason we use \gls{Height1} \gls{DADG}s, instead of general \gls{gradable} \gls{DADG}s, is as follows: while experimenting with \gls{DADG}s, we noticed that the ``height" of a \gls{DADG} is a good indicator as to how complex a surface that realizes the graph must be.

\begin{defn}
The ``height" of a connected \gls{DADG} (connected as a multigraph) is the difference between the maximum grade of any edge and the minimum grade of any edge, $max\{g(e)|e\} - min\{g(e)|e\}$. The height of a non-connected \gls{DADG} is the maximum height of all the components.
\end{defn}

\begin{rem}
One can easily prove that the height of a \gls{DADG} is independent of the grading, and that \gls{Height1} \gls{DADG}s are precisely those \gls{gradable} \gls{DADG}s whose height is 1. Indeed, the height of the grading we defined in the proof of Lemma~\ref{H1L} is clearly 1.
\end{rem}

In practice, ensuring that the first part of the algorithm produces only \gls{Height1} \gls{DADG}s makes the second part (the one that realizes the \gls{DADG} with a \gls{genericsurface}) considerably simpler and faster. Furthermore, this additional demand does not seem to make the first part of the algorithm noticeably longer or more complex.

\section{Realizing a symmetric 3-sat formula with a height-1 DADG}

In this section we prove the following:

\begin{thm} \label{Thm4}
There is a quadratic ($O(K^2)$) time algorithm whose input is a proper symmetric $3$-sat formula of length $K$ (it has $2K$ clauses) and whose output is a height-$1$ \gls{DADG}, with no disjoint circles, DB or branch values, which almost realizes said formula. The size of the \gls{DADG} is also polynomially bounded by the length of the input formula. Specifically, it will have no more than $4K$ triple values.
\end{thm}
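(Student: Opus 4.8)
The plan is to build the required height-$1$ \gls{DADG} $G$ directly from the syntactic data of $F$, arranging matters so that the graph lifting formula of $G$ (Formula~\ref{GraphLiftForm}) is $F$ itself, up to reordering of clauses and literals and up to the removable tautological mirror pairs permitted by Definition~\ref{AlmostRealize}.

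First I would normalise the input: write $F$ as $K$ mirror pairs, and for the $k$th pair extract $j(k,1)<j(k,2)<j(k,3)$ (strictly increasing since $F$ is proper) together with a parameter triple $(s(k,1),s(k,2),s(k,3))$, the pair being $(x_{j(k,1)}\leftrightarrow s(k,1))\vee(x_{j(k,2)}\leftrightarrow s(k,2))\vee(x_{j(k,3)}\leftrightarrow s(k,3))$ and its mirror; we may assume the variables occurring in $F$ are exactly $x_0,\dots,x_{N-1}$ with $N\le 3K$, as relabelling variables corresponds to re-indexing the double arcs in Definition~\ref{DeduceInd}. For each $j$ let $p_j$ and $q_j$ be the numbers of representative clauses in which $x_j$ occurs with parameter $1$ and with parameter $0$ respectively, so $\sum_j(p_j+q_j)=3K$. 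I would then build $G$ with $N$ double arcs $DA_0,\dots,DA_{N-1}$ (one per variable, with $C=B=D=0$, hence no disjoint circles and no DB or branch values), one ``clause'' triple value $TV_k$ per mirror pair whose three arc segments lie on $DA_{j(k,1)},DA_{j(k,2)},DA_{j(k,3)}$ (so that the index function of $G$, in the sense of Definition~\ref{DeduceInd}, is the given $j(k,l)$), and $|p_j-q_j|$ ``dummy'' triple values for each $j$ whose three arc segments all lie on $DA_j$. This yields $T=K+\sum_j|p_j-q_j|\le K+3K=4K$ triple values. The two remaining design choices are the cyclic order of the arc segments along each $DA_j$ (which fixes the edges and the directed multigraph structure of $G$) and, for each arc segment, which of its two ends of edges is preferred; by Corollary~\ref{s'means} the latter is precisely the datum of the parameters $s'(k,l)$, and I would set $s'(k,l)=s(k,l)$ at every clause triple value. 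Granting this, $G$ realizes $F$ together with the mirror pairs coming from the dummies, which will be removable tautologies, so $G$ almost realizes $F$.

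The main obstacle is the interaction of these choices with the \emph{height-$1$} condition, and its resolution is the heart of the argument. Unwinding Definition~\ref{H1}: since $G$ has no DB or branch values, every arc segment carries exactly one beginning and one ending of an edge, so the arc segments of $G$, with the edges of $G$ as $1$-cells, form a $2$-regular graph whose connected components are exactly the double arcs (by Definition~\ref{consec}); moreover an edge from arc segment $A$ to arc segment $A'$ is indecisive precisely when the preferred ends on $A$ and on $A'$ are of the same kind (both beginnings, or both endings). Hence ``no indecisive edge'' is exactly the statement that this graph admits a proper $2$-colouring, i.e. every double arc has an even number of arc segments and the preferred ends — equivalently the values $s'$ — alternate around each double-arc cycle. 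Since the cyclic order on $DA_j$ is ours to choose, this is achievable if and only if the arc segments on $DA_j$ with $s'=1$ and those with $s'=0$ are equinumerous. This is where the dummies earn their keep: a dummy triple value on $DA_j$ contributes three arc segments whose $s'$-pattern, forced by the alternation, is a permutation of $(1,1,0)$ or of $(0,0,1)$ (never constant, once we place its trio so that two segments fall at positions of one parity and one at the other) — exactly the shape of a removable tautology on $x_j$, and it shifts ``$\#\{s'=1\}$ minus $\#\{s'=0\}$'' on $DA_j$ by $\pm1$; so $|p_j-q_j|$ dummies with the right orientation rebalance $DA_j$, after which one interleaves the type-$1$ and type-$0$ arc segments cyclically.

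It remains to package the lists $C=B=D=0$, $T\le 4K$ and $TV$ (with its arc segments and preferred ends) in the format of Definition~\ref{DADG}, to check validity — each of the $6T$ pairs $(r,s)$ occurs exactly once, which is automatic since each edge contributes exactly one beginning and one ending — and to bound the running time: parsing $F$ and computing the $p_j,q_j$ costs $O(K)$, and laying out $\le 4K$ triple values, $N\le 3K$ double arcs and $\le 12K$ edges together with their cyclic orders and preferred-end data costs $O(K^2)$ (comfortably within the claimed bound). Finally, Theorem~\ref{ParamThm} and Definition~\ref{GraphLift} give that the graph lifting formula of the resulting height-$1$ \gls{DADG} equals $F$ up to clause order and the added removable tautological mirror pairs, so $G$ almost realizes $F$, which is the assertion of the theorem.
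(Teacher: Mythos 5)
Your proposal is correct and follows essentially the same route as the paper: your $p_j,q_j$ are the paper's $C_j^{\pm}$, your dummy triple values with pattern two-of-one-parameter-and-one-of-the-other are exactly the paper's added tautological mirror pairs $(x_j\vee x_j\vee\neg x_j)\wedge(x_j\vee\neg x_j\vee\neg x_j)$, and your observation that height-$1$ amounts to an alternating ($2$-colourable) cyclic arrangement of the $s'=1$ and $s'=0$ arc segments on each double arc—hence to $C_j^+=C_j^-$—is precisely the content of the paper's Lemma~\ref{H1Sym} together with the explicit interleaving of Step 5. The only difference is presentational: the paper spells out the interleaving via explicit edge indices $P_j+a$, $P_j+E_j+a$, where you argue it abstractly, which is fine.
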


The algorithm consists of 5 steps. We will explain each step, and then prove that this step takes at most quadratic time. This will be done in a separate lemma for each step - Lemma~\ref{Step1}, \ref{Step2}, \ref{Step3}, \ref{Step4}, and \ref{Step5}. Step 5 actually takes cubic time (see Lemma~\ref{Step5}.) When we explain step 3, we will show that the \gls{DADG} will have at most $4K$ triple values (see Remark~\ref{UpDown}(5)), and the theorem will follow.

1) The first step is verifying the validity of the input.

\begin{lem} \label{Step1}
This takes no more then quadratic ($O(K^2)$) time.
\end{lem}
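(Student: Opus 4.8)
The plan is to verify the input in a sequence of passes, each of which is manifestly polynomial (in fact, each costs at most $O(K^2)$), mirroring the style of the verification arguments already used for generic surfaces in Chapter~\ref{SecTech}. Recall from Definition~\ref{DADG} that a \gls{DADG} is a data type consisting of four non-negative integers $C,B,D,T$, a list $BV$ of $B$ pairs $(r,s)$, a list $DV$ of $D$ pairs, and a list of $T$ triple-value entries, each of which is a nested tuple $(((r_1,s_1),(r_2,s_2)),((r_3,s_3),(r_4,s_4)),((r_5,s_5),(r_6,s_6)))$. The input to the algorithm is supposed to be such a data type; the first task is to confirm this. For the purposes of Theorem~\ref{Thm4} we actually only care about \gls{DADG}s with $B=D=C=0$, so after reading $C,B,D,T$ the algorithm can immediately reject unless $B=D=C=0$; this is $O(1)$.

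First I would check the format and the integer bounds. Read $T$; confirm it is a non-negative integer and (since the output is claimed to have at most $4K$ triple values, and the algorithm that builds such a \gls{DADG} is the subject of the later steps) we need not impose an a priori bound here beyond what the later construction guarantees, but for a general validity check we simply verify $T$ is a sensible non-negative integer encoded with $O(\log K)$ digits. The number of edges of a valid \gls{DADG} with $B=D=0$ is $3T$ (the formula $3T+\tfrac12(B+D)$ of Definition~\ref{DADG}). So each index $r_i$ appearing in the $TV$ list must lie in $\{0,\dots,3T-1\}$, and each $s_i$ must be binary. Walk through the list of $T$ triple-value entries; for each entry confirm it has exactly three sub-entries, each sub-entry is a pair of pairs, each inner pair is an integer in range together with a binary flag, and — per the constraint in Definition~\ref{DADG}(1)(iv) — for each $l=1,2,3$ exactly one of $s_{2l-1},s_{2l}$ is $0$ and the other is $1$. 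Each entry is checked in $O(1)$ time (up to the logarithmic cost of comparing $O(\log K)$-digit integers, which by Remark~\ref{Calculate}(2) we suppress), so this pass is $O(T)\le O(K)$ time.

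Next I would check the crucial global incidence condition: each of the $6T$ possible values $(r,s)$ with $0\le r\le 3T-1$ and $s\in\{0,1\}$ must appear \emph{exactly once} among the ends of edges listed in the $TV$ entries. By the pigeonhole remark in Definition~\ref{DADG}(1) it suffices to check "at least once" or "at most once". The simplest implementation: collect the $6T$ pairs $(r_i,s_i)$ appearing across all triple-value entries into one list, sort it lexicographically in $O(T\log T)=O(K\log K)$ time by merge sort, and scan once to confirm there are no repetitions and that the sorted list is exactly $(0,0),(0,1),(1,0),(1,1),\dots,(3T-1,1)$; the scan is $O(T)$. Alternatively, allocate an array of $6T$ booleans and mark each pair as it is read, rejecting on a double-mark and confirming at the end that every cell is marked; this is $O(T)$ time and space. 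Either way the cost is at most $O(K\log K)\le O(K^2)$.

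The only subtlety — and the one place the argument needs care rather than being purely clerical — is making sure "valid input" as checked here really coincides with "is a \gls{DADG}" in the sense of Definition~\ref{DADG}, i.e.\ that there is nothing further to verify: in particular that the edge count $3T$ is forced (it is, once $B=D=0$, by the degree formula) and that no additional consistency condition among the $s_i$ beyond the per-segment parity constraint is required (it is not — Definition~\ref{DADG} imposes no more). So I do not expect a genuine obstacle here; the lemma is a routine verification. Summing the passes — format and bounds $O(K)$, per-entry structural checks $O(K)$, the global bijection check $O(K\log K)$ — the total is $O(K\log K)$, which is certainly no more than quadratic $O(K^2)$, proving Lemma~\ref{Step1}. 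I would state it in exactly that form: list the passes, give each one's cost, and conclude with the sum.
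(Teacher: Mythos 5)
Your proof verifies the wrong object. Step 1 of the algorithm in Theorem~\ref{Thm4} is ``verifying the validity of the input,'' and the \emph{input} of that algorithm is a proper symmetric $3$-sat formula of length $K$; the \gls{DADG} is the \emph{output}, which is only assembled in steps 3--5. So the validity check the lemma is about is: confirm the given string is a syntactically well-formed $3$-sat formula; find $N$ and confirm every variable $x_0,\dots,x_{N-1}$ actually occurs; confirm the formula is proper in the sense of Definition~\ref{reduced} (literals ordered within each clause, three distinct variables per clause, clauses lexicographically ordered and pairwise distinct); and confirm it is symmetric in the sense of Definition~\ref{Sym3Sat}, i.e.\ that every clause's mirror clause also appears. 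All of these except the last are $O(K)$ passes over the string, and the mirror-clause check is the one step that, done naively, costs $O(K^2)$ --- which is exactly why the lemma is stated with a quadratic rather than a linear or linearithmic bound. Your argument never touches any of these conditions (in particular the symmetry condition, which is the only nontrivial one), so it does not establish the lemma.

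The checks you do describe --- that each $(r,s)$ with $0\le r\le 3T-1$ appears exactly once among the ends of edges, that each arc segment has one $s=0$ and one $s=1$ end, etc. --- are not needed anywhere in the algorithm, because the \gls{DADG} is constructed by the algorithm itself (steps 4 and 5) and its validity is \emph{proved}, not verified, in the lemma following Remark~\ref{WhatToDo}. If one did want to verify a \gls{DADG} given as input (as would be relevant, say, for the realizability algorithm of Chapter~\ref{BHrep}), your passes would be a reasonable and correctly costed way to do it; but that is a different lemma.
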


\begin{proof}
The input is a string of letters and symbols. The algorithm checks that they are all variable names, brackets, and logical quantifiers, and that they are arranged as a 3-sat formula. The time this takes is linear in regard to the length of the string. If this string is a \gls{symmetric} with $K$ pairs of variables, then this length is $O(K)$, so this check takes $O(K)$ time.

The ``variable names" we mentioned are all of the form $x_i$, where $i$ is a non-negative integer. In order for the formula to be valid, it should have all the variables $x_0,...,x_{N-1}$ for some $N$. Finding $N$ (which is the maximum index of the variables plus 1) takes $O(K)$ time, as there are only $3K$ literals to check. Once you find it, you need to check that each of the variables $x_0,...,x_{N-1}$ appears at least once. This can be done in $O(K)$ time.

The algorithm then checks that the formula is proper: making sure that the literals in each clause are ordered (as per Definition~\ref{reduced}), and that no variable appears in a clause more than once takes $O(K)$ time. Checking that the clauses themselves are ordered also takes $O(K)$ time. If the clauses are ordered, then checking that no clause appears more than once takes $O(K)$ time as well.

Checking that the formula is symmetric - that every clause has a \gls{mirror} - takes no more than $O(K^2)$ time, even when done naively.
\end{proof}

2) The second step is determining the index functions $j(k,n)$ and the parameters $s'(k,l)$ that the \gls{DADG} ought to have. Allow us to explain what this means: assume you have a given \gls{symmetric}, and you want to create a \gls{DADG} that realizes the formula. At the moment, we require the \gls{DADG} to actually realize the formula, not just ``almost realize" it, in the sense of Definition~\ref{AlmostRealize} - the graph lifting formula of the \gls{DADG} ought to be equal to the given formula. The graph lifting formula of the \gls{DADG} is determined by the index function $j(k,l)$ and the parameters $s'(k,l)$ of the \gls{DADG}. It will have the form:

\begin{multline*}
\bigwedge_{k=0}^{K-1}(((x_{j(k,1)} \leftrightarrow s'(k,1)) \vee (x_{j(k,2)} \leftrightarrow s'(k,2)) \vee (x_{j(k,3)} \leftrightarrow s'(k,3))) \wedge \\ ((x_{j(k,1)} \leftrightarrow \neg s'(k,1)) \vee (x_{j(k,2)} \leftrightarrow \neg s'(k,2)) \vee (x_{j(k,3)} \leftrightarrow \neg s'(k,3)))).
\end{multline*}

Since the given formula will have the same clauses as the graph lifting formula, it seems that you can use the given formula to deduce what the index function $j(k,n)$ and the parameters $s'(k,l)$ of the \gls{DADG} ought to be. This will give (a part of) a blueprint for the \gls{DADG} - some idea about what this \gls{DADG} should look like. 

The problem is that the given formula does not tell you this explicitly. Firstly, because it is only a list of clauses made of literals. For instance, if the first ($0$th) clause is $x_0 \vee \neq x_1 \vee \neg x_2$ - you have to deduce that $j(0,1)=0$, $j(0,2)=1$ and $j(0,3)=2$ because of the index of the variables, and that $s'(0,1)=1$ and $s'(0,2)=s'(0,3)=0$ because the first literal has no negation symbol ($\neg$) and the other two do.

Secondly, and more importantly, the above formulation selects one ``prime clause" from every pair of mirror clauses, so that each pair consists of the prime clause $(x_{j(k,1)} \leftrightarrow s'(k,1)) \vee (x_{j(k,2)} \leftrightarrow s'(k,2)) \vee (x_{j(k,3)} \leftrightarrow s'(k,3))$, and its \gls{mirror} $(x_{j(k,1)} \leftrightarrow \neg s'(k,1)) \vee (x_{j(k,2)} \leftrightarrow \neg s'(k,2)) \vee (x_{j(k,3)} \leftrightarrow \neg s'(k,3))$. If for a certain $k$ the values of $s'(k,1)$, $s'(k,2)$ and $s'(k,3)$ were all changed, then the pairs will switch places - the ``prime clause" from before the change will be equal to the ``\gls{mirror}" after the change, and vice versa. Essentially, these two choices for the parameters describe the same formula.

The given formula (the input) does not indicate which clauses should be prime, and so you need to choose one prime clause from every pair of mirror clauses. For instance, the formula $(x_0 \vee x_1 \vee \neg x_2) \wedge (\neg x_0 \vee x_1 \vee x_2)$ is symmetric. It has one pair of mirror clauses, so $K=1$ and the index function and parameters are only defined for $k=0$. As before $j(0,1)=0$, $j(0,2)=1$ and $j(0,3)=2$. However, the values of the parameters are determined by the choice of prime clause. If the first clause is prime, then the parameters describe it - $s'(0,1)=1$ and $s'(0,2)=s'(0,3)=0$. But if the second clause is prime, then the parameters describe it and thus all have the opposite values - $s'(0,1)=0$ and $s'(0,2)=s'(0,3)=1$.

Determining the index function and the parameters includes the choice of a prime clause from every pair. As will soon be apparent, a wise choice of prime clause can shorten the runtime of the algorithm, and decrease the size of the \gls{DADG} it produces. We will, however, choose the prime clauses naively. The runtime of the algorithm and the size of the \gls{DADG} will remain polynomial in regard to $K$ and thus are sufficient for our purposes.

\begin{lem} \label{Step2}
Determining the index function and the parameters naively can be accomplished in quadratic ($O(K^2)$) time.
\end{lem}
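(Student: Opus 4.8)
The plan is to read the index function $j(k,l)$ and the parameters $s'(k,l)$ directly off the clauses of the input formula, after organising those clauses into mirror pairs. Recall that by Lemma~\ref{Step1} we may already assume the input $F \equiv \bigwedge_{k=0}^{2K-1}C_k$ is a valid \gls{proper} \gls{symmetric}, so each clause appears exactly once, the three literals of each clause involve three distinct variables listed in strictly increasing order, and by symmetry the \gls{mirror} of every $C_k$ also occurs among $C_0,\dots,C_{2K-1}$. Note also that no clause can be its own mirror, since that would force each literal's parameter to satisfy $\sigma=\neg\sigma$; hence the $2K$ clauses split cleanly into $K$ unordered pairs $\{C,\neg C\}$. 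The algorithm will find these pairs, designate one clause of each pair as the \emph{prime} clause, and then read $j$ and $s'$ off the prime clauses.

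First I would perform the pairing. For a given clause $C_k$, computing its mirror $\neg C_k$ (flipping the parameter of each of the three literals, as in Definition~\ref{Sym3Sat}(2)) takes $O(1)$ time, and comparing two clauses for equality likewise takes $O(1)$ time under the conventions of Remark~\ref{Calculate}(2). Done naively, the algorithm then scans the whole list of $2K$ clauses to locate the unique clause equal to $\neg C_k$; this costs $O(K)$ per clause and $O(K^2)$ in total. (Since $F$ is proper its clauses are lexicographically sorted, so a binary search would improve this to $O(K\log K)$, but the naive scan is enough here.) As each pair is found, the algorithm marks one of its two clauses as prime — say, the one that is smaller in the lexicographic order of Definition~\ref{reduced}, an arbitrary but fixed choice — and ignores the other. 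The result is a list of exactly $K$ prime clauses, which I index $P_0,\dots,P_{K-1}$, for instance in the order in which they appear in $F$.

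Next I would extract the data. Because $F$ is proper, each prime clause $P_k$ has the form $(x_{a_1}\leftrightarrow\sigma_1)\vee(x_{a_2}\leftrightarrow\sigma_2)\vee(x_{a_3}\leftrightarrow\sigma_3)$ with $a_1<a_2<a_3$ and each $\sigma_l\in\{0,1\}$; the algorithm sets $j(k,l)=a_l$ and $s'(k,l)=\sigma_l$ for $l=1,2,3$. Inspecting a single prime clause and recording these six numbers takes $O(1)$ time, so producing $j$ and $s'$ for all $k=0,\dots,K-1$ takes $O(K)$ time. By construction, the graph lifting formula~(\ref{GraphLiftForm}) built from these $j(k,l)$ and $s'(k,l)$ has, for each $k$, the prime clause $P_k$ as its $(2k)$th clause and the \gls{mirror} of $P_k$ as its $(2k+1)$th clause; since every clause of $F$ is either some $P_k$ or the mirror of some $P_k$, this formula has exactly the same clauses as $F$, up to the order of the clauses — which is precisely the sense in which we need the eventual \gls{DADG} to ``realize'' $F$.

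The only genuine obstacle is the clause-pairing step, whose naive implementation dominates the running time at $O(K^2)$; validity is already handled by Lemma~\ref{Step1}, and reading the parameters off the prime clauses is merely linear. Adding the costs, $O(K^2)+O(K)=O(K^2)$, which proves the lemma.
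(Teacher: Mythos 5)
Your proposal is correct and follows essentially the same route as the paper's proof: iterate over the clauses, locate each clause's mirror by a naive $O(K)$ scan (giving the dominant $O(K^2)$ term), fix one clause of each pair as prime, and read $j(k,l)$ and $s'(k,l)$ off the prime clauses in $O(1)$ each. The only cosmetic differences are your choice of prime clause (lexicographically smaller rather than first-encountered) and your added observations that no clause is its own mirror and that the resulting graph lifting formula matches $F$ up to clause order, both of which are correct.
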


\begin{proof}
First, you need to calculate $K$ itself to know how many parameters you need to define. $K$ is the number of clauses. It can be deduced immediately ($O(1)$ time) from the length of the input. 

Next, set $k=0$ and look at the first clause. This clause and its opposite will be the ``first pair of mirror clauses", and it (the first clause) will be the ``prime clause" of this pair. Set $j(0,1)$ to be index of the first literal. Set $s'(0,1)$ to equal 0 if this literal is negative (has $\neg$), and 1 if it is positive (has no $\neg$). Use the second and third literal to set $j(0,2)$, $s'(0,2)$, $j(0,3)$ and $s'(0,3)$ similarly. This takes $O(1)$ time.

Find the \gls{mirror} of the first clause in the formula - this takes $O(K)$ time. The parameters we currently have describe this first pair of mirror clauses in full, and therefore the said clauses are no longer needed. Delete both the first clause and its opposite from the formula. You now have a \gls{symmetric} with $K-1$ pairs of clauses. The first of the remaining clauses will be the ``prime clause" of the second pair of mirror clauses.

Increase $k$ by 1 and repeat - set $j(k,1)$ ($j(k,1)$ in general), $s'(k,1)$, $j(k,2)$, $s'(k,2)$, $j(k,3)$ and $s'(k,3)$ in the same way for the first (remaining) clause. Find its \gls{mirror} and delete them both. Increase $k$ by 1 and continue until there are no clauses left (this happens when $k=K$.) All this takes $O(K^2)$ time.
\end{proof}

3) Let us assume that you have a \gls{symmetric} and you have already determined the index function $j(k,l)$ and the parameters $s'(k,l)$. You would like to realize it with a \gls{DADG}. This \gls{DADG} ought to have one triple value per each pair of mirror clauses and the same parameters $s'(k,l)$ as the formula, and for some indexing of the double arcs the arc segment $TV_k^l$ should belong to the $j(k,l)$th arc (for every $k$ and $l$). However, it may not be possible to create a \gls{Height1} \gls{DADG} (or even a \gls{gradable} \gls{DADG}) with these properties.

It will be possible to create a \gls{Height1} \gls{DADG} that almost realizes the formula (see definition~\ref{GraphLift}(2). This \gls{DADG} will realize an ``enlarged" \gls{symmetric} that is almost equal to the input formula - it will have additional pairs of clauses. The third step of the algorithm is to determine how many pairs of clauses to add, and what will be the index function $j(k,l)$ and parameters $s'(k,l)$ for those new clauses.

%
%
The reason we must add new clauses to the input formula is as follows:

\begin{defn} \label{C}
1) Let $G$ be a \gls{DADG} whose double arcs are indexed as $DA_0,...$, $DA_{N-1}$. For every $j=0,...,N-1$, define $C_j^+$ / $C_j^-$ to respectively be the number arc segments on the $j$th double arc for which the parameter $s'(k,l)$ is equal $1$ / $0$.

2) Given a \gls{symmetric} for which each pair of mirror clauses has a chosen prime clause, define $C_j^+$ / $C_j^-$ similarly - the number of $(k,l)$s for which $j(k,l)=j$ and $s'(k,l)$ is equal $1$ / $0$.

3) In both cases, define also $E_j = 2max\{C_j^+,C_j^-\} - min\{C_j^+,C_j^-\}$.
\end{defn}

\begin{lem} \label{H1Sym}
If $G$ is a \gls{Height1} \gls{DADG}, then for every $j$, $C_j^+=C_j^-$. Additionally, this number will be equal to both the number of \gls{preferred} edges on the $j$th arc and the number of non-\gls{preferred} edges on this arc. 
\end{lem}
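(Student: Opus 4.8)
The plan is to trace through the definitions of \gls{Height1} \gls{DADG}s and of the quantities $C_j^\pm$ to see that an edge contributes to $C_j^+$ or $C_j^-$ according to whether its ends are \gls{preferred} or non-\gls{preferred}. Fix a double arc $DA_j$ of the \gls{Height1} \gls{DADG} $G$. By Definition~\ref{DeduceInd}, $DA_j$ is either an equivalence class of the same-continuation relation — a cyclically-ordered sequence of edges $e_0, e_1, \ldots, e_{m-1}, e_m = e_0$ — or a disjoint circle (which contains no arc segments and contributes nothing, so in that case $C_j^+ = C_j^- = 0$ trivially). In the former case, consecutive edges $e_i, e_{i+1}$ share an arc segment: the \gls{EoE} of $e_i$ that touches it is an \emph{ending} and the \gls{EoE} of $e_{i+1}$ that touches it is a \emph{beginning} (by Definition~\ref{SurfDADG}(iv), each arc segment is one beginning and one ending). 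So as one walks along $DA_j$ one alternately passes through ``ending of $e_i$, beginning of $e_{i+1}$''.

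The key observation is: by Corollary~\ref{s'means}, the parameter $s'(k,l)$ of an arc segment $TV_k^l$ equals $1$ iff the \gls{preferred} \gls{EoE} there is the \emph{beginning}; equivalently $s'(k,l) = 0$ iff the \gls{preferred} \gls{EoE} is the \emph{ending}. Now fix an edge $e_i$ of $DA_j$ which is not irrelevant (both its ends lie on triple values), and recall that in a \gls{Height1} \gls{DADG} $e_i$ is either \gls{preferred} (both ends \gls{preferred}) or non-\gls{preferred} (both ends non-\gls{preferred}) — there are no \gls{indecisive} edges. The ending of $e_i$ lies in some arc segment $TV_{k}^{l}$, shared with the beginning of $e_{i+1}$. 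If $e_i$ is \gls{preferred}, its ending is the \gls{preferred} \gls{EoE} of $TV_k^l$, so by Corollary~\ref{s'means} $s'(k,l) = 0$, and this arc segment contributes to $C_j^-$; meanwhile $e_{i+1}$'s beginning at $TV_k^l$ is then \emph{not} the \gls{preferred} \gls{EoE}, forcing $e_{i+1}$ to be non-\gls{preferred} (a \gls{Height1} \gls{DADG} has no \gls{indecisive} edges, and the other end of $e_{i+1}$ cannot be \gls{preferred}). Dually, if $e_i$ is non-\gls{preferred}, then its ending is non-\gls{preferred}, so $e_{i+1}$'s beginning must be the \gls{preferred} \gls{EoE} there, hence $s'(k,l) = 1$, contributing to $C_j^+$, and $e_{i+1}$ is \gls{preferred}. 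Thus the edges of $DA_j$ alternate strictly: \gls{preferred}, non-\gls{preferred}, \gls{preferred}, non-\gls{preferred}, $\ldots$, going around the cycle. (Here I should also check the case where $e_i$ or $e_{i+1}$ is irrelevant — but an irrelevant edge has an end on a DB or branch value, and such values are degree-1 vertices, so an irrelevant edge cannot sit in the interior of a cyclic continuation class; if $DA_j$ contains an irrelevant edge it is an open arc, and I handle the endpoints separately below.)

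Since the edges of $DA_j$ alternate between \gls{preferred} and non-\gls{preferred}, and the $m$ arc segments of $DA_j$ (for a closed arc of $m$ edges there are exactly $m$ arc segments, one between each consecutive pair) are likewise split half-and-half between those with $s'(k,l)=0$ and those with $s'(k,l)=1$, we get $C_j^- = C_j^+$, and this common value equals the number of \gls{preferred} edges on $DA_j$ and also the number of non-\gls{preferred} edges on it. For an \emph{open} double arc $e_0, \ldots, e_{m-1}$ whose first and last \gls{EoE}s lie on DB/branch values (so $e_0, e_{m-1}$ are irrelevant), the same alternation argument applies to the $m-1$ interior arc segments and the interior edges $e_1, \ldots, e_{m-2}$: an edge $e_i$ with $1 \le i \le m-2$ is \gls{preferred} or non-\gls{preferred}, and the alternation propagates, giving again equal counts. (But note the theorem as stated concerns the general \gls{Height1} \gls{DADG}; in the application the \gls{DADG} has no DB or branch values, so every arc is closed and this subtlety is moot — I will note this but still run the open-arc case for completeness.)

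\textbf{Main obstacle.} The genuinely delicate point is the propagation of the alternation — i.e. verifying that ``no \gls{indecisive} edges'' really forces the strict alternation all the way around a closed arc, rather than merely locally. The potential worry is a parity obstruction: if a closed arc had an \emph{odd} number of edges, strict alternation would be impossible, contradicting existence of the \gls{Height1} \gls{DADG}. So the argument must show that the \gls{Height1} hypothesis already rules this out — which it does, precisely because the alternation I derived is forced edge-by-edge and hence a closed arc in a \gls{Height1} \gls{DADG} must have an even number of edges; this consistency is automatic from the definitions, but it is worth spelling out carefully so the reader sees there is no hidden assumption. Everything else is bookkeeping with Definitions~\ref{Pref}, \ref{SurfDADG}, \ref{consec}, \ref{DeduceInd}, \ref{C}, \ref{H1} and Corollary~\ref{s'means}.
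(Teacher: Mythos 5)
Your closed-arc argument is correct, but it takes a genuinely different route from the paper's. The paper does not traverse the arc at all: it observes that (when no irrelevant edges are present) each arc segment with $s'(k,l)=1$ is precisely where one preferred edge begins and one non-preferred edge ends, and each segment with $s'(k,l)=0$ is where one preferred edge ends and one non-preferred edge begins; ``send each preferred edge to the segment where it begins'' and ``send each preferred edge to the segment where it ends'' are then bijections onto the $s'=1$ and $s'=0$ segments respectively, which yields all four equalities at once by double counting. Your sequential alternation argument proves the same thing for closed arcs, at the cost of having to address the parity question you raise --- which, as you note, resolves itself: the forced alternation simply shows that a height-1 DADG has no closed arc with an odd number of edges. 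The paper's bijection sidesteps the traversal and the parity issue entirely; that is the main thing its approach buys.

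The one genuine problem is your open-arc case. The claim that the alternation ``gives again equal counts'' there is false, and in fact the statement itself fails for open arcs. Take a double arc consisting of two irrelevant edges meeting at a single arc segment, each with its other end on a branch value: then $C_j^+ + C_j^- = 1$, so $C_j^+ \neq C_j^-$. Or take a three-edge open arc whose middle edge is preferred: then $C_j^+ = C_j^- = 1$ while the arc carries one preferred and zero non-preferred edges, so the ``additionally'' clause fails. The correct fix is not to push the alternation through the endpoints but to restrict to DADGs with no DB or branch values --- which is exactly the setting in which the lemma is applied (Theorem~\ref{Thm4} and the surrounding step of the construction), and which the paper's own proof tacitly assumes when it asserts that the edge beginning at an $s'=1$ segment is a preferred edge rather than possibly an irrelevant one. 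So either drop the open-arc ``completeness'' paragraph or state explicitly that every edge of $DA_j$ is assumed to have both ends on triple values; as written, that paragraph asserts something false.
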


\begin{proof}
Let the arc segment $TV_k^l$ reside on the double arc $DA_j$ ($j(k,l)=j$). As per Corollary~\ref{s'means}, if $s'(k,l)=1$, then there is a non-\gls{preferred} edge that ends at $TV_k^l$ and a \gls{preferred} edge that begins at $TV_k^l$. Similarly, if $s'(k,l)=0$ then there is a \gls{preferred} edge that ends at $TV_k^l$ and a non-\gls{preferred} edge that begins at $TV_k^l$. In both cases, both edges belong to the double arc $DA_{j(k,l)}$.

This implies that, for every $j$, there is a 1-1 correspondence between the \gls{preferred} edges and the arc segments of the $j$th arc, sending every edge to the arc segment where it begins. This proves that the number of \gls{preferred} edges is equal to the number of arc segments with $s'(k,l)=1$. These are equal to the numbers of non-\gls{preferred} edges and arc segments with $s'(k,l)=0$, for similar reasons.
\end{proof}

A \gls{DADG} that actually realizes the input formula will clearly have the same $C_j^+$s and $C_j^-$s as the formula. If for any of the $j$s $C_j^+ \neq C_j^-$ then Lemma~\ref{UpDown} implies that this \gls{DADG} cannot be \gls{Height1}. Adding pairs of clauses to the formula can fix this.

Specifically, if $C_j^+ > C_j^-$, then adding a new pair of mirror clauses for which $j(k,1)=j(k,2)=j(k,3)=j$, $s'(k,1)=1$, and $s'(k,2)=s'(k,3)=0$ would increase $C_j^+$ by 1 and $C_j^-$ by 2 - decreasing the difference between them by 1. Adding $C_j^+ - C_j^-$ pairs like this would nullify this difference. Similarly, if $C_j^+ < C_j^-$ you can fix this by adding $C_j^- - C_j^+$ pairs of clauses for which $j(k,1)=j(k,2)=j(k,3)=j$, $s'(k,1)=s'(k,2)=1$ and $s'(k,3)=0$ would nullify this difference. In both cases, this translates to adding $|C_j^+ - C_j^-|$ triple values to the \gls{DADG} (for every $j=0,...,N-1$.)

After you added these new clauses, the enlarged, almost equal formula would uphold $C_j^+=C_j^-$ for every $j$. Such a formula can be actually realized by a \gls{Height1} \gls{DADG}, as we will show in parts 4 and 5.

\begin{rem} \label{UpDown}
1) Notice that in both cases, up to the order of the literals, you will add the same pair of clauses to the formula - $(x_j \vee \neg x_j \vee \neg x_j) \wedge (x_j \vee x_j \vee \neg x_j)$. The difference between the cases is which of the two clauses will be the prime clause. This implies that the graph lifting formula of the \gls{DADG} will be almost equal to the given formula (the input), and in fact this is what motivated the definition of almost equal formulas.

2) The variables $C_j^+$ / $C_j^-$ describe the number of arc-segments with $s'(k,l)=1$ / $0$ in the input formula. But how may variables of each kind will the extended, almost equal formula have? If $C_j^+ > C_j^-$, the extended formula will have $C_j^+ + (C_j^+ - C_j^-)= 2C_j^+ - C_j^-=E_j$ arc segments on the $j$th arc with $s'(k,l)=1$. Otherwise, for similar reasons, it will have $2C_j^- - C_j^+=E_j$ arc segments on the $j$th arc with $s'(k,l)=0$. In both cases, Lemma~\ref{H1Sym} implies that the numbers of arc segments with $s'(k,l)=1$, arc segments with $s'(k,l)=0$, \gls{preferred} edges on the $j$th double arc, and non-\gls{preferred} edges on the $j$th double arc, will all be equal to $E_j$.

3) We defined and will calculate $E_j$ out of convenience, so as to not write a cumbersome expression like $2max\{C_j^+,C_j^-\} - min\{C_j^+,C_j^-\}$ many times in our calculations.

4) The total number of arc segments in the \gls{DADG} will be $\sum_{j=0}^{N-1} 2E_j$. The number of triple values will thus be a third of this number, $K' = \frac{2}{3} \sum_{j=0}^{N-1} E_j$.

5) For every $j$, $E_j \leq 2(C_j^+ + C_j^-)$ which is twice the number of pairs $(k,l)$ with $k=0,..,K-1$ and $l=1,2,3$, for which $j(k,l)=j$. In particular, $K' = \frac{2}{3} \sum_{j=0}^{N-1} E_j \leq \frac{4}{3} \sum_{j=0}^{N-1} (C_j^+ + C_j^-) = \frac{4}{3} \ast 3K = 4K$. This means that the \gls{DADG} will have at most $4K$ triple values - twice the number of clauses in the given formula. The enlarged, almost equal formula will thus have at most $8K$ clauses.
\end{rem}

Now that the reader understands how (and why) to enlarge the \gls{DADG}, we will examine this process computationally.

\begin{lem} \label{Step3}
Enlarging the \gls{DADG} - calculating $C_j^+$, $C_j^-$, $E_j$, and $K'$, and defining the additional index functions and parameters - takes linear $O(K)$ time.
\end{lem}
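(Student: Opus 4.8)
The plan is to verify that each sub-task of Step 3 can be carried out in linear time, given the data produced by Steps 1 and 2. By the end of Step 2 the algorithm has, for each pair of mirror clauses indexed by $k=0,\dots,K-1$ and each $l=1,2,3$, the values $j(k,l)$ and $s'(k,l)$. It also knows $N$ (computed while verifying validity in Step 1, Lemma~\ref{Step1}). These are stored in arrays allowing $O(1)$ access.

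First I would compute the arrays $C_j^+$ and $C_j^-$ for $j=0,\dots,N-1$. Initialise both arrays to zero in $O(N)\le O(K)$ time. Then sweep once over all $3K$ pairs $(k,l)$: for each, read $j=j(k,l)$ and $s'(k,l)$ in $O(1)$ time, and increment $C_j^+$ if $s'(k,l)=1$ or $C_j^-$ if $s'(k,l)=0$. This is $O(K)$ total. Next, compute $E_j=2\max\{C_j^+,C_j^-\}-\min\{C_j^+,C_j^-\}$ for each $j$ in a single $O(N)\le O(K)$ pass, and then $K'=\frac{2}{3}\sum_{j=0}^{N-1}E_j$ by accumulating the sum in the same pass (the sum is divisible by $3$ by Remark~\ref{UpDown}(4), so the division is exact). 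Finally, for each $j$ append $|C_j^+-C_j^-|$ new triple values to the list of index-function and parameter entries: if $C_j^+>C_j^-$ each new entry has $j(k,1)=j(k,2)=j(k,3)=j$, $s'(k,1)=1$, $s'(k,2)=s'(k,3)=0$, and symmetrically if $C_j^+<C_j^-$. Since $\sum_j|C_j^+-C_j^-|\le\sum_j(C_j^++C_j^-)=3K$, writing all of these entries takes $O(K)$ time.

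Summing the costs of these steps gives $O(N)+O(K)+O(K)+O(K)=O(K)$, using $N\le 3K$. There is no genuine obstacle here: the only point that requires a moment's care is that the correctness of the construction — that the enlarged formula is almost equal to the input and that it satisfies $C_j^+=C_j^-$ for every $j$ — has already been established in the discussion preceding the lemma and in Remark~\ref{UpDown}, so the lemma statement is purely a complexity claim, and the argument above is just bookkeeping. The mildly delicate piece is making sure all accesses are genuinely $O(1)$ and that the arrays are laid out so that the index $j(k,l)$ can be used directly as an array offset; this is routine given the conventions of Remark~\ref{Calculate}(2).

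\begin{proof}
After Step 2 (Lemma~\ref{Step2}), the algorithm has stored, in arrays accessible in $O(1)$ time, the values $j(k,l)$ and $s'(k,l)$ for all $k=0,\dots,K-1$ and $l=1,2,3$, as well as the number $N$ of variables (found during Step 1, Lemma~\ref{Step1}).

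Initialise two length-$N$ arrays to hold $C_j^+$ and $C_j^-$, all entries set to $0$; this takes $O(N)\le O(K)$ time since $N\le 3K$ by Remark~\ref{reduce}(1). Make a single pass over the $3K$ pairs $(k,l)$: for each, read $j=j(k,l)$ and $s'(k,l)$ in $O(1)$ time, and increment $C_j^+$ if $s'(k,l)=1$, or $C_j^-$ if $s'(k,l)=0$. This pass takes $O(K)$ time and computes the $C_j^+$ and $C_j^-$ of Definition~\ref{C}(2).

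Next, in one further pass over $j=0,\dots,N-1$, compute $E_j=2\max\{C_j^+,C_j^-\}-\min\{C_j^+,C_j^-\}$ and accumulate $\sum_{j=0}^{N-1}E_j$; then set $K'=\frac{2}{3}\sum_{j=0}^{N-1}E_j$, the division being exact by Remark~\ref{UpDown}(4). This pass is $O(N)\le O(K)$.

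Finally, for each $j=0,\dots,N-1$ append $|C_j^+-C_j^-|$ new triple-value entries to the list of index functions and parameters: if $C_j^+>C_j^-$, each new entry has $j(k,1)=j(k,2)=j(k,3)=j$, $s'(k,1)=1$, $s'(k,2)=s'(k,3)=0$; if $C_j^+<C_j^-$, each new entry has $j(k,1)=j(k,2)=j(k,3)=j$, $s'(k,1)=s'(k,2)=1$, $s'(k,3)=0$; if $C_j^+=C_j^-$, nothing is appended. Writing each entry costs $O(1)$, and the total number of entries appended is $\sum_{j=0}^{N-1}|C_j^+-C_j^-|\le\sum_{j=0}^{N-1}(C_j^++C_j^-)=3K$, so this step takes $O(K)$ time.

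The total cost is $O(N)+O(K)+O(N)+O(K)=O(K)$.
\end{proof}
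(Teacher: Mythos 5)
Your proof is correct and follows the same overall decomposition as the paper: initialise and fill the counters $C_j^{\pm}$, compute $E_j$ and $K'$ in a pass over $j$, then append the $|C_j^+-C_j^-|$ new triples of indices and parameters per variable. The one genuine difference is in how the counters are computed, and it matters: you make a single sweep over the $3K$ pairs $(k,l)$ and use $j(k,l)$ directly as an array offset, which costs $O(K)$. The paper's own proof instead describes checking, for each target index $i$ and each $(k,l)$, whether $j(k,l)=i$, and explicitly concedes that ``this process takes $O(K^2)$'' --- which contradicts the linear bound asserted in the statement of Lemma~\ref{Step3} itself (harmlessly for Theorem~\ref{Thm4}, which only claims quadratic time overall, but still an internal inconsistency). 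So your version is not merely equivalent bookkeeping; it is the implementation that actually delivers the $O(K)$ bound the lemma claims. Your bound $\sum_j|C_j^+-C_j^-|\le\sum_j(C_j^++C_j^-)=3K$ for the appending step is also a slightly cleaner justification than the paper's appeal to $K'\le 4K$, and the remark that the division by $3$ in $K'=\frac{2}{3}\sum E_j$ is exact by Remark~\ref{UpDown}(4) is a nice touch the paper omits.
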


\begin{proof}
In order to calculate $C_j^+$ and $C_j^-$, we begin by setting $C_i^+=C_i^-=0$ for every $j=0,...,N_1$ and, for every $k=0,...,K-1$ and $l=1,2,3$, add 1 to $C_i^+$ if $j(k,l)=i$ and $s(k,l)=1$, and add 1 to $C_i^-$ if $j(k,l)=i$ and $s(k,l)=0$. This process takes $O(K^2)$, since $N \leq 3K$ (the original formula has only $3K$ literals). Calculating $E_j = 2\max\{C_j^+,C_j^-\} - \min\{C_j^+,C_j^-\}$ and $K' = \frac{2}{3} \sum_{j=0}^{N-1} E_j$ takes $O(K)$ time.

Next, for every $j=0,...,N-1$, we need to add index function and parameters for $|C_0^+ - C_0^-|$ new triple values for which all $j(k,1)=j(k,2)=j(k,3)=j$, and the parameters $s'(k,l)$ are determined by the relative size of $C_j^+$ and $C_j^-$. Specifically, we begin with $j=0$, and for every $k=K,K+1,...,K+|C_0^+ - C_0^-|-1$ set $j(k,1)=j(k,2)=j(k,3)=0$. If $C_0^+ > C_0^-$, set $s'(k,1)=1$ and $s'(k,2)=s'(k,3)=0$ for all these $k$'s. Otherwise, we set $s'(k,1)=s'(k,2)=1$ and $s'(k,3)=0$.

Next, we do the same for $j=1$. This time you add $|C_1^+ - C_1^-|$ triple of indexes and parameters - for every $k=K+|C_0^+ - C_0^-|,...,K+|C_0^+ - C_0^-|+|C_1^+ - C_1^-|-1$. We set $j(k,1)=j(k,2)=j(k,3)=1$ and, as before, set $s'(k,1)=1$ and $s'(k,2)=s'(k,3)=0$ if $C_1^+ > C_1^-$, and $s'(k,1)=s'(k,2)=1$ and $s'(k,3)=0$ otherwise. We do the same for every $j=2,...,N-1$. Since we will have $K' \leq 4K$ triple values in total, this takes $O(K)$ time.
\end{proof}

4) We have accumulated enough information about the \gls{DADG} to start constructing it. In this step we ``format" the data structure of the \gls{DADG}. As per Definition~\ref{DADG}, a \gls{DADG} has several ``data fields". It has integers that tell the number of disjoint circles, DB values, branch values and triple values that the \gls{DADG} has. As per the above, the \gls{DADG} we aim to construct has no disjoint circles, DB or branch values, and $K'$ triple values. Set these numbers to $0$, $0$, $0$ and $K'$, respectively. The \gls{DADG} then has two lists that indicate which \gls{EoE} resides on which DB/branch value. Since there are no DB or branch values, these lists should be empty.

Lastly, The \gls{DADG} has a list that indicates which ends of edges reside on which triple value, which arc segment of the triple value, and which ends of edges are \gls{preferred} or non \gls{preferred}. At the end of the algorithm, the $k$th element of the list is supposed to have the form $((ee_1,ee_2),(ee_3,ee_4),(ee_5,ee_6))$, where for each $l=1,2,3$, $ee_{2l}$ is the \gls{preferred} \gls{EoE} of the arc segment $TV_k^l$, and $ee_{2l-1}$ is the non-\gls{preferred} \gls{EoE} of this arc segment.

Each of the 6 $ee_i$'s, in each of the $K'$ triple values, is suppose to be a pair $(r,s)$ where $r$ is an integer and $s$ is binary. For now, the algorithm allocates space for these variables and set them both to $0$. We will set them with the correct values in the next step.

\begin{lem} \label{Step4}
This takes linear $O(K)$ time.
\end{lem}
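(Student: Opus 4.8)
The plan is to simply tally the elementary operations needed to lay out each data field of the \gls{DADG} record of Definition~\ref{DADG}, and observe that the total is $O(K)$. Recall that at this stage we are only \emph{formatting} the record — allocating space and filling in the parts that are already determined, while leaving the edge indices $r$ to be computed in step~5 — so no search or graph traversal is involved.

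First I would dispose of the four non-negative integers $C$, $B$, $D$, $T$. Setting $C=B=D=0$ and $T=K'$ is a bounded number of assignments, hence $O(1)$ time; under the convention of Remark~\ref{Calculate}(2) the $O(\log K')=O(\log K)$ bit-length of $K'$ is absorbed into this. Next, the lists $BV$ and $DV$ recording which \gls{EoE} lies on each DB/branch value are to be \emph{empty}, since the \gls{DADG} being built has no DB or branch values; initializing two empty lists is again $O(1)$.

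The only part that is not constant time is allocating the list $TV$ of triple-value entries. There are $K'$ of them, and by Remark~\ref{UpDown}(5) we already know $K'\le 4K$, so there are $O(K)$ entries. For each entry the algorithm allocates the six slots $ee_1,\dots,ee_6$, each a pair $(r,s)$, and writes $(0,0)$ into each slot — a bounded number of writes per entry, i.e. $O(1)$ per triple value and $O(K')=O(K)$ in total. Summing, the formatting step costs $O(1)+O(1)+O(K)=O(K)$, as claimed.

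The hard part here is essentially nonexistent: the proof is a bookkeeping argument, and the single fact it leans on — that $K'$ is linear, not merely polynomial, in the length of the input formula — was already secured in Remark~\ref{UpDown}(5). The one thing to be careful about in the write-up is to state explicitly that only constant per-entry work is done (so that no hidden $\log$ or quadratic factor sneaks in through the pair allocations), and to flag the use of the integer-size convention for the assignment $T=K'$.
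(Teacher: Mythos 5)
Your proposal is correct and is exactly the bookkeeping the paper intends — the paper's own proof simply declares the step trivial, and your tally (constant work for the integers and the empty $BV$, $DV$ lists, plus $O(1)$ per each of the $K'\le 4K$ triple-value entries) spells out why. No gaps.
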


\begin{proof}
This one is trivial.
%
\end{proof}

5) Lastly, we need to decide which edge begins/ends at which arc segment, and correct the value of the entries in the \gls{DADG} to match this decision.

\begin{rem} \label{WhatToDo}
As per Definitions~\ref{DADG}, \ref{consec} and \ref{H1}, four conditions must be met for the \gls{DADG} to be a well-defined, \gls{Height1} \gls{DADG} with the correct lifting formula: 

a) Each edge has one beginning and one ending. This means that for every $r=0,...,3K'-1$, each of the pairs $(r,0)$ and $(r,1)$ appears only once in the list.

b) If $s'(k,l)=0$, then $TV_k^l$'s \gls{preferred} \gls{EoE} is the ending of some edge, and its non-\gls{preferred} \gls{EoE} is the beginning of some edge. If $s'(k,l)=1$, then it is the other way around.

c) The \gls{DADG} needs to be \gls{Height1}. This means that all edges are either \gls{preferred} (both of their ends are \gls{preferred}), or non-\gls{preferred} (both of their ends are non-\gls{preferred}). 

d) The equivalence classes of the ``same continuation" relation are the double arcs of the \gls{DADG}, and these should match the index functions $j(k,l)$. Recall that at every arc segment one edge ends and one edge begins. These two edges are in the same continuation, by definition. The index function indicates when the edges of different arc segments are on the same continuation.

Specifically, two arc segments $TV_{k_1}^{l_1}$ and $TV_{k_2}^{l_2}$ have the same index function ($j(k_1,l_1)=j(k_2,l_2)$) iff the edges that start/end in one segment must be in the same continuation with the edges that start/end in the other segment.
\end{rem}

\begin{defn} \label{Intend}
For every $j=0,...,N$, we define $P_j=2 \sum_{i=0}^{j-1}E_j$. This means that $P_0=0$, $P_1=2E_0$, $P_w=2E_0+2E_1$, etc.
\end{defn}

Remark~\ref{UpDown}(2) implies that for each $j=0,...N-1$ the $j$th double arc must have $E_j$ arc segments with $s'(k,l)=0$, $E_j$ arc segments with $s'(k,l)=1$, $E_j$ \gls{preferred} edges, and $E_j$ non-\gls{preferred} edges. In particular, it will have $2E_j=P_{j+1}-P_j$ edges in total. In order to account for this, we will ``construct" the $j$th double arc from the edge $P_j,p_j+1,...,P_j+2E_j-1=P_{j+1}-1$. For different $j$'s the corresponding lists of edges are disjoint. This prevents us from accidentally trying to use the same edge while constructing 1 arc.

In order to comply with the demands of Remark~\ref{WhatToDo}, we will define the \gls{DADG} as follows: For every $j$, we will go over the arc segments of the \gls{DADG}, and search for those arc segments $TV_k^l$ for which $j(k,l)=j$ and $s'(k,l)=1$. There are $E_j$ such arc segments. The \gls{preferred} \gls{EoE} of such a segment is supposed to be the beginning of some \gls{preferred} edge of the $j$th arc, and the non-\gls{preferred} \gls{EoE} of such a segment is supposed to be the ending of some non-\gls{preferred} edge of the $j$th arc.

We iterate over those arc segments (the ones with $j(k,l)=j$ and $s'(k,l)=1$), in the order we encounter them. For $a=0,...,E_j-1$, we will set the \gls{preferred} \gls{EoE} of the $a$th arc segment to be $(P_j+a,0)$ - the beginning of the $(P_j+a)$th edge. We will also set the non-\gls{preferred} \gls{EoE} of the $a$th arc segment to be $(P_j+E_j+a,1)$ - the ending of the $(P_j+E_j+a)$th edge.

We will then similarly iterate over the arc segments with $j(k,l)=j$ and $s'(k,l)=0$. For $a=0,...,E_j-1$, we will set the non-\gls{preferred} \gls{EoE} of the $a$th arc segment to be $(P_j+E_j+a,0)$ - the beginning of the $(P_j+E_j+a)$th edge. For $a<E_j-1$, we will set the \gls{preferred} \gls{EoE} to be $(P_j+a+1,1)$ - the ending of the $(P_j+a+1)$th edge. However, for $a=E_j-1$, we will set the \gls{preferred} \gls{EoE} to be $(P_j,1)$ - the ending of the $P_j$th edge.

This method goes over every arc segment exactly once, and assigns values to its two ends of edges. If it assigns the values in compliance with the demands of Remark~\ref{WhatToDo}, then we are done.
 
\begin{lem}
This method of assigning values to the ends of edges complies with the demands of Remark~\ref{WhatToDo}.
\end{lem}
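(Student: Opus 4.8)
The plan is to verify each of the four conditions (a)--(d) of Remark~\ref{WhatToDo} in turn, using the explicit edge-indexing scheme just described. The bookkeeping is entirely combinatorial, so I would set up notation once and then check each condition mechanically.

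First I would record the basic count. By Remark~\ref{UpDown}(2), for each $j$ the $j$th double arc has exactly $E_j$ arc segments with $s'(k,l)=1$ and exactly $E_j$ with $s'(k,l)=0$; the construction allocates to this arc the edges indexed $P_j, P_j+1,\dots,P_{j+1}-1$, which is a block of $2E_j$ consecutive integers, and these blocks are disjoint for distinct $j$ (Definition~\ref{Intend}). Within this block I would further distinguish the ``first half'' $P_j,\dots,P_j+E_j-1$ and the ``second half'' $P_j+E_j,\dots,P_j+2E_j-1$. For condition~(a), I would observe that as $a$ runs over $0,\dots,E_j-1$ the segments with $s'=1$ consume the beginnings $(P_j+a,0)$ of first-half edges and the endings $(P_j+E_j+a,1)$ of second-half edges; the segments with $s'=0$ consume the beginnings $(P_j+E_j+a,0)$ of second-half edges and the endings $(P_j+a+1,1)$ or $(P_j,1)$ of first-half edges. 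A short check shows that the endings used by the $s'=0$ segments are precisely $(P_j+1,1),(P_j+2,1),\dots,(P_j+E_j-1,1),(P_j,1)$ — i.e.\ all first-half endings, each once — so every pair $(r,0)$ and $(r,1)$ with $P_j\le r<P_{j+1}$ appears exactly once, and summing over $j$ gives~(a). Condition~(b) is immediate from the construction: by design the \gls{preferred} \gls{EoE} of an $s'=1$ segment is set to a beginning and its non-\gls{preferred} \gls{EoE} to an ending, and vice versa for $s'=0$ segments, which is exactly what~(b) demands.

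For condition~(c), I would trace which edges get \gls{preferred} ends at both ends. A first-half edge $P_j+a$ ($0\le a\le E_j-1$) has its beginning assigned as the \gls{preferred} \gls{EoE} of some $s'=1$ segment, and its ending assigned (when $a\ge1$ as $(P_j+a,1)$, and when $a=0$ via the wrap-around $(P_j,1)$) as the \gls{preferred} \gls{EoE} of some $s'=0$ segment; so both ends are \gls{preferred} and the edge is \gls{preferred}. A second-half edge $P_j+E_j+a$ has its ending assigned as the non-\gls{preferred} \gls{EoE} of an $s'=1$ segment and its beginning as the non-\gls{preferred} \gls{EoE} of an $s'=0$ segment; so both ends are non-\gls{preferred} and the edge is non-\gls{preferred}. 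Since there are no DB or branch values, no edge is irrelevant, and every edge is \gls{preferred} or non-\gls{preferred}; together with the fact that $K'\ge1$ (the formula is nonempty, so there is at least one triple value), this gives that the \gls{DADG} is \gls{Height1}.

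Condition~(d) is the main obstacle, since it concerns the global ``same continuation'' equivalence classes rather than a local check. The strategy is to show that the block of edges $P_j,\dots,P_{j+1}-1$ forms a single continuation class that constitutes the $j$th double arc. I would argue that the wrap-around choice (setting the \gls{preferred} \gls{EoE} of the last $s'=0$ segment to $(P_j,1)$ rather than $(P_j+E_j,1)$) is exactly what links the first-half and second-half edges into one cycle: following consecutiveness, the segments with $s'=1$ pair first-half-beginning with second-half-ending, stepping from edge $P_j+E_j+a$ back to edge $P_j+a$; the segments with $s'=0$ pair second-half-beginning with first-half-ending, stepping from edge $P_j+a$ (via its ending) forward — and the wrap-around closes the orbit so that all $2E_j$ edges lie in one class. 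I would then verify that no edge from block $j$ is \gls{consecutive} to an edge from block $j'\ne j$: consecutiveness only occurs through a shared arc segment $TV_k^l$, and every arc segment touched in the construction of block $j$ has $j(k,l)=j$, so a segment shared between blocks $j$ and $j'$ would force $j=j'$. Hence the continuation classes are exactly the sets $\{P_j,\dots,P_{j+1}-1\}$, each is the $j$th double arc, and the index function of the resulting \gls{DADG} agrees with the prescribed $j(k,l)$. Combined with~(a)--(c), this shows the constructed \gls{DADG} is a well-defined \gls{Height1} \gls{DADG} whose graph lifting formula is the enlarged, almost-equal formula, completing the verification.
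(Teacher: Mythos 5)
Your verification of conditions (a)--(d) is correct and follows essentially the same combinatorial bookkeeping as the paper's own proof: the same edge-by-edge accounting for (a), the same observation for (b), the same first-half/second-half split for (c), and the same chain $P_j+a \sim P_j+E_j+a \sim P_j+a+1 \sim \cdots$ closed by the wrap-around, plus disjointness of blocks via the index function, for (d). No gaps.
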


\begin{proof}
a) We need to prove that for every $r=0,...,P_N-1$, the $r$th edge has been assigned a beginning and an ending. The pigeon coop principle implies that each edge has been assigned exactly one beginning and one ending. To encompass all $r$s, go over every $j=0,...,N-1$, and look at the edges $r=P_j,P_j+1,...,P_{j+1}-1=P_j+2E_j-1$.

For $r=P_j$, the construction assigned the beginning of the $r$th edge to the $0$th arc segment with $j(k,l)=j$ and $s'(k,l)=1$, and the ending of this edge to the $(E_j-1)$th arc segment with $j(k,l)=j$ and $s'(k,l)=0$.

For $r=P_j+a$ with $a=1,...,E_j-1$, the construction assigned the beginning of the $r$th edge to the $a$th arc segment with $j(k,l)=j$ and $s'(k,l)=1$, and the ending of this edge to the $(a+1)$th arc segment with $j(k,l)=j$ and $s'(k,l)=0$.

For $r=P_j+E_j+a$ with $a=0,...,E_j-1$, the construction assigned the beginning of the $r$th edge to the $a$th arc segment with $j(k,l)=j$ and $s'(k,l)=0$, and the ending of this edge to the $a$th arc segment with $j(k,l)=j$ and $s'(k,l)=1$.

b) Every time the method assigned a \gls{preferred} \gls{EoE} for an arc segment with $s'(k,l)=0$ it was the ending of some edge, and every time it assigned it a non-\gls{preferred} \gls{EoE} it was the ending of some edge. For arc segments with $s'(k,l)=1$, it was the other way around.

c) For every $j=0,...,N-1$ and $r=P_j,...,P_j+E_j-1$, the beginning and ending of the $r$th edge are \gls{preferred} ends of edge, so these are \gls{preferred} edges. For $r=P_j+E_j,...,P_j+2E_j-1$, the beginning and ending of the $r$th edge are non-\gls{preferred} ends of edge, so these are non-\gls{preferred} edges. This accounts for all edges.

d) For every $j=0,...,N-1$ and $a=0,...,E_j-1$, edge $P_j+a$ is \gls{consecutive} to edge $P_j+E_j+a$ via the $a$th arc segment, with $j(k,l)=j$ and $s'(k,l)=1$. For $a \leq E_j-2$, edge $P_j+E_j+a$ is then \gls{consecutive} to edge $P_j+a+1$ via the $a$th arc segment with $j(k,l)=j$ and $s'(k,l)=0$. This implies that all edges from $P_j$ to $P_j+2E_j=1=P_{j+1}-1$ are on the same continuation. 

On the other hand, if $P_{j_1} \leq r_1 \leq P_{{j_1}+1}-1$ and $P_{j_2} \leq r_2 \leq P_{{j_2}+1}-1$ for $j_1 \neq j_2$, then the $r_1$th and $r_2$th edges cannot be \gls{consecutive}, since the ends of the $r_1$th edge reside on arc segments with $j(k,l)=j_1$ and the ends of the $r_2$th edge reside on arc segments with $j(k,l)=j_2$. 

This implies that edges $P_j$ to $P_j+2E_j=1=P_{j+1}-1$ form an equivalence class of the same continuation relation - a double arc. We enumerate this arc as the $j$th double arc. As per the above, this implies that an arc segment will be a part of the $j$th arc iff an edge between $P_j$ to $P_j+2E_j=1=P_{j+1}-1$ begins and/or ends there iff $j(k,l)=j$.
\end{proof}
%

\begin{lem} \label{Step5}
The above method of assigning beginnings and endings to every edge (step 5 of the algorithm) can be realized by a quadratic $O(K^2)$ time algorithm.
\end{lem}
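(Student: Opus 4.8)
The plan is to realize the construction described immediately above by two nested loops — an outer loop over the double-arc index $j$ and, for each $j$, two inner sweeps over the list of all arc segments of the \gls{DADG} — and then to carry out the elementary bookkeeping that shows the total cost is quadratic in $K$.

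First I would note which data is already at hand. After steps 1--4 the algorithm holds the index function $j(k,l)$ and the parameters $s'(k,l)$ for all $k=0,\dots,K'-1$ and $l\in\{1,2,3\}$, the integers $E_j$ and $P_j$ for $j=0,\dots,N$ (the former from step~3, the latter from Definition~\ref{Intend}), and the still-blank triple-value list of the \gls{DADG}, whose $k$th entry is a tuple $((ee_1,ee_2),(ee_3,ee_4),(ee_5,ee_6))$ in which $ee_{2l}$ is the \gls{preferred} \gls{EoE} slot and $ee_{2l-1}$ the non-\gls{preferred} \gls{EoE} slot of $TV_k^l$. Since these lists are stored so as to be accessible in constant time, as elsewhere in the algorithm (cf.\ Lemma~\ref{St'Lem}), reading $j(k,l)$, $s'(k,l)$, $E_j$, $P_j$ and writing into the slot of index $2l$ or $2l-1$ of the $k$th entry each take $O(1)$ time.

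Then I would spell out the loop. For $j=0,1,\dots,N-1$ the algorithm performs two passes over all $3K'$ arc segments $TV_k^l$. In the first pass it keeps a counter $a$ initialised to $0$; every time it meets a segment with $j(k,l)=j$ and $s'(k,l)=1$ it writes $(P_j+a,0)$ into the \gls{preferred} slot and $(P_j+E_j+a,1)$ into the non-\gls{preferred} slot of $TV_k^l$ and then increments $a$. In the second pass it again starts $a$ at $0$; every time it meets a segment with $j(k,l)=j$ and $s'(k,l)=0$ it writes $(P_j+E_j+a,0)$ into the non-\gls{preferred} slot, writes $(P_j+a+1,1)$ into the \gls{preferred} slot if $a<E_j-1$ and $(P_j,1)$ into it if $a=E_j-1$, and increments $a$. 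By Remark~\ref{UpDown}(2) each pass meets exactly $E_j$ matching segments, so the counter runs from $0$ to $E_j-1$ as required, and this is verbatim the assignment whose correctness was established in the lemma just proved; no new correctness argument is needed here.

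Finally the count, which is the whole point of the lemma. Each of the two passes for a fixed $j$ examines $3K'$ arc segments and spends $O(1)$ time at each (a bounded number of comparisons and additions and at most two array writes — $O(1)$ under the convention of Remark~\ref{Calculate}(2), and at worst absorbing a single logarithmic factor otherwise), so it costs $O(K')$; hence the entire construction costs $O(N\cdot K')$. Now $K'\le 4K$ by Remark~\ref{UpDown}(5), and $N\le 3K$ because the prime clauses of the input formula contain only $3K$ literal occurrences and every one of the variables $x_0,\dots,x_{N-1}$ appears among them; therefore $O(N\cdot K')=O(K^2)$. The only point requiring any care is exactly this bookkeeping — verifying that the body of the inner sweep is genuinely $O(1)$ (which rests on the \gls{DADG} data being randomly accessible) and that both $N$ and $K'$ are $O(K)$, so that the double loop is quadratic rather than worse.
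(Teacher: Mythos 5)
Your proposal is correct and follows essentially the same route as the paper's own proof: an outer loop over the arc index $j$ with two $O(K')$ sweeps over the arc segments (one for $s'(k,l)=1$, one for $s'(k,l)=0$), constant-time work per segment, and the bounds $K'\le 4K$ and $N\le 3K$ giving $O(N\cdot K')=O(K^2)$. The only cosmetic difference is that the paper explicitly charges $O(K)$ for computing the $P_j$'s at the start of this step, which your accounting absorbs without issue.
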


\begin{proof}
It takes $O(K)$ time to calculate the $P_j$'s of Definition~\ref{Intend}. After this, do the following for every $j=0,...,N-1$:

%
a) Set an integer $a$ to be $0$. For now, $a$ will count the arc segments with $j(k,l)=j$ and $s'(k,l)=1$. Look at the $k,l$th arc segment. If $j(k,l) \neq j$ or $s'(k,l)=0$, then this is not the kind of arc segment we are looking for, and we ignore it. The first time $j(k,l)=j$ and $s'(k,l)=1$, you have found the $a$th such arc segment - $TV_k^l=((0,0),(0,0))$. Per the method, change the \gls{preferred} \gls{EoE} (the second pair) from $(0,0)$ to $(P_j,0)$, and change the non-\gls{preferred} \gls{EoE} (the first pair) from $(0,0)$ to $(P_j+E_j,1)$. Increase $a$ by 1, and search for the next pair of indexes $k,l$, with $j(k,l)=j$ and $s'(k,l)=1$.

In general, the $a$th arc segment with $j(k,l)=j$ and $s'(k,l)=1$ will also have the form $TV_k^l=((0,0),(0,0))$. Change it to $((P_j+E_j+a,1),(P_j+a,0))$, as per the method, increase $a$ by 1, and search for the next pair of indexes $k,l$, with $j(k,l)=j$ and $s'(k,l)=1$. All of this takes $O(K')=O(K)$ time for a specific $j$, and $O(K' \ast N) \leq O(K^2)$ for all $j=0,...,N-1$ (recall that $N \leq 3K$).

Next, you must repeat this process with $a$ now counting the arc segments with $j(k,l)=j$ and $s'(k,l)=0$ (instead of $s'(k,l)=1$). Set $a=0$ and go over every $k,l$. This time search for indexes $k,l$ for which $j(k,l)=j$ and $s'(k,l)=0$. This time, the method dictates that the $a$th such arc segment should be changed from $TV_k^l=((0,0),(0,0))$ to $((P_j+E_j+a,0),(P_j+a+1,1))$, unless $a=E_j-1$ - in which case 
$TV_k^l$ should be changed into $((P_j+E_j+a,0),(P_j,1))$. Afterwards, increase $a$ by 1, and search for the next pair of indexes $k,l$ with $j(k,l)=j$ and $s'(k,l)=0$.

As before, this also takes $O(K')=O(K)$ time for any $j$, $O(K' \cdot N) \leq O(K^2)$ time for all $j$s together. The lemma follows, as does Theorem~\ref{Thm4}.
\end{proof}

\section{The casing and tube construction} \label{CaseTueSec}

In this section, we will explain how to construct a \gls{genericsurface} that realizes a \gls{Height1} \gls{DADG} with no disjoint circles, DB or branch values in $\R^3$. For now, we disregard the computational aspects of the construction. We will examine them in the next section.

Observe Figure~\ref{fig:Figure 21}. It describes a surface in $\R^2 \times [-1,1]$ using a movie - a depiction of the cross-section of the surface with the plane $\{z=a\}$ for several $a$'s between $-1$ and $1$. A cross-section of this form is known as a still. The surface always intersects the plane transversally, and so their intersection will be an immersion of some number of loops into the plane.

\begin{defn}
The surface described in Figure~\ref{fig:Figure 21} is called the ``triple value casing". As mentioned, we will construct a surface that realizes the \gls{DADG} from many pieces, and that includes several copies of the triple value casing. We can thus refer to ``a triple value casing", or several ``triple value casings", of the construction surface.

A triple value casing has 3 double arcs, called the ``casing arcs", each of which intersects each still at one point. For each still we marked the intersection of the first/second/third double arc with the still with a red dot and the number 1/2/3. The arcs intersect once, at a triple value (colored in green), in the still $z=0$. The changes between the stills at $z=-0.2$, $z=0$ and $z=0.25$ reflect the intersection of the arcs at the triple value. 

The only other changes that occur are a) an isotopy of the loops and b) a bridging between two loops or two strands of the same loop. (b) indicates that the surface has a saddle point, which occurs between the stills $z=-0.8$ and $z=-0.6$, $z=-0.6$ and $z=-0.4$, $z=0.25$ and $z=0.5$.

Each loop in each still has an arrow on it describing a normal direction on the loop. This direction is continuous, as we demonstrated on the two rightmost loops in the still $z=-1$. Because of this continuity, we could (and did) depict the direction with only one arrow on any other loop. These directions merge into a \gls{preferred} direction on the surface and so they describe an orientation on the triple value casing. (We assume $\R^3$ has the right hand orientation). All triple value casings are assumed to have this orientation.
\end{defn}

\begin{figure}
\begin{center}
\includegraphics{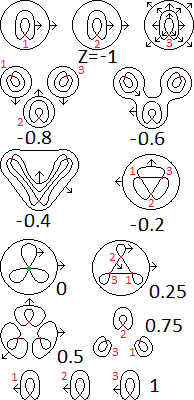}
\caption{A movie description of a triple value casing} 
\label{fig:Figure 21}
\end{center}
\end{figure}

A triple value casing has ``boundaries" that reside on the top and bottom stills ($z= \pm 1$). It is a properly immersed surface in $\R^2 \times [-1,1]$, and in particular a \gls{genericsurface} there, but not in $\R^3$ (where it is not proper). In particular:

\begin{defn}
Each of the 3 double arcs of a triple value casing has two ends - one in the top still ($z=1$), and one in the bottom still ($z=-1$). The one in the top still is surrounded by the shape in Figure~\ref{fig:Figure 22}A, which is called a ``top socket". The one in the bottom still is surrounded by the shape in Figure~\ref{fig:Figure 22}B, which is called a ``bottom socket". A bottom socket looks like a top socket surrounded by a circle. These compose the entire boundary of a triple value casing.
\end{defn}

In order to turn a collection of disjoint triple value casings into a \gls{genericsurface}, we will connect the different bottom and top sockets with tubes.

\begin{defn} \label{Tube}
Figure~\ref{fig:Figure 22}C depicts a ``top tube". It is a bundle over an interval, whose fibres look like top tube sockets. A ``bottom tube" is defined similarly - it looks like a top tube surrounded by a cylinder. The boundary of a top/bottom tube is the union of the two ``end fibres" - the fibres at the ends of the interval. Notice that, like triple values casings, top and bottom tubes come with an orientation.

Each top or bottom tube contains a double arc that goes through its center, we colored it in red in Figure~\ref{fig:Figure 22}C. We called this ``the arc of the tube", or simply the ``tube arc".
\end{defn}

\begin{figure}
\begin{center}
\includegraphics{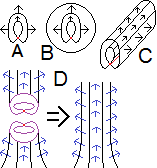}
\caption{Top and bottom sockets, tubes, and tube-gluing} 
\label{fig:Figure 22}
\end{center}
\end{figure}

As in Figure~\ref{fig:Figure 22}D, you may glue two top tubes or two bottom tubes along their end-fibres (one fibre from each tube), and produce a longer tube. You may use the same method to glue an end fibre of a top/bottom tube into a top/bottom socket of some triple value casing. You may glue both end fibres to sockets of the same triple value casing, or each end to a socket of a different casing.

As Figure~\ref{fig:Figure 22}D demonstrates, when you glue the two end fibres / an end fibre and socket together, the points along the gluing area become regular and double values. In particular, if you place an even number of (disjoint) triple value sockets in $\R^3$, arrange their top sockets in pairs, and their bottom sockets too, and connect each pair of top/bottom sockets with a tube (the different tubes do not touch each other, and no part of a tube, other than its end fibres, touches any of the casings), then each point on the resulting surface will be a regular, double or triple value - it will be a closed \gls{genericsurface} in $\R^3$ with no branch values.

\begin{defn}
A surface created by placing $T$ disjoint triple value casings in $\R^3$ (for some even $T$), dividing their $3T$ top sockets into pairs, doing the same for the bottom sockets, and connecting each pair of top/bottom sockets with a top/bottom tube, is called a ``casing and tube construct" or ``casing and tube surface".
\end{defn}

\begin{rem}
The casings and the tubes came with orientations. As Figure~\ref{fig:Figure 22}D demonstrates, 
glue a tube and a casing together, the orientations on the tube and casing ``match" - the combine into a continues orientation on the combined surface. The same happens when you glue two tubes together. It follows that a casing and tube surface inherits a continuous orientation from its different parts.
\end{rem}

\begin{defn} \label{Method}
Given a \gls{Height1} \gls{DADG} $G$ with $T$ triple values and no branch values, DB values or disjoint circles, we will realize it with a \gls{genericsurface} in the following way:

1) Embed $T$ triple value casings into $\R^3$. Index them from $0$ to $T-1$. Each casing contains a unique triple value. Index the triple values such that the triple value in the $k$th casing will be the $k$th triple value $TV_k$, the surface will have no other triple values.

2) Each casing contains 3 casing arcs. In Figure~\ref{fig:Figure 21}, we indexed them from 1 to 3. Name the $l$th casing arc of the $k$th casing $TV_k^l$. As this notion suggests, the casing arcs will serve as the arc segments of the complete surface. 

When you glue a tube to a socket, the casing arc merges with the tube arc. In this way, many casing arcs and tube arcs form one big double arc of the complete surface. In particular, the casing arcs fills the role of the ``arc-segments" of the intersection graph of the full surface - each triple value of the surface is contained in a casing, and is the intersection of its 3 casing arcs. Each casing arc is indeed just a small segment of some double arc in which the double arc crosses the triple value.

Each casing arc is divided into two parts - before the triple value, in stills $z=-1$ to $z=0$, and after the triple value, in stills $z=0$ to $z=1$. These are the two ends of edge that make up the arc segment. As Figure~\ref{fig:Figure 21} depicts, the orientation of the triple value casings always points towards the arc segment in stills $z=0$ to $z=1$, so this will be the \gls{preferred} \gls{EoE} of the arc segment in the complete surface. Notice that the \gls{preferred} \gls{EoE} at each arc segment ends in a top socket and the non-\gls{preferred} \gls{EoE} ends in a bottom socket.

Since the half of the casing arcs will be the end of edges of the surface, you should index them in a way that matches the \gls{DADG}. Each arc segment $TV_k^l$ in the \gls{DADG} is a pair of (indexes of) ends of edges $((r_{2l-1},s_{2l-1}),(r_{2l},s_{2l}))$. As per the above identification, the top half of the casing arc $TV_k^l$ will be the $(r_{2l},s_{2l})$th \gls{EoE} of the surface, and the bottom half of the same casing arc will be the $(r_{2l},s_{2l})$th \gls{EoE} of the surface. Index them as such.

3) Since the \gls{DADG} is \gls{Height1} and have no DB or branch values, each edge is either \gls{preferred} or non-\gls{preferred}. It has $3T$ edges. For every $r=0,...,3T-1$, if the $r$th edge is \gls{preferred}, then both the arc segments $(r,0)$ and $(r,1)$ are \gls{preferred}. Using the identification above, the top halves of two of the casing arcs have been indexed as $(r,0)$ and $(r,1)$. They each end in a top socket. Similarly, if the $r$th edge is non-\gls{preferred}, then the bottom halves of two of the casing arcs have been indexed as $(r,0)$ and $(r,1)$. Connect them with a bottom tube and, as before, call it tube $r$.

This finishes the casing and tube surface. For every $r=0,...,3T-1$, it has a unique tube called $r$ that connects the casing arcs $(r,0)$ and $(r,1)$. There is an edge of the complete surface whose ends of edges are the casing arcs $(r,0)$ and $(r,1)$, and whose bulk is the tube arc of the $r$th tube. This will be the $r$th edge of the complete surface. Give it a direction that points towards the \gls{EoE} $(r,1)$, which will thus be the ending of the edge while $(r,0)$ will be the beginning of the edge. This accounts for all $3T$ edges of the casing and tube surface, which means that we indexed all the edges of the surface as $r=0,...,3T-1$.
\end{defn}

\begin{lem} \label{MethodWorks}
1) This casing and tube surface, with the aforementioned \gls{preferred} direction on the edges, is a \gls{Thrice} \gls{genericsurface} in $\R^3$.

2) Using the aforementioned indexing of the edges, triple values and arc segments, the \gls{DADG} of the surface is equal to the given \gls{DADG}.
\end{lem}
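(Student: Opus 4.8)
The plan is to verify the two assertions separately: part (1) by a local analysis of the surface, and part (2) by matching the construction of Definition~\ref{Method} against the definitions of the DADG data type.

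For part (1), I would first establish genericity by going piece by piece. The casing-and-tube surface is assembled from three kinds of pieces: the triple value casings, the interiors of the top and bottom tubes, and the gluing regions where an end fibre of a tube is identified with a socket. Reading the movie in Figure~\ref{fig:Figure 21}, every still is a transverse union of immersed loops, the only events are ambient isotopies, the three saddle points, and the single triple intersection in the still $z=0$; hence every point of a casing is a regular, double, or triple value, and the unique triple value of a casing sits in the still $z=0$. The interior of a tube is a bundle over an interval whose fibres are (thickened) sockets, so its points are regular values except along the tube arc, where they are double values. Figure~\ref{fig:Figure 22}D shows that points in a gluing region become regular and double values. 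Since the tubes are embedded, pairwise disjoint, and meet the casings only along their end fibres, no further triple values appear and there are no branch, RB, or DB values; thus $S$ is a closed generic surface in $\R^3$. Orientability then follows from the fact, visible in Figure~\ref{fig:Figure 22}D, that the orientations on the casings and tubes agree across every gluing region and so assemble into one continuous orientation. For the thrice-oriented structure I would take the right-hand orientation on $\R^3$, the assembled orientation on $S$, and, on each double arc of $S$, the orientation obtained by concatenating the casing-arc directions (which run from the bottom still to the top still) with the tube-arc directions; a short check at each gluing shows these are compatible, so every double arc inherits a well-defined direction of progress.

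For part (2), I would unwind the identifications of Definition~\ref{Method} and compare them with Definitions~\ref{SurfDADG} and~\ref{DADG}. The degree-$6$ vertices of the intersection graph of $S$ are exactly the $T$ triple values, one per casing, there are no DB or branch values, and, since all sockets are used and the given DADG has no disjoint circles, $C=0$; this matches the recorded integers $(C,B,D,T)=(0,0,0,T)$. The long edges of $X(S)$ are precisely the tube arcs, one per tube, and by step~3 the $r$th tube joins the casing arcs indexed $(r,0)$ and $(r,1)$, so the $r$th edge begins at the end of edge $(r,0)$ and ends at $(r,1)$, as required. For the triple value $TV_k$, step~2 declares the arc segment $TV_k^l$ to be the $l$th casing arc of the $k$th casing, whose top half is indexed as the second component of the pair $TV_k^l$ in $G$ and whose bottom half as the first component; so the two ends of edges composing each arc segment and their indices agree with $G$. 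Finally, since the orientation of every casing points toward the top half of each casing arc (the stills $z=0$ to $z=1$), the preferred end of edge of $TV_k^l$ in $S$ is exactly that second component, which is the one $G$ designates as preferred. Assembling these observations, the DADG of $S$ under the chosen indexing has the same integers, the same (empty) lists $BV$ and $DV$, and the same list $TV$ as $G$, hence equals $G$.

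The main obstacle I expect is the local analysis in part (1): one must genuinely read off from the movie of Figure~\ref{fig:Figure 21} that the still changes near $z=-0.2,0,0.25$ encode a single transverse triple point with three double arcs through it (and that the three intermediate saddles create only double values), and one must confirm that the orientations and arc directions really do match across the gluing regions rather than merely appearing to. Everything in part (2) is then bookkeeping, provided the identification ``top half $=$ preferred end of edge $=$ end lying in a top socket'' is set up carefully and the Height-$1$ hypothesis is invoked to guarantee that every edge is either preferred or non-preferred, so that each tube is legitimately a top tube or a bottom tube and step~3 of the construction is well defined.
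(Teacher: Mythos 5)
Your proposal is correct and follows essentially the same route as the paper: the paper's own proof delegates genericity and orientability to the preceding discussion of casings, tubes, and their gluings, and explicitly verifies only (as you do) that the edge directions merge into a continuous direction of progress on each double arc because exactly one edge ends and one begins at every arc segment, while part (2) is the same bookkeeping comparison of the data fields of the two \gls{DADG}s. One small inaccuracy worth noting: the direction of progress through a casing arc does not always run ``from the bottom still to the top still''---each edge is directed from its $(r,0)$ \gls{EoE} to its $(r,1)$ \gls{EoE}, so which half of a casing arc the double arc enters and which it exits depends on the binary parameters $s_{2l-1},s_{2l}$---but this does not damage your argument, since the continuity at each arc segment still follows from the fact that one edge ends and one begins there.
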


\begin{proof}
1) Every casing and tube surface is oriented. It remains to be proven that the directions of progress on all the edges merge into a continuous direction on the double arcs (everything else is upheld by every casing and tube surface). This means only that whenever you move from one edge of the arc to the next, in an arc segment, one edge ends and the other begins. This follows from the fact that the construction implies that one edge ends and one edge begins at each arc segment.

2) Both the original \gls{DADG} and surface \gls{DADG} have $T$ triple values and no DB values, branch values or disjoint circles. These \gls{DADG}s will be equal iff, for every $k=0,...,T-1$ and $l=1,2,3$, the arc segments $TV_k^l$ of both \gls{DADG}s have the same \gls{preferred} \gls{EoE} $(r_{2l},s_{2l})$ and the same non-\gls{preferred} \gls{EoE} $(r_{2l-1},s_{2l-1})$. The construction clearly ensures this.
\end{proof}

\section{Surface blocks} \label{SBsec}

We now know how to realize every \gls{Height1} \gls{DADG} with no DB or branch values using a \gls{Thrice} \gls{genericsurface} in $\R^3$. If you take a big 3-ball $B$ that contains the surface, and treat the surface as a sub-complex of $B$, then you will have realized the \gls{DADG} with a closed \gls{genericsurface} in $D^3$. Unfortunately, this construction is abstract rather than concrete. The surface and 3-manifold are not triangulated, and so we cannot examine the complexity of the construction as it was given.

In this section, we modify the construction to produce a triangulated surface in a triangulated $D^3$. Instead of taking an already triangulated copy of $D^3$ and try to embed the surface in it in such a way that the surface will be a sub-complex of this triangulation, we will build the surface in $\R^3$ from triangulated pieces. We refer to these pieces as ``surface blocks".

%
%
\begin{defn}
1) A concrete simplicial complex in $\R^3$ is a set of (linearly embedded) simplices in $\R^3$ that adheres to the definition of a simplicial complex - if a simplex is in this set, all of its faces must also be in this set. A concrete simplicial complex is ``rational", if each coordinate of every vertex of every simplex is a rational number. In a computer program, one can represent a point in $\Q$ using a trio of numbers (its coordinates).

The data type of a (rational) concrete simplicial complex $M$ thus contains: a) A list $M_0$ of the vertices of the complex - the $r$th entry in the list is the coordinates of $r$th vertex. We also included an integer $\#V$, whose value is the length of $M_0$.

b) 3 lists - $M_1$, $M_2$ and $M_3$ respectively listing the 1, 2 and 3 simplices. The $r$th entry in the list $M_n$, representing the $r$th $n$-simplex of the complex, is an $n+1$-tuple whose elements are \textbf{indexes} of the vertices of this $n$-simplex. For instance, if the complex contains the 1-simplex $((0,0,0),(1,2,0))$, $(0,0,0)$ is the $6$th entry in the list of vertices ($M_0$), and $(1,2,0)$ is the $4$th entry therein. Therefore, the said 1-simplex will be represented by the pair $(6,4)$ (the list $M_1$ will contain the entry $(6,4)$.)

2) One may choose to ignore the way a concrete simplicial complex is embedded in $\R^3$ and only regard the underlying abstract simplicial complex. Algorithmically, this involves only ``ignoring" the irrelevant data field of the concrete complex, the list $M_0$, and it takes $O(1)$ time. 

3) A triangulated rectanguloid in $\R^3$ is a concrete simplicial complex whose total space is a rectanguloid. We use the notation ``triangulated \underline{   }" for other geometric shapes as well. For instance, a triangulated cube is a cube made of simplices and the faces of a triangulated rectanguloid are triangulated rectangles.

4) A ``surface block" is a pair $(R,S)$ where $R$ is a triangulated rectanguloid, and $S$ is a \gls{genericsurface} in $R$. The size of the surface block is the number of 3-simplices in $R$. In the interest of convenience, we make the surface $S$ disjoint from the vertices and edges of the rectanguloid (not the 1-skeleton of the complex, but just the actual corners of the rectanguloid). A surface block is ``closed" if the \gls{genericsurface} $S$ is closed. This means it has no boundary - no RB or DB values. Equivalently, it is disjoint from the boundary of $R$. When we refer to a ``face of a surface block", it means ``one of the faces of the rectangoloid".
\end{defn}

If you place two triangulated rectanguloids $A$ and $B$ next to each other, such that their intersection $A\cap B$ is both a face of $A$ and a face of $B$, and if this face inherits the same triangulation from $A$ and $B$, then the union $A \cup B$ is a bigger triangulated rectanguloid. We refer to this as ``gluing" two rectanguloids together. One may glue surface blocks in the same way. In order to glue two surface blocks $(A,S_A)$ and $(B,S_B)$ along a joint face $A \cap B$, the intersection of this face with the surfaces in $A$ and $B$ must coincide - $A \cap B \cap S_A=A \cap B \cap S_B$. That way $S_A \cup S_B$ will be a surface of $A \cup B$. A value $p$ in $A \cap B \cap S_A$ can be either an RB or DB value of the surface $S_A$. It will be the same type of value in $S_B$. In the combined surface, $S_A \cup S_B$ $p$ will respectively be a regular or double value as Figure~\ref{fig:Figure 23} demonstrates.

\begin{figure}
\begin{center}
\includegraphics{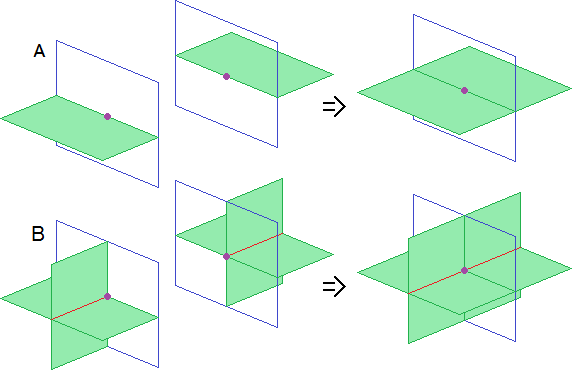}
\caption{Gluing two surface blocks turn an RB/DB value on the glued faces into a regular/double value} 
\label{fig:Figure 23}
\end{center}
\end{figure}

We will construct the casing and tube surface algorithmically, by gluing together many small ``atomic" surface blocks. There are only 8 types of atomic blocks, but the complete surface is made of many copies of each type.
%

\begin{defn}
1) Let $(R,S)$ be a surface block.

a) A ``blank face" is a $1 \times 1$ square face that is disjoint from $S$, and has the triangulation of Figure~\ref{fig:Figure 24}A. Its 9 vertices must be the corners of the square, the center of the square and the center of each edge of the square.

b) A ``top socket face" is a $1 \times 1$ square face, the triangulation of which is as depicted in Figures~\ref{fig:Figure 24}B. The only vertices on the boundary of the square are the corners and the center of each edge. The intersection of the bottom socket face with $S$ is depicted in green, and it is clearly a top tube socket. We also require the triangulation to have a mirror symmetry in regard to the middle axis (the dotted line). Other than these requirements, the reader may choose the exact position of each vertex in the triangulation as required. The only caveat is that all bottom socket faces of all the surface blocks in the construction must have the same triangulation, otherwise it will be impossible to glue different surface blocks along these faces.

When drawing a top socket face, we will usually depict only its intersection with $S$ and the 8 triangles that share an edge with the boundary of the face, as in Figure~\ref{fig:Figure 24}C. A bottom socket face may point in different directions. For instance, in Figure~\ref{fig:Figure 24}C it points rightwards, and in Figure~\ref{fig:Figure 24}D it points upwards.

c) A ``bottom socket face" is defined similarly to a top socket face. Its triangulation is depicted in Figure~\ref{fig:Figure 24}E.

2) A face can be a $n \times m$ lattice of different sockets. The face is divided into $n$ columns and $m$ rows of $1 \times 1$ squares. Figure~\ref{fig:Figure 24}F is a $2 \times 3$ lattice with a downwards pointing top socket face on the $(1,1)$ place, a right-pointing bottom socket face on the $(2,3)$ place, and blank faces everywhere else.

\begin{figure}
\begin{center}
\includegraphics{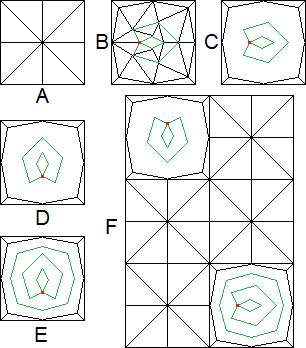}
\caption{A blank face, top and bottom socket faces, and a lattice face} 
\label{fig:Figure 24}
\end{center}
\end{figure}

3) The algorithm will use the following surface blocks:

a) The ``standard empty block" is a triangulated cube, whose total space is $[0,1]^3$ and whose faces are triangulated as blank faces. The reader may choose the exact triangulation of this block. An ``empty block" is any triangulated cube created by transposing the standard empty block - adding a constant vector to all of its vertices.

b) The ``standard straight top tube" is a surface block whose total space is $[0,1]^3$,
and whose boundary is depicted in ``unfolded" Figure~\ref{fig:Figure 25}A. In order to get the actual boundary, you should fold the shape in Figure~\ref{fig:Figure 25}A along each of the orange lines and use a 90-degree fold.

Specifically, two of its antipodal faces are bottom socket faces that point in the same direction and the other four faces are blank. Its interior contains (only) a bottom tube that connects the two faces. Again, the reader may choose the exact triangulation of this block. A ``straight top tube" is any surface block whose faces are parallel to the coordinate planes, and is created by transposing, or rotating and then transposing, the standard straight bottom tube block.

Due to the rotation, the ``socket faces" of the tube may be parallel to any of the coordinate planes. The socket in these faces may point in different directions too. It will always be clear what formation the rotation left the tube in.

c) The ``standard horizontal top tube corner" is a surface block whose total space is $[0,1]^3$
and whose boundary is depicted unfolded in Figure~\ref{fig:Figure 25}B. Specifically, two of its adjacent faces are bottom socket faces, that point in the same direction and the other four faces are blank. Again, the interior contains only a tube connecting the two sockets. Once more, the reader may choose the exact triangulation, and a rotation and/or transposition of this block is called a ``horizontal top tube corner".

d) A ``vertical top tube corner" is defined similarly to a horizontal one, except that the two adjacent bottom socket faces point towards each other, as depicted in unfolded form in Figure~\ref{fig:Figure 25}C. 

e,f,g) A ``straight bottom tube", a ``horizontal bottom tube corner" and a ``vertical bottom tube corner" are defined similarly to their top tube counterparts, but with bottom tubes instead of top tubes. For instance, Figure~\ref{fig:Figure 25}D depicts the boundary of a horizontal bottom tube corner unfolded.

\begin{figure}
\begin{center}
\includegraphics{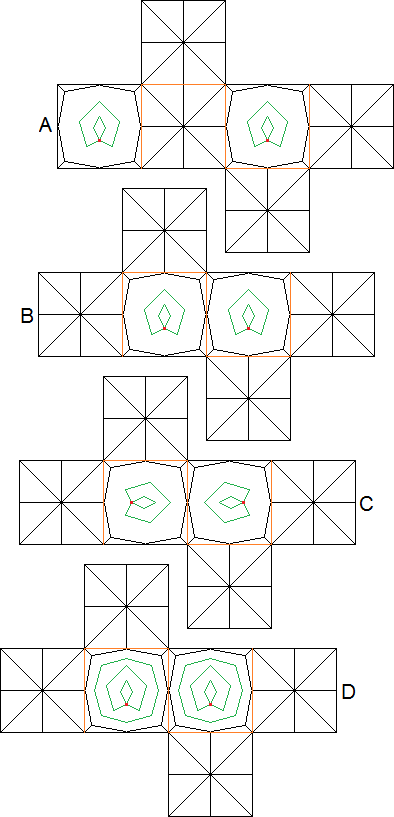}
\caption{The boundaries of the tube blocks} 
\label{fig:Figure 25}
\end{center}
\end{figure}

h) The ``standard casing block" is a surface block whose total space is $[0,2] \times [0,3] \times [0,1]$ and whose boundary is depicted unfolded in Figure~\ref{fig:Figure 26}. Specifically, both $1 \times 2$ faces and both $1 \times 3$ faces are respectively $1 \times 2$ and $1 \times 3$ lattices of blank faces. The top (resp. bottom) $2 \times 3$ face is a lattice with one row of 3 blank faces, and the other row has 3 top (resp. bottom) socket faces pointing towards the line that divided the 2 rows. Additionally, the top sockets of the top face are placed directly above the bottom sockets of the bottom face (and not above the row of blank faces).

The interior of this must contain a triangulated triple value casing, whose top and bottom sockets are those in the socket faces. We mark the sockets with numbers from 1 to 3. The $l$th top and bottom socket must be at the ends of the same arc segment, which we will refer to as the $l$th arc segment of the casing block. The reader may once again choose the exact triangulation, and a ``casing block" is a transposition of the standard one.

\begin{figure}
\begin{center}
\includegraphics{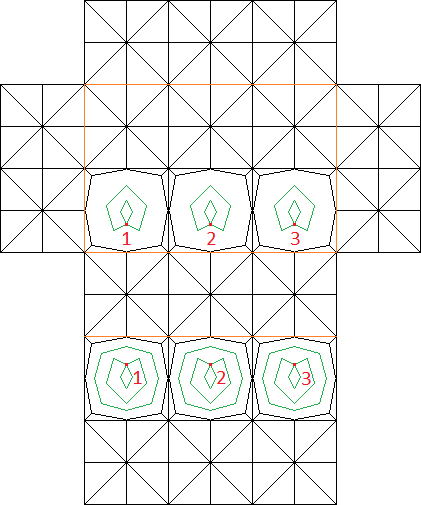}
\caption{The boundary of a casing block} 
\label{fig:Figure 26}
\end{center}
\end{figure}

4) Let $MaxT$ be any integer that is greater or equal to the number of 3-simplices in any one of the 8 types of atomic surface blocks.
\end{defn}

We will construct the casing and tube surface block by gluing together atomic blocks. At each step, we will take a new surface block and glue it to the ``part of the casing and tube surface that we already constructed", which we refer to as the ``current complex". In order to determine the complexity of this algorithm, we must rigorously examine the process of gluing surface blocks. Each step of the algorithm includes two parts:

1. The first part is formally ``writing down" the new atomic block $(M_+,S_+)$. We assume that the algorithm contains a copy $(M_s,S_s)$ of each of the 8 standard atomic blocks. In order to create $(M_+,S_+)$, one must rotate and/or transpose $(M_s,S_s)$. Rotating and/or transposing a surface block means moving every vertex to a new position. For instance, one can rotate a surface block whose total space is $[0,1]^3$ by changing the coordinates of every vertex from $(x,y,z)$ to $(y,1-x,z)$. There is no need to change any other data-field of the surface block - a simplex whose vertices are the $0$th, $3$rd, $5$th and $6$th vertices of the complex will still have these same vertices after the rotation and/or transposition, but the coordinates of vertices may change.

Every atomic block has at most $4MaxT$ vertices, and so a rotation / transposition involves using the same coordinate change $4MaxT$ times. For rotations, each of these changes takes $O(1)$ time since there is only a finite number of ways one can rotate a rectanguloid such that its edge end up parallel to the coordinate axes. Rotation thus takes $O(1)$ time. A transposition involves adding a constant vector to each of the vertices. We will always use an integer vector of the form $(i,j,h)$ where $i=0,1$, $j=0,...,3T-1$ and $h=-\frac{3}{2}T,...,\frac{3}{2}T$. A transposition will therefore take $O(\log(T))$ amount of time. However, in the spirit of Remark~\ref{Calculate}(2), we will treat it as though it takes $O(1)$ time.

2. The second part is merging the new block $(M_+,S_+)$ to the current complex $(M,S)$. $M$ will be a concrete simplicial complex, but not necessarily a rectanguloid. For instance, its total space may be the union of the cubes $[0,1]^3$, $[1,2] \times [0,1]^2$ and $[0,1] \times [1,2] \times [0,1]$. There are two difficulties in merging the new block $(M_+,S_+)$ to the current complex $(M,S)$:

i) Some of simplices of the new block will already be inside the current complex. For instance, if we add a block whose total space is $[1,2]^2 \times [0,1]$ to the above example, then the simplices of the faces $[1,2] \times \{1\} \times [0,1]$ and $\{1\} \times [1,2] \times [0,1]$ appear in both $(M_+,S_+)$ and $(M,S)$. In order to merge these two we must identify these simplices, and add all of the other simplices of $M_+$ into $M$.

ii) Every 1, 2 or 3 simplex in the new block is a list of the indices of the vertices of that simplex. When we merge the complexes, the indices of these vertices change. For instance, assume that $(1,1,1)$ was the $30$'th vertex in the current complex and the $6$th vertex in the new block, that $(1,\frac{3}{2},1)$ was the $12$th vertex of new block, and when we add the vertices of the new block to the current complex, $(1,\frac{3}{2},1)$ becomes the $45$th entry in this list. The edge between these two vertices was represented by the pair $(6,12)$ in the new block. When we ``add it" to the current complex, it will have to be represented by the pair $(30,45)$. We need to add the pair $(30,45)$, rather than $(6,12)$, to $M$. We will need to do something similar with every 1,2 or 3 simplex $M_+$ that is not in $M$, and repeat the process for $S_+$ and $S$.

\begin{lem} \label{CompGlue}
Merging the $k$th atomic block into the current complex can take $O(k)$ time.
\end{lem}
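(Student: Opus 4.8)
The plan is to exhibit an explicit merging procedure and bound each of its parts, the only nontrivial input being that linear searches in the current complex are $O(k)$. First I would pin down the sizes. Each of the $8$ atomic blocks has at most $MaxT$ 3-simplices, hence (Remark~\ref{Calculate}(1)) at most $4MaxT$ triangles, $6MaxT$ edges and $4MaxT$ vertices, i.e. $O(1)$ simplices of each dimension; and the current complex $(M,S)$ is obtained from an initial atomic block by $k-1$ previous merges, each adding at most $6MaxT=O(1)$ simplices of each dimension, so $M$ and $S$ have $O(k)$ simplices and $O(k)$ vertices before step $k$. Thus at step $k$ we match an $O(1)$-sized block $(M_+,S_+)$ (already rotated and transposed, which Definition~\ref{Method}'s preamble shows costs $O(1)$) against an $O(k)$-sized complex.

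The heart of the procedure is a translation table $\theta$ sending each vertex index $r$ of $M_+$ to its index in the merged complex. For each of the $O(1)$ vertices $v$ of $M_+$, scan the list $M_0$ of vertices of the current complex (length $O(k)$), testing coordinate equality, which is an $O(1)$ comparison since all coordinates are rationals with a bounded number of digits (Remark~\ref{Calculate}(2)). If $v$ already occurs at index $i$ set $\theta(r)=i$; otherwise append $v$ to $M_0$, increment $\#V$, and let $\theta(r)$ be the new index. This is $O(k)$ per vertex of $M_+$, hence $O(k)$ in total. Then, for each of the $O(1)$ simplices $\sigma=(r_0,\dots,r_d)$ of $M_+$ with $d=1,2,3$, form the relabelled, sorted tuple $(\theta(r_0),\dots,\theta(r_d))$ in $O(1)$ time; search the list $M_d$ (length $O(k)$) for this tuple and append it iff it is absent (if some $\theta(r_i)$ is a freshly created index the search is sure to fail, so the simplex is appended — this is the "add all the other simplices of $M_+$" case). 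Each test-and-append is $O(k)$ and there are $O(1)$ of them, so this phase is $O(k)$. Finally run the identical relabel/search/append pass over the simplices of $S_+$ against the lists of $S$, using the same table $\theta$ — which is legitimate because $S$ is stored as a subcomplex of $M$, i.e. by vertex indices of $M$ — again at cost $O(k)$. Summing the three phases gives a merging step of $O(k)$.

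For correctness one must check that the naive coordinate comparison makes exactly the identifications the gluing demands. Here one uses the geometric setup of Definition~\ref{Method} and of the preceding figures: the new block is placed so that $M\cap M_+$ is a union of faces of $M_+$ that, by construction of the standard atomic blocks and the requirement that all socket faces carry one fixed triangulation, inherits the same triangulation from $M$ and from $M_+$, so it is a genuine subcomplex of both. Hence a simplex of $M_+$ lies in $M$ precisely when its relabelled tuple occurs in $M_d$, which is what the search tests; and since the pieces are glued so that $A\cap B\cap S_A=A\cap B\cap S_B$, the same applies to $S_+$ and $S$, with the shared RB/DB values of the two pieces becoming the regular/double values of the union (Figure~\ref{fig:Figure 23}).

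The main obstacle is not the running-time accounting, which is merely "$O(1)$ linear searches in lists of length $O(k)$," but making this correctness argument clean: one should verify that no spurious vertex coincidences occur away from the intended gluing faces (so that $M\cap M_+$ really is the prescribed union of faces), which is exactly where the rationality-with-bounded-denominators of all atomic blocks and the once-and-for-all choice of socket-face triangulations are used; granting that, every simplex of $M_+$ with all vertices in $M$ is tested against $M_d$ and every genuinely new simplex is appended, so the merged $(M,S)$ is the correct union.
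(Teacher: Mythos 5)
Your procedure is the same as the paper's: build a vertex-index translation table by linear search through the $O(k)$ vertex list of the current complex (appending new vertices and updating $\#V$), then relabel each of the $O(1)$ simplices of the new block and test-and-append it against the corresponding $O(k)$-length simplex list of $M$ (and of $S$), giving $O(1)$ searches of cost $O(k)$ each. The added remarks on sorting the relabelled tuples and on why coordinate equality makes exactly the intended identifications are harmless elaborations of what the paper leaves implicit; the argument and the bound are correct.
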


\begin{proof}
At this point, the current complex $M$ is made of $k-1$ atomic blocks, so it contains at most $k \ast MaxT$ 3-simplices. The new atomic block $M_+$ contains at most $MaxT$ 3-simplices. Since $M$ and $M_+$ are pure complexes, the numbers of vertices, edges and 2-simplices they contain are bounded by $4$ or $6$ times the number of 3-simplices. Set $a \leq 4k \ast MaxT$ and $c \leq 6k \ast MaxT$ to respectively be the numbers of vertices and edges in $M$ (before the merge), and $b \leq 4MaxT$ to be the number of vertices in $M_+$. 

For every $i=0,...,b-1$, one of two options will take place. Either the $i$th vertex of $M_+$ is already in $M$, in which case we need to find its index $\sigma(i)$ as a vertex of $M$ as per (ii), or it is not already in $M$, in which case we must add it to $M$. At the start of this step, we define a new variable $r$ and set it to $0$. $r$ will count the ``new" vertices in $M_+$, the ones that are not in $M$. The $r$th new vertex in $M_+$ will need to be assigned a new index in when it is added to $M$ - it will be the $a+r$th vertex of $M$, and we will set $\sigma(i)=a+r$ accordingly.

In order to realize this, do the following for every $i=0,...,b-1$: set $(x,y,z)$ to be the $i$th vertex of $M_+$, and search for $(x,y,z)$ among the $a$ vertices of $M$. This takes $O(a) \leq O(k)$ time. If the $n$th vertex in $M$ is equal to $(x,y,z)$, set $\sigma(i)=n$. If you do not find $(x,y,z)$ in $M$, then $(x,y,z)$ is the $r$th new vertex in $M_+$. You should add $(x,y,z)$ to the list of vertices of $M$, set $\sigma(i)=a+r$ and then increase $r$ by 1. If this $i$th vertex is both a new vertex and is in $S_i$, then it should be added to $S_i$ as well. This means that the $\sigma(i)$th vertex should be added to $S$. Doing so for every $i=0,...,b-1$ takes $O(k \ast b) \leq O(4k \ast MaxT) \leq O(k)$ time, since $b$ is bounded by the constant $4MaxT$.

The variable $\#V$ contains the number of vertices in the complex $M$. At the end of this process, $r$ will be equal to the number of new vertices that were added to $M$, so change the value of $\#V$ to $\#V+r$. This takes $O(1)$ time.

Next, for every edge $(i,j)$ in $M_+$, you should search for $(\sigma(i),\sigma(j))$ among the $c$ 1-simplices of $M$. This takes $O(b) \leq O(k)$ time. If $(i,j)$ is not in $M$ you must add it to $M$. If additionally it is in $S_+$, then you must also add it to $S$. Since $M_+$ has a bounded number of edges, doing this for all edges still takes $O(k)$ time. Doing the same thing for 2 and 3 simplices also takes $O(k)$ time, due to similar reasons.
\end{proof}

\begin{thm}\label{Thm5}
There is a cubic $O(T^3)$ time algorithm that receives a \gls{Height1} \gls{DADG} with $T$ triple values, no disjoint circles, DB or branch values, and produces a surface block version of the casing and tube surface of this \gls{DADG}.
\end{thm}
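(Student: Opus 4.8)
The plan is to realise the abstract casing-and-tube surface of Definition~\ref{Method} concretely, as a union of the eight types of atomic surface blocks introduced above, and then to account for the running time using Lemma~\ref{CompGlue}: the algorithm never manipulates an abstract surface, it literally assembles a triangulated rectanguloid in $\R^3$ block by block, following the recipe of Definition~\ref{Method}.

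First I would lay out $T$ pairwise disjoint casing blocks: place the $k$th casing block at the translate of the standard casing block determined by the position of $k$ in a $\lceil\sqrt{T}\rceil\times\lceil\sqrt{T}\rceil$ grid inside the slab $\{0\le z\le 1\}$, and declare its triple value to be $TV_k$ and its $l$th casing arc to be $TV_k^l$, exactly as in Definition~\ref{Method}(1)--(2); this fixes which half of which casing arc carries which end-of-edge index $(r,s)$. Then, for each $r=0,\dots,3T-1$, I would build the $r$th edge. Since $G$ is height-$1$ and has no DB or branch values, the $r$th edge is either preferred --- in which case, by the indexing of Definition~\ref{Method}(3), the two top sockets labelled $(r,0)$ and $(r,1)$ must be joined by a top tube --- or non-preferred, in which case the two corresponding bottom sockets must be joined by a bottom tube. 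Each such tube I would build from a chain of atomic straight-tube and tube-corner blocks of the appropriate (top or bottom) kind, routed out of the casings' slab and back; I would give the $r$th tube its own ``routing layer'' in the $z$-direction --- top tubes in layers above the casings, bottom tubes below --- packing $\Theta(\sqrt{T})$ tubes into each of $\Theta(\sqrt{T})$ layers, so that distinct tubes are disjoint and no tube meets a casing except along the two sockets it is glued to. Because every top-socket face, every bottom-socket face and every blank face of every atomic block carries one fixed triangulation, the faces that get glued always match, so each gluing yields a well-defined larger surface block, as in Figure~\ref{fig:Figure 23}. Finally I would fill every remaining unit cell of the bounding box with empty blocks, so that the total space $R$ of the finished surface block is an honest triangulated rectanguloid, hence $|R|$ is a $3$-ball.

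Correctness is then cheap: by construction the underlying abstract surface is a casing-and-tube surface assembled exactly according to Definition~\ref{Method} with exactly the indexing prescribed there, and passing to blocks only supplies a triangulation, so Lemma~\ref{MethodWorks} applies verbatim and the output realises $G$. For the running time, writing one atomic block is a rotation and/or transposition of one of the eight stored standard blocks, each of which has $O(1)$ vertices, so (treating arithmetic on the bounded translation vectors as $O(1)$, as in Remark~\ref{Calculate}(2)) this costs $O(1)$ per block; the layout above uses $O(T^{3/2})$ blocks in total (the $T$ casings, $O(\sqrt{T})$ blocks for each of the $O(T)$ tubes since grid casings are $O(\sqrt{T})$ apart, and $O(T^{3/2})$ empty blocks filling $O(\sqrt{T})$ layers of footprint $O(T)$); and by Lemma~\ref{CompGlue} merging the $k$th block costs $O(k)$, so the merging cost is $\sum_{k=1}^{O(T^{3/2})}O(k)=O(T^3)$, which dominates. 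Hence the algorithm runs in $O(T^3)$ time.

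The real obstacle is the tube-routing step. One must exhibit, for an arbitrary height-$1$ DADG with no DB or branch values, a concrete collision-free embedding of the $3T$ tubes that touches the casings only at the prescribed sockets \emph{and} fits inside a bounding box of volume $O(T^{3/2})$, because a wasteful layout --- for instance one routing layer per tube, giving $\Theta(T^2)$ cells and $\Theta(T^2)$ blocks --- would inflate the $\sum O(k)$ estimate to $\Theta(T^4)$; this is in essence a three-dimensional wire-routing problem, and it is exactly the $\sqrt{T}\times\sqrt{T}\times\sqrt{T}$ packing of $\Theta(T)$ nets that has to be verified with care. The accompanying bookkeeping --- tracking, throughout the assembly, which half of which casing arc has been assigned which index $(r,s)$, so that the sockets glued together are precisely those dictated by the edges of $G$ --- is routine but must be carried out explicitly. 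Everything else (compatibility of triangulations along glued faces, $|R|$ being a ball, and the appeal to Lemma~\ref{MethodWorks}) is immediate.
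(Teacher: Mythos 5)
Your overall architecture---write each atomic block by rotating and translating one of the eight stored standard blocks in $O(1)$ time, merge it into the current complex via Lemma~\ref{CompGlue}, pad with empty blocks so the total space is a rectanguloid, and invoke Lemma~\ref{MethodWorks} for correctness---is the same as the paper's. The genuine gap is precisely the step you yourself flag as ``the real obstacle'' and then do not carry out: you never exhibit the collision-free routing of the $3T$ tubes through $O(\sqrt{T})$ layers over a $\sqrt{T}\times\sqrt{T}$ grid of casings. For an arbitrary perfect matching of the sockets this is a genuine three-dimensional wire-routing problem: several tubes sharing a layer must not cross within it, every socket not yet connected must be carried through the intermediate layers by vertical straight-tube blocks that themselves occupy cells of those layers, and top and bottom tubes have different cross-sections and must meet socket faces pointing in prescribed directions. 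Without an explicit scheme the claimed $O(T^{3/2})$ block count---on which your entire running-time analysis rests---is unsupported, and since the output of the algorithm is the triangulation itself, the embedding must be produced concretely, not asserted to exist.

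The paper sidesteps the routing problem entirely with a trivial layout: the $T$ casings sit in a single row of length $3T$, each preferred (resp.\ non-preferred) edge gets its own horizontal level above (resp.\ below) the casings, and at each level every not-yet-connected socket is extended one level further by a straight tube before the level is padded with empty blocks. This is exactly the ``wasteful'' layout you dismiss, and it uses $\Theta(T^2)$ blocks. You are right that feeding $\Theta(T^2)$ blocks into Lemma~\ref{CompGlue} gives $\sum_{k}O(k)=O(T^4)$ rather than the $O(T^3)$ asserted in the paper's final line, so your instinct to compress the layout is well taken, and a correctly executed $O(T^{3/2})$-volume routing would in fact tighten the bound; but as written your proposal trades an arithmetic slip for an unproved geometric packing claim, which is the more serious omission.
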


\begin{rem}
1) As per the explanation in the beginning of the chapter, Theorems~\ref{Thm4} and \ref{Thm5} and Lemma~\ref{MethodWorks} imply Theorem~\ref{Thm3}, and thus imply that the lifting problem is NP-hard.

2) Technically, Theorem~\ref{Thm3} speaks of the algorithm that creates a \gls{genericsurface} in an \textbf{abstract} triangulated 3-manifold, while this algorithm produces a surface block - a \gls{genericsurface} in a \textbf{concrete} triangulated 3-manifold. However this is irrelevant, as one can simply ignore the additional information that makes the complex concrete (the coordinates of each of the vertices).
\end{rem}

\begin{proof}
We begin by embedding a matching casing block for every triple value of the \gls{DADG}. Specifically, we will embed $T$ casing blocks into $\R^3$. The total space of the $k$th block ($k=0,...,T_1$) is $[0,2]  \times [k,k+3] \times [0,1]$. The $k$th casing block will represent the $k$th triple value of the given \gls{DADG}, and its $l$th casing arc will represent the $(k,l)$th arc segment of the given \gls{DADG}. Figure~\ref{fig:Figure 27}A depicts this for a \gls{DADG} with 2 triple values.

Next, we will realize the \gls{preferred} edges of the \gls{DADG} with top tubes. Define a list of $3T$ binary variables $t(1),...,t(3T)$, and set them all to $0$. We'll explain their purpose later on. Additionally, define a variable $b$ that counts the \textbf{\gls{preferred}} edges of the \gls{DADG}. It will have the values 0 to $\frac{3}{2}T-1$. Begin by setting it to 0. Go over the edges of the \gls{DADG} and find the \gls{preferred} ones - for every $r=0,...,3T-1$ go over all $k=0,...,T-1$ and $l=1,2,3$, and check if either of $TV_k^l$'s ends of edges is $(r,0)$ or $(r,1)$.

If you found a non-\gls{preferred} edge that is equal to either $(r,0)$ or $(r,1)$, then the $r$th edge is non-\gls{preferred} and should be ignored - move on to the next $r$. Otherwise, the $r$th edge is also the ``$b$th \gls{preferred} edge". During your search you will find $k_1,k_2$ and $l_1,l_2$ such that $3k_1+l_1 < 3k_2+l_2$, and the \gls{preferred} ends of edges of $TV_{k_1}^{l^1}$ and $TV_{k_2}^{l^2}$ are $(r,0)$ and $(r,1)$. As per the socket and tube construction, we must connect the top sockets at the ends of the $(k_1,l_1)$th and $(k_2,l_2)$th socket arcs. 

We accomplish this as follows: at the first $r$ for which the $r$th edge is \gls{preferred}, when $b=0$, the current complex contains only  the casing blocks. The $(k,l)$th socket arc ends in the $l$th top socket face of the $k$th casing block, which is the face the total space of which is $[0,1] \times [3k+l-1,3k+l] \times \{1\}$. Denote $i_1=3k_1+l_1$ and $i_2=3k_2+l_2$. We must thus connect the sockets at $[0,1] \times [i_1-1,i_1] \times \{1\}$ with that in $[0,1] \times [i_1-1,i_1] \times \{1\}$ using a top tube.

We draw this tube in Figure~\ref{fig:Figure 27}B, where $k_1=0$, $l_1=1$, $k_2=1$, $l_1=2$ and thus $i_1=1$ and $i_2=5$. We start by placing a horizontal top tube corner over the $i_1$th socket, at $[0,1] \times [i_1-1,i_1] \times [1,2]$. The other end of this tube is at the face $\{1\} \times [i_1-1,i_1] \times [1,2]$. We ``turn" the tube to the right by placing a vertical top tube corner at $[1,2] \times [i_1-1,i_1] \times [1,2]$. We then continue forward by placing straight top tubes at $[1,2] \times [i-1,i] \times [1,2]$ for $i_1<i<i_2$. We turn right again using a vertical top tube corner at $[1,2] \times [i_2-1,2_1] \times [1,2]$, and then turn the tube downwards into the $i_2$th socket by placing a horizontal top tube corner at $[0,1] \times [i_2-1,2_1] \times [1,2]$.

Each of the tube pieces we added to the surface is a part of a whole surface block that we added to the current complex. Each of these blocks has two faces which are top tube sockets, but they have already been used in the gluing. You can see this in Figure~\ref{fig:Figure 27}C, which depicts the same tube as in Figure~\ref{fig:Figure 27}B, but emphasizes the ``socket faces" of all the tube blocks. The other faces of each block are blank faces. Figure~\ref{fig:Figure 27}D depicts the current complex after adding the tube blocks. 

\begin{figure}
\begin{center}
\includegraphics{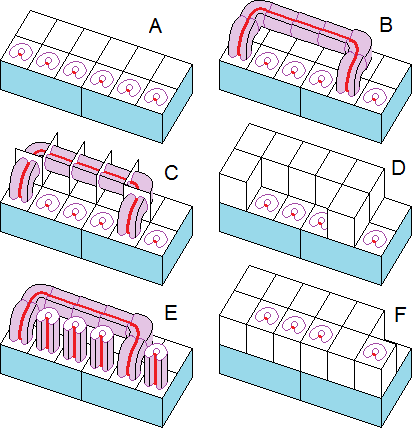}
\caption{Realizing a \gls{DADG} with a casing and tube surface block} 
\label{fig:Figure 27}
\end{center}
\end{figure}

We set $t(i_1)$ and $t(i_2)$ to $1$ to indicate that we already attached the corresponding sockets to each other with a tube. We would like to similarly realize the other \gls{preferred} edges with a tube. In order to avoid intersecting this first tube, the other tubes will go above it, on other ``levels" of the complex. For every $i$ for which the $i$th socket has not yet been connected with a tube - those $i$'s for which $t(i)=0$ - we ``move the $i$th socket" one level upwards by attaching a straight top tube at $[0,1] \times [i-1,i] \times [1,2]$. Figure~\ref{fig:Figure 27}E depicts these tubes, and \ref{fig:Figure 27}F depicts the current complex after we add them. You can think of this as extending the $i$th casing arc upwards, until it reaches the next level - into $[0,1] \times [i-1,i] \times \{2\}$.

We will be able to attach the next tube on this level, at $z \in [2,3]$, to avoid the first tube in $z \in [1,2]$. We will then move another level upwards, and attach the third tube, and so on. Before we move to the $[2,3]$ level, we must ``fill in" every empty spot in the $[1,2]$ level with an empty block, as Figure~\ref{fig:Figure 28}A depicts. We do this since we want the complete ``casing and tube" surface to be a block - to be contained in a full rectanguloid with no holes in it. We must also increase $b$ by $1$, to indicate that we are moving on to the next \gls{preferred} edge.

The first tube is added when $b=0$, in the $z \in [1,2]$ level. The second is added when $b=1$, in the $z \in [2,3]$ level. In general, the $b$th \gls{preferred} tube is placed in the $z \in [1+b,2+b]$ level. For any new $b$, we find the next $r$ for which the $r$th edge is \gls{preferred}, and we find $k_1$, $l_1$, $k_2$ and $l_2$ as before. We define $i_1=3k_1+l_1$ and $i_2=3k_2+l_2$, and we need to connect the sockets at $[0,1] \times [i_1-1,i_1] \times \{b+1\}$ and $[0,1] \times [i_2-1,i_2] \times \{b+1\}$. As before, we do this by placing horizontal top tube corners at $[0,1] \times [i_1-1,i_1] \times [1+b,2+b]$ and $[0,1] \times [i_2-1,i_2] \times [1+b,2+b]$, vertical top tube corners at $[1,2] \times [i_1-1,i_1] \times [1+b,2+b]$ and $[1,2] \times [i_2-1,i_2] \times [1+b,2+b]$, and straight top tubes at $[1,2] \times [i-1,i] \times [1+b,2+b]$ for $i_1<i<i_2$.

\begin{figure}
\begin{center}
\includegraphics{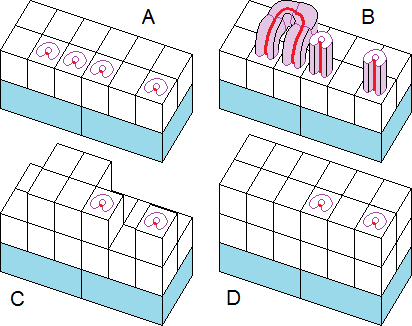}
\caption{Realizing the a \gls{DADG} with a casing and tube surface block 2} 
\label{fig:Figure 28}
\end{center}
\end{figure}

Once more, we set $t(i_1)$ and $t(i_2)$ to $1$, and we lift every socket that has not yet been connected to another socket into the next level by attaching straight top tubes at $[0,1] \times [i-1,i] \times [1,2]$ for every $i$ for which $t(i)=0$. Figures~\ref{fig:Figure 28}B,C depict this for $b=1$. The tube on the previous level connects the $i_1=1$ and $i_2=5$ sockets, and on this level it connects the $i_1=2$ and $i_2=3$ sockets, so at this point $t(1)=t(2)=t(3)=t(5)=1$ and $t(4)=t(6)=0$. Next, we fill the holes in the $[b+1,b+2]$ level with empty blocks. As Figure~\ref{fig:Figure 28}D depicts, these ``holes" occur at two locations: first, at $[0,1] \times [i-1,i] \times [b+1,b+2]$ for every $i$ for which $t(i)=0$ and $i \neq i_1,i_2$; secondly, at $[!,2] \times [i-1,i] \times [b+1,b+2]$ for every $i$ for which $i<i_1$ or $i_2<i$. We than increase $b$ by $1$ and continue to the next \gls{preferred} edge and the next level.

The process continues until $r$ has gone over all edges (reaches $3T$), or $b$ has gone over all \gls{preferred} edges (reaches $\frac{3}{2}T$.) By this time, we have attached a top tube per every \gls{preferred} edge, and there are no top sockets renaming. We than perform a similar process with the non-\gls{preferred} edges and bottom tubes. For every $b=0,...,\frac{3}{2}T-1$, we attach a new level \textbf{below} the current complex, in which we add a bottom tube that realizes the $b$th non-\gls{preferred} edge. We also move every bottom socket that has not yet been connected to a tube downwards into the next floor, so we can connect them with tubes later on.

Now, let us examine the runtime of this construction:

In order to find the $b$th \gls{preferred} edge, the algorithm searches, for every $r=0,...,3T-1$, for the arc segments where the $r$th edge begins and ends. This involves going over every $k=0,...,T-1$ and $l=1,2,3$, and so it takes $O(T)$ time. Repeating this for every $r$ takes $O(T^2)$ time. We later repeat this process, in order to find the non-\gls{preferred} edges. Once more, it takes $O(T^2)$ time.

Aside from this, the algorithm does the following for each $b$ when attaching both the top and the bottom levels: it defines $i_1$ and $i_2$, changes the values of $t(i_1)$ and $t(i_2)$ to 1, and increases $b$ by 1. This takes $O(1)$ time for every $b$ and $O(T)$ time for all $b$s. 

Lastly, the algorithm places $T+18T^2$ blocks - the $T$ triple values casing blocks, and $6T$ blocks at each of the $3T$ levels ($\frac{3}{2}T$ levels above the triple values and the same number below them). As per Lemma~\ref{CompGlue}, this will be accomplished in $\sum_{k=1}^{T+18T^2}O(k)=O(T^3)$ time.
\end{proof}

We would like to finish with two remarks: 

1) As we mentioned before, there is more than one way to define a \gls{genericsurface} as a data type. We used \gls{genericsurface}s in already triangulated 3-manifolds. Another way would be to use triangulated \gls{genericsurface}s in $\R^3$ - the surface is a 2-dimensional concrete simplicial complex in $\R^3$, and there is no need for a triangulated 3-manifold that contains the surface.

The lifting problem for this type of surface is still NP-complete. It is possible to construct lifting formulas for these type of surfaces, and prove that their lifting problem is NP, using a similar process to the one we used in section~\ref{CompOfLift}. As for NP-hardness, the proof of Theorem~\ref{Thm3} still works for this kind of surface. Indeed, the algorithm creates a surface block $(M,S)$ that realizes a given \gls{sym&pro}, and in particular $S$ is a triangulated \gls{genericsurface} in $\R^3$ that realizes the formula.

2) Instead of realizing each formula with a surface in $D^3$, the algorithm can be modified to realize them with surfaces in any chosen 3-manifold $X$, and in particular in $S^3$. The general idea is to take the \gls{genericsurface} $(M,S)$ that the algorithm creates ($M$ is a ball), and a triangulation $M'$ of $\overline{X \setminus D^3}$ ($X$ with an open ball removed from it), and glue the boundary of $M$ to that of $M'$, producing a new \gls{genericsurface} $(\overline{M},S)$, where $\overline{M}$, the union of $M$ and $M$, is homeomorphic to $X$.

The problem is that the triangulations of the gluing-boundaries of $M$ and $M'$ need to match. $M'$ will have a constant triangulation, but that of $M$ depends on the number $T$ of triple values in the \gls{DADG}. Specifically, note that the total space of the constructed surface block is $[0,2] \times [0,3T] \times [-\frac{3}{2}T,1+\frac{3}{2}T]$, that it is a closed surface block, which implies that its boundary is disjoint from the surface within the block, and that it consists of atomic blocks, which implies that all the faces are lattices of blank sockets. The top and bottom faces are $2 \times 3T$ lattices, the front and back faces are $3T \times 3T+1$, and the left and right faces are $2 \times 3T+1$ lattices.

There is a simple trick to change the triangulation of the boundary into a constant one. Given a closed surface $\Sigma$ with a tiling $A$, the tiling may contain not only triangles but other polygons as well. Observe the dual tiling $B$. There is a standard tiling for $\Sigma \times I$, the one boundary of which ($\Sigma \times \{0\}$) is tiled as $A$, and the other boundary is tiled as $B$. For every vertex/edge/face $\sigma$ of $A$ has a matching face/edge/vertex $\sigma'$ in $B$. Each 3-cell in the tiling of $\Sigma \times I$ is the convex hall of $\sigma$ in $\Sigma \times \{0\}$ and $\sigma'$ in $\Sigma \times \{1\}$.

Now, if you subdivide $A$ into a triangulation $A'$ (a tiling made of triangles), you can appropriately subdivide the tiling of $\Sigma \times I$. For every $n=0,1,2$, every $n$-cell $\sigma$ of $A$ and every $n$-cell $\tau$ of $A'$ that is contained in $\sigma$, this refined tiling of $\Sigma \times I$ has one matching 3-cell - the convex hall of $\tau$ in $\Sigma \times \{0\}$ and $\sigma'$ in $\Sigma \times \{1\}$. This is indeed a refinement of the previous tiling - the union of the 3-cells that correspond to all $\tau$'s in a given $\sigma$ is equal to the 3-cell that corresponded to $\sigma$ in the previous tiling. Additionally, if $B$ is a triangulation (and its faces are triangles), then the refined tiling of $\Sigma \times I$ is also a triangulation (all its 3-cells are simplices).

Think of the triangulation of the $\partial M$ as a refinement of the usual cube tiling, where each square has been divided into a lattice of blank faces. The opposite tiling, the octahedron, is indeed a triangulation (all the faces of the octahedron are triangles). Let $M''$ be the triangulation of $S^2 \times I$, where one side is the boundary of $M$ and the other is the octahedron. It has one 3-cell for every triangle in any face of the rectanguloid $M$, one 3-cell for every 1-simplex in any edge of the rectanguloid, and one edge per vertex of the rectanguloid. A blank face has $8$ triangles, the faces of $M'$ contain $2(2 \ast 3T)+2(2 \ast (3T+1))+2(3T \ast (3T+1))=18T^2+30T+4$ triangles. Each edge of a blank face is made of two 1-simplices. The rectanguloid has 4 edges made of $2 \ast 2=4$ 1-simplices, 4 edges made of $2 \ast 3T=6T$ 1-simplices and 4 edges made of $2 \ast (3T+1)=6T+2$ 1-simplices - $48T+24$ edges in total. The rectanguloid also has $8$ vertices, so $M'''$ has $18T^2+30T+4+48T+24+8=18T^2+78T+36$ 3-simplices.

In particular, gluing the the appropriate boundary of $M''$ to the boundary of $M$ will involve adding $O(T^2)$ triangles to it. The gluing is a similar, but simpler process than the one examined in Lemma~\ref{CompGlue}, and will, in particular, take no longer than $O(T^3)$ time. This results in a \gls{genericsurface} contained in the gluing of $M$ and $M''$, which is still homeomorphic to a 3-ball, which realizes the given \gls{DADG}/3-sat formula and whose boundary always has a octahedron triangulation. Taking a triangulation $M'$ of $\overline{X \setminus D^3}$ whose boundary is a octahedron, and gluing it to the ``already glued" $M$ and $M''$, will produce the necessary \gls{genericsurface} in $X$, and will take $O(1)$ time.

\chapter{Realizable DADGs} \label{BHrep}

In this chapter, we will explain which \gls{DADG}s are realizable in which 3-manifolds. This answers an open question left by Li in \cite{Li1}. As we mentioned in section~\ref{DADGsec} (Definition~\ref{Pref}), Li defined two enriched graph structures that describe the intersection graph of \gls{genericsurface}s, ``Daisy Graphs" (DGs) and ``Arrowed Daisy Graphs" (ADGs). The intersection graph of every \gls{genericsurface} has a \gls{DG} structure, which indicates which pairs of ends of edges are \gls{consecutive}. Only the intersection graphs of oriented \gls{genericsurface}s in oriented 3-manifolds have an \gls{ADG} structure, which also indicates which one of the two ends of edges in each pair is the \gls{preferred} one.

Li's work only considered immersions in $S^3$. In \cite{Li1}, Li asked ``which DGs are realizable by immersion into $S^3$?" and ``which DGs are realizable in $S^3$ by an orientable surface?". He created ADGs as a tool to help answer these questions. This led to the question ``which ADGs are realizable in $S^3$ (by an oriented surface)?", which Li left open.

The \gls{DADG}s we defined in \ref{SurfDADG} and \ref{DADG} are a generalization of the ADGs defined by Li. We will answer the question: ``given a 3-manifold $M$, what \gls{DADG}s are realizable in $M$?". The answer depends on the first homology group $H_1(M;\Z)$. In particular, it depends on whether or not $H_1(M;\Z)$ has an element of infinite order. If it does not, then a \gls{DADG} is realizable in $M$ iff it has a grading (Definition~\ref{Grading}).

In the first section of this chapter, we will review the concept of \gls{gradable} \gls{DADG}s. We will explain what makes a \gls{DADG} \gls{gradable} or non-gradable and show how to check if a \gls{DADG} is \gls{gradable} in linear time. In the second section, we will use the proof that a \gls{DADG} that is realizable in $M$ must be \gls{gradable}, and in the third section we will prove the other direction.

In the forth section we will study the other case. Specifically, we will prove that if $M$ is compact and $H_1(M;\Z)$ is infinite then any \gls{DADG} is realizable in $M$. Lastly, in the fifth section, we will show how to enhance the \gls{DADG} structure of the intersection graph even further, so that it encodes even more information about the topology of the surface.

We note that the results of this chapter have been submitted for publication as the article \cite{BH1}. While the two contain the same results, some of the notations we used in \cite{BH1} are different than the ones we used in this thesis. For instance, we define the enhanced structure of the intersection graph in a way that is more similar to Li's original definition, and we use the term \gls{ADG} instead of \gls{DADG}.

\section{The complexity of gradability}

Recall Definitions~\ref{H1} and \ref{Grading}. The aforementioned definition of a \gls{gradable} \gls{DADG} is not very applicable - it only states that a \gls{DADG} is \gls{gradable} iff it has a grading. In this section, we study the question ``what \gls{DADG}s are \gls{gradable}". Specifically, we explain what obstructions may prevent a \gls{DADG} from being \gls{gradable}, and show that it is possible to check whether a given \gls{DADG} is \gls{gradable} in linear time.

\begin{defn} \label{GradeObs}
A ``grade obstructing" loop of an \gls{DADG} is a loop (a path whose ends both lay on the same vertex $v$) with one \gls{preferred} end and one non-\gls{preferred} end at $v$. For example, the loop in Figure~\ref{fig:Figure 0x}A is grade obstructing, while the loop in Figure~\ref{fig:Figure 0x}B is not.
\end{defn}

\begin{figure}
\begin{center}
\includegraphics{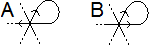}
\caption{A grade obstructing loop and a non grade obstructing loop}
\label{fig:Figure 0x}
\end{center}
\end{figure}

\begin{rem} \label{GradeObsRem}
1) A \gls{gradable} \gls{DADG} cannot have grade obstructing loops, since the grade of such a loop would have to be $a(v)=g(e)=a(v)+1$. 

2) If an \gls{DADG} has no grade obstructing loops, then the sets of \gls{preferred} edges at $v$ and non-\gls{preferred} edges at $v$ are mutually exclusive. This simplifies the following definition.
\end{rem}

\begin{defn} \label{RelGrade}
1) Given a \gls{DADG} $G$ with no grade obstructing loops, and two edges $e$ and $f$ that share a vertex $v$ (which must be a triple value since its degree cannot be 1), define the ``grading difference" $\Delta g(e,v,f)$ to be $1$ if $f$ is \gls{preferred} at $v$ and $e$ is not, $-1$ if it is the other way around, and $0$ if either both $f$ and $g$ are \gls{preferred} or if they are both non-\gls{preferred}.

2) The grading difference of a path $e_0,v_0,e_1,v_1,...,v_{r-1},e_r$ in $G$ is the sum $\sum_{k=1}^r(\Delta g(e_{k-1},v_{k-1},e_k)$.
\end{defn}

\begin{lem} \label{GradeRem}
1) If an \gls{DADG} has a grading $g$, then the grading difference of a path $e_0,v_0,e_1,v_1,...,v_{r-1},e_r$ is equal to $g(e_r)-g(e_0)$.

2) An \gls{DADG} is \gls{gradable} iff it has no grade obstructing loop and, for every pair of edges $e$ and $f$, every path between $e$ and $f$ has the same grading difference.

3) One can check if an \gls{DADG} $G$ is \gls{gradable}, and therefore construct a grading, in linear $O(|E|)$ time where $E$ is the set of $G$'s edges.
\end{lem}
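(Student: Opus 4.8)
The plan is to prove the three parts of Lemma~\ref{GradeRem} in sequence, since each builds on the previous one. For part (1), I would argue by induction on the length $r$ of the path $e_0,v_0,e_1,\dots,v_{r-1},e_r$. The base case $r=0$ is trivial since the empty sum is $0=g(e_0)-g(e_0)$. For the inductive step, observe that $\Delta g(e_{k-1},v_{k-1},e_k) = g(e_k) - g(e_{k-1})$ for every consecutive pair: this is exactly the content of Definition~\ref{Grading}(3), which says that the \gls{preferred} edges at a triple value have grade exactly one more than the non-\gls{preferred} ones, so the three cases in Definition~\ref{RelGrade}(1) ($+1$, $-1$, $0$) match the three possibilities for $g(e_k)-g(e_{k-1})$. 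Summing telescopes to $g(e_r)-g(e_0)$.

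For part (2), the forward direction is immediate from Remark~\ref{GradeObsRem}(1) (a grading forbids grade obstructing loops) together with part (1) (the grading difference of any path between $e$ and $f$ equals the grading-independent quantity $g(f)-g(e)$, hence is path-independent). For the converse, I would construct a grading explicitly. Assume $G$ has no grade obstructing loop and that grading differences are path-independent. Work component by component (components in the sense of the underlying multigraph). In each component, pick a base edge $e_*$, set $g(e_*)=0$, and for any other edge $e$ define $g(e)$ to be the grading difference of any path from $e_*$ to $e$; this is well-defined by the path-independence hypothesis, and one must check that every edge is reached by some path — this holds because any two edges sharing a triple value are joined by a length-one path and a component is edge-connected through its triple values (note a DB or branch value is degree one, so it cannot fail connectivity here). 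It then remains to verify that this $g$ satisfies Definition~\ref{Grading}: for a triple value $TV_k$ with edges $r_1,\dots,r_6$, the length-one paths between any two of them force $g(r_2)=g(r_4)=g(r_6)$ and $g(r_1)=g(r_3)=g(r_5)$ (since $\Delta g = 0$ between two \gls{preferred} or two non-\gls{preferred} edges) and $g(r_2)=g(r_1)+1$ (since $\Delta g(r_1,TV_k,r_2)=1$), which is exactly condition (3). One subtlety: one must also confirm path-independence is consistent with the no-obstructing-loop hypothesis, i.e.\ that a grade obstructing loop would itself be a ``path from $e$ to $e$'' with nonzero grading difference — this is why both hypotheses are needed and why the construction is well-defined.

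For part (3), the algorithmic statement, I would describe a single depth-first (or breadth-first) traversal of the multigraph underlying $G$. First, in $O(|E|)$ time, for each triple value record which of its three arc segments each incident edge-end belongs to and whether that end is \gls{preferred} (this data is already present in the $TV$ list of the \gls{DADG}, by Definition~\ref{DADG}(1)(iv), so the total work is proportional to $\sum_k 6 = 6T = O(|E|)$). Then run the traversal: maintain a tentative grade $g(e)$ for each visited edge, initialized at $0$ on the first edge of each component; when moving from a visited edge $e$ across a shared triple value $v$ to an unvisited edge $f$, assign $g(f) = g(e) + \Delta g(e,v,f)$; when the traversal reaches an already-visited edge $f$ across $v$ from $e$, check that $g(e)+\Delta g(e,v,f) = g(f)$ and also (implicitly, via the $\Delta g$ lookup) that no grade obstructing loop was discovered (a self-loop at $v$ with one \gls{preferred} and one non-\gls{preferred} end is detected when the edge is processed). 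If every consistency check passes, $G$ is \gls{gradable} and the assigned $g$ is a grading by part (2); if any fails, part (2) says $G$ is not \gls{gradable}. Each edge and each edge-incidence is examined a bounded number of times, so the total running time is $O(|E|)$.

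The main obstacle I anticipate is the careful bookkeeping in part (2)'s converse, specifically making the well-definedness argument airtight: one must phrase ``path from $e_*$ to $e$'' so that the existence of such a path (edge-connectivity of a component through triple values) and the path-independence of $\Delta g$ together genuinely pin down a unique value, and one must make sure the interaction with degree-one vertices (DB and branch values) is handled — an edge ending at such a vertex simply contributes no constraint and is always reachable through its other (triple-value) end, unless the whole component is a single edge with both ends on degree-one vertices, a degenerate case where any grade works. Everything else is routine: part (1) is a telescoping induction and part (3) is a standard connected-components traversal with a constant-time local consistency check, so the write-up of those should be short.
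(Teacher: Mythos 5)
Your proposal is correct and follows essentially the same route as the paper: part (1) by telescoping the identity $\Delta g(e,v,f)=g(f)-g(e)$, part (2) forward from Remark~\ref{GradeObsRem} and part (1) and backward by fixing a base edge per component and defining grades via path grading differences, and part (3) by a single traversal with constant-time local consistency checks. The only point the paper spells out more fully is why a failed consistency check certifies non-gradability (it exhibits two explicit paths with different grading differences), but your appeal to part (2) captures the same idea.
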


\begin{proof}
1) For a short part $e,v,f$, this follows directly from Definitions~\ref{Grading} and \ref{RelGrade}(1). Induction implies the general case.

2) ($\Leftarrow$): The first part is Remark~\ref{GradeObsRem}, and the second follows from (1).

($\Rightarrow$): For every connected component $G'$ of $G$ (that is not a double circle), do the following: choose one edge $e$ in $G'$ and give it the grade $0$. Next, for every other edge $f$ in $G'$, choose a path $e=e_0,v_0,e_1,...,e_r=f$ and set the grade $g(f)$ of $f$ to be the relative grade of this path. By assumption, this is independent of the path. If $f$ shares a vertex $v$ with another edge $h$, then $e=e_0,v_0,e_1,...,e_r=f,v,h$ is a path from $e$ to $h$, and so $g(h)=\sum_{k=1}^r(\Delta g(e_{k-1},v_{k-1},e_k)+\Delta g(f,v,h)=g(f)+\Delta g(f,v,h)$. This holds for every adjacent pair of edges. In particular, if $v$ is a vertex and $f$ is non-\gls{preferred} at $v$, then, for the number $a(v)=g(f)$, every non-\gls{preferred} edge $h$ at $v$ upholds $g(h)=g(f)+\Delta g(f,v,h)=a(v)$ and every \gls{preferred} edge $h$ at $v$ upholds $g(h)=g(f)+\Delta g(f,v,h)=a(v)+1$, and so $g$ is a grading.

3) It takes $O(|E|)$ time to go over the edges of $G$ and check if any of them is a grade-obstructing loop. If no such loop exists, we will assign each edge $f$ of $G$ a number $g(f)$ which, if the graph is \gls{gradable}, will be a grading. We say that the algorithm ``reached" (resp. ``exhausted") a vertex if it assigned a grading to at least one (resp. all) of the edges of this vertex. We begin by choosing one edge $e$ and grading it $g(e)=0$. For each vertex of $e$, we set $a(v)=g(e)-1=-1$ / $a(v)=g(e)=0$ if $e$ is respectively \gls{preferred} / non-\gls{preferred} at $v$.

Next, we choose a vertex $v$ that the algorithm has reached but has not exhausted (currently, this means that $v$ is one of the vertices of $e$) and go over the edges of $v$. If a \gls{preferred} / non-\gls{preferred} edge $f$ has yet to be graded, then grade it $g(f)=a(v)+1$ / $g(f)=a(v)$ respectively, then look at the other vertex $w$ of $f$. If this is the first time the algorithm reaches $w$, set $a(w)=g(f)-1$ / $a(w)=g(f)$ if $f$ is respectively \gls{preferred} / non-\gls{preferred} at $w$. If the algorithm reached $w$ before, then $a(w)$ has already been set previously. In order for $g$ to be a grading, $w$ must uphold $a(w)=g(f)-1$ / $a(w)=g(f)$, depending on if $f$ is \gls{preferred} at $v$ or not. Check if this equality holds.

If the equality holds, move on to the other edges of $v$ and do the same. Since $v$ has no more than 6 edges, this takes $O(1)$ time. When you have exhausted $v$, move on to another vertex $G$ that the algorithm has reached but has yet to exhaust. Continue like this until either a) you grade an edge $f$ whose ``other vertex" $w$ has already been reached and for which the appropriate equality $a(w)=g(f)-1$ / $a(w)=g(f)$ fails, or b) if you have not reached such an edge but there are no more vertices that the algorithm reached but has yet to exhaust.

If you stop because of (a) then $G$ is not \gls{gradable}. In order to see this, notice that if you reached a vertex $v$ via an edge $e_v$, and then you grade another edge $f$ at $v$, then $g(f)=\Delta (g_v,v,f)+g(e_v)$. This can be proven on a case per case basis. For instance, if both $f$ and $e_v$ are \gls{preferred} at $v$, then $\Delta (g_v,v,f)=0$ and according to the above $a(v)=g(e_v)-1$ and $g(f)=a(v)+1=g(e_v)=\Delta (g_v,v,f)+g(e_v)$ as required. Induction implies that every $f$ that the algorithm grades has a path $e=e_0,v_0,e_1,...,e_r=f$ such that $g(f)$ is equal to the grading difference of this path. Indeed, it holds for $e$ itself, and if you assume that it holds for every edge you graded before, and in particular for $e_v$, then $g(e_v)$ is equal to the grading difference of the path $e=e_0,v_0,e_1,...,e_r=g_v$ and $g(f)=g(e_v)+(g(f)-g(e_v))=\sum_{k=1}^r(\Delta g(e_{k-1},v_{k-1},e_k)+\Delta (g_v,v,f)$ - the grading difference of the path $e=e_0,v_0,e_1,...,e_r,v,f$.

Now, if you grade an edge $f$ whose other vertex $w$ has already been reached, and the appropriate equality $a(w)=g(f)-1$ / $a(w)=g(f)$ fails, then similar considerations imply that $g(f) \neq \Delta (g_w,w,f)+g(e_w)$. We have proven that there is one path from $e$ to $f$ whose grading difference is equal to $g(f)$, but there is another such path $e=h_0,w_0,h_1,...,w_{r-1},h_r=e_w,w,f$, for which $g(e_w)=\sum_{k=1}^r(\Delta g(h_{k-1},w_{k-1},h_k)+\Delta (g_v,v,f)$ but $g(f)=g(e_w)+(g(f)-g(e_w)) \neq \sum_{k=1}^r(\Delta g(h_{k-1},w_{k-1},h_k)+\Delta (g_w,w,f)$. Since these two paths have different grading differences, (2) implies that $G$ is not \gls{gradable}.

If the algorithm stopped because of (b), then it provided a grading $g(f)$ for every edge $f$ in the connected component of $G$ that contains $e$. Since the equality never failed, every vertex $v$ and every \gls{preferred} / non-\gls{preferred} edge $f$ at $v$ upholds $a(v)=g(f)-1$ / $a(v)=g(f)$. This means that $g$ is indeed a grading of this connected component. If there are any vertices left that the algorithm has not reached yet, then they belong to a different connected component. Choose a new ungraded edge $e$ and grade it $g(e)=0$, and then proceed to grade its connected component. Eventually, either you will reach stop condition (a), meaning that $G$ is not \gls{gradable}, or you will exhaust all the vertices of $G$, in which case you finished grading all of $G$.

In total, the algorithm went over every edge $f$ of $G$, determined $g(f)$, and either determined $a(w)$ for one or both of its vertices, or checked if it upheld the equity $a(w)=g(f)-1$ / $a(w)=g(f)$. This takes $O(|E|)$ time.
\end{proof}

\begin{rem} \label{tree}
If the graph part of an \gls{DADG} $G$ is a forest, then the algorithm will never reach the stop condition (a), and so $G$ is \gls{gradable}.
\end{rem}

\section{Gradings and winding numbers}

In the next two subsections, we will answer the following question: given a 3-manifold $M$ for which $H_1(M;Z)$ contains only elements of finite order (such a group is called periodic or torsion), what \gls{DADG}s can be realized via a \gls{genericsurface} in $M$. In particular, we will prove the following theorem:

\begin{thm} \label{ADGThm}
Let $M$ be an oriented 3-manifold for which $H(M;\Z)$ is periodic.

1) If $M$ has no boundary, then a \gls{DADG} $G$ can be realized as the intersection graph of an oriented \gls{genericsurface} $S$ in $M$ iff $G$ is \gls{gradable} and has no DB values.

2) If $M$ has a boundary, then a \gls{DADG} $G$ can be realized as the intersection graph of an oriented \gls{genericsurface} $S$ in $M$ iff $G$ is \gls{gradable}.
\end{thm}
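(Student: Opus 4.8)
\textbf{Proof proposal for Theorem~\ref{ADGThm}.}

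The plan is to prove the two directions separately, treating cases (1) and (2) together with the boundary hypothesis mostly affecting which combinatorial obstruction survives. For the ``only if'' direction, suppose $S$ is an oriented \gls{genericsurface} in $M$ realizing a \gls{DADG} $G$. The DB-value condition is immediate: a DB value only exists when $\partial M \neq \emptyset$, which handles the extra clause in (1). The substantive claim is that $G$ is \gls{gradable}. Here I would construct the grading directly from a geometric ``height-type'' invariant of the surface. The key idea is that an orientation on $S$ together with the orientation on $M$ gives, at each double arc, a \gls{preferred} side (as in Remark~\ref{OriFig}(4)), and this lets one define, for each edge $e$ of the intersection graph, an integer $g(e)$ measuring a kind of local degree or ``signed count'' of how the two sheets wind around the arc. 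At a triple value, the three arc segments and the \gls{preferred} directions impose exactly the relations in Definition~\ref{Grading}: the \gls{preferred} edges sit one level above the non-\gls{preferred} ones. One must check that $g$ is globally well-defined, i.e.\ that going around any loop in the intersection graph returns the same value — this is precisely where $H_1(M;\Z)$ being periodic (torsion) enters, since a potential discrepancy around a loop would give a nontrivial infinite-order class in $H_1(M;\Z)$ (this is the content of the ``winding number'' discussion promised in Section~\ref{SecGraph}'s sequel). By Lemma~\ref{GradeRem}(2), the existence of such a $g$ is equivalent to gradability, so this direction follows.

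For the ``if'' direction, given a \gls{gradable} \gls{DADG} $G$ (with no DB values in case (1)), I would build an oriented \gls{genericsurface} in $M$ realizing it. The construction should be a refinement of the casing-and-tube construction of Section~\ref{CaseTueSec}, but generalized beyond the height-$1$ case. Fix a grading $g$ of $G$. The grading stratifies the edges of $G$ into ``levels'' indexed by $\Z$, and one builds the surface level by level: triple-value casings are placed so that the \gls{preferred} edges leave at level $g+1$ and the non-\gls{preferred} ones at level $g$, and tubes connect edges at matching levels, routed through distinct ``floors'' of a collar so they do not intersect, exactly as in the proof of Theorem~\ref{Thm5} but now using the grading to tell us how high each tube must climb. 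Since $G$ may have edges at many grade-levels, one needs finitely many floors above and below, bounded by the height of $G$. This produces a surface block in $\R^3 \cong$ interior of $D^3 \subseteq M$; when $M \neq D^3$ one absorbs the rest of $M$ using the gluing trick at the end of Section~\ref{SBsec}, and when $\partial M \neq \emptyset$ one additionally routes the ends of the DB-value edges out to $\partial M$. Orientability is automatic because every casing and every tube carries a continuous orientation that matches under gluing (Remark after Definition~\ref{Method}).

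The main obstacle I expect is the well-definedness of the geometric invariant $g$ in the ``only if'' direction, and more precisely \emph{identifying the right invariant} so that (a) it manifestly satisfies the local triple-value relations of Definition~\ref{Grading}, and (b) its global consistency is controlled by $H_1(M;\Z)$. The cleanest route is probably to define a map from loops in the intersection graph to $H_1(M;\Z)$ (via a ``winding number'' of one sheet around the other along the loop), show that a grading exists iff this map is identically $0$, and then observe that when $H_1(M;\Z)$ is torsion one can pass to a finite cover or use that the relevant cycle is a boundary to force the winding number to vanish — or, alternatively, argue that the winding contribution around any loop is forced to be zero by a Mayer–Vietoris / linking-number computation in the complement of the surface. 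A secondary technical point is making the level-by-level tube routing in the ``if'' direction work when the grading takes both positive and negative values and when the underlying multigraph of $G$ is complicated; but this is a bookkeeping extension of the already-established Theorem~\ref{Thm5} argument rather than a new idea.
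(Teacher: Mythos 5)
Your ``only if'' direction is essentially the paper's argument: define winding numbers on the complementary regions (bodies) of $S$ by signed intersection counts of paths with the surface, observe that well-definedness fails only if some $1$-cycle has nonzero intersection number with $[S]$ and hence infinite order in $H_1(M;\Z)$, and read off the grading of each edge from the winding numbers of the four bodies surrounding it (Lemmas~\ref{Winding} and \ref{GradeRem}, Figure~\ref{fig:Figure 1x}). That part is fine, modulo pinning the invariant to the bodies rather than to a vaguely described ``winding of one sheet around the other.''

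The ``if'' direction, however, has a genuine gap. You propose to extend the casing-and-tube construction of Theorem~\ref{Thm5} ``level by level'' using the grading, and you call the extension bookkeeping. It is not: the casing-and-tube construction depends essentially on the \gls{Height1} hypothesis, because in a casing every \gls{preferred} \gls{EoE} terminates in a \emph{top} socket and every non-\gls{preferred} \gls{EoE} in a \emph{bottom} socket, and top and bottom sockets are not homeomorphic local pictures (a bottom socket is a top socket surrounded by an extra circle of surface). A \gls{gradable} \gls{DADG} of height greater than $1$ necessarily contains \gls{indecisive} edges, whose two ends would have to be a top socket and a bottom socket; no tube of either type joins them, and making the extra sheet ``appear'' along the edge requires completing the surface by faces that are not part of any neighborhood of the intersection graph. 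This is exactly why the paper abandons casings and tubes here and instead builds only the regular neighborhood of the intersection graph (the cross-surface of Definition~\ref{CrossSurf}), reducing the problem to capping off its boundary $1$-manifold by an embedded surface in the complement. That capping is the real content of the sufficiency of gradability: one thickens the cross-surface to a handlebody, shows the meridians of the independent cycles form a basis of the first homology of the complement, and proves (Lemma~\ref{partial}) that the relevant linking numbers vanish because the signed intersections of a pushed-off cycle with the cross-surface telescope as $\sum_i\bigl(g(e_i)-g(e_{i-1})\bigr)=0$. Your proposal never uses the grading for anything but indexing floors, so it never actually proves that the construction can be completed; it also omits branch values entirely (casings have none, whereas the cross-surface has dedicated vertex neighborhoods for them), and in case (2) the paper does not route edges to the existing boundary but manufactures DB values by excising balls around auxiliary branch values and joining the excisions by $1$-handles.
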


\begin{res} \label{GradeRes}
In \cite{Li1}, Li showed that a \gls{DG} with no DB values or branch values is realizable iff any arc in it is composed of an even number of edges. Theorem~\ref{ADGThm} implies a generalization of this -  a general \gls{DG} is realizable via an orientable \gls{genericsurface} iff every \underline{closed} arc is composed of an even number of edges.
\end{res}

\begin{proof} [Proof of Result~\ref{GradeRes}]
If a \gls{DG} is realizable via an orientable \gls{genericsurface}, then any orientation of the surface gives the \gls{DG} an \gls{ADG} structure (arrows). Choosing a direction of progress for each double arc turns it into a \gls{DADG}. This \gls{DADG} is realizable and therefore \gls{gradable}. The grading of each subsequent edge on an arc will have a different parity than the grading of the previous edge and, in particular, closed arcs must have an even number of edges on them.

On the other hand, given a \gls{DG} that upholds this condition (every closed arc must have an even number of edges), it is possible to give the \gls{DG} a ``short grading" - number the edges with only 0 and 1 in such a way that \gls{consecutive} edges have different numbers. Clearly, the only obstruction to this is the existence of closed arcs with an odd number of edges. Now, one \gls{EoE} in every \gls{consecutive} pair will belong to an edge whose grade is 1, and the other will belong to an edge whose grade is 0. You can give the \gls{DG} an \gls{ADG} structure that matches this grading by choosing the former half-edges to be \gls{preferred}. This graded \gls{ADG} is realizable, and in particular, the underlying \gls{DG} is realizable via an orientable surface.
\end{proof}

In the remainder of this section we will prove the ``only if" direction of the items of Theorem~\ref{ADGThm}. The ``if" direction will be proven in the next section. One part of the ``only if direction" is trivial - a \gls{genericsurface} in a bounderyless 3-manifold cannot have DB values. In order to prove the other part, that the intersection graph of a \gls{genericsurface} is a \gls{gradable} \gls{DADG}, we use 3-dimensional winding numbers:

\begin{defn} \label{FaceBody}
Let $S$ be a \gls{genericsurface} in a 3-manifold $M$.

1) A face (resp. body) of $S$ is a connected component of $S \setminus X(i)$ (resp. $M \setminus S$).

2) Each face $V$ is an embedded surface in $M$, and there is a body on each side of it. We say these two bodies are adjacent (via $V$). A priori, it is possible that these two bodies are in fact two parts of the same body, and even that $V$ is a one-sided surface. In these cases, this body will be self adjacent, but this does not happen in any of the cases we are interested in.

3) If $S$ has an orientation, then each face $V$ is two sided, and the arrows on the face point towards one of its two sides. We say that the body on the side that the arrows point toward is ``greater" (via $V$) than the body on the other side of $V$.

4) A choice of ``winding numbers" for $S$ is a choice of an integer $w(U) \in \Z$, for every body $U$ of $S$, such that if $U_1$ and $U_2$ are adjacent, and $U_1$ the greater of the two, then $g(U_1)=g(U_2)+1$.
\end{defn}

\begin{lem} \label{Winding}
If $M$ is a connected and orientable 3-manifold, $H_1(M;\Z)$ is periodic, and $S$ is an oriented \gls{genericsurface}, then $S$ has a choice of winding numbers.
\end{lem}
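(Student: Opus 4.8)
The plan is to build the winding numbers directly from a ``height-type'' function on $M$, obtained by integrating a cocycle that measures crossing $S$. Since $S$ is oriented, each face $V$ carries a coorientation (the arrows), so $S$ defines a class $[S] \in H^1(M;\Z)$ in the usual way: for a loop $\gamma$ transverse to $S$, the algebraic intersection number $\gamma \cdot S$ depends only on the homology class of $\gamma$, and this defines a homomorphism $H_1(M;\Z) \to \Z$. The key observation is that because $H_1(M;\Z)$ is periodic, every homomorphism $H_1(M;\Z) \to \Z$ is trivial: the image of a torsion element under any group homomorphism to $\Z$ must be $0$, since $\Z$ is torsion-free. Hence $[S]$ pairs trivially with every $1$-cycle, i.e. $\gamma \cdot S = 0$ for every closed loop $\gamma$ in $M$.

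From here I would construct $w$ as follows. Fix a basepoint body $U_0$ and set $w(U_0)=0$. For any other body $U$, pick a path $\alpha$ in $M$ from a point of $U_0$ to a point of $U$ that is transverse to $S$ (and meets $S$ only in the interiors of faces), and define $w(U)$ to be the algebraic count of signed crossings of $\alpha$ with $S$, where a crossing is counted $+1$ if $\alpha$ passes from the ``lesser'' to the ``greater'' side of the face (in the sense of Definition~\ref{FaceBody}(3)) and $-1$ otherwise. I must check this is well-defined, i.e. independent of the path $\alpha$. Given two such paths $\alpha_1,\alpha_2$ from $U_0$ to $U$, their concatenation $\alpha_1 * \bar{\alpha_2}$ is a loop $\gamma$ in $M$, and the difference of the two signed crossing counts is precisely $\gamma \cdot S$, which is $0$ by the previous paragraph. (Connectedness of $M$ guarantees such paths exist; a small general-position perturbation makes them transverse to $S$ and disjoint from $X(i)$, which has codimension $2$.) By construction, if $U_1$ and $U_2$ are adjacent via a face $V$ with $U_1$ the greater, then appending to a path reaching $U_2$ a short arc crossing $V$ from $U_2$ into $U_1$ adds exactly one positive crossing, so $w(U_1) = w(U_2) + 1$, which is exactly the defining condition for winding numbers.

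The main obstacle I anticipate is the bookkeeping around the possibility flagged in Definition~\ref{FaceBody}(2): a priori a face could be one-sided, or the two ``sides'' of a face could belong to the same body, which would make the adjacency relation ill-behaved and the condition $w(U_1)=w(U_2)+1$ self-contradictory for a self-adjacent body. This is where orientability of $S$ (not just of $M$) is used: an oriented face is two-sided and carries a genuine coorientation, so the ``greater/lesser'' distinction is globally consistent along $V$; and the vanishing of $\gamma \cdot S$ for the loop $\gamma$ that crosses $V$ once and returns through the ambient $M$ shows a face cannot have the same body on both sides (such a loop would have intersection number $\pm 1$). So the periodicity hypothesis on $H_1(M;\Z)$ does double duty: it makes $w$ well-defined and it rules out the pathological self-adjacency. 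I would record these two points as short lemmas or remarks before assembling the argument, and then the construction above yields the required winding numbers. Finally, although the statement is phrased for connected $M$, nothing else in the proof needs it beyond ``paths exist,'' so the connectedness hypothesis is exactly what lets the basepoint construction reach every body.
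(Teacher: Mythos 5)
Your construction is the same as the paper's: fix a base body $U_0$ with $w(U_0)=0$, count signed transverse crossings along a path, and use the fact that a nonzero intersection number of the loop $\alpha_1 * \bar{\alpha_2}$ with the $2$-cycle carried by $S$ would force that loop to have infinite order in $H_1(M;\Z)$, contradicting periodicity. Your additional observation that the same vanishing rules out a self-adjacent body is a nice bonus, but the argument is essentially identical to the one in the text.
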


\begin{proof}
Pick one body $U_0$ to be ``the exterior" of the surface and set $w(U_0)=0$. Next, define the winding numbers for every other body $U$ as follows:

Take a smooth path from $U_0$ to $U$ that is in general position to $S$ (it intersects $S$ only at faces, and does so transversally), and set $w(U)$ to be the signed number of times it crosses $S$, the number of times it intersects it in the direction of the orientation minus the times it crosses it against the orientation. This is well defined, since any two such paths $\alpha$ and $\beta$ must give the same number. Otherwise, the composition $\beta^{-1} \ast \alpha$ is a 1-cycle whose intersection number with the 2-cycle represented by $S$ is non-zero. This implies that this 1-cycle is of infinite order in $H_1(M;\Z)$ - contradicting the fact that this group is periodic.

It is also clear that if $U_1$ and $U_2$ are adjacent and $U_1$ is the greater of the pair, then $g(U_1)=g(U_2)+1$.
\end{proof}

\begin{rems} \label{WNRems}
1) It is clear that two different choices of ``winding numbers" for $S$ will differ by a constant, and that the one we created is unique in satisfying $w(U_0)=0$.

2) We can do a similar process on a loop $\gamma$ in $\R^2$ instead of a surface in a 3-manifold. If we choose the component $U_0$ of $\R^2 \setminus \gamma$ to be the actual exterior, then this will produce the usual winding numbers - $w(U)$ will be the number of times $\gamma$ winds around a point in $U$. 
\end{rems}

We will use the winding numbers to induce a grading in the following manner: the neighborhood of a double value includes 4 bodies, with the possibility that some of them are, in fact, different parts of the same body. If the surface has a orientation and winding numbers, then there is a number $g$ such that two of these bodies have the WN $g$, one has the WN $g+1$ and one has the WN $g-1$. Figure~\ref{fig:Figure 1x}A depicts this:

\begin{figure}
\begin{center}
\includegraphics{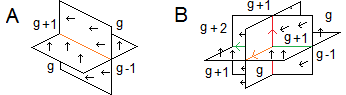}
\caption{The winding number of bodies around an edge of $X(i)$ and a triple value}
\label{fig:Figure 1x}
\end{center}
\end{figure}

Due to continuity, this will be the same value $g$ for all the double values on the same edge (or double circle). We call this number the grading of the edge, and name the grading of an edge $e$ $g(e)$. This is indeed a grading in the sense of Definition~\ref{Grading}. In order to prove this, we need to show that at every triple value of the surface all the \gls{preferred} ends of edges have the same grading, which is greater by 1 than the grading of all the non-\gls{preferred} ones. This can be seen in Figure~\ref{fig:Figure 1x}B, which depicts the winding numbers of the bodies around an arbitrary triple value. Indeed, you can see that the \gls{preferred} ends of edges - the ones going up, left and outwards (toward the reader) have the grading $g+1$, while the other edges have the grading $g$. This proves the ``only if" direction of Theorem~\ref{ADGThm}.

\section{The cross surface of a DADG}

In order to prove the ``if" direction of Theorem~\ref{ADGThm} we will first prove a partial result. We will limit the discussion to connected \gls{DADG}s with no DB values.

\begin{lem} \label{NoDB}
Every \textbf{connected}, \textbf{\gls{gradable}} \gls{DADG} $G$ \textbf{without DB values} can be realized via a closed \gls{genericsurface} $S$ in $S^3$.
\end{lem}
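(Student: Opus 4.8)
The plan is to prove the statement by an explicit geometric construction --- essentially an unrestricted version of the casing-and-tube construction of Section~\ref{CaseTueSec}. There one realizes only \gls{Height1} \gls{DADG}s, whose edges all live at one of just two heights; here I would instead use the grading of $G$ to stack the pieces of the surface at the heights it prescribes, build the resulting surface in $\R^3$, and then view it as a closed \gls{genericsurface} in $S^3$ via $\R^3\hookrightarrow S^3$. First I would fix a grading $g$ of $G$: one exists since $G$ is \gls{gradable} (Lemma~\ref{GradeRem}(2)), and since $G$ is connected it is unique up to an additive constant (the grading difference between two edges is path-independent, so $g$ is determined once pinned on one edge), so after translating we may assume $g$ takes its values in $\{0,1,\dots,N\}$ where $N$ is the height of $G$. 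Disjoint circles of $G$, if any, are realized separately by small disjoint pairs of transversally intersecting round spheres, so we may assume $G$ has none; branch values are permitted and will be capped off.

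For the construction I would reuse the local pieces of Section~\ref{CaseTueSec} --- triple value casings and tubes (Definition~\ref{Tube}) --- together with an additional ``branch cap'', a standard block in which a double arc tapers to a Whitney umbrella, but now I would track the heights at which each block's sockets sit. Each triple value $TV_k$ has a level $c_k$, with $g=c_k$ on its three non-\gls{preferred} edges and $g=c_k+1$ on its three \gls{preferred} edges (this is precisely Definition~\ref{Grading}); I would place the casing for $TV_k$ so that its three non-\gls{preferred} sockets lie at height $c_k$ and its three \gls{preferred} sockets at height $c_k+1$. Each edge $r$ of $G$ has a well-defined grade $g(r)$ and, through its two arc segments, plugs into casings at exactly that height; so the tube realizing $r$ can be kept at the fixed height $g(r)$. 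The one genuinely new ingredient over Section~\ref{CaseTueSec} is the routing: one must embed all the casings and tubes disjointly in $\R^3\cong\R^2\times\R$ with every block at its prescribed height. I would do this by confining the tubes of level $m$ to a thin slab $\R^2\times(m-\varepsilon,m+\varepsilon)$ and using the extra $\R^2$ room --- as in the proof of Theorem~\ref{Thm5} --- to make the finitely many tubes at a given level, and at adjacent levels, pairwise disjoint, the casings straddling two consecutive slabs being likewise spread out. Gluing the blocks along their sockets as in Definition~\ref{Method} then produces a surface $S\subseteq\R^3$; since $G$ has no DB values every block face is either glued to another block or capped, so $S$ is closed.

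What remains is bookkeeping. That $S$ is a \gls{genericsurface} whose triple values, ends of edges, arc segments, \gls{consecutive}-pairings and \gls{preferred} data all reproduce $G$ follows from the block calculus exactly as in Lemma~\ref{MethodWorks}. For orientability, each block carries a \gls{preferred} direction, and these agree on glued sockets precisely because the two sockets glued together sit at equal heights --- which is what the grading guarantees --- so the local orientations assemble into a continuous orientation of $S$, making $S$ a \gls{Thrice} \gls{genericsurface} that realizes $G$. As a consistency check against the ``only if'' direction: since $H_1(S^3;\Z)=0$ is periodic, Lemma~\ref{Winding} endows the bodies of $S$ (Definition~\ref{FaceBody}) with winding numbers, and by construction the winding number of each body is the height at which it was built; hence the grading these winding numbers induce on the intersection graph of $S$ is, up to a constant, the $g$ we started with, so nothing was lost or doubled.

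The main obstacle is the routing step: keeping the tubes and casings disjoint \emph{and} at their prescribed heights simultaneously. This is exactly where connectedness and gradability are used: for a connected $G$ the grading is forced up to a constant, so the heights of all pieces are dictated and globally consistent, whereas for a non-gradable $G$ there is a grade-obstructing loop, or more generally a loop with nonzero grading difference (Definition~\ref{GradeObs}, Lemma~\ref{GradeRem}(2)), along which no height-consistent routing can exist --- the content of the already-proved ``only if'' direction. A secondary technical point is the genericity of the glued surface: one must verify that the socket gluings produce only regular and double values (as in Figure~\ref{fig:Figure 23} and Figure~\ref{fig:Figure 22}D), that each branch cap contributes exactly one branch value and no stray double arcs, and that transplanting a finite surface from $\R^3$ into $S^3$ leaves all local models unchanged.
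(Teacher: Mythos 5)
Your plan founders on a point that your height bookkeeping does not touch: the difference between top and bottom sockets. In the casing-and-tube calculus, a bottom socket is a top socket \emph{plus an extra surrounding circle} (Figure~\ref{fig:Figure 22}), and a product tube can only join two sockets of the same type. In a \gls{Height1} \gls{DADG} every edge is \gls{preferred} or non-\gls{preferred}, so every tube joins two top sockets or two bottom sockets and the union closes up. A general \gls{gradable} \gls{DADG}, however, has \gls{indecisive} edges; for such an edge the grading forces the two sockets to sit at the \emph{same height} (if $g(r)=c_{k_1}+1=c_{k_2}$ then both ends are at level $g(r)$), but one of them is a top socket and the other a bottom socket. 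No fibre-preserving tube connects them, and after your gluing the surface is left with free boundary circles --- one per \gls{indecisive} \gls{EoE} mismatch --- so the claim that ``every block face is either glued to another block or capped, so $S$ is closed'' fails exactly in the cases that go beyond Section~\ref{CaseTueSec}. These leftover circles are precisely the boundary of the regular neighborhood of the intersection graph, and capping them off by disjoint embedded surfaces in the complement is the actual content of the lemma, not a routing detail.

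This is where the paper's proof diverges from yours in an essential way: it builds only the regular neighborhood (the cross-surface of Definition~\ref{CrossSurf}), thickens it to a handlebody $H$, shows the meridians of $H$ form a basis of $H_1(\overline{S^3\setminus H};\Z)$, and then proves (Lemma~\ref{partial}) that the boundary $1$-cycle of the cross-surface has linking number zero with every simple cycle of the graph --- the signed intersections contributed at the vertices of a cycle are the grading differences $g(e_i)-g(e_{i-1})$, which telescope to zero. That null-homology is what lets the boundary be capped by an embedded oriented surface in the complement, and it is the \emph{only} place gradability is used. In your proposal gradability is spent on making heights consistent, which is necessary but not sufficient; you would still need an argument that the union of leftover circles bounds disjointly in the complement of your block complex (they need not individually bound discs), and that argument is essentially Lemma~\ref{partial}. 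The same issue appears, in milder form, at your ``branch caps'': the tube end has an X cross-section with four free boundary arcs plus possibly the extra circle, and you do not say how the cap absorbs them. Separately, the double-circle components you dismiss in one line are handled in the paper by an explicit surface of revolution (Figure~\ref{fig:Figure 2x}); two transversally intersecting spheres would produce a double circle with the wrong local strip structure unless arranged as in that figure, so that step also needs care.
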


\begin{rem} \label{Connected1}
It may be assumed that $S$ is connected. Otherwise, one of its components will contain the connected intersection graph, and you may delete the other components.
\end{rem}

We begin with the unique case where the \gls{DADG} is a double circle. The \gls{genericsurface} from Figure~\ref{fig:Figure 2x}A has a single double circle as its intersection graph. It is the surface of revolution of the curve from Figure~\ref{fig:Figure 2x}B around the blue axis. Both figures have indication for the orientation. The intersection graph will be the revolution of the orange dot where the curve intersects itself, and will thus be a circle. The underlying surface is clearly a sphere.

\begin{figure}
\begin{center}
\includegraphics{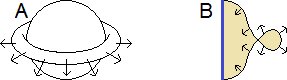}
\caption{A surface whose intersection graph is a double circle}
\label{fig:Figure 2x}
\end{center}
\end{figure}

Any other connected \gls{DADG} is a ``graph \gls{DADG}" - it will have no double circles. In this case, we begin by constructing a part of the matching \gls{genericsurface} - the regular neighborhood of the intersection graph. Li defined something similar in \cite{Li1} (p.3723, figure 2) which he called a ``cross-surface", and we will use the same notation.

\begin{defn} \label{CrossSurf}
Given a \gls{DADG} $G$ that has no DB values and no double circle, a ``cross-surface" $X_G$ of $G$ is a shape in $S^3$ that is built via the following two steps:

1) For every triple value $TV_k$ of $G$, embed a copy of Figure~\ref{fig:Figure 3x}A in $S^3$. This shape is called the ``vertex neighborhood" $TVN_k$ of $TV_k$. The triple value in the vertex neighborhood $TVN_k$ will be the $k$th triple value of the surface $S$. Similarly, for every branch value $BV_k$ of $G$, embed a vertex neighborhood $BVN_k$ that looks like Figure~\ref{fig:Figure 3x}B in $S^3$. Make sure that the different vertex neighborhoods will be pairwise disjoint.

\begin{figure}
\begin{center}
\includegraphics{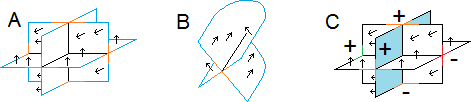}
\caption{The vertices neighborhoods and their gluing zones}
\label{fig:Figure 3x}
\end{center}
\end{figure}

Recall Definition~\ref{Pref}(2) of the ends, and the length, of an edge of the intersection graph of a \gls{genericsurface}. In each vertex neighborhood one can see the 1 or 6 ends-of-edges of the vertex that it contains. Each \gls{EoE} meets the boundary of the vertex neighborhood at one point. Given such an intersection point, we refer to its regular neighborhood \textbf{inside the boundary of the vertex neighborhood} as its ``gluing zone". In Figure~\ref{fig:Figure 3x}A and B, we colored the gluing zones in orange and the rest of the boundary of the vertex neighborhoods in blue.

Recall that $BV_k$, the $k$th branch value of $G$, is a pair $(r,s)$ where $r$ is an integer and $s$ is binary. For the \gls{DADG} of $S$ to be equal to $G$, the $s$th end of the $r$th edge of the intersection graph of $S$ must end in the $k$th branch value of the surface - the branch value inside the vertex neighborhood $BVN_k$ (the 0th end of the $r$th edge is the beginning of the edge and the 1st end is the ending). In order to reflect this, we index the gluing zone of this vertex neighborhood as the ``$(r,s)$th gluing zone".

Similarly, $TV_k$, the $k$th triple value of $G$, has the form $(((r_1,s_1),(r_2,s_2)),$ $((r_3,s_3),(r_4,s_4)),((r_5,s_5),(r_6,s_6)))$ where each $r_i$ is an integer and each $s_i$ is binary. For the \gls{DADG} of $S$ to be equal to $G$, the $s_i$th end of the $r_i$th edge of the intersection graph of $S$ must end in the triple value inside the vertex neighborhood $BVN_k$. In order to reflect this, for each $i=1,...,6$, we index one of the gluing zones of this vertex neighborhood as the ``$(r_i,s_i)$th gluing zone".

You must choose which gluing zone corresponds to which $i=1,...,6$ in a way that reflects the structure of $G$. Firstly, recall that for every $l=1,2,3$, the ends of edges $(r_{2l-1},s_{2l-1})$ and $(r_{2l},s_{2l})$ represent two \gls{consecutive} ends of edges. In order to reflect this, make sure that the $(r_{2l-1},s_{2l-1})$th and $(r_{2l},s_{2l})$th gluing zones that are on opposite sides of the vertex neighborhood, such as the zones marked red and green in Figure~\ref{fig:Figure 3x}C.

Additionally, recall that $(r_{2l},s_{2l})$ is the \gls{preferred} \gls{EoE} among the two. Each pair of ``opposite sides" gluing zones is separated by one of the 3 intersecting surface sheets at $TVN_k$. For instance, in Figure~\ref{fig:Figure 3x}C, the blue surface separates the red and green gluing zones. The orientation on this surface points toward one of the gluing zones. In this case - the green one. In order to reflect the fact that the $(r_{2l},s_{2l})$th \gls{EoE} is \gls{preferred}, it must correspond to the gluing zone that the orientation points toward. The other gluing zone, in our case the red one, will be the $(r_{2l-1},s_{2l-1})$th gluing.

In Figure~\ref{fig:Figure 3x}C we also marked the gluing zones toward which the orientation points with ``+", and the other gluing zones with ``-". Another way to phrase the last requirement is that the gluing zones marked ``+" (resp. ``-") will correspond to the ends of edges $(r_i,s_i)$ for which $i$ is even (resp. odd).

\begin{figure}
\begin{center}
\includegraphics{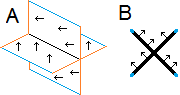}
\caption{The X-bundle of an edge and a cross-section of it}
\label{fig:Figure 4x}
\end{center}
\end{figure}

2) The previous step identified each \gls{EoE} of $G$ with a unique gluing zone of the cross surface, and thus with a unique \gls{EoE} of the surface $S$. In this step we will add the length of the edge to the cross surface. Assume that $G$ has $|E|$ edges. For each $r=0,...,|E|-1$, embed a matching copy of the shape in Figure~\ref{fig:Figure 4x}A into $S^3$.

This shape is a bundle over a closed interval, whose fiber looks like the ``X" in Figure~\ref{fig:Figure 4x}B. We therefore call this shape ``the $r$th X-bundle of $G$". The piece of double line that goes through it will serve as the length of the $r$th edge of the intersection graph of the cross surface. In order to do this, the embedding of the $r$th X-bundle must adhere to the following rules:

(a) The boundary of each X-bundle is composed of two parts - the fibers at the ends of the interval, colored orange, and the (union of the) ends of all the fibers, colored blue. Make sure to embed the $r$th X-bundle so that one end fiber coincides with the $(r,0)$th gluing zone and the other coincides with the $(r,1)$th gluing zone. Additionally, ensure that the ``length" of the X-bundle (the X-bundle sans the end fiber) is disjoint from the vertex neighborhoods, and that X-bundles of different edges do not touch one another.

(b) Note that both the vertex neighborhoods and the X-bundles have arrows on them, which represent orientations. When you embed the X-bundles, these orientations on them must match, as in Figure~\ref{fig:Figure 5x}A, and unlike Figure~\ref{fig:Figure 5x}B. This way they will merge into a continuous orientation on the entire cross surface.

The resulting shape is the cross-surface. It is similar to a \gls{genericsurface} but it has a boundary - the union of all the ``blue parts" of the boundaries of the vertex neighborhoods and the X-bundles.

In order to define the \gls{DADG} structure of the cross surface, you must choose a direction of progress on each edge. Do so in such a way that, along the length of the $r$th end, inside the $r$th X-bundle, the direction of progress points from the end fiber that is glued to the $(r,0)$th gluing zone and towards the end fiber that is glued to the $(r,1)$th gluing zone. This way, the way we indexed the gluing zones implies that: (1) for every $k$, if the $(r,s)$th \gls{EoE} in $G$ resides on the $k$th branch value / triple value, then so does the $(r,s)$th \gls{EoE} of the intersection graph, (2) if the $(r,s)$th and the $(r',s')$th ends of edge are \gls{consecutive} in $G$, then the same holds for the intersection graph, and (3) the same one of these ends of edge is \gls{preferred} at $G$ and at the intersection graph. This implies that the \gls{DADG} structure of the intersection graph is equal to $G$.

\begin{figure}
\begin{center}
\includegraphics{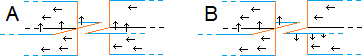}
\caption{The gluing must preserve the orientation}
\label{fig:Figure 5x}
\end{center}
\end{figure}
\end{defn}

The boundary of the cross-surface is the union of many embedded intervals in $S^3$ - the ``blue parts" of the boundaries of the vertex neighborhoods and the X-bundles. Since each end of every interval coincides with an end of one other interval, and the intervals do not otherwise intersect, their union is an embedded compact 1-manifold in $S^3$. The cross-surface induces an orientation on this 1-manifold, the usual orientation that an oriented manifold induces on its boundary. It is depicted in the left part of Figure~\ref{fig:Figure 6x}A. 

We will show that the boundary of the cross surface of a connected \gls{DADG} $G$ is also the oriented boundary of an embedded surface which is disjointed from the cross surface. It follows that the union of the cross surface and the embedded surface, with the orientation on the embedded surface reversed, will be a closed and oriented \gls{genericsurface} whose intersection graph will be isomorphic to $G$. This will prove Lemma~\ref{NoDB}.

\begin{figure}
\begin{center}
\includegraphics{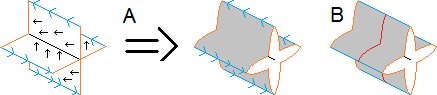}
\caption{Thickening the cross-surface into a handle body, and the handle body's meridians}
\label{fig:Figure 6x}
\end{center}
\end{figure}

In order to prove that such an embedded surface exists, we begin by ``thickening" the cross surface as in Figure~\ref{fig:Figure 6x}A. Figure~\ref{fig:Figure 6x}A only shows how to do this to an X-bundle, but you can similarly do this for all the vertex neighborhoods. This results in a handle body $H$ in $S^3$ and our 1-cycle is on its boundary. It will suffice to prove that the 1-cycle is the boundary of some embedded surface in the complement of $H$. This happens iff is the cycle is a ``boundary" in the homological sense - the is equal to $0$ in $H_1(\overline{S^3 \setminus H};\Z)$.

Given any loop $\gamma$ in the intersection graph, we define a functional $f_\gamma:H_1(\overline{S^3 \setminus H}$ $;\Z) \to \Z$ such that $f_\gamma(c)$ is the linking number of $\gamma$ and a representative of $c$. It is well-defined, since cycles in $\overline{S^3 \setminus H}$ are disjoint from $\gamma$, and since the linking number of $\gamma$ with any boundary in $H_1(\overline{S^3 \setminus H})$ is $0$, as the boundary bounds a surface in $\overline{S^3 \setminus H}$ which is disjoint from $\gamma$.

In case the genus of $G$, and therefore of the intersection graph and of $H$, is $n$, then the intersection graph has $n$ simple cycles $C_1,...,C_n$, such that each cycle $C_i$ contains an edge $e_i$ that is not contained in any of the other cycles. For every cycle $C_i$, we take a small meridian $m_i$ around the edge $C_i$ (as depicted in red in Figure~\ref{fig:Figure 6x}B). It follows that $f_{c_i}([m_j])=\delta_{ij}$ where $\delta$ is the Kronecker delta function. Additionally, since $\overline{S^3 \setminus H}$ is the complement of an $n$-handle body, $H_1(\overline{S^3 \setminus H}) \equiv \Z^n$. We will prove that:

\begin{lem}
These meridians form a base of $H_1(\overline{S^3 \setminus H})$.
\end{lem}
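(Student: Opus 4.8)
The plan is to show that the $n$ meridians $m_1,\dots,m_n$ both span and are independent in $H_1(\overline{S^3\setminus H};\Z)\cong\Z^n$, so that they form a base. Independence is the easy half: we already have the linking-number functionals $f_{C_i}$ on $H_1(\overline{S^3\setminus H};\Z)$, and we noted that $f_{C_i}([m_j])=\delta_{ij}$. Hence the $n\times n$ matrix $\big(f_{C_i}([m_j])\big)$ is the identity, which is invertible over $\Z$; consequently no nontrivial integer combination of the $[m_j]$ can vanish, for applying the $f_{C_i}$'s to such a combination would force all coefficients to be zero. So $[m_1],\dots,[m_n]$ are linearly independent in $\Z^n$.

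For spanning I would argue that a linearly independent set of $n$ elements in a free abelian group of rank $n$ must be a basis once we know the quotient is torsion-free, or more concretely: the matrix $\big(f_{C_i}([m_j])\big)$ being the identity means the homomorphism $\Z^n\to\Z^n$ sending the standard basis to $[m_1],\dots,[m_j]$ followed by $(f_{C_1},\dots,f_{C_n})$ is the identity, so the subgroup generated by the $[m_j]$ is a direct summand and the complementary summand maps to $0$ under every $f_{C_i}$. It then suffices to check that the only class in $H_1(\overline{S^3\setminus H};\Z)$ killed by all of $f_{C_1},\dots,f_{C_n}$ is $0$. This is the standard fact that for a handlebody $H\subset S^3$ of genus $n$, the linking pairing between $H_1$ of the handlebody (generated by the cores of the handles, i.e.\ the cycles $C_i$, which are carried by the intersection graph) and $H_1(\overline{S^3\setminus H})$ is a perfect pairing; equivalently, $H_1(\overline{S^3\setminus H})$ is freely generated by the meridians of a system of cores. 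One clean way to see it is via Alexander duality, $H_1(\overline{S^3\setminus H};\Z)\cong H^1(H;\Z)\cong\Z^n$, under which the meridians are dual to the core cycles, so they form a basis; the functionals $f_{C_i}$ realize exactly this duality.

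I expect the main obstacle to be bookkeeping rather than conceptual: one must be careful that the $n$ cycles $C_1,\dots,C_n$ chosen in the intersection graph (each containing a private edge $e_i$) really do form a basis of $H_1$ of the thickened graph $H$, i.e.\ that $H$ is genuinely a genus-$n$ handlebody with these as core curves, and that the meridians $m_i$ are taken around the private edges $e_i$ so that $m_i$ links $C_j$ trivially for $j\ne i$ (the private edge $e_i$ lies on no $C_j$, $j\ne i$) and links $C_i$ once. Once the relation $f_{C_i}([m_j])=\delta_{ij}$ is nailed down, together with the rank computation $H_1(\overline{S^3\setminus H})\cong\Z^n$ already cited, the base property follows formally as above. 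So the write-up will: (i) recall $H_1(\overline{S^3\setminus H};\Z)\cong\Z^n$; (ii) recall/verify $f_{C_i}([m_j])=\delta_{ij}$; (iii) deduce independence from invertibility of the identity matrix; and (iv) deduce spanning from the perfectness of the linking pairing (equivalently Alexander duality), concluding that $\{[m_1],\dots,[m_n]\}$ is a base.
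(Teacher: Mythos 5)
Your proposal is correct, and the independence half is word-for-word the paper's argument (a nontrivial combination $\sum a_j[m_j]$ has linking number $a_j$ with $C_j$, so it cannot bound). For the spanning half you take a genuinely different route. The paper stays elementary: independence makes $N=\mathrm{Span}_\Z\{[m_i]\}$ a finite-index sublattice of $H_1(\overline{S^3\setminus H};\Z)\cong\Z^n$; given any $y$, it subtracts $\sum_i lk(y,C_i)[m_i]$ to get $y'$ with all linking numbers zero, observes that some multiple $ky'$ lies in $N$ and hence equals $0$ by the linking computation, and concludes $y'=0$ by torsion-freeness of $\Z^n$. You instead observe that the pairing matrix being the identity splits off $N$ as a direct summand and then invoke perfectness of the linking pairing (Alexander duality) to kill the complement. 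That works, but two remarks. First, your phrasing "equivalently, $H_1(\overline{S^3\setminus H})$ is freely generated by the meridians of a system of cores" comes uncomfortably close to citing the lemma itself as the standard fact that proves it; if you lean on Alexander duality you should state it as $H_1(\overline{S^3\setminus H};\Z)\cong\mathrm{Hom}(H_1(H;\Z),\Z)$ realized by linking, and note that this needs the $C_i$ to be a basis (or at least a finite-index generating set) of $H_1(H;\Z)$ --- a hypothesis the paper's argument never uses. Second, the duality appeal is actually superfluous given what you have already set up: once $H_1(\overline{S^3\setminus H};\Z)\cong\Z^n$ and $N\cong\Z^n$ is a direct summand, the complementary summand is a torsion-free group of rank $0$ inside $\Z^n$, hence trivial, and you are done. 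Either that rank count or the paper's finite-index argument keeps the proof self-contained; both buy the same conclusion, with the paper's version requiring only that $H_1(\overline{S^3\setminus H};\Z)$ is free of rank $n$ and nothing about the cores.
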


\begin{proof}
First, we show that the meridians are independent. This is because a boundary in $\overline{S^3 \setminus H}$ would have $0$ as the linking number with every $c_i$, but the linking number of a non-trivial combination $x=\sum a_i[m_i]$ with any $c_j$ will be $a_j$, and for some $j$, $a_j \neq 0$. Second, notice that this implies that $N=Span_{\Z}\{[m_1],...,[m_n]\}$ is a maximal lattice in $H_1(\overline{S^3 \setminus H}) \equiv \Z^n$, and therefore has a finite index.

Third, had $N$ been a strict subgroup of $H_1(\overline{S^3 \setminus H};\Z)$, then there would be an element $y \in H_1(\overline{S^3 \setminus H};\Z) \setminus N$. Define $b_i=lk(y,c_i)$ and $y'=y-\sum_{i=1}^n b_i[m_i]$. $y'$ will have $0$ as the linking number with every $c_i$, but it will not belong to $N$. The finite index of $N$ implies that $ky' \in N$ for some $k$, but $lk(ky',c_i)=k \dot 0=0$ for all $i$, and thus $ky'=0$. This means that $y'$ is a non-zero element of finite order in $H_1(\overline{S^3 \setminus H};\Z) \equiv \Z^n$, but no such element exists.
\end{proof}

\begin{lem} \label{partial}
Let $G$ be a connected \gls{DADG} that has no DB values, is not a double circle, and is \gls{gradable}. Then the linking number of the boundary of its cross surface with any simple cycle in the intersection graph of this cross surface is $0$.
\end{lem}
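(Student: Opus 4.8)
\textbf{Proof proposal for Lemma~\ref{partial}.}
Fix a grading $g$ of $G$, which exists since $G$ is gradable (Definition~\ref{Grading}); recall from Lemma~\ref{GradeRem}(1) that along any path $e_0,v_0,e_1,\dots,v_{r-1},e_r$ the grading difference telescopes to $g(e_r)-g(e_0)$. Let $C=e_0,v_0,e_1,\dots,v_{r-1},e_r=e_0$ be a simple cycle in the intersection graph of the cross surface. Since all $v_i$ have degree $6$ they are triple values (a simple cycle cannot pass through a degree‑$1$ branch value), so every $v_i$ is one of the vertex neighborhoods $TVN_{k_i}$ of Definition~\ref{CrossSurf}, and each $e_i$ is carried by an $X$‑bundle. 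The plan is to show
\[
lk(\partial X_G, C)\;=\;\pm\sum_{i=0}^{r-1}\Delta g(e_i,v_i,e_{i+1})\;=\;\pm\bigl(g(e_r)-g(e_0)\bigr)\;=\;0 .
\]

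To get the displayed identity I would decompose the linking number into local contributions, one per piece of the cross surface that $C$ meets. Concretely, isotope $\partial X_G$ (rel nothing) so that, along the $X$‑bundle of each $e_i$, the four strands of $\partial X_G$ coming from the four tips of the $X$‑fibre run parallel to the core double arc $C$; by the triviality of the $X$‑bundle over an interval (Definition~\ref{CrossSurf}, Figure~\ref{fig:Figure 4x}) these four strands acquire no twist along the bulk of $e_i$, so the bulk of each edge contributes $0$ to $lk(\partial X_G,C)$. All of the linking is therefore concentrated in the vertex neighborhoods $TVN_{k_i}$, and $lk(\partial X_G,C)=\sum_i \lambda(e_i,v_i,e_{i+1})$, where $\lambda(e,v,f)$ is the signed number of full turns the four boundary strands of the $X$‑bundles of $e$ and $f$ make around $C$ as $C$ passes through the standard neighborhood $TVN_v$ (Figure~\ref{fig:Figure 3x}A). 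Because every triple‑value neighborhood is a single fixed model, $\lambda(e,v,f)$ depends only on the ``types'' of the incoming and outgoing ends of edge at $v$ — i.e. on which of the two ends of the arc segment $C$ uses is the preferred one on each side (cf. Corollary~\ref{s'means}). Inspecting the model one checks $\lambda(e,v,f)=0$ when $e$ and $f$ are both preferred at $v$ or both non‑preferred, $\lambda(e,v,f)=+1$ when $f$ is preferred and $e$ is not, and $-1$ in the reverse case — which is exactly $\Delta g(e,v,f)$ of Definition~\ref{GradeRem} (up to the one global sign fixed by our orientation conventions, as in Figure~\ref{fig:Figure 1x}). Summing over the $r$ triple values of $C$ and telescoping gives $g(e_r)-g(e_0)=0$.

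I expect the genuinely technical step to be the local computation of $\lambda(e,v,f)$: tracking the four boundary strands of two $X$‑bundles as they are glued into the standard triple‑value neighborhood (matching orientations as in Figure~\ref{fig:Figure 3x}C and Figure~\ref{fig:Figure 5x}), and verifying that the net rotation of this framing of $C$ about $C$ is precisely the grading difference, with the same sign at every triple value. An equivalent and perhaps cleaner route, which I would fall back on if the strand bookkeeping becomes unwieldy, is to use the grading to \emph{build} $X_G$ in a standard ``height‑organized'' position — place the $X$‑bundle of each edge $e$ near the slab at height $g(e)$ and let each $TVN_v$ interpolate between heights $a(v)$ and $a(v)+1$ — and then observe that in this position $\partial X_G$ can be combed vertically off any chosen Seifert surface of the knot $C\subset S^{3}$, forcing $lk(\partial X_G,C)=0$ directly; the compatibility that makes this combing possible around the whole of $C$ is again exactly the vanishing of $\sum_i\Delta g(e_i,v_i,e_{i+1})$. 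Either way, the conclusion together with the meridian‑basis lemma shows $[\partial X_G]=0$ in $H_1(\overline{S^3\setminus H})$, which is what the rest of the section needs to complete Lemma~\ref{NoDB} and hence the ``if'' direction of Theorem~\ref{ADGThm}.
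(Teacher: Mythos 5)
Your overall strategy is the same as the paper's: localize the linking number at the triple values of the cycle, show that the local contribution at $v_i$ equals the grading difference $\Delta g(e_{i-1},v_i,e_i)$, and telescope to $g(e_n)-g(e_0)=0$ using gradability. The claimed local values ($0$ when both half-edges are preferred or both non-preferred, $\pm1$ in the mixed cases) also match what the paper finds. The difference is in mechanism: the paper does not track the winding of boundary strands of $\partial X_G$ around $C$; instead it pushes $C$ itself off the cross surface (in the direction dictated by the orientations of the two sheets meeting along each $X$-bundle) and computes the signed \emph{intersection number of the pushed-off $C$ with the two-dimensional cross surface}, which equals $lk(C,\partial X_G)$ because the cross surface is a $2$-chain bounded by $\partial X_G$. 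This is done picture-by-picture in Figures~\ref{fig:Figure 8x}--\ref{fig:Figure 10x}.

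There is a genuine gap in your version as written, and it is precisely at the step you flag as ``the genuinely technical step.'' First, that step is the entire content of the lemma -- everything else (telescoping, the meridian basis) is routine -- so asserting ``inspecting the model one checks $\lambda=\Delta g$'' leaves the proof essentially undone. Second, your bookkeeping object $\lambda(e,v,f)$ is defined as the winding of \emph{the four boundary strands of the $X$-bundles of $e$ and $f$} around $C$ inside $TVN_v$, and this is the wrong set of strands to watch. In the paper's computation the nonzero contributions at a triple value come from the pushed-off $C$ crossing the \emph{third} surface sheet at $v$ -- the sheet transverse to the arc segment that $C$ traverses (cf.\ Definition~\ref{3Discs}); dually, the winding around $C$ is carried by boundary arcs associated with that third sheet and with the other four $X$-bundles at $v$, not by the strands of $e$'s and $f$'s own bundles (which stay essentially parallel to $C$ through the vertex). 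So even granting the localization, your $\lambda$ as defined would not reproduce $\Delta g$. Your fallback idea -- building $X_G$ in a height-organized position indexed by the grading and combing $\partial X_G$ off a Seifert surface of $C$ -- is closer in spirit to how the grading is actually used (compare the winding-number argument of Lemma~\ref{Winding} and Figure~\ref{fig:Figure 1x}), but it too would need the same local verification at each triple value to justify that the combing closes up around the cycle.
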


\begin{proof}
Let $C$ be a simple cycle in the intersection graph. It is composed of distinct vertices and edges $e_0,v_1,e_1,v_2,...,v_n,e_n=e_0$. Each $v_i$ is a triple value, since it is not a degree-1 vertex. We will perturb $C$ until it is in general position to the cross surface and calculate the intersection number of the ``moved $C$" with the cross-surface. This will be equal to the linking number of $C$ and the boundary of the cross-surface.

We begin by pushing each edge $e_i$ away from its matching X-bundle in a direction that agrees with the orientation on both of the surfaces that intersect in this X-bundle, as in Figure~\ref{fig:Figure 7x}. 

\begin{figure}
\begin{center}
\includegraphics{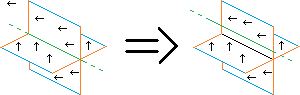}
\caption{Moving the intersection graph away from the cross surface}
\label{fig:Figure 7x}
\end{center}
\end{figure}

We need to continue this ``pushing" at the vertex neighborhood of each $v_i$. Figures~\ref{fig:Figure 8x}, \ref{fig:Figure 9x} and \ref{fig:Figure 10x} demonstrate how to push away the half-edges from their original position. The half-edges we push are colored green, and the arrows on them indicate the direction of the cycle - the half-edge whose arrow points toward (resp. away from) the triple value is a part of $e_{i-1}$ (resp. $e_i$). Continuity dictates that we must always push in the direction indicated by the orientations on the surface as we did in Figure~\ref{fig:Figure 7x}, and Figures~\ref{fig:Figure 8x}, \ref{fig:Figure 9x} and \ref{fig:Figure 10x} indeed comply with this.

Each of the three figures depicts a different situation with regards to which of the two half-edges, if any, is \gls{preferred} at $v_i$. Figure~\ref{fig:Figure 8x} depicts the case where both the half-edges are \gls{preferred}. In this case, after being pushed away from the cross-surface, $C$ will not intersect the cross surface at the neighborhood of $v_i$.

\begin{figure}
\begin{center}
\includegraphics{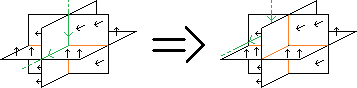}
\caption{Moving the intersection graph away from a triple value, when both sides are \gls{preferred}}
\label{fig:Figure 8x}
\end{center}
\end{figure}

Figure~\ref{fig:Figure 9x} depicts the case where the half-edge that is a part of $e_{i-1}$, the one entering the triple value, is not \gls{preferred}, and the half-edge that is a part of $e_i$, the one exiting the triple value, is \gls{preferred}. In this case, after being pushed away from the cross-surface, $C$ will intersect the cross-surface once, and it will do so agreeing with the direction of the orientation on the surface (that's why there is a little $+1$ next to the intersection). 

\begin{figure}
\begin{center}
\includegraphics{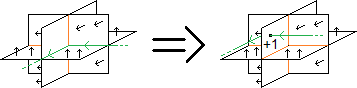}
\caption{Moving the intersection graph away from a triple value, when only one side is \gls{preferred}}
\label{fig:Figure 9x}
\end{center}
\end{figure}

Figure~\ref{fig:Figure 9x} depicts the case where the two half-edges are not \gls{consecutive}, but even if they were, the same thing would happen - $C$ would intersect the cross-surface once, in agreement with the orientation. The only difference would be that the half-edge that was exiting $v_i$ would have continued leftwards instead of turning outwards towards the reader. Furthermore, had the half-edge coming from $e_{i-1}$ been \gls{preferred} and the one coming from $e_i$ had not, then the pushing would still occur as in Figure~\ref{fig:Figure 9x}, except that the arrows on the green line would point the other way. In this case, $C$ would still intersect the cross surface once after the pushing, but it would be against the direction on the orientation.

Lastly, Figure~\ref{fig:Figure 10x} depicts the case in which both half-edges are not \gls{preferred}. In this case, after being pushed away from the cross-surface, $C$ will intersect the cross-surface twice in the neighborhood of $v_i$. One intersection, marked $+1$, is in the direction of the orientation, and the other intersection, marked $-1$, is against it.

\begin{figure}
\begin{center}
\includegraphics{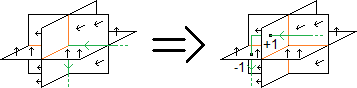}
\caption{Moving the intersection graph away from a triple value, when both sides are non-\gls{preferred}}
\label{fig:Figure 10x}
\end{center}
\end{figure}

Let $G$ be a grading of the intersection graph. Since $e_{i-1}$ and $e_i$ share a vertex, the difference between their grading is at most $1$. If $g(e_i)-g(e_{i-1})=1$ (resp. $-1$), then $e_i$ (resp $e_{i-1}$) is \gls{preferred} and $e_{i-1}$ (resp. $e_i$) is not. We just showed that in this case the signed number of intersections between the ``pushed away" $C$ and the cross-surface is $1$ (resp. $-1$). If $g(e_i)-g(e_{i-1})=0$ then either both $e_i$ and $e_{i-1}$ are \gls{preferred}, in which cases $C$ does not intersect the cross-surface around $v_i$, or they are both non-\gls{preferred}, in which case they intersect once with and once against the orientation.

In all cases, the signed number of intersections between the pushed $C$ and the cross-surface around $v_i$ is equal to $g(e_i)-g(e_{i-1})$. The pushed $C$ does not intersect the cross-surface anywhere else, and so their intersection number is $\sum_{i=1}^n(g(e_i)-g(e_{i-1}))=g(e_n)-g(e_0)=0$. Since $C$ did not cross the the boundary of the cross surface during the pushing, this ($0$) is equal to the linking number of $C$ and the boundary.
\end{proof}

Having proven Lemmas~\ref{partial} and \ref{NoDB}, we can now prove the ``if" direction of the items of Theorem~\ref{ADGThm}:

\begin{proof}
1) Each connected component $G_k$ of $G$ is \gls{gradable} and lacks DB values, and thus has a realizing surface $S_k$ in $S^3$. Remove a point from $S^3 \setminus S_k$ in order to regard $S_k$ as a surface in $\R^3$, and embed these copies of $\R^3$ as disjoint balls in the interior of $M$.

2) If $G$ has no DB values the proof of (1) holds. Otherwise, define a new \gls{DADG} $G'$ in which each DB value of $G$ is replaced with a branch value. Realize $G'$, via (1), with a closed \gls{genericsurface} $S$ for which $F$ is connected.

Take a small ball around each of the branch values that replaces a DB value of $G$, as in Figure~\ref{fig:Figure 11x}A. Figure~\ref{fig:Figure 11x}B depicts the intersection of the surface with the boundary of the ball. It is an ``8-figure" as in Figure~\ref{fig:Figure 11x}C, and the orange dot (the intersection in the 8-figure) is the intersection of the boundary with the intersection graph. If you remove this ball from $S^3$, then instead of ending at the branch value, the edge will end at the orange dot in the 8-figure, which will become a DB value. It follows that after removing all these balls, the intersection graph will be an \gls{DADG} isomorphic to $G$.

\begin{figure}
\begin{center}
\includegraphics{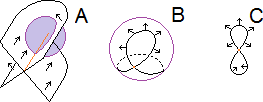}
\caption{Turning a branch value into a DB value}
\label{fig:Figure 11x}
\end{center}
\end{figure}

The \gls{genericsurface} now lays in $S^3$ minus some number of balls. Choose one spherical boundary component and connect it via a path to each of the other ones. Make sure that the path is in general position to the \gls{genericsurface} - it may intersect it only at faces and will do so transversally. Thicken these paths into narrow 1-handles and remove them from the 3-manifold. This may remove some disc from the surface, but will not effect its intersection graph. You now have a \gls{genericsurface} that realizes $G$ in $D^3$. Remove a point from the boundary of $D^3$, making it diffeomorphic to the closed half space $\{(x,y,z) \in \R^3|z \geq 0\}$ which can be properly embedded in any 3-manifold with a boundary. This finishes the proof.
\end{proof}

\begin{rem} \label{Connected2}
If needed, you can make sure that the underlying surface $F$ is connected. This involves modifying the surface in two ways.

a) You can modify the proof of item (1) to produce a connected surface $S$. Begin by assuming that each $S_k$ is connected via Remark~\ref{Connected1}. Pick a face $v_k$ in each $S_k$. The orientation on $v_k$ points towards a body $U_k$. When you remove a point from $S^3$, make sure you remove it from $U_k$. This way, $U_k$ (minus a point) becomes the exterior body of $S_k \subseteq \R^3$. When you embed the copies of $\R^3$ in $M$, the orientation on all $v_k$s will point towards the same connected component of $M \setminus \bigcup i_k(F_k)$. You may connect each $V_k$ to $V_{k+1}$ with a handle going through this component as in Figure~\ref{fig:Figure 14x} (ignore the letters ``A" and ``B" in the drawing). This connects the $i_k$s without sacrificing the orientation or changing the intersection graph.

In item (2) you take a surface from item (1) and modify it. It is clear that none of these modifications can disconnect the surface, and so (2) may also produce a connected surface.

b) If $S$ is connected but $F$ has more than one connected component, then the images of some pair of connected components must intersect generically at a double line. This is depicted in the left part of Figure~\ref{fig:Figure 12x}, where the vertical surface comes from one connected component of $F$ and the horizontal comes from another. Connect them via a handle in an orientation preserving way, as in the right part of Figure~\ref{fig:Figure 12x}, thereby decreasing the number of connected components of $F$. Continue in this manner until $F$ is connected. 
\end{rem}

\begin{figure}
\begin{center}
\includegraphics{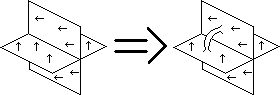}
\caption{Turning a disconnected surface into a connected one}
\label{fig:Figure 12x}
\end{center}
\end{figure}

\section{Compact 3-manifolds with an infinite homology group} \label{InfHom}

In this section, we deal with 3-manifolds whose first homology group contains an element of infinite order.

\begin{thm} \label{ADGThm2}
If $M$ is an oriented, compact and boundaryless 3-manifold with an infinite first homology group, then any \gls{DADG} $G$ with no DB values can be realized as the intersection graph of an oriented \gls{genericsurface} in $M$. If $M$ has a boundary then any \gls{DADG} $G$ can be realized in $M$.
\end{thm}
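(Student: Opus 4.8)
The plan is to reduce this theorem to the ``periodic'' case handled by Theorem~\ref{ADGThm}, by a localization trick: realize the given \gls{DADG} in a ball and then embed that ball in $M$. Recall that in the proof of the ``if'' direction of Theorem~\ref{ADGThm}, the surfaces $S_k$ realizing the connected components of a \gls{gradable} \gls{DADG} were constructed in $S^3$, and the obstruction to gradability disappeared precisely because $H_1$ was periodic. The key observation here is that \emph{any} \gls{DADG} $G$, \gls{gradable} or not, becomes realizable once we are allowed to work in a 3-manifold with an infinite-order first homology class; equivalently, once we have a solid torus (or a non-separating surface) available. So first I would reduce to the case of a connected \gls{DADG} with no DB values and no double circle (double circles are handled exactly as in Lemma~\ref{NoDB}, via Figure~\ref{fig:Figure 2x}; DB values are converted to branch values and then back, as in the proof of item (2) of Theorem~\ref{ADGThm}; disconnected \gls{DADG}s are handled componentwise).

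For a connected graph \gls{DADG} $G$, I would build its cross-surface $X_G$ in $S^3$ exactly as in Definition~\ref{CrossSurf} and thicken it to a handlebody $H$ with the boundary 1-cycle $\partial X_G$ lying on $\partial H$. In the periodic case, Lemma~\ref{partial} showed that $\partial X_G$ has linking number $0$ with every simple cycle in the intersection graph, hence is a boundary in $H_1(\overline{S^3\setminus H};\Z)$, and capping it off completes the surface. When $G$ is not \gls{gradable}, $\partial X_G$ need \emph{not} be nullhomologous in the complement of $H$; its class in $H_1(\overline{S^3\setminus H};\Z)\cong\Z^n$ is some vector $v$ determined by the grading differences around the simple cycles $C_1,\dots,C_n$. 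The fix is to not work in $S^3$ but in a manifold where the relevant cycles can be ``unwound'': I would take the connected sum, or more precisely glue in $1$-handles / a copy of $S^1\times S^2$ (or $S^1\times D^2$ if $M$ has boundary) so that the homology class $v$ becomes a boundary. Concretely, if $G$ has a non-gradable simple cycle $C_i$ with nonzero grading defect $d_i$, I would route the corresponding piece of the cross-surface through a handle of the ambient manifold so that $C_i$'s meridian bounds on the complement side; the infinite-order class in $H_1(M;\Z)$ is exactly what supplies such a handle. After finitely many such modifications the boundary 1-cycle of the (modified) cross-surface is nullhomologous in the complement, and Seifert's theorem produces the capping surface disjoint from the cross-surface. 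Reversing orientation on the cap and taking the union yields a closed oriented \gls{genericsurface} whose \gls{DADG} is $G$.

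For the precise mechanism I would proceed as follows. Since $H_1(M;\Z)$ is infinite and $M$ is compact, $M$ contains a non-separating embedded orientable surface $\Sigma$ (by Poincar\'e--Lefschetz duality, a primitive infinite-order class in $H_1$ is dual to one in $H_2$), and hence an embedded $S^1$ dual to $\Sigma$ with infinite order. Remove a small ball $B_0$ from $M$ disjoint from $\Sigma$; what remains still contains $\Sigma$ and the dual circle. Build the cross-surface $X_G$ inside $B_0$ as above, with its boundary 1-cycle on $\partial H\subset B_0$. Now, for each simple cycle $C_i$ with grading defect $d_i\neq 0$, I would tube the cross-surface along an arc that runs out of $B_0$, $|d_i|$ times around the dual circle (intersecting $\Sigma$ algebraically $d_i$ times), and back; this is a boundary-connect-sum type operation on $X_G$ that changes the homology class of $\partial X_G$ in the complement by exactly a meridian of $C_i$, without altering the intersection graph (the tube carries no new double points, being embedded and orientation-compatible, cf.\ the handle constructions in Remark~\ref{Connected2}). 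After doing this for all non-gradable cycles, the boundary 1-cycle is nullhomologous in the complement of the (modified) thickened cross-surface, so it bounds an embedded oriented surface there; gluing it on (orientation reversed) gives the desired \gls{genericsurface} in $M$. The DB case and the case $M$ with boundary follow by the same end-of-proof maneuvers as in Theorem~\ref{ADGThm}: convert DB values to branch values first, realize, then excise small balls around those branch values to recreate DB values, and if $M$ has boundary, excise one further boundary ball and properly embed a half-space.

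The main obstacle I anticipate is verifying cleanly that the tubing operation does exactly what is claimed to the homology class of the boundary cycle in the complement of the handlebody --- i.e.\ that running the tube $d_i$ times around a class dual to $\Sigma$ adds precisely $d_i$ times the meridian $[m_i]$ and nothing else, and that it genuinely leaves the intersection graph (and the \gls{DADG} structure, including which ends of edges are \gls{preferred}) unchanged. This is a linking-number bookkeeping argument, parallel to the one in Lemma~\ref{partial} but now run in $M$ rather than $S^3$; care is needed because $H_1$ of the complement is no longer a finitely generated \emph{free} abelian group of the naive rank, so I would phrase the argument purely in terms of linking/intersection pairings with the fixed cycles $C_1,\dots,C_n$ and $\Sigma$, exactly as was done in the proof that the meridians form a base, rather than trying to compute the complement's homology outright.
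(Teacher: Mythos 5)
Your preliminary reductions (extracting the non-separating surface $\Sigma$ and a dual circle via Poincar\'e duality, handling double circles, converting DB values to branch values and back, and the boundary case) all match the paper's. But the core of your argument --- building the cross-surface $X_G$ inside a ball $B_0$, attaching tubes that run out of $B_0$ around the dual circle, and then capping the boundary $1$-cycle --- cannot work for non-\gls{gradable} $G$, which is precisely the case this theorem adds beyond Theorem~\ref{ADGThm}. The obstruction is detected by the same linking-number bookkeeping you invoke. In your construction every simple cycle $C_i$ of the intersection graph stays inside $B_0$, hence is nullhomologous in $M$; therefore $lk(z,C_i)$ is well defined for any $1$-cycle $z$ bounding in $M$ (two surfaces with boundary $z$ differ by a closed $2$-cycle, whose intersection number with the nullhomologous $C_i$ vanishes), and for $z=\partial X_G$ the computation of Lemma~\ref{partial}, run with the cross-surface itself as the bounding surface, gives the grading defect $d_i\neq 0$. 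No tube or band attached in the complement of the thickened cross-surface can change this: a tube glued to the interior of $X_G$ does not move $\partial X_G$ at all and only shrinks the complement, so a nonzero class stays nonzero under the inclusion-induced map on $H_1$; and a band glued to $\partial X_G$ alters it by $\partial b$ for an embedded rectangle $b$ disjoint from $C_i$, so the linking number is unchanged. If a capping surface $F$ disjoint from the modified cross-surface existed, it would force $lk(\partial X_G',C_i)=F\cdot C_i=0$, a contradiction. So the claim that each tube ``changes the homology class of $\partial X_G$ in the complement by exactly a meridian of $C_i$'' is not just unjustified --- it is impossible.

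What your plan misses is that the cycles of the intersection graph themselves must leave the ball: the obstruction dissolves only when $[C_i]$ pairs nontrivially with $[\Sigma]$ (equivalently, acquires infinite order in $H_1(M;\Z)$), for then $lk(\cdot\,,C_i)$ is no longer well defined and the winding-number argument of the ``only if'' direction breaks down. The paper arranges this by induction on the genus of the graph part of $G$: cut an edge $e$ on a genus-reducing cycle into two edges ending in new branch values, realize the resulting lower-genus (eventually gradable) \gls{DADG} by a \emph{closed} surface with a single body (Lemma~\ref{OneBody}, which is where $\Sigma$ enters, to merge bodies), and then reconnect the two branch values by an $8$-figure bundle routed through the connected complement, in particular across $\Sigma$. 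This yields a closed surface directly, with no capping step, and the restored cycle is free to be homologically essential. Repairing your approach would require routing the X-bundles of the offending edges --- not auxiliary tubes --- around the dual circle, at which point you would essentially be reconstructing the paper's induction.
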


The proof relies on two lemmas:

\begin{lem} \label{Once}
$M$ has a connected, compact, oriented and properly embedded surface $\Sigma \subseteq M$ that is non-dividing ($M \setminus \Sigma$ is connected).
\end{lem}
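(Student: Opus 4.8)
<br>

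The plan is to produce the desired surface $\Sigma$ by extracting it from a nonzero class in $H_1(M;\Z)$ of infinite order, via Poincar\'e duality and the standard "dual of an integral cohomology class" construction. First I would pick an element $\alpha \in H_1(M;\Z)$ of infinite order, which exists by hypothesis. By Poincar\'e duality (valid since $M$ is compact, oriented, and boundaryless of dimension $3$), $\alpha$ corresponds to a class in $H^2(M;\Z)$, which in turn is represented by a homotopy class of maps $f:M \to \C P^\infty = K(\Z,2)$; since $M$ is $3$-dimensional we may as well take $f:M \to \C P^1 = S^2$. Perturb $f$ to be smooth and transverse to a regular value $p \in S^2$; then $\Sigma_0 := f^{-1}(p)$ is a smooth, compact, properly embedded, oriented surface in $M$ (the orientation coming from the orientations of $M$ and $S^2$), and its fundamental class is Poincar\'e dual to $f^*(\mathrm{PD}[p]) = \alpha$ in the appropriate degree. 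Concretely, $[\Sigma_0]$ pairs nontrivially with the cohomology class dual to $\alpha$, hence $\Sigma_0$ is homologically nontrivial.

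The key point is that this $\Sigma_0$ is \emph{non-dividing}. The obstruction to $M \setminus \Sigma_0$ being disconnected is exactly the vanishing of the mod-$2$ (or integral) intersection pairing of $[\Sigma_0]$ with loops: if $M \setminus \Sigma_0$ were disconnected into pieces, one could build a loop crossing $\Sigma_0$ exactly once, and this loop would have intersection number $\pm 1$ with $\Sigma_0$. Conversely, if every loop has even (indeed zero) intersection number with $\Sigma_0$, the function "signed count of crossings of $\Sigma_0$ along a path from a fixed basepoint" would be well-defined on $M$, forcing a separation. So I need a loop $\gamma$ in $M$ with $\gamma \cdot \Sigma_0 = 1$. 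But such a loop is precisely a representative of a class $\beta \in H_1(M;\Z)$ with $\langle \mathrm{PD}[\Sigma_0], \beta\rangle = 1$; since $[\Sigma_0]$ is Poincar\'e dual to an infinite-order primitive class (replacing $\alpha$ by a primitive generator of the cyclic subgroup it generates, or dividing out the g.c.d.), such a $\beta$ exists because the intersection pairing $H_2(M;\Z) \times H_1(M;\Z) \to \Z$ is unimodular after quotienting by torsion. Thus, after arranging $\alpha$ to be primitive of infinite order, the resulting $\Sigma_0$ is non-dividing.

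Two cleanup steps remain. First, $\Sigma_0$ need not be connected. If it has several components, I would discard all but one component that still meets $\gamma$; but discarding components could in principle reconnect or re-disconnect the complement, so more carefully I would use the standard trick: among the components of $\Sigma_0$, the signed intersection numbers with $\gamma$ sum to $1$, so some component $\Sigma_1$ has odd intersection number with \emph{some} loop; tubing together components that lie on a common side via thin handles embedded in the complement (an orientation-compatible $1$-handle, as in the constructions of Remark~\ref{Connected2}) reduces the number of components without changing the homology class, and without making the complement disconnected --- one keeps tubing until connected. Second, if $M$ has boundary, I would first note $\Sigma$ can be taken properly embedded (meeting $\partial M$ transversally in its boundary, or disjoint from $\partial M$), by choosing $f$ generic on $\partial M$ as well; alternatively one caps off or drills as convenient, but the compact bounded case is subsumed by applying the closed case to the double $DM$ and cutting, or by the direct transversality argument on $(M,\partial M)$.

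The main obstacle I expect is the connectedness step: ensuring that the tubing-together of components of $\Sigma_0$ can be done by handles lying in $M \setminus \Sigma_0$ in a way that simultaneously preserves orientation, preserves the homology class (hence non-dividingness), and terminates with a connected surface. This is the same kind of handle-surgery argument used elsewhere in the paper (e.g.\ Remark~\ref{Connected2}), so I would cite or mirror that technique. Everything else --- existence of an infinite-order class, Poincar\'e duality, genericity/transversality to get an embedded dual surface, and the "single transverse loop $\iff$ non-dividing" equivalence --- is standard $3$-manifold topology and should be dispatched quickly.
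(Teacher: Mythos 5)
Your overall strategy---extract an embedded oriented surface dual to a cohomology class that detects the infinite-order element, then deduce non-dividingness from a nonzero algebraic intersection number with a loop---is sound and genuinely different from the paper's argument (the paper proceeds by contradiction: if every connected embedded surface divided $M$, every generator of $H_2(M;\Z)$ would be a boundary, so $H_2(M;\Z)=0$, and Poincar\'e duality would force $H_1(M;\Z)$ to be all torsion). However, as written your central construction has a dimension error that prevents it from producing a surface at all. The Poincar\'e dual of $\alpha\in H_1(M;\Z)$ lives in $H^2(M;\Z)$ and is classified by a map $f:M\to \C P^\infty$, but the preimage of a regular value of a map $M^3\to S^2$ has codimension $2$, i.e.\ it is a $1$-manifold (essentially a representative of $\alpha$ itself), not a surface. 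To obtain a surface you must instead start from a class $\phi\in H^1(M;\Z)\cong[M,S^1]$ with $\phi(\alpha)\neq 0$ --- such a $\phi$ exists precisely because $\alpha$ has infinite order, since $H^1(M;\Z)\cong \mathrm{Hom}(H_1(M;\Z),\Z)$ --- and take $\Sigma_0=f^{-1}(p)$ for a smooth $f:M\to S^1$ representing $\phi$ and a regular value $p$. The duality bookkeeping in your second paragraph is off by one degree throughout for the same reason.

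Once this is corrected, the rest simplifies considerably and your worries about the cleanup steps evaporate. A loop $\gamma$ representing $\alpha$ satisfies $\gamma\cdot\Sigma_0=\phi(\alpha)\neq 0$, so some connected component $\Sigma_i$ of $\Sigma_0$ has $\gamma\cdot\Sigma_i\neq 0$; since a connected, two-sided, separating surface has zero algebraic intersection number with every loop, that single component is already non-dividing, and no tubing or primitivity/unimodularity argument is needed (you do not need intersection number exactly $1$, only nonzero). Also note that the direction of your "obstruction" sentence is reversed: it is when $M\setminus\Sigma_0$ is \emph{connected} that one can build a loop crossing $\Sigma_0$ exactly once, whereas disconnectedness forces all algebraic crossing counts to vanish; only the latter implication is actually used. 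Finally, the bounded case needs a word of care (Poincar\'e--Lefschetz duality or doubling), but the lemma is invoked in the paper for the closed case, so this is minor.
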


\begin{proof}
$H_2(M;\Z)$ is generated by 2-cycles of the form $[\Sigma]$ where $\Sigma \subseteq M$ is a connected, compact, oriented and properly embedded surface. If the statement of the lemma is false, then each such surface divides $M$ into two connected components and will therefore be a boundary in $H_2(M;\Z)$. This implies that $H_2(M;\Z) \equiv \{0\}$. According to Poincar\'{e}'s duality, \\$\{0\} \equiv H_2(M;\Z)/Tor(H_2(M;\Z)) \equiv H_1(M;\Z)/Tor(H_1(M;\Z))$. This implies that every element of $H_1(M;\Z)$ is of finite order, contradicting the assumption.
\end{proof}

\begin{lem} \label{OneBody}
If $G$ is \gls{gradable}, then there is a \gls{genericsurface} $S$, which realizes $G$, and for which $M \setminus S$ is connected (equivalently, $S$ has only one body).
\end{lem}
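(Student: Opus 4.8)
The plan is to start from a realization of $G$ inside a small ball and then repeatedly ``tube together'' the complementary regions of the surface, using the non-dividing surface supplied by Lemma~\ref{Once} to defeat a parity obstruction. First I would produce an initial realization: since $G$ is \gls{gradable}, the constructions behind Lemma~\ref{NoDB} and the branch-value/embedding tricks in the proof of Theorem~\ref{ADGThm} realize $G$ by an oriented closed \gls{genericsurface} $S_0$ which we may take to lie inside a small open ball $B$ in the interior of $M$ (if $M$ has boundary, $S_0$ lies instead in a half-space collar, which changes nothing below). List the bodies of $S_0$ in $M$ as $U_0,U_1,\dots,U_r$, where $U_0$ meets $M\setminus B$ and $U_1,\dots,U_r\subseteq B$ are the bounded ones; the goal is to modify $S_0$, \emph{without altering its \gls{DADG}}, into a surface with a single body. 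Note this can only succeed when $H_1(M;\Z)$ is infinite: by Lemma~\ref{Winding} a surface in a manifold with periodic first homology carries winding numbers, and a one-body surface would force $w(U)=w(U)+1$.

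The basic move is tube surgery along an arc. If $\gamma$ is an arc from a body $A$ to a body $A'$, in general position with respect to $|S_0|$ (meeting it transversally and only at regular or RB values, in points $t_1<\dots<t_m$), and if $m$ is even, then replacing the discs $|S_0|\cap N(\gamma)$ by thin tubes joining $t_1$ to $t_2$, $t_3$ to $t_4$, and so on produces again an oriented \gls{genericsurface}: the modification happens away from $X(S_0)$, so the intersection graph and hence the \gls{DADG} is unchanged, and, since the co-orientation of $S_0$ alternates along $\gamma$, the tubes joining consecutive crossings can be co-oriented consistently with $S_0$. One checks that this amalgamates $A$, $A'$ and the in-between bodies met along $\gamma$ into a single body while splitting no body, so the number of bodies strictly drops.

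The crux is arranging $m$ to be even between two prescribed bodies, which fails outright for $S_0$ as it stands: when $S_0$ carries winding numbers, every arc from $U_i$ to $U_j$ meets $|S_0|$ in an odd number of points whenever $w(U_i)\ne w(U_j)$, and tubing cannot cross winding-number classes. To remove this I would first enlarge the surface using Lemma~\ref{Once}: pick a connected, compact, oriented, non-dividing properly embedded surface $\Sigma$, isotoped to lie in $M\setminus B\subseteq U_0$, and replace $S_0$ by $S_0\sqcup\Sigma$. Because $\Sigma$ is embedded and disjoint from $S_0$ it contributes nothing to the intersection graph, so the \gls{DADG} is still $G$; because $\Sigma$ is non-dividing, $U_0\setminus\Sigma$ is still connected, so the bodies are now $U_0'=U_0\setminus\Sigma,U_1,\dots,U_r$. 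Crucially, $\Sigma$ being non-dividing yields a loop $\alpha$ meeting $\Sigma$ transversally once and disjoint from $S_0$; band-summing a copy of $\alpha$ onto any arc changes its intersection number with $S_0\sqcup\Sigma$ by one. Hence between any two bodies there is an arc meeting the surface an even number of times, the tube surgery above applies, and iterating it finitely often merges $U_0',U_1,\dots,U_r$ into one body. The resulting surface still realizes $G$ and has a single body, as required.

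The step I expect to be the main obstacle is exactly this parity bookkeeping: checking rigorously that the band-sum with $\alpha$ has the claimed effect on intersection numbers, that each tube surgery amalgamates precisely the intended bodies (rather than splitting off a new one or leaving the targets separate), and that none of the auxiliary moves — adding $\Sigma$, band-summing, tubing — ever disturbs the orientations or the \gls{DADG}. Everything else reduces to routine PL cut-and-paste, and the role of the hypothesis (infinite $H_1(M;\Z)$) enters solely through the existence of $\Sigma$.
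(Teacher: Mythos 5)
Your overall strategy---realize $G$ inside a ball, then merge the complementary bodies by tube surgery along arcs, using the non-dividing surface $\Sigma$ of Lemma~\ref{Once} (through its dual loop $\alpha$) to kill the homological obstruction---is genuinely different from the paper's. The paper instead handle-connects one face $V$ of the realizing surface to $\Sigma$ itself, producing a face $V\#\Sigma$ with the same body $A$ on both sides; every other body $B$ is then absorbed one at a time by running a handle from the face $W$ separating $A$ and $B$ to $V\#\Sigma$, entering $V\#\Sigma$ from whichever side matches the co-orientation of $W$. That freedom to choose the side is exactly what makes every orientation question trivial in the paper's version, and it is exactly what your version lacks.

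The gap sits at the point you flagged. First, ``$m$ even'' is not the right hypothesis: to cap the crossings in pairs compatibly with the co-orientation, each paired pair must consist of one positive and one negative intersection, so you need the \emph{signed} intersection number of $\gamma$ with the surface to vanish, not merely the parity of $m$ (an arc meeting the surface with signs $+,+$ has $m=2$ but admits no compatible tube). Second, the claim that ``the co-orientation of $S_0$ alternates along $\gamma$'' is false: a radial arc through two nested spheres, both co-oriented outward, meets them with signs $+,+$, and the cylinder joining those two crossings would have to be co-oriented away from the core arc at one end and toward it at the other. Consequently your pairing $(t_1,t_2),(t_3,t_4),\dots$ can fail even when the signed count is zero (signs $+,+,-,-$). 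This is repairable---band-sum with $\alpha$ until the signed count is exactly zero, then repeatedly tube an \emph{innermost} adjacent pair of opposite signs---and one must also correct the bookkeeping of which bodies merge: a tube joining $t_i$ to $t_j$ amalgamates the bodies just \emph{outside} its two ends, not the in-between bodies, though after all pairs are cancelled the endpoints $A$ and $A'$ do end up identified. As written, however, the key surgery step is not justified.
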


\begin{proof}
Take the \gls{genericsurface} $\Sigma$ from Lemma~\ref{Once}, and a subset $M' \subseteq M$ that is disjoint from $\Sigma$ and is homomorphic to a half-space (if $M$ has a boundary) or to $\R^3$ (if it does not). According to Theorem~\ref{ADGThm}, there is a \gls{genericsurface} $S'$ in $M'$ which realizes $G$. Connect some face $V$ of the \gls{genericsurface} to $\Sigma$ with a handle, as in Figure~\ref{fig:Figure 13x} (the handle does not intersect $\Sigma$ or $S'$). If needed, reverse the orientation of $\Sigma$ so that the resulting surface will be continuously oriented.

\begin{figure}
\begin{center}
\includegraphics{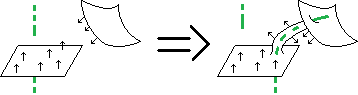}
\caption{Giving the surface a face that has the same body on both sides}
\label{fig:Figure 13x}
\end{center}
\end{figure}

You now have a new \gls{genericsurface} $S$ in $M$ whose intersection graph is still isomorphic to $G'$. Since $\Sigma$ was non-dividing, the connected sum of $V$ and $\sigma$ is a face $S$ that has the same body $A$ on both sides (as indicated by the green path which does not intersect the surface in Figure~\ref{fig:Figure 13x}). If this is $S$'s only body then you are done. If not, you can decrease the number of bodies as follows:

Let $B$ be another body of $S$ that is adjacent to $A$. Connect the face $W$ which separates $A$ and $B$ to the face $V \# \Sigma$ with a path that goes through $A$, and does not intersect $S$ except at the ends of the path. Since $V \# \Sigma$ has $A$ on both sides, you can approach it from either side. If the arrows on $W$ points toward $A$ (resp. $B$), make sure the path enters $V \# \Sigma$ from the direction the arrows point towards (resp. point away from). Next, attach the faces $V$ and $W$ with a handle that runs along this path. Figure~\ref{fig:Figure 14x} depicts the case there the arrows on $W$ point towards $A$. Reverse the direction of all arrows to get the other case.

\begin{figure}
\begin{center}
\includegraphics{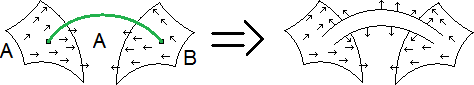}
\caption{Reducing the number of bodies}
\label{fig:Figure 14x}
\end{center}
\end{figure}

The resulting \gls{genericsurface} has one body less than $S$ since $A$ and $B$ have merged. It still realizes $G$ and has a face with the same body on both sides. Repeat this process until you get a surface with only one body.
\end{proof}

We will now prove Theorem~\ref{ADGThm2}:

\begin{proof}
Let $H$ be the graph part of $G$ - $G$ without the double circles. We use induction on the genus of $H$. If the genus is $0$, then $G$ is the union of a forest with some double circles, and Remark~\ref{tree} implies that it is \gls{gradable} and the theorem follows from Lemma~\ref{OneBody}. If the genus of $H$ is positive, pick an edge $e \in H$ such that $H \setminus \{e\}$ has a smaller genus. This means that removing $e$ does not divide the connected component of $H$ that contains $e$. Note that both ends of $e$ are on triple values, since branch values and DB values are of degree 1 and removing their single edge divides the graph.

Define a new \gls{DADG} $G'$ in the following manner: start with a copy of $G$ and cut the edge $e$ in the middle. Instead of $e$ you will get two ``new edges" $e_1$ and $e_2$. Each $e_i$ has one end on a new branch value while the other end ``replaces" one of the ends of $e$ - it enters the triple value that the said end of $e$ was on, and it retains the \gls{DADG} data - it is \gls{preferred} iff the half-edge of $e$ was \gls{preferred}, and it has the same \gls{consecutive} half-edge. Figure~\ref{fig:Figure 15x} depicts the two possible ways to construct $G'$ from $G$. 

\begin{figure}
\begin{center}
\includegraphics{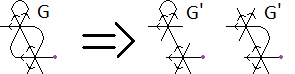}
\caption{Cutting an edge and adding two branch values to an arrowed daisy graph}
\label{fig:Figure 15x}
\end{center}
\end{figure}

$H'$, the graph structure of $G'$, has a lower genus then $H$. We assume, by induction, that there is a \gls{genericsurface} in $M$ that realizes $G'$ and \textbf{has only one body}. We will modify this surface so that it realizes $G$. Observe the new branch values at the ends of $e_1$ and $e_2$. Change the surface in a small neighborhood of each branch value as per Figure~\ref{fig:Figure 16x}A, deleting the branch value and leaving instead a ``figure 8 boundary" of the surface.

\begin{figure}
\begin{center}
\includegraphics{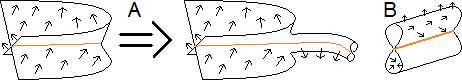}
\caption{Removing two branch values from a \gls{genericsurface} and restoring the previously cut edge}
\label{fig:Figure 16x}
\end{center}
\end{figure}

This figure 8 boundary is depicted in Figure~\ref{fig:Figure 11x}C. Take a bundle over an interval whose fibers are ``8-figures", as in Figure~\ref{fig:Figure 16x}B, and embed it in $M$ in such a way that its end-fibers coincide with the said ``figure 8 boundaries" (in a way that preserves the arrows of the orientation). Since the complement of the original surface was connected, you can make sure that the bundle does not intersect the surface anywhere except its ends. This closes $e_1$ and $e_2$ into one edge, reversing the procedure that created $G'$ from $G$, and so this new surface realizes $G$ while still having only one body. The proof follows by induction.
\end{proof}

\begin{rem} \label{Connected3}
It is possible once more to make sure that the underlying surface $F$ is connected. Firstly, you may connect the different connected components of $S$ via handles, similarly to the way you connected faces in the proof of Lemma~\ref{OneBody}. You may then proceed as in Lemma~\ref{Connected2}(b).
\end{rem}

\section{The order of the arc segments}

In the last section, we will explain how to strengthen Theorems~\ref{ADGThm} and \ref{ADGThm2}. We will do so by refining the definition of the \gls{DADG} structure of the intersection graph of a \gls{Thrice} \gls{genericsurface} so that it encodes more information regarding the topology of the surface.

According to Definition~\ref{SurfDADG}, in order to define the \gls{DADG} structure of a surface, you must order the triple values from $TV_0$ to $TV_{T-1}$. Then, for each $k=0,...,T-1$, you must order the three intersecting arc segments at the triple value $T_k$ as $TV_k^1$, $TV_k^2$ and $TV_k^3$. You then set the $k$th ``list of triple values" field of the \gls{DADG} to be $TV_k=(((r_1,s_1),(r_2,s_2)),((r_3,s_3),(r_4,s_4)),((r_5,s_5),(r_6,s_6)))$ where $(r_{2l},s_{2l})$ and $(r_{2l-1},s_{2l-1})$ are respectively the \gls{preferred} and non-\gls{preferred} ends of edges that compose the arc segment $TV_k^l$.

This leads us to consider as isomorphic \gls{DADG}s that differ only in the order of the arc segments at some triple value. For instance, if a \gls{DADG} contains the triple value $TV_k=(((5,1),(3,0)),((5,0),(9,1)),((4,0),(6,1)))$, permuting the order of the arc segment into, for instance, $TV_k=(((5,0),(9,1)),((4,0),(6,1)),$ $((5,1),(3,0)))$, will not actually change the \gls{DADG}. Indeed, these two \gls{DADG}s can represent the same \gls{genericsurface}. The difference between them represents only a difference in the way one indexes the arc segments of this surface.

Up until now, no restriction was imposed on the choice of how to index each arc segment, and thus we considered every permutation on the order of the arc segments to be an isomorphism of the \gls{DADG}. We would now like to change the definition. Figure~\ref{fig:Figure 17x} depicts 3 ways that one may index the arc segments of a triple value. In this figure, both the surface and the 3-manifold $M$ in which the surface resides are oriented. $M$ has the usual right hand orientation. The orientation of the surface imposes an orientation on each of the arc segments. This orientation points towards the \gls{preferred} side of the surface.

We would like to refine Definition~\ref{SurfDADG} so that, from now on, when choosing how to index the arc segments of a triple value, one must ensure that the triple of vectors $(TV_k^1,TV_k^2,TV_k^3)$ agrees with the orientation of $M$, as in Figures~\ref{fig:Figure 17x}A and B and unlike Figure~\ref{fig:Figure 17x}C. This implies that, from now on, only even permutations on the order of the arc segments of a triple value will be considered isomorphisms.

\begin{figure}
\begin{center}
\includegraphics{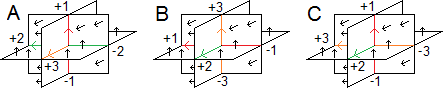}
\caption{The order of the arc segments may agree or disagree with the orientation of the 3-manifold.}
\label{fig:Figure 17x}
\end{center}
\end{figure}

We do this because even presentations preserve the topology of the neighborhood of the intersection graph while odd ones do not. In order to see this, recall Definition~\ref{CrossSurf} of a cross surface of a \gls{DADG}. In it, for every triple value $TV_k$ of $G$, one embeds a corresponding vertex neighborhood $TVN_k$ into $M$, and indexes its gluing zones in a way that corresponds to the ends of edge that reside on $TV_k$ according to $G$.

Specifically, the gluing zones on $TVN_k$ are divided into 3 pairs of ``gluing zones on opposite sides of $TVN_k$". One zone in each pair is \gls{preferred} - the orientation on surface sheets that separates the zone points towards it. For each $l=1,2,3$, we choose one pair of zones to correspond to the arc segment $TV_k^l$. In particular, the \gls{preferred} one of the zones will correspond to the \gls{preferred} \gls{EoE} from this arc segment, $(r_{2l},s_{2l})$, and the other zone will correspond to the other \gls{EoE}, $(r_{2l-1},s_{2l-1})$. One then embeds the X-bundles into $M$, and glues the matching end-fiber to the gluing zone.

As with the definition of the \gls{DADG} of a surface, we refine the definition of a cross surface and require that that the gluing zones be indexed in a way that matches the orientation of $M$. In particular, noting that the arc segment $TV_k^l$ of the cross surface is the line that connects the gluing zones $(r_{2l-1},s_{2l-1})$ and $(r_{2l},s_{2l})$ and points towards the latter, the triple $(TV_k^1,TV_k^2,TV_k^3)$ must agree with the orientation of $M$.

It is possible to see this in Figures~\ref{fig:Figure 18x}A-C. In accordance with Figure~\ref{fig:Figure 17x}, we use the colors red, green and orange to respectively indicate the arc segments $TV_k^1$, $TV_k^2$ and $TV_k^3$. The gluing zones are colored in correspondence with their arc segment, e.g zones $(r_1,s_1)$ and $(r_2,s_2)$ are colored red. We indicate the \gls{preferred} zones with a ``+" and the other ones with a ``-". For instance, the green zone marked with ``+" is $(r_4,s_4)$. In Figures~\ref{fig:Figure 18x}A-B the gluing zones are indexed correctly, in accordance with the orientation of $M$, and in figure C they are indexed wrongly.

\begin{figure}
\begin{center}
\includegraphics{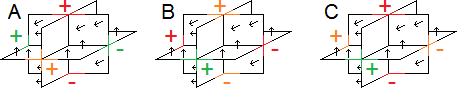}
\caption{The order of the gluing zones in regard to the orientation of the 3-manifold}
\label{fig:Figure 18x}
\end{center}
\end{figure}

While there are still 3 ways to index the gluing zones correctly, these differ up to an even permutation, which implies that the resulting cross surface is essentially unique. Indeed, if you built the cross surface according to one indexing, and wish to see what would happen if you used another indexing, simply cut the X-bundles from the vertex neighborhood $TVN_k$, rotate it in a why that would turn the shape in Figure~\ref{fig:Figure 18x}A into that in Figure~\ref{fig:Figure 18x}B, and re-glue.

The resulting cross surface is clearly isomorphic to the original one. In fact, there are neighborhoods $H_1,H_2 \subseteq M$ of the two cross-surfaces and an orientation preserving homeomorphism $f:H_1 \to H_2$ that sends the first cross-surface to the second one in a manner preserving the orientation on them. Note that, if $G$ was originally the \gls{DADG} structure of a \gls{genericsurface} $S$, then this implies that the cross surface of $G$ is homeomorphic to a neighborhood of the intersection graph of $S$, and so one can recreate such a neighborhood using only the said \gls{DADG} structure.

On the other hand, cross surfaces that differ by an odd permutation on one of the triple values do not even have to be homeomorphic. This holds even for cross surfaces with only 2 triple values. For instance, constructing the cross surfaces of the \gls{DADG} $((((0,0)(1,1)),(2,0)(3,1)),(4,0)(5,1))),(((1,0)(0,1)),(3,0)(2,1)),$ $(5,0)(4,1))))$ and $((((0,0)(1,1)),(2,0)(3,1)),(4,0)(5,1))),(((1,0)(0,1)),(5,0)$ \\ $(4,1)),(3,0)(2,1))))$ reveals that the former has 12 boundary components while the latter has only 8. We am referring to the connected components of the boundary of the cross surface, the same boundary studied in Lemma~\ref{partial}.

Refining the definition of the \gls{DADG} of a surface and the cross surface of a \gls{DADG} will not interfere with any of the results given in this thesis. In particular, Theorems~\ref{ADGThm} and \ref{ADGThm2} will still hold and their proofs will still work - they can still be used to create surfaces that realize any \gls{DADG}. The difference is that, after the refinement, a surface must fulfill more requirements in order to realize a \gls{DADG}, which means that the theorems are stronger.

%
%
%
\bibliographystyle{amsplain}
\bibliography{MyBib}
%
%
%
%

\end{document}